\newcommand {\bea}{\begin{align}}
\newcommand {\ea}{\end{align}}
\newtheorem{proposition}{Proposition}[section]
\newtheorem{theorem}{Theorem}[section]
\newtheorem{lemma}{Lemma}[section]
\newtheorem{remark}{Remark}[section]
\newtheorem{corollary}{Corollary}[section]
\newcommand{\thmref}[1]{{Theorem~\ref{#1}}}
\newcommand{\lemref}[1]{{Lemma~\ref{#1}}}
\newcommand{\secref}[1]{{Section~\ref{#1}}}
\newcommand{\propref}[1]{{Proposition~\ref{#1}}}
\newcommand{\coref}[1]{{Corollary~\ref{#1}}}
\newcommand{\rmref}[1]{{Remark~\ref{#1}}}
\newcommand{\red}{\color{red}}
\newcommand{\eps}{\varepsilon}
\newcommand{\revdr}[1]{{\color{red} #1}}
\begin{document}
\title[A posteriori error analysis of the SCHE with rough noise]
{Robust a posteriori error analysis of the stochastic Cahn-Hilliard equation with rough noise}

\author{\v{L}ubom\'{i}r Ba\v{n}as}
\address{Department of Mathematics, Bielefeld University, 33501 Bielefeld, Germany}
\email{banas@math.uni-bielefeld.de}
\author{Jean Daniel Mukam}
\address{Department of Mathematics and Informatic, School of Mathematics and Natural Sciences, University of Wuppertal, 42119 Wuppertal, Germany}
\email{mukam@uni-wuppertal.de}

\begin{abstract}

We derive a posteriori error estimate for a fully discrete adaptive ﬁnite
element approximation of the stochastic Cahn-Hilliard equation with rough noise.
The considered model is derived from the stochastic Cahn-Hilliard equation with additive space-time white noise through suitable spatial regularization of the white noise.
The a posteriori estimate is robust with respect to the interfacial width parameter as well as the noise regularization parameter.
We propose a practical adaptive algorithm for the considered problem and perform numerical simulations to illustrate the theoretical findings.
\end{abstract}

\maketitle



\section{Introduction}

The stochastic Cahn-Hilliard equation with additive space-time white noise reads as
\begin{align}
\label{model1}
du&=\Delta wdt+ dW &&\text{in}\; (0, T)\times\mathcal{D},\nonumber\\
w&=-\varepsilon\Delta u+\varepsilon^{-1}f(u) && \text{in}\; (0, T)\times\mathcal{D},\\
\partial_{\vec{n}}u&=\partial_{\vec{n}}w=0 &&\text{on}\; (0,T)\times\partial\mathcal{D},\nonumber\\
u(0)&=u^{\varepsilon}_0 &&\text{in}\; \mathcal{D},\nonumber
\end{align}
where $T>0$ is fixed, $\mathcal{D}\subset\mathbb{R}^d$, $d\geq 1$ is an open bounded domain with boundary $\partial\mathcal{D}$ and $\vec{n}$ denotes the outer unit normal vector to $\partial \mathcal{D}$.
The constant $0<\varepsilon\ll 1$ is called the interfacial with parameter.
The nonlinearity in \eqref{model1} is given by $f(u)=F'(u)=u^3-u$, where  $F(u)=\frac{1}{4}(u^2-1)^2$ is the double-well potential.

The term  $W$ in \eqref{model1} represents the space-time white noise
which  can be formally  expressed as
\begin{align}
\label{SpaceTimeWhiteNoise}
W(t,x)=\sum_{j\in\mathbb{N}^d}\beta_j(t)e_j(x),
\end{align}
where the $\beta_j$, $j\in\mathbb{N}^d$, are independent and identically distributed Brownian motions on a filtered probability space $(\Omega, \mathcal{F}, \{\mathcal{F}_t\}_t, \mathbb{P})$ and $\{e_j\}_{j\in\mathbb{N}^d}$ are the eigenvectors of the Neumann Laplacian $-\Delta$ with domain $ D(-\Delta)=\{u\in \mathbb{H}^2:\; \partial_{\vec{n}}u=0\;\text{on}\; \partial\mathcal{D}\}$.

For simplicity we take $\mathcal{D}=(0, 1)^d$ to be the unit cube in $\mathbb{R}^d$, $d=1,2,3$.
To avoid technicalities we assume that the initial data $u^{\varepsilon}_0\in \mathbb{H}^1$ and has zero mean, i.e., $\int_{\mathcal{D}}u^{\varepsilon}_0dx=0$.
Furthermore, we assume that the noise is mean-value preserving, i.e., $\int_{\mathcal{D}} W(t, x) dx=0$ for a.a. $t\in [0,T]$, $\mathbb{P}$-a.s. (i.e., we drop the constant mode in \eqref{SpaceTimeWhiteNoise}, cf. \cite{Debussche1}).
The zero mean conditions on the initial data and the noise imply that $\int_{\mathcal{D}} u(t, x) dx=0$ for $t\in [0,T]$, $\mathbb{P}$-a.s.

Recently, a posteriori estimates for adaptive  finite element approximation of linear stochastic partial differential equations (SPDEs) with $\mathbb{H}^2\cap\mathbb{W}^{1, \infty}$-trace class noise were investigated in \cite{MajeeProhl22}, generalizing the variational concepts  of the residual-based estimators for deterministic parabolic PDEs (cf. e.g., \cite{ChenFeng2004}) to linear SPDEs.
Due to the lack of differentiability in time of solutions to SPDEs, \cite{MajeeProhl22} employs a linear transformation that transforms the (linear) SPDE into a (linear) random PDE (RPDE) which is amenable to a posteriori analysis.
This approach was recently generalized to nonlinear SPDEs in \cite{BanasVieth_a_post23}, \cite{bw21}.
 The work \cite{BanasVieth_a_post23} derives robust a posteriori estimates for the stochastic Cahn-Hilliard equation with additive $\mathbb{H}^4\cap\mathbb{W}^{1, \infty}$-trace class noise
and \cite{bw21} considers a posteriori estimate for the stochastic total variation flow requiring $\mathbb{H}^2$-regularity of the noise.

The stochastic Cahn-Hilliard equation with space-time white noise (\ref{model1}) is not amenable to a posteriori error analysis since its solution
does not posses enough (spatial) regularity to formulate a suitable error equation for the numerical approximation. I.e., the order parameter $u$ is not $\mathbb{H}^2$-regular in space, cf. \cite{Debussche1}, \cite{BanasDaniel23}, \cite{scarpa18}. In addition, the chemical potential $w$ is not properly defined in the case of space-time white noise (cf. \cite{BanasDaniel23}, \cite{scarpa18}),
which prohibits the application of a suitable counterpart of the linear transformation from \cite{BanasVieth_a_post23} (see \eqref{lineartransform} below).
Hence, we consider the regularized stochastic Cahn-Hilliard equation \eqref{pb1}, which is obtained by replacing the space-time white noise \eqref{SpaceTimeWhiteNoise} in the original problem (\ref{model1}) by its piecewise linear approximation \eqref{Noiseapprox2}.
To derive the a posteriori error estimate for the numerical approximation of \eqref{pb1}, we adopt a similar approach as in \cite{BanasVieth_a_post23}. We split the solution as $u = \tilde{u} + \hat{u}$,
where $\tilde{u}$ solves the linear SPDE \eqref{model2} and $\hat{u}$ solves the random PDE (RPDE) \eqref{model3}.
Analogously to \cite{BanasVieth_a_post23}, to obtain estimate that are robust with respect to the interfacial width parameter $\varepsilon$
we make use of the (computable) principal eigenvalue \eqref{Eigenvalue1} (see also \cite{BartelsMueller2011}, \cite{Bartels_book2015}).
Our work differs from \cite{BanasVieth_a_post23} in the following aspects.
\begin{itemize}
\item To derive the a posteriori error estimate for the linear SPDE \eqref{model2} in the low-regularity setting requires the use of a modified linear transformation, see \rmref{RemLinearT} below,
along with an appropriate treatment of the regularized noise.
\item
We adopt a refined approach for the derivation of pathwise a posteriori estimate for the RPDE \eqref{model3}.
We derive the error estimate on a subspace
$\Omega_{\delta,\tilde{\varepsilon}} \cap \Omega_{\gamma, \tilde{\varepsilon}} \cap \Omega_{\tilde{\varepsilon}}$,
where, the set $\Omega_{\tilde{\varepsilon}}$ \eqref{SetOmegatilde} controls the approximation error of the linear SPDE,
the set $\Omega_{\delta, \tilde{\varepsilon}}$ \eqref{SetOmegadelta} corresponds to the subspace on which the $L^{\infty}(0, T; \mathbb{H}^{-1})$- and $L^4(0, T; \mathbb{L}^4)$-norms of the solution are bounded by a prescribed threshold, and
the set $\Omega_{\gamma, \tilde{\varepsilon}}$ \eqref{SetOmegagamma} corresponds to the subspace on which the $L^{\infty}(0, T; \mathbb{L}^4)$-norm of the solution to the linear SPDE is bounded by a prescribed threshold.
Using the new interpolation inequality in \lemref{Fundamentallemma}, in \thmref{mainresult2} we derive  pathwise a posteriori error estimate for the approximation of the random PDE on the subspace $\Omega_{\delta,\tilde{\varepsilon}} \cap \Omega_{\gamma, \tilde{\varepsilon}} \cap \Omega_{\tilde{\varepsilon}}$.
By combining variational and semigroup techniques, we prove that $\Omega_{\delta, \tilde{\varepsilon}}$ and $\Omega_{\gamma, \tilde{\varepsilon}}$ are subspaces of high probability (see Lemmas \ref{Normutilde} and \ref{LemmaRegularity}).
Furthermore, the approximation error of the linear SPDE on the set $\Omega_{\tilde{\varepsilon}}$ can be controlled owing to Corollary~\ref{RateLinearSPDE}.
Using the fact that $\Omega_{\delta, \tilde{\varepsilon}}$, $\Omega_{\gamma, \tilde{\varepsilon}}$ are subspaces of high probability
we combine the pathwise estimate in \thmref{mainresult2} with the error estimate for the linear SPDE in \lemref{LemmaErrortilde}
and obtain an error estimate for the numerical approximation
of \eqref{pb1} in \thmref{mainresult3}.
\item
As a byproduct of our analysis we obtain some additional new results.
The error estimate in \thmref{mainresult3} holds on the whole sample space $\Omega$. This improves the earlier work \cite{BanasVieth_a_post23},
where the derived a posteriori error estimate for the stochastic Cahn-Hilliard equation with smooth noise in spatial dimension $d=3$ was restricted to the subspace
$\Omega_{\infty}=\left\{\omega\in\Omega:\; \sup_{t\in(0,T)}\Vert u(t)\Vert_{\mathbb{L}^{\infty}} \leq C_{\infty}\right\}$. A rigorous estimate for this subspace $\Omega_{\infty}$ has not yet been established.
Furthermore, in \thmref{ThmRateLinearSPDE}, we obtained convergence rate for the fully discrete numerical approximation of the linear fourth order SPDE \eqref{model2} with $\mathbb{H}^1$-regular noise. This appears to be a new result.
\end{itemize}

The paper is organized as follows.
In Section~\ref{sec_notation} we introduce the notation and auxiliary results.
In \secref{Regularizedproblem}, we introduce the regularized problem and its fully discrete numerical approximation is given in \secref{sec_full}.
In \secref{ErrorlinearSPDE}, we derive the error estimate for the linear PDE. \secref{ErrorRandonPDE} is dedicated to the error analysis of the random PDE. In \secref{ErrorRegularizedCHE}, we combine the estimates from \secref{ErrorlinearSPDE} and \secref{ErrorRandonPDE} and derive the error estimate for the numerical approximation of the stochastic Cahn-Hilliard equation.
Numerical experiments are presented in \secref{sec_num}.
Auxiliary results are collected in Appendices~\ref{SectionRegularity}~and~\ref{Rateimplicit}.

\section{Notation and preliminaries}
\label{sec_notation}

For $p\in [1, \infty]$, we denote by $(\mathbb{L}^p, \Vert \cdot\Vert_{\mathbb{L}^p}):=(L^p(\mathcal{D}), \Vert \cdot\Vert_{L^p(\mathcal{D})})$ the space of  equivalence classes of functions on $\mathcal{D}$ that are  $p$-th order integrable. We denote by $(\cdot, \cdot)$ the inner product in $\mathbb{L}^2$, and by $\Vert \cdot\Vert:=\Vert \cdot\Vert_{\mathbb{L}^2}$ its associated norm. For any $k\in\mathbb{N}$, we denote by $(\mathbb{H}^k, \Vert\cdot\Vert_{\mathbb{H}^k}):=(H^k(\mathcal{D}),  \Vert\cdot\Vert_{H^k(\mathcal{D})})$  the standard Sobolev space of functions whose derivatives up to order $k$ belong to $\mathbb{L}^2$. For $r>0$, we denote by $\mathbb{H}^r$ the standard fractional Sobolev space. For  $r\geq 0$, $\mathbb{H}^{-r}:=(\mathbb{H}^r)^*$ stands for the dual space of $\mathbb{H}^r$.  We denote by $\langle\cdot, \cdot\rangle$  the duality pairing between $\mathbb{H}^1$ and $\mathbb{H}^{-1}$, with the norm  defined as
\begin{align}
\label{NormH-1}
\Vert u\Vert_{\mathbb{H}^{-1}} =\sup_{v\in \mathbb{H}^1}\frac{\langle u,v\rangle}{\| v\|_{\mathbb{H}^{1}}}.
\end{align}
Furthermore, we consider the space $\mathring{\mathbb{H}}^{-1} = \left\{v\in \mathbb{H}^{-1}:\,\, \langle v, 1\rangle =0 \right\}$.

For $v\in \mathbb{L}^2$, we denote by $m(v)$ the mean value of $v$, i.e.,
\begin{align*}
m(v):=\frac{1}{\vert \mathcal{D}\vert}\int_{\mathcal{D}}v(x)dx,\quad v\in \mathbb{L}^2,
\end{align*}
and define the space $\mathbb{L}^2_0=\{\varphi\in \mathbb{L}^2:\; m(\varphi)=0\}$.

We consider the inverse Neumann Laplacian $(-\Delta)^{-1}: \mathring{\mathbb{H}}^{-1} \rightarrow  \mathbb{H}^2\cap\mathbb{L}^2_0$, i.e.,
for  $v\in \mathring{\mathbb{H}}^{-1}$ we let $\tilde{v} :=(-\Delta)^{-1}v$ be the unique variational solution to:
\begin{align*}
-\Delta \tilde{v} & =v\quad \text{in}\; \mathcal{D} \\
 \partial_{\vec{n}} \tilde{v} & =0\quad \text{on}\; \partial\mathcal{D}.
\end{align*}
In particular for $\overline{v}\in \mathbb{L}^2_0$ it holds that  $(\nabla(-\Delta)^{-1}\overline{v}, \nabla\varphi)=(\overline{v}, \varphi)$ for all $\varphi\in \mathbb{H}^1$.

The inner product  on $\mathring{\mathbb{H}}^{-1}$ is defined by
\begin{align*}
(u,v)_{-1}:= (\nabla(-\Delta)^{-1}\overline{v}, \nabla (-\Delta)^{-1} v) \qquad \forall u, v\in \mathring{\mathbb{H}}^{-1}.
\end{align*}
Note that the norm associated to the above scalar product is equivalent to the $\mathbb{H}^{-1}$-norm (\ref{NormH-1}) on $\mathring{\mathbb{H}}^{-1}$.

\section{The regularized stochastic Cahn-Hilliard equation}\label{Regularizedproblem}
Let $\mathcal{T}_{\tilde{h}}$ be a quasi-uniform partition of $\mathcal{D}$ into simplices with mesh-size $\tilde{h}=\max_{K\in \mathcal{T}_{\tilde{h}}}\text{diam}(K)$.
Let $\mathbb{V}_{\tilde{h}}\equiv \mathbb{V}_{\tilde{h}}(\mathcal{T}_{\tilde{h}})\subset\mathbb{H}^1$  be the finite element space of piecewise affine, globally continuous functions on $\mathcal{D}$, that is,
 \begin{align*}
 \mathbb{V}_{\tilde{h}}:=\{v_{\tilde{h}}\in C(\bar{\mathcal{D}}):\; {v_{\tilde{h}}}_{|_K}\in \mathbb{P}_1(K)\quad \forall K\in \mathcal{T}_{\tilde{h}}\}.
\end{align*}
Let $\phi_{\ell}$, $\ell=1, \cdots, L$, be the basis functions of $\mathbb{V}_{\tilde{h}}$, s.t., $\mathbb{V}_{\tilde{h}}=\text{span}\{\phi_{\ell},\; \ell=1, \cdots, L\}$.
As in \cite{BanasBrzezniakProhl2013,sllg_book}, we introduce the following approximation of the space-time white noise (\ref{SpaceTimeWhiteNoise}):
\begin{align*}
\widehat{W}(t, x):=\sum_{\ell=1}^L\frac{\phi_{\ell}(x)}{\sqrt{(d+1)^{-1}\vert (\phi_{\ell}, 1)\vert}}\beta_{\ell}(t)\quad x\in \bar{\mathcal{D}}\subset\mathbb{R}^d,
\end{align*}
where   $(\beta_{\ell})_{\ell=1}^L$ are standard real-valued Brownian motions.
To ensure the zero mean-value property of the noise at the discrete level, we normalize the noise $\widehat{W}$ as:
\begin{align}
\label{Noiseapprox2}
\widetilde{W}(t):=\widehat{W}(t)-\frac{1}{\vert\mathcal{D}\vert}(\widehat{W}(t), 1) =\sum_{\ell}^L\frac{\phi_{\ell}-m(\phi_{\ell})}{\sqrt{(d+1)^{-1}\vert (\phi_{\ell}, 1)\vert}}\beta_{\ell}(t).
\end{align}

\begin{remark}
The discrete noise $\widehat{W}$ was considered in \cite{sllg_book,BanasBrzezniakProhl2013} as an approximation of the space-time white noise, cf. \cite[Remark A.1]{BanasBrzezniakProhl2013}.
The approximation $\widehat{W}$ can also be interpreted as the $\mathbb{L}^2$-projection onto $\mathbb{V}_{\tilde{h}}$ of the higher-dimensional  analogue of the piecewise constant approximation of the space-time white noise from \cite{AllenNovoselZhang1998}.
\end{remark}

The regularized stochastic Cahn-Hilliard equation is obtained by replacing the white noise $W$ in \eqref{model1} with the approximation $\widetilde{W}$ as
\begin{align}
\label{pb1}
du&=\Delta wdt+ d\widetilde{W}(t) && \text{in}\; (0, T)\times\mathcal{D},\nonumber\\
w&=-\varepsilon\Delta u+\varepsilon^{-1}f(u) && \text{in}\; (0, T)\times\mathcal{D},\\
\partial_{\vec{n}}u&=\partial_{\vec{n}}w=0 && \text{on}\; (0,T)\times\mathcal{D},\nonumber\\
u(0)&=u^{\varepsilon}_0 && \text{in}\; \mathcal{D}.\nonumber
\end{align}
The solution of \eqref{pb1} can be written as $u=\widetilde{u}+\widehat{u}$, where $\widetilde{u}$ solves the  linear SPDE
\begin{align}
\label{model2}
d\widetilde{u}&=\Delta \widetilde{w}dt+ d\widetilde{W}(t)&&\text{in}\; (0, T)\times\mathcal{D},\nonumber\\
\widetilde{w}&=-\varepsilon\Delta \widetilde{u}&& \text{in}\; (0, T)\times \mathcal{D},\\
\partial_{\vec{n}}\widetilde{u}&=\partial_{\vec{n}}\widetilde{w}=0&& \text{on} \; (0, T)\times\partial\mathcal{D},\nonumber\\
\widetilde{u}(0)&=0&&\text{in}\; \mathcal{D},\nonumber
\end{align}
and $\widehat{u}$ solves the following random PDE:
\begin{align}
\label{model3}
d\widehat{u}&=\Delta \widehat{w}dt && \text{in}\; (0, T)\times\mathcal{D},\nonumber\\
\widehat{w}&=-\varepsilon\Delta \widehat{u}+\frac{1}{\varepsilon}f(u) && \text{in}\; (0, T)\times \mathcal{D},\\
\partial_{\vec{n}}\widehat{u}&=\partial_{\vec{n}}\widehat{w}=0 && \text{on} \; (0, T)\times\partial\mathcal{D},\nonumber\\
\widehat{u}(0)&=u^{\varepsilon}_0&& \text{in}\; \mathcal{D}.\nonumber
\end{align}

The linear SPDE \eqref{model2} has a unique (analytically) weak solution, see e.g., \cite{Debussche1}, i.e., there exists $(\widetilde{u}, \widetilde{w})$ that satisfy for $t\in(0,T)$, $\mathbb{P}$-a.s.:
\begin{subequations}
\label{weak1}
\begin{align*}
(\widetilde{u}(t), \varphi)+\int_0^t(\nabla\widetilde{w}(s), \nabla\varphi)ds&=\left(\int_0^td\widetilde{W}(s), \varphi\right)&\forall\varphi\in\mathbb{H}^1,\\
(\widetilde{w}(t), \psi)&=\varepsilon(\nabla\widetilde{u}(t), \nabla\psi) & \forall\psi\in\mathbb{H}^1.
\end{align*}
\end{subequations}
We introduce the linear transformation
\begin{align}
\label{lineartransform}
y(t,x)=\widetilde{u}(t,x)-\int_0^td\widetilde{W}(s,x),
\end{align}
and note that $(y, \widetilde{w})$ $\mathbb{P}$-a.s. solves the random PDE
\begin{equation}
\label{weak2}
\begin{array}{rclll}
(y(t), \varphi)+\displaystyle\int_0^t(\nabla\widetilde{w}(s), \nabla\varphi)ds&=&0 &\quad\forall\varphi\in\mathbb{H}^1,\\
(\widetilde{w}(t), \psi)&=&\varepsilon(\nabla\widetilde{u}(t), \nabla\psi)&\quad \forall\psi\in\mathbb{H}^1,
\end{array}
\end{equation}
for all $t\in(0,T)$, with $y(0)=0$.

We remark standard arguments (e.g., note Lemma~\ref{LemmaDiscdisrete} and take $\tau_n\rightarrow 0$ in \eqref{scheme2})
imply that $\widetilde{w}\in L^2(0, T; \mathbb{H}^{1})$, $\mathbb{P}$-a.s., for fixed $\tilde{h}$.
Hence, cf. \cite{bw21}, it follows that $\partial_t y\in L^2(0, T; \mathbb{H}^{-1})$, $\mathbb{P}$-a.s.
and \eqref{weak2} is equivalent to
\begin{equation}
\label{Eqy}
\begin{array}{rclll}
\langle\partial_ty(t), \varphi\rangle+(\nabla\widetilde{w}(t), \nabla\varphi)&=&0 &\quad\forall\varphi\in\mathbb{H}^1,\\
(\widetilde{w}(t), \psi)&=&\varepsilon(\nabla\widetilde{u}(t), \nabla\psi)&\quad \forall\psi\in \mathbb{H}^1.
\end{array}
\end{equation}

\begin{remark}
\label{RemLinearT}
In \cite{BanasVieth_a_post23} the linear transformation (\ref{lineartransform}) is also applied
to the variable $\widetilde{w} = -\varepsilon \Delta \widetilde{u}$. Hence,
instead of \eqref{Eqy}, in \cite[Section 5]{BanasVieth_a_post23} a RPDE is formulated for
the transformed variables $(y, y_w)$ with $y_w(t) = \widetilde{w}(t) +\varepsilon \Delta \int_0^t d\widetilde{W}(s)$.
This transformation requires $\mathbb{H}^4$-regularity of the noise and is therefore not applicable in our setting where the noise is only $\mathbb{H}^1$-regular.
\end{remark}

\section{Fully discrete numerical approximation}\label{sec_full}
We consider a possibly non-uniform partition $0=t_0<t_1<\cdots<t_N=T$ of the time interval $[0, T]$ with step sizes $\tau_n=t_n-t_{n-1}$, $n=1,\cdots,N$.
Below, we denote $\tau:=\max\limits_{n=1,\cdots,N}\tau_n$.
At time level $t_n$, we consider a quasi-uniform partition $\mathcal{T}^n_h$ of the domain $\mathcal{D}$ into simplices and the associated finite element space of globally continuous piecewise linear functions
\begin{align*}
\mathbb{V}_h^n=\{\varphi_h\in C(\bar{\mathcal{D}}):\; \varphi_{h}\vert_K\in \mathcal{P}_1(K)\qquad \forall K\in\mathcal{T}^n_h\}.
\end{align*}

For an element $K\in\mathcal{T}^n_h$, we denote by $\mathcal{E}_K$ the set of all faces $e$ of $\partial K$. The set of all faces of the elements of the mesh $\mathcal{T}^n_h$ is denoted by $\mathcal{E}^n_h=\cup_{K\in \mathcal{T}^n_h}\mathcal{E}_K$. The diameters of $K\in \mathcal{T}^n_h$ and $e\in\mathcal{E}^n_h$ are denoted  by $h_K$ and $h_e$ respectively. We set $h:=\max_{K\in\mathcal{T}^n_h}h_K$. We split $\mathcal{E}^n_h$ into the set of all interior and boundary faces, $\mathcal{E}^n_h=\mathcal{E}^n_{h, \mathcal{D}}\cup\mathcal{E}^n_{h, \partial\mathcal{D}}$, where $\mathcal{E}^n_{h, \partial\mathcal{D}}=\{e\in\mathcal{E}^n_h,\; e\subset\partial\mathcal{D}\}$.
For $K\in\mathcal{T}^n_h$ and $e\in\mathcal{E}^n_h$, we define the local patches $w_K=\cup_{\mathcal{E}_K\cap\mathcal{E}_{K'}\neq \emptyset}K'$ and $w_e=\cup_{e\in\mathcal{E}_{K'}}K'$.

We define the $\mathbb{L}^2$-projection $P^n_h: \mathbb{L}^2\rightarrow \mathbb{V}^n_h$ such that:
\begin{align}
\label{Defprojection}
(P^n_hv-v, \varphi_h)=0\quad \forall\varphi\in \mathbb{V}^n_h. 
\end{align}
For   $s\in\{1,2\}$, the projection $P^n_h$ satisfies the following error estimate (cf. \cite{Bartels_book2015,BrennerScottbook2002,Ciarletbook2002}) 
 \begin{align}
 \label{ErrorPnh}
 \Vert v-P^n_hv\Vert+h\Vert\nabla(v-P^n_hv)\Vert\leq Ch^{s}\Vert v\Vert_{\mathbb{H}^s}\quad \forall v\in \mathbb{H}^s. 
 \end{align}
We consider the Cl\'{e}ment-Scott-Zhang interpolation operator $C^n_h: \mathbb{H}^1\rightarrow \mathbb{V}^n_h$, which satisfies the following local  error estimates: there exists a constant $C^*>0$  depending only on the minimum angle of the mesh $\mathcal{T}^n_h$ (cf. \cite[Definition 3.8]{Bartels_book2015}) such that for all $\psi\in \mathbb{H}^1$:
\begin{align}
\label{ErrorCnh}
\Vert\psi-C^n_h\psi\Vert_{L^2(K)}+h_K\Vert\nabla[\psi-C^n_h\psi]\Vert_{L^2(K)}\leq C^*h_K\Vert\nabla\psi\Vert_{L^2(w_K)}\quad \forall K\in\mathcal{T}^n_h,\\
\Vert \psi-C^n_h\psi\Vert_{L^2(e)}\leq C^*h_e^{\frac{1}{2}}\Vert\nabla\psi\Vert_{L^2(w_e)}\hspace{3cm} \forall e\in\mathcal{E}^n_h.\nonumber
\end{align}

We consider the following fully discrete numerical approximation of the Cahn-Hilliard equation \eqref{pb1}: set $u^0_h=P^0_hu^{\varepsilon}_0\in \mathbb{V}^0_h$
and for $n=1,\dots, N$ find $(u^n_h,w^n_h) \in \mathbb{V}^n_h\times \mathbb{V}^n_h$ as the solution of
 \begin{align}
  \label{scheme1}
\frac{1}{\tau_n}(u^n_h-u^{n-1}_h, \varphi_h)+(\nabla w^n_h, \nabla\varphi_h)&= \frac{1}{\tau_n}(\Delta_n\widetilde{W}, \varphi_h)& \varphi_h\in\mathbb{V}^n_h,\nonumber\\
\varepsilon(\nabla u^n_h, \nabla\psi_h)+\displaystyle\frac{1}{\varepsilon}(f(u^n_h), \psi_h)&=(w^n_h, \psi_h)& \psi_h\in\mathbb{V}^n_h,
\end{align}
where $\Delta_n\widetilde{W}$ denotes the time-increment of the noise \eqref{Noiseapprox2} on $(t_{n-1}, t_n)$, i.e.,
\begin{align*}
\Delta_n\widetilde{W}:= \widetilde{W}(t_n)- \widetilde{W}(t_{n-1}) = \Delta_n\widehat{W}-\frac{1}{\vert\mathcal{D}\vert}(\Delta_n\widehat{W}, 1).
\end{align*}
 
We define the piecewise  linear time interpolant $u_{h,\tau}$ of the numerical solution $\{u^n_h\}_{n=0}^N$ as:
\begin{align}
\label{Interpolant1}
u_{h,\tau}(t)=\frac{t-t_{n-1}}{\tau_n}u^n_h+\left(1-\frac{t-t_{n-1}}{\tau_n}\right)u^{n-1}_h\quad \text{for}\; t\in[t_{n-1}, t_n].
\end{align}
Analogously, we define  the piecewise  linear time interpolant  $w_{h,\tau}$ of the numerical solution $\{w^n_h\}_{n=0}^N$. 

The numerical solution $u^n_h$ can be expressed as $u^n_h=\widetilde{u}^n_h+\widehat{u}^n_h$, where $(\widetilde{u}^n_h, \widetilde{w}^n_h)$ solves:
\begin{align}
\label{scheme2}
\frac{1}{\tau_n}(\widetilde{u}^n_h-\widetilde{u}^{n-1}_h, \varphi_h)+(\nabla \widetilde{w}^n_h, \nabla\varphi_h)&=\frac{1}{\tau_n}(\Delta_n\widetilde{W}, \varphi_h) &\varphi_h\in\mathbb{V}^n_h,\nonumber\\
(\widetilde{w}^n_h, \psi_h)&=\varepsilon(\nabla \widetilde{u}^n_h, \nabla\psi_h)&\psi_h\in\mathbb{V}^n_h,\\
\widetilde{u}^0_h&=0,\nonumber
\end{align}
and $(\widehat{u}^n_h,\widehat{w}^n_h)$ solves:
\begin{align}
\label{scheme3}
\frac{1}{\tau_n}(\widehat{u}^n_h-\widehat{u}^{n-1}_h, \varphi_h)+(\nabla \widehat{w}^n_h, \nabla\varphi_h)&= 0 & \varphi_h\in\mathbb{V}^n_h,\nonumber\\
\varepsilon(\nabla \widehat{u}^n_h, \nabla\psi_h)+\displaystyle\frac{1}{\varepsilon}(f(u^n_h), \psi_h)&=(\widehat{w}^n_h, \psi_h) &\psi_h\in\mathbb{V}^n_h,\\
\widehat{u}^0_h=u^0_h&=P^0_hu^{\varepsilon}_0.\nonumber
\end{align}
Analogously to \eqref{Interpolant1}, we define the  interpolants $\widetilde{u}_{h,\tau}$, $\widetilde{w}_{h,\tau}$, $\widehat{u}_{h,\tau}$ and $\widehat{w}_{h,\tau}$ of the numerical solutions $\{\widetilde{u}^n_h\}_n$, $\{\widetilde{w}^n_h\}_n$, $\{\widehat{u}^n_h\}_n$ and $\{\widehat{w}^n_h\}_n$ respectively.

\section{Error estimate for the linear SPDE}
\label{ErrorlinearSPDE}

In this section we derive error estimates for the numerical approximation \eqref{scheme2} of \eqref{model2}.
To derive the error estimates we first consider the following approximation of \eqref{Eqy}:
\begin{equation}
\label{schemey}
\begin{array}{rclll}
\left(\frac{y^n_h-y^{n-1}_h}{\tau_n}, \varphi_h\right)+(\nabla\widetilde{w}^n_h, \nabla\varphi_h)&=&0 &\quad \forall \varphi_h\in\mathbb{V}_h^n,\\
(\widetilde{w}^n_h,\varphi_h)&=&\varepsilon(\nabla\widetilde{u}^n_h,\nabla\psi_h)& \quad \forall\psi_h\in \mathbb{V}^n_h,
\end{array}
\end{equation}
with $y^0_h = 0$ and $\{\widetilde{w}^n_h\}_{n=1}^N$ is the solution of \eqref{scheme2}.

In the following lemma, we derive a discrete analogue of the transformation \eqref{lineartransform}, which relates the solution of \eqref{schemey} to the solution of \eqref{scheme2}.
The lemma holds under an additional (mild) noise ''compatibility'' condition $\mathbb{V}_{\tilde{h}}\subset\mathbb{V}^n_h$ for all  $n=1,\cdots,N$, which is assumed to hold for the remainder of the paper.
The proof of the lemma follows as \cite[Lemma 3.1]{bw21} and \cite[Lemma 4.1]{BanasVieth_a_post23} and is therefore omitted. We note that the noise compatibility condition relaxes
the condition $\mathbb{V}^{n-1}_h \subset \mathbb{V}^n_h$ which was assumed in \cite{BanasVieth_a_post23}, cf. \cite[Remark 5.2]{BanasVieth_a_post23}.
\begin{lemma}
\label{LemmaLubo}
Suppose that  $\mathbb{V}_{\tilde{h}}\subset\mathbb{V}^n_h$ for all $n=1,\cdots,N$. Then it holds that:
\begin{align*}
 y^n_h=\widetilde{u}^n_h-\sum_{j=1}^n\int_{t_{j-1}}^{t_j}d\widetilde{W}(s). 
 \end{align*}
\end{lemma}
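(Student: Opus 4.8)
The plan is to prove the identity by a uniqueness argument combined with induction on $n$. I introduce the candidate
\[
z^n_h := \widetilde{u}^n_h - \sum_{j=1}^n\int_{t_{j-1}}^{t_j}d\widetilde{W}(s),
\]
and show that $z^n_h$ satisfies exactly the discrete scheme \eqref{schemey} with $z^0_h=0$. Since \eqref{schemey} determines its iterates uniquely, this forces $z^n_h=y^n_h$ for every $n$.

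First I would note that the sum telescopes: because $\widetilde{W}(0)=0$, one has $\sum_{j=1}^n\int_{t_{j-1}}^{t_j}d\widetilde{W}(s)=\widetilde{W}(t_n)$, so that $z^n_h=\widetilde{u}^n_h-\widetilde{W}(t_n)$. The crucial structural point, and the only place the compatibility hypothesis is used, is that $z^n_h$ must be an admissible iterate, i.e.\ $z^n_h\in\mathbb{V}^n_h$. By \eqref{Noiseapprox2} the value $\widetilde{W}(t_n)$ is a linear combination of the functions $\phi_\ell-m(\phi_\ell)\in\mathbb{V}_{\tilde{h}}$, hence $\widetilde{W}(t_n)\in\mathbb{V}_{\tilde{h}}\subset\mathbb{V}^n_h$; together with $\widetilde{u}^n_h\in\mathbb{V}^n_h$ this yields $z^n_h\in\mathbb{V}^n_h$. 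Without the inclusion $\mathbb{V}_{\tilde{h}}\subset\mathbb{V}^n_h$ the difference would leave the discrete space and the comparison with $y^n_h$ would break down, which is exactly why this mild condition is imposed.

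Next I would substitute $z^n_h$ into the first equation of \eqref{schemey}. Using
\[
\frac{z^n_h-z^{n-1}_h}{\tau_n}=\frac{\widetilde{u}^n_h-\widetilde{u}^{n-1}_h}{\tau_n}-\frac{\widetilde{W}(t_n)-\widetilde{W}(t_{n-1})}{\tau_n}=\frac{\widetilde{u}^n_h-\widetilde{u}^{n-1}_h}{\tau_n}-\frac{\Delta_n\widetilde{W}}{\tau_n},
\]
and testing against $\varphi_h\in\mathbb{V}^n_h$, I add $(\nabla\widetilde{w}^n_h,\nabla\varphi_h)$. The first equation of \eqref{scheme2} replaces $\tfrac{1}{\tau_n}(\widetilde{u}^n_h-\widetilde{u}^{n-1}_h,\varphi_h)+(\nabla\widetilde{w}^n_h,\nabla\varphi_h)$ by $\tfrac{1}{\tau_n}(\Delta_n\widetilde{W},\varphi_h)$, which exactly cancels the noise increment and leaves $0$. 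Hence $z^n_h$ solves the first line of \eqref{schemey}. The second line of \eqref{schemey} coincides verbatim with the second line of \eqref{scheme2} (it constrains only $\widetilde{w}^n_h$ and $\widetilde{u}^n_h$, not the transformed variable), so it holds automatically.

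Finally I would close the induction. The base case is $z^0_h=\widetilde{u}^0_h-\widetilde{W}(0)=0=y^0_h$. For the inductive step, the first line of \eqref{schemey} determines $(y^n_h,\varphi_h)$ for all $\varphi_h\in\mathbb{V}^n_h$ from the previous iterate and $\widetilde{w}^n_h$; since the $\mathbb{L}^2$-inner product is positive definite on $\mathbb{V}^n_h$, the iterate is uniquely determined. As $z^n_h$ and $y^n_h$ satisfy the same equation with the same previous iterate (by the induction hypothesis) and the same $\widetilde{w}^n_h$, they must agree. I do not expect a genuine obstacle; the only step requiring care is the membership $z^n_h\in\mathbb{V}^n_h$, which is precisely what the noise compatibility condition supplies, mirroring the arguments of \cite[Lemma 3.1]{bw21} and \cite[Lemma 4.1]{BanasVieth_a_post23}.
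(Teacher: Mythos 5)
Your proof is correct: the substitution--and--uniqueness argument (showing $z^n_h:=\widetilde{u}^n_h-\widetilde{W}(t_n)$ lies in $\mathbb{V}^n_h$ via the compatibility condition, satisfies \eqref{schemey} by cancellation of the noise increment against \eqref{scheme2}, and then coincides with $y^n_h$ by induction and positive definiteness of the $\mathbb{L}^2$-inner product on $\mathbb{V}^n_h$) is exactly the standard argument of the cited lemmas to which the paper defers. No gaps.
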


Similarly to \eqref{Interpolant1}, we define the  piecewise linear time interpolant $y_{h, \tau}$  of the numerical solution $(y^n_h)$.  It follows that:
\begin{align}
\label{DerivInterpoly}
\partial_ty_{h,\tau}(t)=\frac{y^n_h-y^{n-1}_h}{\tau_n}\quad \text{for} \quad t_{n-1}<t<t_n,\quad n=1,\cdots,N. 
\end{align}
It follows from \eqref{schemey} that $(y_{h,\tau}, \widetilde{w}_{h, \tau})$ satisfies:
\begin{equation}
\label{schemeintery}
\begin{array}{rclll}
(\partial_ty_{h,\tau}(t), \varphi)+(\nabla\widetilde{w}_{h,\tau}(t), \nabla\varphi)&=&\langle\mathcal{R}_y(t),\varphi\rangle &\quad \forall \varphi\in \mathbb{H}^1,\\
\varepsilon(\nabla\widetilde{u}_{h,\tau}(t), \nabla\psi)-(\widetilde{w}_{h,\tau}(t), \psi)&=&\langle\mathcal{S}_{y}(t), \psi\rangle &\quad \forall\psi\in \mathbb{H}^1, 
\end{array}
\end{equation}
with the residuals $\mathcal{R}_y(t)$, $\mathcal{S}_y(t)$ given as
\begin{align*}
\langle\mathcal{R}_y(t),\varphi\rangle=(\partial_ty_{h,\tau}(t), \varphi)+(\nabla\widetilde{w}_{h,\tau}(t), \nabla\varphi), & \quad\forall\varphi\in \mathbb{H}^1, \\
\langle\mathcal{S}_{y}(t), \psi\rangle=-(\widetilde{w}_{h,\tau}(t), \psi)+\varepsilon(\nabla\widetilde{u}_{h,\tau}(t), \nabla\psi) &\quad \forall\psi\in \mathbb{H}^1.
\end{align*}
We define the spatial error indicators $\eta^n_{\text{SPACE},i}$, for $i=1,2,3$, as follows
\begin{align*}
\eta^n_{\text{SPACE},1}&=\left(\sum_{K\in \mathcal{T}^n_h}h_K^2\Vert\tau_n^{-1}(y^n_h-y^{n-1}_h)\Vert^2_{L^2(K)}\right)^{1/2}+\left(\sum_{e\in \mathcal{E}^n_h}h_e\Vert[\nabla\widetilde{w}^n_h.\vec{n}_e]_e\Vert^2_{L^2(e)}\right)^{1/2},\\
\eta^n_{\text{SPACE},2}&=\left(\sum_{K\in\mathcal{T}^n_h}h_K^2\Vert\widetilde{w}^n_h\Vert^2_{L^2(K)}\right)^{1/2},\\
\eta^n_{\text{SPACE},3}&=\left(\varepsilon\sum_{e\in\mathcal{E}^n_h}h_e\Vert[\nabla\widetilde{u}^n_h.\vec{n}_e]_e\Vert^2_{L^2(e)}\right)^{1/2},
\end{align*}
where $[\nabla u.\vec{n}_e]:=\nabla u\vert_{K_1}.\vec{n}_1+\nabla u\vert_{K_2}.\vec{n}_2$ for $e=\bar{K}_1\cap\bar{K}_2$, and the vectors $\vec{n}_1$ and $\vec{n}_2$ are  the outer unit normals to the elements $K_1, K_2\in\mathcal{T}^n_h$ at $e\in\mathcal{E}^n_h$.
Furthermore, we define the time error indicators $\eta^n_{\text{TIME},i}$, $i=1,2,3$ as
\begin{align*}
\eta^n_{\text{TIME},1} & =\Vert\nabla[\widetilde{w}^{n-1}_h-\widetilde{w}^n_h]\Vert,
\\
\eta^n_{\text{TIME},2} & =\Vert\widetilde{w}^{n-1}_h-\widetilde{w}^n_h\Vert,
\\
\eta^n_{\text{TIME},3} & =\varepsilon\Vert\nabla[\widetilde{u}^{n-1}_h-\widetilde{u}^n_h]\Vert.
\end{align*}
To simplify the notation below we denote
\begin{align*}
\mu_{-1}(t) & =C^*\eta^n_{\text{SPACE},1}+\eta^n_{\text{TIME},1},
\\
\mu_0(t) & =\eta^n_{\text{TIME},2},
\\
\mu_1(t) & =\eta^n_{\text{TIME},3}+\eta^n_{\text{SPACE},2}+C^*\eta^n_{\text{SPACE},3},
\end{align*}
where $C^*>0$ is the constant from \eqref{ErrorCnh}.
\begin{lemma}
\label{Residuelestimate}
The following bounds on the residuals hold:
\begin{align*}
\langle\mathcal{R}_y(t), \varphi\rangle\leq \mu_{-1}(t)\Vert\nabla\varphi\Vert\quad \text{and}\quad 
\langle\mathcal{S}_y(t), \varphi\rangle\leq \mu_0(t)\Vert\varphi\Vert+\mu_1(t)\Vert\nabla\varphi\Vert.
\end{align*}
\end{lemma}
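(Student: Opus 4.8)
The plan is to treat both residuals by the standard residual-based strategy: insert the Cl\'ement--Scott--Zhang interpolant as a test function into the \emph{discrete} equations \eqref{schemey} to exploit Galerkin orthogonality, and then control the remaining interpolation defect by \eqref{ErrorCnh}. The only genuinely new bookkeeping is that $\widetilde{u}_{h,\tau}$ and $\widetilde{w}_{h,\tau}$ are piecewise \emph{linear} in time while \eqref{schemey} holds only at the nodes $t_n$; this is reconciled by splitting each interpolant into its right endpoint plus a time increment. Throughout I fix $t\in(t_{n-1},t_n)$ and write $\lambda=(t-t_{n-1})/\tau_n\in[0,1]$, so that by \eqref{Interpolant1} and \eqref{DerivInterpoly} one has $\widetilde{w}_{h,\tau}(t)=\widetilde{w}^n_h-(1-\lambda)(\widetilde{w}^n_h-\widetilde{w}^{n-1}_h)$ and $\widetilde{u}_{h,\tau}(t)=\widetilde{u}^n_h-(1-\lambda)(\widetilde{u}^n_h-\widetilde{u}^{n-1}_h)$, while $\partial_t y_{h,\tau}(t)=\tau_n^{-1}(y^n_h-y^{n-1}_h)$ is constant on the subinterval.

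For $\mathcal{R}_y$ I would first substitute the splitting of $\widetilde{w}_{h,\tau}$. The endpoint contribution $(\partial_t y_{h,\tau}(t),\varphi)+(\nabla\widetilde{w}^n_h,\nabla\varphi)$ is exactly the first line of \eqref{schemey} tested against $\varphi$, hence it vanishes when $\varphi$ is replaced by $\varphi_h:=C^n_h\varphi\in\mathbb{V}^n_h$. Subtracting this zero and integrating by parts element-wise (the interior Laplacian of the piecewise affine $\widetilde{w}^n_h$ vanishes, and the face contributions assemble into the normal-flux jumps collected in $\eta^n_{\text{SPACE},1}$) leaves $(\partial_t y_{h,\tau}(t),\varphi-\varphi_h)+\sum_{e}([\nabla\widetilde{w}^n_h\cdot\vec{n}_e]_e,\varphi-\varphi_h)_e$. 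Applying the element-wise and edge-wise estimates \eqref{ErrorCnh} together with the finite overlap of the patches $w_K$ and $w_e$ produces $C^*\eta^n_{\text{SPACE},1}\Vert\nabla\varphi\Vert$. The leftover increment $(1-\lambda)(\nabla(\widetilde{w}^n_h-\widetilde{w}^{n-1}_h),\nabla\varphi)$ is bounded by Cauchy--Schwarz and $1-\lambda\le 1$ by $\eta^n_{\text{TIME},1}\Vert\nabla\varphi\Vert$, yielding the claimed $\mu_{-1}(t)\Vert\nabla\varphi\Vert$.

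For $\mathcal{S}_y$ the mechanism is the same but both interpolants are split. The endpoint part $-(\widetilde{w}^n_h,\psi)+\varepsilon(\nabla\widetilde{u}^n_h,\nabla\psi)$ vanishes against $\psi_h:=C^n_h\psi$ by the second line of \eqref{schemey}; subtracting it and using the $L^2$ element-wise bound in \eqref{ErrorCnh} on $-(\widetilde{w}^n_h,\psi-\psi_h)$ together with element-wise integration by parts and the edge-wise bound on $\varepsilon(\nabla\widetilde{u}^n_h,\nabla(\psi-\psi_h))$ returns the spatial indicators $\eta^n_{\text{SPACE},2}$ and $\eta^n_{\text{SPACE},3}$ tested against $\Vert\nabla\psi\Vert$ (up to the interpolation constant $C^*$). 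The two time increments are estimated directly: $-(1-\lambda)(\widetilde{w}^{n-1}_h-\widetilde{w}^n_h,\psi)\le\eta^n_{\text{TIME},2}\Vert\psi\Vert=\mu_0(t)\Vert\psi\Vert$ and $\varepsilon(1-\lambda)(\nabla(\widetilde{u}^{n-1}_h-\widetilde{u}^n_h),\nabla\psi)\le\eta^n_{\text{TIME},3}\Vert\nabla\psi\Vert$. Collecting the $\Vert\nabla\psi\Vert$-terms into $\mu_1(t)$ gives the second bound.

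The main obstacle, and the step where I would be most careful, is the $\varepsilon$-robust scaling of the $\widetilde{u}^n_h$ flux term: after integration by parts one naturally obtains $\varepsilon\big(\sum_e h_e\Vert[\nabla\widetilde{u}^n_h\cdot\vec{n}_e]_e\Vert^2_{L^2(e)}\big)^{1/2}\Vert\nabla\psi\Vert$, and to recast this as $C^*\eta^n_{\text{SPACE},3}=C^*\sqrt{\varepsilon}\,(\cdots)^{1/2}$ one must absorb one power of $\varepsilon$ using $\varepsilon\le\sqrt{\varepsilon}$, valid since $0<\varepsilon\le 1$; distributing the factor $\sqrt{\varepsilon}$ symmetrically between indicator and test function is precisely what keeps the estimate from degenerating as $\varepsilon\to0$. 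A secondary technical point is the correct assembly of the boundary faces in $\mathcal{E}^n_{h,\partial\mathcal{D}}$ under the homogeneous Neumann condition and the absorption of the patch-overlap constants into $C^*$.
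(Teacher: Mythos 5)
Your proposal is correct and follows essentially the same route as the paper: insert the Cl\'ement--Scott--Zhang interpolant to exploit the Galerkin orthogonality in \eqref{schemey}, integrate by parts element-wise, and bound the interpolation defects via \eqref{ErrorCnh} and the time increments directly. The paper's proof is just a terser version of the same decomposition; your extra remark on absorbing the factor $\sqrt{\varepsilon}$ into $\eta^n_{\text{SPACE},3}$ (using $0<\varepsilon\leq 1$) is a valid clarification of a detail the paper leaves implicit.
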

\begin{proof}
Using \eqref{schemey}, we can express $\mathcal{R}_y$ and $\mathcal{S}_y$  as follows:
\begin{align*}
\langle \mathcal{R}_y(t),\varphi\rangle=&(\partial_ty_{h,\tau}(t), \varphi-\varphi_h)+(\nabla\widetilde{w}_h^n, \nabla[\varphi-\varphi_h])+(\nabla[\widetilde{w}_{h,\tau}(t)-\widetilde{w}^n_h], \nabla\varphi), \\
\langle\mathcal{S}_y(t), \varphi\rangle=&(\widetilde{w}^n_h-\widetilde{w}_{h,\tau}(t), \varphi)+(\widetilde{w}^n_h, \varphi_h-\varphi)+\varepsilon(\nabla[\widetilde{u}_{h,\tau}(t)-\widetilde{u}^n_h], \nabla\varphi)\nonumber\\
&+\varepsilon(\nabla\widetilde{u}^n_h, \nabla[\varphi-\varphi_h]). 
\end{align*}
By setting $\varphi_h=C^n_h\varphi\in \mathbb{V}^n_h$, and applying element-wise integration by parts, together with \eqref{ErrorPnh} and \eqref{ErrorCnh}, as done in the proof of \cite[Proposition 6.3]{Bartels_book2015}, we obtain the desired results.
\end{proof}
To simplify the notation, we respectively denote the stochastic integral and its time-discrete counterpart by:
\begin{align}
\label{Sigma1}
\Sigma(t) & =\int_0^td\widetilde{W}(s),
\\
\label{Sigma1b}
\Sigma^n_{\tilde{h}} & =\sum_{i=1}^{n}\Delta_i\widetilde{W}=\int_0^{t_n}d\widetilde{W}(s).
\end{align}
Analogously to \eqref{Interpolant1}, we define the continuous piecewise linear time-interpolant of $\{\Sigma^n_{\tilde{h}}\}_{n=0}^N$ as follows:
\begin{align}
\label{Sigma2} 
\Sigma_{\tilde{h},\tau}(t)=\frac{t-t_{n-1}}{\tau_n}\Sigma^n_{\tilde{h}}+\frac{t_n-t}{\tau_n}\Sigma^{n-1}_{\tilde{h}}=\sum_{i=1}^{n-1}\Delta_i\widetilde{W}+\frac{t-t_{n-1}}{\tau_n}\Delta_n\widetilde{W},\quad t\in[t_{n-1}, t_n].
\end{align}

We recall in the following lemma some  basic properties of the nodal basis functions $(\phi_{\ell})_{\ell=1}^L$ of the finite element space $\mathbb{V}_{\tilde{h}}$ for a quasi-uniform triangulation, see, e.g., \cite[Chapter 3]{Bartels_book2015}. 
\begin{lemma}
\label{Lemmabasis}
The following properties hold  for all $\phi_{\ell}\in \mathbb{V}_{\tilde{h}}$, uniformly in $\tilde{h}$ and for all $\ell\in\{1, \cdots, L\}$:
\begin{itemize}
\item[(i)]    $C_1\tilde{h}^d\leq \vert(\phi_{\ell}, 1)\vert\leq C_2\tilde{h}^{d}$, $L=dim(\mathbb{V}_{\tilde{h}})\leq C\tilde{h}^{-d}$,
\item[(ii)] $\Vert \phi_{\ell}\Vert\leq C\tilde{h}^{\frac{d}{2}}$ and $\Vert\nabla \phi_{\ell}\Vert\leq C\tilde{h}^{-1}\Vert \phi_{\ell}\Vert$.
\end{itemize} 
\end{lemma}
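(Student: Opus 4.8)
The plan is to reduce all three estimates to the standard affine scaling argument on a reference simplex, using only shape-regularity and the quasi-uniformity of $\mathcal{T}_{\tilde{h}}$. I would write each $K\in\mathcal{T}_{\tilde{h}}$ as $K=F_K(\hat{K})$, where $\hat{K}$ is the reference simplex and $F_K(\hat{x})=B_K\hat{x}+b_K$ is affine. Shape-regularity gives $\Vert B_K\Vert\leq Ch_K$, $\Vert B_K^{-1}\Vert\leq Ch_K^{-1}$ and $\vert\det B_K\vert=\vert K\vert/\vert\hat{K}\vert$, while quasi-uniformity gives $c\tilde{h}\leq h_K\leq\tilde{h}$ for every $K$, so that $\vert K\vert$ is comparable to $\tilde{h}^d$ uniformly in $\tilde{h}$ and in $K$. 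Each nodal function $\phi_\ell$ restricts on a simplex $K$ containing the node $x_\ell$ to a barycentric coordinate $\lambda_{\ell,K}$ and is supported on the patch $\omega_\ell=\bigcup_{K\ni x_\ell}K$; shape-regularity bounds the number of simplices in $\omega_\ell$ by a constant independent of $\ell$ and $\tilde{h}$.

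For part (i) I would use the exact quadrature identity $\int_K\lambda_{\ell,K}\,dx=\vert K\vert/(d+1)$, valid for any barycentric coordinate on a simplex. Summing over the patch gives $(\phi_\ell,1)=\tfrac{1}{d+1}\sum_{K\ni x_\ell}\vert K\vert$, a sum of strictly positive terms each comparable to $\tilde{h}^d$; bounding the number of terms above and below then yields $C_1\tilde{h}^d\leq\vert(\phi_\ell,1)\vert\leq C_2\tilde{h}^d$. For the dimension bound, $L$ equals the number of vertices of $\mathcal{T}_{\tilde{h}}$, which is at most $(d+1)$ times the number of simplices; since each simplex has volume $\geq c\tilde{h}^d$, the number of simplices is at most $\vert\mathcal{D}\vert/(c\tilde{h}^d)$, giving $L\leq C\tilde{h}^{-d}$.

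For the first bound in part (ii) I would argue identically, now with $\int_K\lambda_{\ell,K}^2\,dx=\tfrac{2\vert K\vert}{(d+1)(d+2)}$, so that $\Vert\phi_\ell\Vert^2=\sum_{K\ni x_\ell}\int_K\lambda_{\ell,K}^2\,dx$ is again a bounded sum of terms comparable to $\tilde{h}^d$, whence $\Vert\phi_\ell\Vert\leq C\tilde{h}^{d/2}$. The gradient estimate is the local inverse inequality: for the affine function $\phi_\ell$ on $K$, scaling to $\hat{K}$ together with $\Vert B_K^{-1}\Vert\leq Ch_K^{-1}$ gives $\Vert\nabla\phi_\ell\Vert_{L^2(K)}\leq Ch_K^{-1}\Vert\phi_\ell\Vert_{L^2(K)}$; since quasi-uniformity yields $h_K^{-1}\leq C\tilde{h}^{-1}$, squaring, summing over $K\ni x_\ell$ and taking square roots gives $\Vert\nabla\phi_\ell\Vert\leq C\tilde{h}^{-1}\Vert\phi_\ell\Vert$.

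None of the steps is genuinely difficult; the only point requiring care is that every constant be independent of $\tilde{h}$ and of $\ell$. This uniformity is supplied entirely by the two structural hypotheses: quasi-uniformity pins every element volume to $\asymp\tilde{h}^d$ and controls $h_K^{-1}\leq C\tilde{h}^{-1}$ in the inverse inequality, while shape-regularity bounds both the cardinality of each patch $\omega_\ell$ and the constants $\Vert B_K\Vert,\Vert B_K^{-1}\Vert$ in the reference map. Hence the main (modest) obstacle is simply bookkeeping of these dependencies rather than any analytical subtlety, and the lemma follows as in the standard references, e.g. \cite[Chapter 3]{Bartels_book2015}.
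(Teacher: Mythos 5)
Your proposal is correct: the paper itself gives no proof of this lemma, merely citing \cite[Chapter 3]{Bartels_book2015}, and your affine-scaling argument (exact integration of barycentric coordinates over each simplex, counting elements in the nodal patch via shape-regularity, and the local inverse inequality combined with quasi-uniformity) is precisely the standard proof that reference supplies. All constants are tracked correctly and are uniform in $\tilde{h}$ and $\ell$ as required.
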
 

We define the noise error indicator as
\begin{align}
\label{Noise1}
\eta^n_{\text{NOISE}}&:=\tau_n^2\sum_{\ell=1}^L\frac{\Vert\nabla\phi_{\ell}\Vert^2}{(d+1)^{-1}\vert (\phi_{\ell}, 1)\vert}.
\end{align}
\begin{remark}
Using \lemref{Lemmabasis}, it can be shown that:
\begin{align*}
\eta^n_{\text{NOISE}}\leq C\tau_n^2\tilde{h}^{-2}L\leq C\tau_n^2\tilde{h}^{-2-d}.
\end{align*}
By choosing $\tau_n$ such that $\tau_n\leq C\tilde{h}^{2+d+\sigma}$ for some $\sigma>0$, it follows that $\eta^n_{\text{NOISE}}\leq C\tau_n\tilde{h}^{\sigma}$.
Hence, since $\tilde{h}$ is fixed, the size of the noise error indicator can be controlled by the time step size.
\end{remark}
The following lemma relates the noise error indicator (\ref{Noise1}) to the error due to the time-discretization of the noise.
\begin{lemma}
\label{ErrorNoiseLemma}
The following estimate holds:
\begin{align*}
\int_{0}^{T}\mathbb{E}[\Vert\nabla(\Sigma_{\tilde{h},\tau}(s)-\Sigma(s))\Vert^2]ds\leq C\sum_{n=1}^N\eta^n_{\text{NOISE}}.
\end{align*}
\end{lemma}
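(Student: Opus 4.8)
The plan is to exploit the explicit representation of the regularized noise to reduce the estimate to a family of independent one-dimensional Brownian interpolation errors, for which the variance can be computed exactly. First I would note that, since each $\beta_\ell$ starts at zero, the stochastic integral collapses to $\Sigma(s)=\widetilde{W}(s)=\sum_{\ell=1}^L c_\ell\,(\phi_\ell-m(\phi_\ell))\,\beta_\ell(s)$, with $c_\ell:=\big((d+1)^{-1}\vert(\phi_\ell,1)\vert\big)^{-1/2}$, and that $\Sigma^n_{\tilde h}=\widetilde{W}(t_n)$. Hence, on each subinterval $[t_{n-1},t_n]$ the interpolant \eqref{Sigma2} is exactly the piecewise linear (in time) interpolation of $\widetilde{W}$, so that the error satisfies $\Sigma_{\tilde h,\tau}(s)-\Sigma(s)=\sum_{\ell=1}^L c_\ell\,(\phi_\ell-m(\phi_\ell))\,X_\ell(s)$, where I set $\lambda:=(s-t_{n-1})/\tau_n\in[0,1]$ and $X_\ell(s):=\lambda\,\beta_\ell(t_n)+(1-\lambda)\beta_\ell(t_{n-1})-\beta_\ell(s)$.

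Taking the gradient annihilates the constants $m(\phi_\ell)$, and after squaring and taking expectation the independence of the family $\{\beta_\ell\}_{\ell=1}^L$ removes every off-diagonal term, leaving $\mathbb{E}[\Vert\nabla(\Sigma_{\tilde h,\tau}(s)-\Sigma(s))\Vert^2]=\sum_{\ell=1}^L c_\ell^2\,\Vert\nabla\phi_\ell\Vert^2\,\mathbb{E}[X_\ell(s)^2]$. The crux is then the single-path variance. Rewriting $X_\ell(s)=\lambda\big(\beta_\ell(t_n)-\beta_\ell(s)\big)-(1-\lambda)\big(\beta_\ell(s)-\beta_\ell(t_{n-1})\big)$ displays it as a linear combination of two independent, non-overlapping Brownian increments of variances $(1-\lambda)\tau_n$ and $\lambda\tau_n$; therefore $\mathbb{E}[X_\ell(s)^2]=\lambda^2(1-\lambda)\tau_n+(1-\lambda)^2\lambda\tau_n=\lambda(1-\lambda)\tau_n$, which one recognises as the Brownian-bridge variance $(s-t_{n-1})(t_n-s)/\tau_n$.

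Finally I would integrate over $[t_{n-1},t_n]$ via the substitution $s=t_{n-1}+\lambda\tau_n$, $ds=\tau_n\,d\lambda$, using $\int_0^1\lambda(1-\lambda)\,d\lambda=\tfrac16$, to get $\int_{t_{n-1}}^{t_n}\mathbb{E}[\Vert\nabla(\Sigma_{\tilde h,\tau}(s)-\Sigma(s))\Vert^2]\,ds=\tfrac16\,\tau_n^2\sum_{\ell=1}^L c_\ell^2\Vert\nabla\phi_\ell\Vert^2=\tfrac16\,\eta^n_{\text{NOISE}}$ by the definition \eqref{Noise1}; summing over $n$ gives the claim with $C=\tfrac16$. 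Notably the estimate is actually an identity, so no real difficulty arises beyond careful bookkeeping: the only step demanding attention is the orthogonal splitting of $X_\ell(s)$ into non-overlapping increments, which is precisely what makes the variance collapse to the clean factor $\lambda(1-\lambda)\tau_n$ and thereby reproduces the indicator \eqref{Noise1}.
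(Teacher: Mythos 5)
Your proof is correct and follows essentially the same route as the paper: decompose the integral over the subintervals, identify the error as the piecewise-linear interpolation error of $\widetilde{W}$, and use the independence of the $\beta_\ell$ and of non-overlapping Brownian increments to reduce to the one-dimensional second moments. The only difference is that you compute the Brownian-bridge variance $\lambda(1-\lambda)\tau_n$ exactly and obtain the identity with $C=\tfrac16$, whereas the paper settles for the triangle inequality plus the It\^{o} isometry and an unspecified constant; both yield the stated bound.
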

\begin{proof}
Using the definitions of $\Sigma_{\tilde{h},\tau}$ and $\Sigma$ (see \eqref{Sigma1} and \eqref{Sigma1b}), we obtain
\begin{align*}
\text{I}&:=\int_{0}^{T}\mathbb{E}[\Vert\nabla(\Sigma_{\tilde{h},\tau}(t)-\Sigma(t))\Vert^2]ds=\sum_{n=1}^N\int_{t_{n-1}}^{t_n}\mathbb{E}[\Vert\nabla(\Sigma_{\tilde{h},\tau}(t)-\Sigma(t))\Vert^2]dt\nonumber\\
&=\mathbb{E}\left[\sum_{n=1}^N\int_{t_{n-1}}^{t_n}\left\Vert\nabla\left(\int_0^td\widetilde{W}(s)-\frac{t-t_{n-1}}{\tau_n}\Delta_n\widetilde{W}-\sum_{i=1}^{n-1}\Delta_i\widetilde{W}\right)\right\Vert^2\right]dt\nonumber\\
&=\mathbb{E}\left[\sum_{n=1}^N\int_{t_{n-1}}^{t_n}\left\Vert\nabla\left(\int_{t_{n-1}}^td\widetilde{W}(s)-\frac{t-t_{n-1}}{\tau_n}\int_{t_{n-1}}^{t_n}d\widetilde{W}(s)\right)\right\Vert^2\right]dt.
\end{align*}
Using the It\^{o} isometry, the fact that  $\mathbb{E}[(\Delta_n\beta_{\ell})^2]=\tau_n$, and $\mathbb{E}[(\Delta_n\beta_{\ell})(\Delta_n\beta_k)]=0$ for $k\neq \ell$, we obtain
\begin{align*}
\text{I}&\leq  C\sum_{n=1}^N\int_{t_{n-1}}^{t_n}\mathbb{E}\left[\left\Vert\nabla\left(\int_{t_{n-1}}^td\widetilde{W}(s)\right)\right\Vert^2dt\right]+C\sum_{n=1}^N\int_{t_{n-1}}^{t_n}\mathbb{E}\left[\left\Vert\frac{t-t_{n-1}}{\tau_n}\nabla\left(\Delta_n\widetilde{W}\right)\right\Vert^2dt\right]\nonumber\\
&\leq  C\sum_{n=1}^N\tau_n\int_{t_{n-1}}^{t_n}\sum_{\ell=1}^L\frac{\Vert\nabla\phi_{\ell}\Vert^2}{(d+1)^{-1}\vert (\phi_{\ell},1)\vert}dt+C\sum_{n=1}^N\tau_n^2\sum_{\ell=1}^L\frac{\Vert\nabla\phi_{\ell}\Vert^2}{(d+1)^{-1}\vert (\phi_{\ell}, 1)\vert}\leq C\sum_{n=1}^N\eta^n_{\text{NOISE}}.
\end{align*}
\end{proof}

\begin{proposition}
\label{mainresult1}
 Let $y$ be given by \eqref{Eqy} and  $y_{h,\tau}$ be  the time-interpolant of the numerical solution $\{y^n_h\}_n$, given by \eqref{schemey}. Then, the following error estimate holds:
\begin{align*}
&\sup_{t\in[0, T]}\mathbb{E}[\Vert y_{h,\tau}(t)-y(t)\Vert^2_{\mathbb{H}^{-1}}]+\varepsilon\int_0^T\mathbb{E}[\Vert \nabla(y_{h,\tau}(s)-y(s))]\Vert^2]ds\nonumber\\
&\qquad\leq C\int_0^T\mathbb{E}\left[T\mu_{-1}^2(s)+\frac{T}{\varepsilon}\mu_0^2(s)+\varepsilon^{-1}\mu_1^2(s)\right]ds+C\varepsilon \sum_{n=1}^N\eta^n_{\text{NOISE}}.
\end{align*}
\end{proposition}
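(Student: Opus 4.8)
The plan is to derive a pathwise energy identity for the error by testing the error equations with the Riesz representative of the error in $\mathring{\mathbb{H}}^{-1}$, and then to convert it into the stated bound by Cauchy--Schwarz in time and Young's inequality, taking expectations only at the end. Write $e_y=y_{h,\tau}-y$, $e_u=\widetilde u_{h,\tau}-\widetilde u$ and $e_w=\widetilde w_{h,\tau}-\widetilde w$, and set $\zeta=\Sigma_{\tilde h,\tau}-\Sigma$. Subtracting \eqref{Eqy} from \eqref{schemeintery} gives the error equations $\langle\partial_t e_y,\varphi\rangle+(\nabla e_w,\nabla\varphi)=\langle\mathcal R_y,\varphi\rangle$ and $\varepsilon(\nabla e_u,\nabla\psi)-(e_w,\psi)=\langle\mathcal S_y,\psi\rangle$ for all $\varphi,\psi\in\mathbb{H}^1$. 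First I would record that $y$ and each $y^n_h$ (hence $y_{h,\tau}$) are mean-free; for $y^n_h$ this follows by testing \eqref{schemey} with the constant $\varphi_h=1\in\mathbb V_h^n$. Thus $e_y(t)\in\mathring{\mathbb H}^{-1}$ and $(-\Delta)^{-1}e_y$ is well defined, with $\Vert e_y\Vert_{-1}:=\Vert\nabla(-\Delta)^{-1}e_y\Vert$ equivalent to $\Vert e_y\Vert_{\mathbb{H}^{-1}}$. Moreover, \lemref{LemmaLubo} together with the interpolation \eqref{Sigma2} yields $y_{h,\tau}=\widetilde u_{h,\tau}-\Sigma_{\tilde h,\tau}$, whence the crucial relation $e_u=e_y+\zeta$.

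Next I would test the first error equation with $\varphi=(-\Delta)^{-1}e_y$, using $(\nabla e_w,\nabla(-\Delta)^{-1}e_y)=(e_w,e_y)$ and $\langle\partial_t e_y,(-\Delta)^{-1}e_y\rangle=\tfrac12\tfrac{d}{dt}\Vert e_y\Vert_{-1}^2$ (licit because $\partial_t y\in L^2(0,T;\mathbb H^{-1})$ and $\partial_t y_{h,\tau}$ is piecewise constant). To eliminate $e_w$ I would test the second error equation with $\psi=e_u$ and with $\psi=\zeta$ and combine; using $e_u-\zeta=e_y$ this reduces $(e_w,e_y)$ to $\varepsilon(\nabla e_u,\nabla e_y)-\langle\mathcal S_y,e_y\rangle$, and expanding $\nabla e_u=\nabla e_y+\nabla\zeta$ produces the pathwise identity
\begin{equation*}
\tfrac12\tfrac{d}{dt}\Vert e_y\Vert_{-1}^2+\varepsilon\Vert\nabla e_y\Vert^2=\langle\mathcal R_y,(-\Delta)^{-1}e_y\rangle+\langle\mathcal S_y,e_y\rangle-\varepsilon(\nabla\zeta,\nabla e_y).
\end{equation*}
Integrating from $0$ to $t$ (with $e_y(0)=0$) and invoking \lemref{Residuelestimate} together with $\Vert\nabla(-\Delta)^{-1}e_y\Vert=\Vert e_y\Vert_{-1}$, the right-hand side is bounded by $\int_0^t[\mu_{-1}\Vert e_y\Vert_{-1}+\mu_0\Vert e_y\Vert+\mu_1\Vert\nabla e_y\Vert+\varepsilon\Vert\nabla\zeta\Vert\,\Vert\nabla e_y\Vert]\,ds$.

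I would then dispatch the four contributions according to the norm of $e_y$ they carry. The terms paired with $\Vert\nabla e_y\Vert$ (namely $\mu_1$ and the noise term $\varepsilon\Vert\nabla\zeta\Vert$) are absorbed into the dissipation by Young's inequality, leaving $C\varepsilon^{-1}\int\mu_1^2$ and $C\varepsilon\int\Vert\nabla\zeta\Vert^2$; after taking expectations the latter becomes $C\varepsilon\sum_n\eta^n_{\text{NOISE}}$ by \lemref{ErrorNoiseLemma}. The term $\mu_{-1}\Vert e_y\Vert_{-1}$ is handled by pulling out $\sup_{s\le t}\Vert e_y\Vert_{-1}$ and applying Cauchy--Schwarz in time, $\int_0^t\mu_{-1}\,ds\le\sqrt T\,(\int_0^t\mu_{-1}^2\,ds)^{1/2}$; Young's inequality then yields $\tfrac18\sup_{s}\Vert e_y\Vert_{-1}^2+2T\int\mu_{-1}^2$, the first part being absorbed into the left-hand side after taking the supremum over $t$. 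Finally I would take expectations and use $\sup_t\mathbb E[\,\cdot\,]\le\mathbb E[\sup_t\,\cdot\,]$ to recover the stated left-hand side.

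The main obstacle is the low-regularity term $\mu_0\Vert e_y\Vert$: since no $\mathbb L^2$-norm of $e_y$ appears on the left-hand side, it cannot be paired with a dissipation or supremum term directly. The remedy is the interpolation inequality $\Vert e_y\Vert\le C\Vert e_y\Vert_{-1}^{1/2}\Vert\nabla e_y\Vert^{1/2}$ (valid because $e_y$ is mean-free, so $\Vert e_y\Vert_{\mathbb H^1}$ is equivalent to $\Vert\nabla e_y\Vert$), which distributes $\Vert e_y\Vert$ between the $\mathbb H^{-1}$-norm controlled by $\sup_s\Vert e_y\Vert_{-1}$ and the $\varepsilon$-weighted dissipation. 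Bounding $\Vert e_y\Vert_{-1}^{1/2}\le(\sup_s\Vert e_y\Vert_{-1})^{1/2}$, Cauchy--Schwarz in time on $\int\mu_0\Vert\nabla e_y\Vert^{1/2}$, and a three-factor Young's inequality (distributing the powers so that $\Vert\nabla e_y\Vert^2$ enters with weight $\varepsilon/8$ and $\sup_s\Vert e_y\Vert_{-1}^2$ with a small weight) leave a remainder proportional to $\varepsilon^{-1/2}T^{1/2}\int\mu_0^2$, which is dominated by $\varepsilon^{-1}T\int\mu_0^2$ for $\varepsilon\le T$. Collecting the absorbed terms on the left and the residual contributions on the right then gives precisely the asserted estimate.
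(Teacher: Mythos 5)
Your proposal is correct and follows essentially the same route as the paper: the same error equations tested with $(-\Delta)^{-1}e_y$ and $e_y$, the same use of $y_{h,\tau}=\widetilde u_{h,\tau}-\Sigma_{\tilde h,\tau}$ to produce the $\varepsilon(\nabla\zeta,\nabla e_y)$ noise term controlled by \lemref{ErrorNoiseLemma}, \lemref{Residuelestimate} for the residuals, and the interpolation $\Vert e_y\Vert^2\le\Vert e_y\Vert_{\mathbb H^{-1}}\Vert\nabla e_y\Vert$ with Young's inequality for the $\mu_0$-term (yielding $\sqrt{T/\varepsilon}$, dominated by $T/\varepsilon$ as you note). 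The only cosmetic deviation is that you take the pathwise supremum before the expectation, which in fact delivers the stronger Corollary~\ref{Corollary1} directly.
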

\begin{proof}
We subtract \eqref{Eqy} from \eqref{schemeintery} and take $\varphi=(-\Delta)^{-1}\left(y_{h,\tau}(t)-y(t)\right)$ to obtain
\begin{align*}
&\frac{1}{2}\frac{d}{dt}\Vert y_{h,\tau}(t)-y(t)\Vert^2_{\mathbb{H}^{-1}}+(\widetilde{w}_{h,\tau}(t)-\widetilde{w}(t), y_{h,\tau}(t)-y(t))\\
&\qquad=\langle\mathcal{R}_y(t), (-\Delta)^{-1}\left(y_{h,\tau}(t)-y(t)\right)\rangle.
\end{align*}
Subtracting \eqref{Eqy} from \eqref{schemeintery} and taking  $\psi=\widetilde{u}_{h,\tau}(t)-\widetilde{u}(t)$, yields:
\begin{align*}
&-(\widetilde{w}_{h,\tau}(t)-\widetilde{w}, \widetilde{u}_{h,\tau}-\widetilde{u}(t))=-\varepsilon\Vert\nabla\left(\widetilde{u}_{h,\tau}(t)-\widetilde{u}(t)\right)\Vert^2+\langle\mathcal{S}_y(t), \widetilde{u}_{h,\tau}(t)-\widetilde{u}(t)\rangle.
\end{align*}
By summing the two preceding identities, integrating the resulting equation over the interval $(0, t)$, noting that $y_{h,\tau}(0)=y(0)=0$ and then taking the expectation, we obtain:
\begin{align}
\label{errory1}
&\frac{1}{2}\mathbb{E}[\Vert y_{h,\tau}(t)-y(t)\Vert^2_{\mathbb{H}^{-1}}]+\varepsilon\int_0^t\mathbb{E}[\Vert\nabla\left(\widetilde{u}_{h,\tau}(s)-\widetilde{u}(s)\right)\Vert^2]ds\nonumber\\
&\quad=\int_0^t\mathbb{E}[(\widetilde{w}_{h,\tau}(s)-\widetilde{w}(s), \widetilde{u}_{h,\tau}(s)-\widetilde{u}(s))]ds\nonumber\\
&\quad\quad-\int_0^t\mathbb{E}[(\widetilde{w}_{h,\tau}(s)-\widetilde{w}(s), y_{h,\tau}(s)-y(s))]ds\\
&\quad\quad+\int_0^t\mathbb{E}[\langle\mathcal{R}_y(s), (-\Delta)^{-1}\left(y_{h,\tau}(s)-y(s)\right)\rangle]ds+\int_0^t\mathbb{E}[\langle\mathcal{S}_y(s), \widetilde{u}_{h,\tau}(s)-\widetilde{u}(s)\rangle]ds.\nonumber
\end{align}
Subtracting the second equation of \eqref{weak2} from the second equation of \eqref{schemeintery} yields:
\begin{align}
\label{Inter1}
(\widetilde{w}_{h,\tau}(t)-\widetilde{w}(t), \psi)=\varepsilon(\nabla\left(\widetilde{u}_{h, \tau}(t)-\widetilde{u}(t)\right), \nabla\psi)-\langle\mathcal{S}_y(t), \psi\rangle\quad \forall\psi\in\mathbb{H}^1. 
\end{align}
Taking $\psi=\widetilde{u}_{h,\tau}-\widetilde{u}$ in \eqref{Inter1} and substituting the resulting equation into \eqref{errory1} yields:
\begin{align}
\label{errory2}
\frac{1}{2}\mathbb{E}[\Vert y_{h,\tau}(t)-y(t)\Vert^2_{\mathbb{H}^{-1}}]=&-\int_0^t\mathbb{E}[(\widetilde{w}_{h,\tau}(s)-\widetilde{w}(s), y_{h,\tau}(s)-y(s))]ds\nonumber\\
&+\int_0^t\mathbb{E}[\langle\mathcal{R}_y(s), (-\Delta)^{-1}\left(y_{h,\tau}(s)-y(s)\right)\rangle]ds.
\end{align}
Taking $\psi=y_{h,\tau}-y$ in \eqref{Inter1} and recalling that $y_{h,\tau}(t)=\widetilde{u}_{h,\tau}(t)-\Sigma_{\tilde{h},\tau}(t)$, yields:
\begin{align}
\label{errory2a}
&(\widetilde{w}_{h,\tau}(t)-\widetilde{w}(t), y_{h,\tau}(t)-y(t))\nonumber\\
&\quad=\varepsilon\Vert\nabla\left(y_{h,\tau}(t)-y(t)\right)\Vert^2+\varepsilon\left(\nabla[\Sigma_{\tilde{h},\tau}(t)-\Sigma(t)], \nabla[y_{h,\tau}(t)-y(t)]\right)\\
&\quad\quad-\langle\mathcal{S}_y(t), y_{h,\tau}(t)-y(t)\rangle.\nonumber
\end{align}
Substituting \eqref{errory2a} into \eqref{errory2} leads to:
\begin{align*}
&\frac{1}{2}\mathbb{E}\left[\Vert y_{h,\tau}(t)-y(t)\Vert^2_{\mathbb{H}^{-1}}\right]+\varepsilon\int_0^t\mathbb{E}\left[\Vert\nabla\left(y_{h,\tau}(s)-y(s)\right)\Vert^2\right]ds\nonumber\\
&\quad=\varepsilon\int_0^t\mathbb{E}\left[(\nabla\left(\Sigma_{\tilde{h},\tau}(s)-\Sigma(s)\right), \nabla\left(y_{h,\tau}(s)-y(s)\right))\right]ds\nonumber\\
&\quad\quad+\int_0^t\mathbb{E}\left[\left\langle\mathcal{R}_y(s), (-\Delta)^{-1}[y_{h,\tau}(s)-y(s)\right]\right\rangle]ds+\int_0^t\mathbb{E}\left[\left\langle\mathcal{S}_y(s), y_{h,\tau}(s)-y(s)\right\rangle\right]ds.\nonumber\\
\end{align*}
Using \lemref{Residuelestimate}, we obtain:
\begin{align}
\label{errory3}
&\frac{1}{2}\mathbb{E}\left[\Vert y_{h,\tau}(t)-y(t)\Vert^2_{\mathbb{H}^{-1}}\right]+\varepsilon\int_0^t\mathbb{E}\left[\Vert\nabla\left(y_{h,\tau}(s)-y(s)\right)\Vert^2\right]ds\nonumber\\
&\quad\leq \varepsilon\int_0^t\mathbb{E}\left[(\nabla\left(\Sigma_{\tilde{h},\tau}(s)-\Sigma(s)\right), \nabla\left(y_{h,\tau}(s)-y(s)\right))\right]ds\\
&\quad\quad+\int_0^t\mathbb{E}\left[\mu_0(s)\Vert y_{h,\tau}(s)-y(s)\Vert\right] ds+\int_0^t\mathbb{E}\left[\mu_{-1}(s)\Vert y_{h,\tau}(s)-y(s)\Vert_{\mathbb{H}^{-1}}\right]ds\nonumber\\
&\quad\quad+\int_0^t\mathbb{E}\left[\mu_1(s)\Vert\nabla\left(y_{h,\tau}(s)-y(s)\right)\Vert\right]ds\nonumber\\
&\quad=: \text{II}_1+\text{II}_2+\text{II}_3+\text{II}_4. \nonumber
\end{align}
Using Cauchy-Schwarz's inequality, Young's inequality, and \lemref{ErrorNoiseLemma}, we obtain:
\begin{align}
\label{EstimationI1}
\text{II}_1&\leq C\varepsilon\int_0^t\mathbb{E}[\Vert\nabla[\Sigma_{\tilde{h},\tau}(s)-\Sigma(s)]\Vert^2]ds+\frac{\varepsilon}{8}\int_0^t\mathbb{E}[\Vert\nabla\left(y_{h,\tau}(s)-y(s)\right)\Vert^2]ds\nonumber\\
&\leq C\varepsilon\sum_{n=1}^N\eta^n_{\text{NOISE}}+\frac{\varepsilon}{8}\int_0^t\mathbb{E}[\Vert\nabla\left(y_{h,\tau}(s)-y(s)\right)\Vert^2]ds.
\end{align}
Using Young's inequality, we estimate $\text{II}_2$ and $\text{II}_4$ as follows: 
\begin{align}
\label{EstimationI2}
\text{II}_2&\leq 2T\int_0^t\mathbb{E}\left[\mu_{-1}^2(s)\right]ds+\frac{1}{8}\sup_{s\in[0, t]}\mathbb{E}\left[\Vert y_{h,\tau}-y(s)\Vert^2_{\mathbb{H}^{-1}}\right],\\
\label{EstimationI4}
\text{II}_4&\leq 2\varepsilon^{-1}\int_0^t\mathbb{E}\left[\mu_1^2(s)\right]ds+\frac{\varepsilon}{8}\int_0^t\mathbb{E}\left[\Vert\nabla\left(y_{h,\tau}(s)-y(s)\right)\Vert^2\right]ds. 
\end{align}
Using the interpolation inequality $\Vert u\Vert^2_{\mathbb{L}^2}\leq \Vert u\Vert_{\mathbb{H}^{-1}}\Vert\nabla u\Vert_{\mathbb{L}^2}$ and Young's inequality, we obtain:
\begin{align}
\label{EstimationI3}
\text{II}_3&\leq C\sqrt{\frac{T}{\varepsilon}}\int_0^t\mathbb{E}[\mu_0^2(s)]ds+\frac{\varepsilon}{8}\int_0^t\mathbb{E}[\Vert\nabla\left(y_{h,\tau}(s)-y(s)\right)\Vert^2]ds\\
&\quad+\frac{1}{8}\sup_{s\in[0, t]}\mathbb{E}\left[\Vert y_{h,\tau}(s)-y(s)\Vert^2_{\mathbb{H}^{-1}}\right]. \nonumber
\end{align}
Substituting \eqref{EstimationI1}, \eqref{EstimationI2}, \eqref{EstimationI3} and \eqref{EstimationI4} into \eqref{errory3}
 completes the proof. 
\end{proof}

\begin{corollary}
\label{Corollary1}
 Let $y$ be given by \eqref{lineartransform}, and let $y_{h,\tau}$ be the time-interpolant of the numerical solution $\{y^n_h\}_n$ satisfying \eqref{schemey}. The following estimate holds:
\begin{align*}
&\mathbb{E}\left[\sup_{t\in[0, T]}\Vert y_{h,\tau}(t)-y(t)\Vert^2_{\mathbb{H}^{-1}}\right]+\varepsilon\mathbb{E}\left[\int_0^T\Vert \nabla(y_{h,\tau}(s)-y(s))]\Vert^2ds\right]\nonumber\\
&\qquad\leq C\int_0^T\mathbb{E}\left[T\mu_{-1}^2(s)+\frac{T}{\varepsilon}\mu_0^2(s)+\varepsilon^{-1}\mu_1^2(s)\right]ds+C\varepsilon\sum_{n=1}^N\eta^n_{\text{NOISE}}.
\end{align*}
\end{corollary}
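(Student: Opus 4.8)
The plan is to revisit the proof of \propref{mainresult1} and to observe that every step leading to \eqref{errory3} is in fact a \emph{pathwise} ($\mathbb{P}$-a.s.) identity; the expectation was only invoked at the end, which forced the supremum to remain \emph{outside} $\mathbb{E}$. To upgrade $\sup_t\mathbb{E}[\cdots]$ to $\mathbb{E}[\sup_t\cdots]$ I would simply postpone taking the expectation until the very last step, performing all Young-type absorptions pathwise.

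Concretely, subtracting \eqref{Eqy} from \eqref{schemeintery} with the test functions $\varphi=(-\Delta)^{-1}(y_{h,\tau}(t)-y(t))$ and $\psi=\widetilde u_{h,\tau}(t)-\widetilde u(t)$, summing, integrating over $(0,t)$ and using \eqref{Inter1}, all $\mathbb{P}$-a.s., yields the pathwise analogue of \eqref{errory3}: after applying \lemref{Residuelestimate} and the interpolation inequality $\Vert u\Vert^2\le\Vert u\Vert_{\mathbb{H}^{-1}}\Vert\nabla u\Vert$ exactly as in \eqref{EstimationI1}--\eqref{EstimationI3}, but \emph{before} taking $\mathbb{E}$, the gradient contributions $\tfrac{\varepsilon}{8}\int_0^t\Vert\nabla(y_{h,\tau}-y)\Vert^2\,ds$ arising from the noise term, the $\mu_1$-term and the $\mu_0$-interpolation term are absorbed into the left-hand side, while the $\mu_{-1}$- and $\mu_0$-terms each produce $\tfrac18\sup_{s\in[0,t]}\Vert y_{h,\tau}(s)-y(s)\Vert^2_{\mathbb{H}^{-1}}$. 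This leaves, pathwise,
\[
\tfrac12\Vert y_{h,\tau}(t)-y(t)\Vert^2_{\mathbb{H}^{-1}}+\tfrac{5\varepsilon}{8}\int_0^t\Vert\nabla(y_{h,\tau}-y)\Vert^2\,ds\le\tfrac14\sup_{s\in[0,t]}\Vert y_{h,\tau}(s)-y(s)\Vert^2_{\mathbb{H}^{-1}}+R,
\]
where $R:=C\varepsilon\int_0^T\Vert\nabla(\Sigma_{\tilde{h},\tau}-\Sigma)\Vert^2\,ds+C\int_0^T\big(T\mu_{-1}^2+\tfrac{T}{\varepsilon}\mu_0^2+\varepsilon^{-1}\mu_1^2\big)\,ds$ is independent of $t$.

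Taking the supremum over $t\in[0,T]$ and bounding $\sup_{s\in[0,t]}$ by $\sup_{s\in[0,T]}$ on the right, the term $\tfrac14\sup_t\Vert\cdot\Vert^2_{\mathbb{H}^{-1}}$ is absorbed into $\tfrac12\sup_t\Vert\cdot\Vert^2_{\mathbb{H}^{-1}}$, giving $\sup_t\Vert\cdot\Vert^2_{\mathbb{H}^{-1}}\le CR$; evaluating the inequality at $t=T$ then gives $\varepsilon\int_0^T\Vert\nabla(y_{h,\tau}-y)\Vert^2\,ds\le CR$ as well, so that pathwise
\[
\sup_{t\in[0,T]}\Vert y_{h,\tau}(t)-y(t)\Vert^2_{\mathbb{H}^{-1}}+\varepsilon\int_0^T\Vert\nabla(y_{h,\tau}-y)\Vert^2\,ds\le CR.
\]
I would finish by taking the expectation and invoking \lemref{ErrorNoiseLemma} to bound $\mathbb{E}\big[\int_0^T\Vert\nabla(\Sigma_{\tilde{h},\tau}-\Sigma)\Vert^2\,ds\big]$ by $C\sum_{n=1}^N\eta^n_{\text{NOISE}}$, which reproduces exactly the claimed estimate.

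The one point worth flagging — and the essential reason the argument closes — is that the noise contribution $\varepsilon\int_0^t(\nabla(\Sigma_{\tilde{h},\tau}(s)-\Sigma(s)),\nabla(y_{h,\tau}(s)-y(s)))\,ds$ is an ordinary Bochner integral in $s$, not an It\^o integral against $d\widetilde W$; this is precisely what the transformation $y=\widetilde u-\Sigma$ in \eqref{lineartransform} buys us. Hence the Cauchy--Schwarz/Young bound on this term is pathwise, and no Burkholder--Davis--Gundy or maximal martingale inequality is required to interchange the supremum with the expectation. Had a genuine stochastic integral survived, controlling $\mathbb{E}[\sup_t|\cdots|]$ would have been the main obstacle; here it is avoided entirely, so the strengthening from $\sup_t\mathbb{E}$ in \propref{mainresult1} to $\mathbb{E}\sup_t$ comes essentially for free.
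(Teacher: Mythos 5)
Your proposal is correct and follows essentially the same route as the paper: the paper likewise exploits that, after the transformation \eqref{lineartransform}, every term in \eqref{errory3} is a pathwise Bochner integral, takes the supremum over $[0,T]$ before the expectation, and then repeats the Young-type absorptions of \propref{mainresult1}. Your observation that no Burkholder--Davis--Gundy argument is needed precisely because no stochastic integral survives is exactly the point the paper relies on (implicitly) in upgrading $\sup_t\mathbb{E}$ to $\mathbb{E}\sup_t$.
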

\begin{proof}
The proof follows along the same lines as that of \propref{mainresult1}. We first take the supremum on $[0, T]$ and then the expectation and obtain (cf. \eqref{errory3})
\begin{align*}
&\frac{1}{2}\mathbb{E}\left[\sup_{t\in[0,T]}\Vert y_{h,\tau}(t)-y(t)\Vert^2_{\mathbb{H}^{-1}}\right]+\varepsilon\mathbb{E}\left[\int_0^T\Vert\nabla\left(y_{h,\tau}(s)-y(s)\right)\Vert^2ds\right]\nonumber\\
&\qquad\leq \varepsilon\mathbb{E}\left[\int_0^T\left\vert(\nabla\left(\Sigma_{h,\tau}(s)-\Sigma(s)\right), \nabla\left(y_{h,\tau}(s)-y(s)\right))\right\vert ds\right]\nonumber\\
&\qquad\qquad+\mathbb{E}\left[\int_0^T\vert\mu_0(s)\vert\Vert y_{h,\tau}(s)-y(s)\Vert  ds\right]+\mathbb{E}\left[\int_0^T\vert\mu_{-1}(s)\vert\Vert y_{h,\tau}(s)-y(s)\Vert_{\mathbb{H}^{-1}}ds\right]\nonumber\\
&\qquad\qquad+\mathbb{E}\left[\int_0^T\vert\mu_1(s)\vert\Vert\nabla\left(y_{h,\tau}(s)-y(s)\right)\Vert ds\right].\nonumber
\end{align*}
The rest of the proof follows the same lines as that of \propref{mainresult1}.
\end{proof}

The following lemma provides an estimate of the error  $\widetilde{u}(t)-\widetilde{u}_{h,\tau}$ in the $L^{\infty}(0, T; L^2(\Omega, \mathbb{H}^{-1}))$-norm.
\begin{lemma}
 Let $\widetilde{u}$ be the solution to \eqref{weak1}, and let  $\widetilde{u}_{h,\tau}$  be the time-interpolant of the numerical solution of \eqref{scheme2}. The following  error estimate holds:
\begin{align*}
&\sup_{t\in[0, T]}\mathbb{E}\left[\Vert \widetilde{u}_{h,\tau}(t)-\widetilde{u}(t)\Vert^2_{\mathbb{H}^{-1}}\right]+\varepsilon\int_0^T\mathbb{E}[\Vert\nabla\left(\widetilde{u}_{h,\tau}(s)-\widetilde{u}(s)\right)\Vert^2]ds\nonumber\\
&\quad\leq C\varepsilon\sum_{n=1}^N\eta^n_{\text{NOISE}}+C\int_0^T\mathbb{E}\left[T\mu^2_{-1}(s)+\sqrt{\frac{T}{\varepsilon}}\mu_0^2(s)+\varepsilon^{-1}\mu_1^2(s)\right]ds\nonumber\\
&\qquad+ C\max_{n=1,\cdots,N}\left(\mathbb{E}[\Vert \widetilde{u}^{n-1}_h-\widetilde{u}^n_h\Vert^2_{\mathbb{H}^{-1}}+\mathbb{E}[\Vert y^{n-1}_h-y^n_h\Vert^2_{\mathbb{H}^{-1}}]\right)+C\tau\sum_{\ell=1}^L\frac{\Vert\phi_{\ell}-m(\phi_{\ell})\Vert^2_{\mathbb{H}^{-1}}}{(d+1)^{-1}\vert(\phi_{\ell}, 1)\vert}\nonumber\\
&\qquad+C\varepsilon\sum_{n=1}^N\tau_n\left(\mathbb{E}\left[\left\Vert\nabla\left(\widetilde{u}^{n-1}_h-\widetilde{u}^n_h\right)\right\Vert^2\right]+\mathbb{E}\left[\left\Vert \nabla(y^{n-1}_h-y^n_h)\right\Vert^2\right]\right).
\end{align*}
\end{lemma}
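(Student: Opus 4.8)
The plan is to reduce the estimate for $\widetilde{u}_{h,\tau}-\widetilde{u}$ to the already-established estimate for the transformed error $y_{h,\tau}-y$ plus the error committed in the time-discretization of the noise. First I would exploit the two exact linear relations $\widetilde{u}(t)=y(t)+\Sigma(t)$ (the transformation \eqref{lineartransform}) and, via \lemref{LemmaLubo} together with the linearity of piecewise-linear interpolation in time, $\widetilde{u}_{h,\tau}(t)=y_{h,\tau}(t)+\Sigma_{\tilde{h},\tau}(t)$. Subtracting these yields the pointwise-in-time splitting
\begin{align*}
\widetilde{u}_{h,\tau}(t)-\widetilde{u}(t)=\bigl(y_{h,\tau}(t)-y(t)\bigr)+\bigl(\Sigma_{\tilde{h},\tau}(t)-\Sigma(t)\bigr).
\end{align*}
Applying $\Vert a+b\Vert^2\le 2\Vert a\Vert^2+2\Vert b\Vert^2$ to both the $\mathbb{H}^{-1}$-norm and the $\varepsilon$-weighted gradient norm, then taking $\sup_t\mathbb{E}[\cdot]$ respectively $\varepsilon\int_0^T\mathbb{E}[\cdot]\,ds$, and invoking \propref{mainresult1} for the $y$-part, immediately delivers the terms $C\varepsilon\sum_n\eta^n_{\text{NOISE}}$ and $C\int_0^T\mathbb{E}[T\mu_{-1}^2(s)+\sqrt{T/\varepsilon}\,\mu_0^2(s)+\varepsilon^{-1}\mu_1^2(s)]\,ds$.

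The remaining work is to bound $\Sigma_{\tilde{h},\tau}-\Sigma$. On each interval $[t_{n-1},t_n]$ I would decompose this difference, using \eqref{Sigma1}, \eqref{Sigma1b} and \eqref{Sigma2}, into its genuinely stochastic remainder and its interpolation jump,
\begin{align*}
\Sigma_{\tilde{h},\tau}(t)-\Sigma(t)=-\frac{t_n-t}{\tau_n}\,\Delta_n\widetilde{W}+\int_{t}^{t_n}d\widetilde{W}(s).
\end{align*}
The stochastic remainder $\int_t^{t_n}d\widetilde{W}$ is handled by the It\^{o} isometry in $\mathbb{H}^{-1}$: writing $\widetilde{W}=\sum_\ell c_\ell(\phi_\ell-m(\phi_\ell))\beta_\ell$ from \eqref{Noiseapprox2}, with independent $\beta_\ell$ and $c_\ell^2=((d+1)^{-1}\vert(\phi_\ell,1)\vert)^{-1}$, the cross terms vanish and one obtains $\mathbb{E}[\Vert\int_t^{t_n}d\widetilde{W}\Vert^2_{\mathbb{H}^{-1}}]=(t_n-t)\sum_\ell c_\ell^2\Vert\phi_\ell-m(\phi_\ell)\Vert^2_{\mathbb{H}^{-1}}$, whose supremum over $t$ and over $n$ is exactly the term $\tau\sum_\ell c_\ell^2\Vert\phi_\ell-m(\phi_\ell)\Vert^2_{\mathbb{H}^{-1}}$; the analogous gradient computation gives $\frac{\varepsilon}{2}\sum_n\eta^n_{\text{NOISE}}$, in line with \lemref{ErrorNoiseLemma} and \eqref{Noise1}.

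The key step, and the only place where \lemref{LemmaLubo} enters beyond the initial splitting, is the treatment of the interpolation jump $\frac{t_n-t}{\tau_n}\Delta_n\widetilde{W}$. Rather than estimating the noise increment probabilistically, I would re-express it through the computed numerical solutions: from $\widetilde{u}^n_h-y^n_h=\Sigma^n_{\tilde{h}}$ we get
\begin{align*}
\Delta_n\widetilde{W}=\Sigma^n_{\tilde{h}}-\Sigma^{n-1}_{\tilde{h}}=(\widetilde{u}^n_h-\widetilde{u}^{n-1}_h)-(y^n_h-y^{n-1}_h).
\end{align*}
Since $0\le (t_n-t)/\tau_n\le 1$, bounding in $\mathbb{H}^{-1}$ and maximizing over $n$ produces the $C\max_n(\mathbb{E}[\Vert\widetilde{u}^{n-1}_h-\widetilde{u}^n_h\Vert^2_{\mathbb{H}^{-1}}]+\mathbb{E}[\Vert y^{n-1}_h-y^n_h\Vert^2_{\mathbb{H}^{-1}}])$ term, while for the gradient contribution $\varepsilon\int_{t_{n-1}}^{t_n}\frac{(t_n-t)^2}{\tau_n^2}\mathbb{E}[\Vert\nabla\Delta_n\widetilde{W}\Vert^2]\,dt=\frac{\varepsilon\tau_n}{3}\mathbb{E}[\Vert\nabla\Delta_n\widetilde{W}\Vert^2]$, which after the same substitution and summation over $n$ yields the $C\varepsilon\sum_n\tau_n(\cdots)$ term. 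Collecting all contributions completes the proof.

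The conceptual crux, and the step I expect to be the main obstacle, is choosing how to treat the noise error $\Sigma_{\tilde{h},\tau}-\Sigma$: it must be split so that only the \emph{continuous} stochastic part is estimated by the It\^{o} isometry, while the interpolation jump is converted, through the discrete transformation of \lemref{LemmaLubo}, into computable time-increments of the numerical solution. A naive direct It\^{o} estimate of the full increment would in fact be sharper and avoid the $\max_n$ and $\sum_n\tau_n$ terms, but it would not retain the fully computable (a posteriori) form required for combining this lemma with the subsequent pathwise estimates.
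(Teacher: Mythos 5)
Your proposal is correct and follows essentially the same route as the paper: both rest on the splitting $\widetilde{u}_{h,\tau}-\widetilde{u}=(y_{h,\tau}-y)+(\Sigma_{\tilde h,\tau}-\Sigma)$ induced by \lemref{LemmaLubo}, invoke \propref{mainresult1} for the $y$-part, estimate the continuous stochastic remainder $\int_t^{t_n}d\widetilde{W}$ by the It\^o isometry, and convert the interpolation jump $\frac{t_n-t}{\tau_n}\Delta_n\widetilde{W}=(\widetilde{u}_{h,\tau}(t)-\widetilde{u}^n_h)-(y_{h,\tau}(t)-y^n_h)$ into the computable increments of the numerical solutions. The paper organizes this as a four-way triangle inequality around $\widetilde{u}^n_h$, $y^n_h$ and $\Sigma^n_{\tilde h}$ (with the fourth term vanishing by \lemref{LemmaLubo}), which is algebraically identical to your decomposition.
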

\begin{proof}
Recalling that $\widetilde{u}(t)=y(t)+\int_0^td\widetilde{W}(s)$ and using the triangle inequality, we obtain:
\begin{align*}
\Vert\widetilde{u}(t)-\widetilde{u}_{h,\tau}(t)\Vert^2_{\mathbb{H}^{-1}}&=\left\Vert y(t)+\int_0^td\widetilde{W}(s)-y_{h,\tau}(t)+y_{h,\tau}(t)-\widetilde{u}_{h,\tau}(t)\right\Vert^2_{\mathbb{H}^{-1}}\nonumber\\
&\leq2 \Vert y(t)-y_{h,\tau}(t)\Vert^2_{\mathbb{H}^{-1}}+2\left\Vert y_{h,\tau}(t)+\int_0^td\widetilde{W}(s)-\widetilde{u}_{h,\tau}(t)\right\Vert^2_{\mathbb{H}^{-1}}.
\end{align*}
A similar estimate  for $\int_0^t\Vert\nabla(\widetilde{u}_{h,\tau}(s)-\widetilde{u}(s))\Vert^2]ds$ holds. Consequently, we have:
\begin{align}
\label{Errorutilde1}
&\sup_{t\in[0, T]}\mathbb{E}\left[\Vert \widetilde{u}_{h,\tau}(t)-\widetilde{u}(t)\Vert^2_{\mathbb{H}^{-1}}\right]+\varepsilon\int_0^T\mathbb{E}\left[\Vert\nabla(\widetilde{u}_{h,\tau}(s)-\widetilde{u}(s))\Vert^2\right]ds\nonumber\\
&\quad\leq 2\sup_{t\in[0, T]}\mathbb{E}\left[\Vert y_{h,\tau}(t)-y(t)\Vert^2_{\mathbb{H}^{-1}}\right]+2\sup_{t\in[0, T]}\mathbb{E}\left[\left\Vert y_{h,\tau}(t)+\int_0^td\widetilde{W}(s)-\widetilde{u}_{h,\tau}(t)\right\Vert^2_{\mathbb{H}^{-1}}\right]\\
&\quad\quad+2\varepsilon\int_0^T\mathbb{E}[\Vert\nabla(y_{h,\tau}(t)-y(t))\Vert^2]dt\nonumber\\
&\quad\quad+2\varepsilon\int_0^T\mathbb{E}\left[\left\Vert\nabla\left(y_{h,\tau}(t)+\int_0^td\widetilde{W}(s)-\widetilde{u}_{h,\tau}(t)\right)\right\Vert^2\right]dt\nonumber\\
&\quad=:\text{III}_1+\text{III}_2+\text{III}_3+\text{III}_4. \nonumber
\end{align}
The terms $\text{III}_1$ and $\text{III}_3$ are estimated in \propref{mainresult1}. It remains to estimate $\text{III}_2$ and $\text{III}_4$.  Using the triangle inequality,  we have:
\begin{align}
\label{EstimateII2}
\text{III}_2 &\leq \max_{n=1,\cdots,N}\sup_{t\in[t_{n-1}, t_n]}\left\{\mathbb{E}\left[\Vert \widetilde{u}_{h,\tau}(t)-\widetilde{u}^n_h\Vert^2_{\mathbb{H}^{-1}}\right]+\mathbb{E}\left[\Vert y_{h,\tau}(t)-y^n_h\Vert^2_{\mathbb{H}^{-1}}\right]\right\}\nonumber\\
&\qquad\max_{n=1,\cdots,N}\sup_{t\in[t_{n-1}, t_n]}\mathbb{E}\left[\left\Vert\sum_{j=1}^n\int_{t_{j-1}}^{t_j}d\widetilde{W}(s)-\int_0^td\widetilde{W}(s) \right\Vert^2_{\mathbb{H}^{-1}}\right]\nonumber\\
&\qquad+\max_{n=1,\cdots,N}\mathbb{E}\left[\left\Vert \widetilde{u}^n_h-y^n_h-\sum_{j=1}^n\int_{t_{j-1}}^{t_j}d\widetilde{W}(s)\right\Vert^2_{\mathbb{H}^{-1}}\right]\\
&=: \text{III}_{2,1}+\text{III}_{2,2}+\text{III}_{2,3}+\text{III}_{2,4}. \nonumber
\end{align}
From \lemref{LemmaLubo}, we have $\text{III}_{2,4}=0$. Using the definitions of $\widetilde{u}_{h,\tau}$ and $y_{h, \tau}$, we obtain:
\begin{align}
\label{EstimateII21}
\text{III}_{2,1}\leq \max_{n=1,\cdots,N}\mathbb{E}\left[\Vert \widetilde{u}^{n-1}_h-\widetilde{u}^n_h\Vert^2_{\mathbb{H}^{-1}}\right]\quad \text{and}\quad \text{III}_{2,2}\leq \max_{n=1,\cdots,N}\mathbb{E}\left[\Vert y^{n-1}_h-y^n_h\Vert^2_{\mathbb{H}^{-1}}\right].
\end{align}
Next, 
using the It\^{o} isometry, we estimate
\begin{align}
\label{EstimateII23}
\text{III}_{2,3}&=\max_{n=1,\cdots,N}\sup_{t\in[t_{n-1}, t_n]}\mathbb{E}\left[\left\Vert\int_0^{t_n}d\widetilde{W}(s)-\int_0^td\widetilde{W}(s)\right\Vert^2_{\mathbb{H}^{-1}}\right]\nonumber\\
&=\max_{n=1,\cdots,N}\sup_{t\in[t_{n-1}, t_n]}\mathbb{E}\left[\left\Vert \int_t^{t_n}d\widetilde{W}(s)\right\Vert^2_{\mathbb{H}^{-1}}\right]\\
&\leq C\max_{n=1,\cdots,N}\sup_{t\in[t_{n-1}, t_n]}(t_n-t)\sum_{\ell=1}^L\frac{\Vert\phi_{\ell}-m(\phi_{\ell})\Vert^2_{\mathbb{H}^{-1}}}{(d+1)^{-1}\vert (\phi_{\ell},1)\vert}=C\tau\sum_{\ell=1}^L\frac{\Vert\phi_{\ell}-m(\phi_{\ell})\Vert^2_{\mathbb{H}^{-1}}}{(d+1)^{-1}\vert(\phi_{\ell}, 1)\vert}\nonumber. 
\end{align}
Substituting \eqref{EstimateII23} and \eqref{EstimateII21} into \eqref{EstimateII2} yields
\begin{align}
\label{EstimateII2a}
\text{III}_2&\leq C\max_{n=1,\cdots,N}\mathbb{E}\left[\Vert \widetilde{u}^{n-1}_h-\widetilde{u}^n_h\Vert^2_{\mathbb{H}^{-1}}\right]+C\max_{n=1,\cdots,N}\mathbb{E}\left[\Vert y^{n-1}_h-y^n_h\Vert^2_{\mathbb{H}^{-1}}\right]\nonumber\\
&\qquad+ C\tau\sum_{\ell=1}^L\frac{\Vert\phi_{\ell}-m(\phi_{\ell})\Vert^2_{\mathbb{H}^{-1}}}{(d+1)^{-1}\vert(\phi_{\ell}, 1)\vert}.
\end{align}
We can rewrite $\text{III}_4$ as follows:
\begin{align*}
\text{III}_4\leq 2\varepsilon\sum_{n=1}^N\int_{t_{n-1}}^{t_n}\mathbb{E}\left[\left\Vert\nabla\left(y_{h,\tau}(t)+\int_0^td\widetilde{W}(s)-\widetilde{u}_{h,\tau}(t)\right)\right\Vert^2\right]dt=:2\varepsilon\sum_{n=1}^N\int_{t_{n-1}}^{t_n}\text{III}_4^n.
\end{align*}
Along the same lines as the estimate of $\text{III}_2$, one obtains the following estimate:
\begin{align*}
\text{III}_4^n \leq C\tau_n\mathbb{E}\left[\Vert\nabla(\widetilde{u}^{n-1}_h-\widetilde{u}^n_h)\Vert^2\right]+C\tau_n\mathbb{E}\left[\Vert\nabla(y^{n-1}_h-y^n_h)\Vert^2\right]+C\tau_n\sum_{\ell=1}^L\frac{\Vert\nabla\phi_{\ell}\Vert^2}{(d+1)^{-1}\vert (\phi_{\ell}, 1)\vert}.
\end{align*}
We therefore obtain the following estimate for $\text{III}_4$:
\begin{align}
\label{EstimateII4}
\text{III}_4&\leq C\varepsilon\sum_{n=1}^N\tau_n\mathbb{E}\left[\Vert\nabla(\widetilde{u}^{n-1}_h-\widetilde{u}^n_h)\Vert^2\right]+C\varepsilon\sum_{n=1}^N\tau_n\mathbb{E}\left[\Vert\nabla(y^{n-1}_h-y^n_h)\Vert^2\right]\nonumber\\
&\qquad+C\varepsilon\sum_{n=1}^N\tau_n^2\sum_{\ell=1}^L\frac{\Vert\nabla\phi_{\ell}\Vert^2}{(d+1)^{-1}\vert (\phi_{\ell}, 1)\vert}. 
\end{align}
Substituting \eqref{EstimateII4} and \eqref{EstimateII2a} into \eqref{Errorutilde1} and using \propref{mainresult1} completes the proof. 
\end{proof}

The next lemma provides an estimate for the error $\widetilde{u}_{h,\tau}-\widetilde{u}$ in the $L^{2}(\Omega; L^{\infty}(0, T;  \mathbb{H}^{-1}))$-norm.

\begin{lemma}
\label{LemmaErrortilde}
 Let $\widetilde{u}$ be the solution to \eqref{weak1}, and let  $\widetilde{u}_{h,\tau}$  be the time-interpolant of the numerical solution of \eqref{scheme2}. The following  error estimate holds:
\begin{align*}
&\mathbb{E}\left[\sup_{t\in[0, T]}\Vert\widetilde{u}_{h,\tau}(t)-\widetilde{u}(t)\Vert^2_{\mathbb{H}^{-1}}\right]+\varepsilon\int_0^T\mathbb{E}\left[\Vert\nabla(\widetilde{u}_{h,\tau}(t)-\widetilde{u}(t))\Vert^2\right]dt\nonumber\\
&\leq C\varepsilon\sum_{n=1}^N\eta^n_{\text{NOISE}}+C\tau\sum_{l=1}^L\frac{\Vert\phi_l\Vert^2_{\mathbb{H}^{-1}}}{(d+1)^{-1}\vert(\phi_l, 1)\vert}+C\mathbb{E}\left[\max_{n=1,\cdots,N}\Vert\widetilde{u}^{n-1}_h-\widetilde{u}^n_h\Vert^2_{\mathbb{H}^{-1}}\right]\nonumber\\
&\;\;+C\mathbb{E}\left[\max_{n=1,\cdots,N}\Vert y^{n-1}_h-y^n_h\Vert^2_{\mathbb{H}^{-1}}\right]+C\varepsilon\sum_{n=1}^N\tau_n\left(\mathbb{E}[\Vert\nabla(\widetilde{u}^{n-1}_h-\widetilde{u}^n_h)\Vert^2]+\mathbb{E}[\Vert\nabla(y^{n-1}_h-y^n_h)\Vert^2]\right)
\nonumber\\
&\;\;+C\int_0^T\mathbb{E}\left[T\mu_{-1}^2(s)+\sqrt{\frac{T}{\varepsilon}}\mu_0^2(s)+\varepsilon^{-1}\mu_1^2(s)\right]ds+C_p\tau^{2\lambda}\left(\sum_{\ell=1}^L\frac{\Vert\phi_{\ell}-m(\phi_{\ell})\Vert^a_{\mathbb{H}^{-1}}}{(d+1)^{-1}\vert (\phi_{\ell}, 1)\vert}\right)^{\frac{2}{a}}\nonumber,
\end{align*}
for any $\lambda=q-\frac{1}{p}$, with $a, p\in(2, \infty)$, $a\geq p$, $q>\frac{1}{p}$, such that $\frac{1}{p}+q<\frac{1}{2}-\frac{1}{a}$. 
\end{lemma}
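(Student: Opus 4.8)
The plan is to exploit the exact additive splitting of $\widetilde{u}$ into its linear-transform part and its stochastic-integral part, and then to upgrade the preceding $\sup_t\mathbb{E}$-estimate into an $\mathbb{E}\sup_t$-estimate. By \lemref{LemmaLubo} together with the definitions of the interpolants one has $\widetilde{u}_{h,\tau}(t)=y_{h,\tau}(t)+\Sigma_{\tilde{h},\tau}(t)$, while $\widetilde{u}(t)=y(t)+\Sigma(t)$, so that
\[
\widetilde{u}(t)-\widetilde{u}_{h,\tau}(t)=\big(y(t)-y_{h,\tau}(t)\big)+\big(\Sigma(t)-\Sigma_{\tilde{h},\tau}(t)\big).
\]
Applying $\mathbb{E}[\sup_t\|\cdot\|^2_{\mathbb{H}^{-1}}]$ and $\|a+b\|^2\le 2\|a\|^2+2\|b\|^2$ reduces the $\mathbb{H}^{-1}$-part of the claim to (i) the $y$-error, which is now controlled \emph{directly} by \coref{Corollary1} (this is the only change from \propref{mainresult1} and it is precisely what moves the supremum inside the expectation), producing the $\eta^n_{\text{NOISE}}$- and $\mu_{-1},\mu_0,\mu_1$-contributions, and (ii) the stochastic-integral interpolation error $\Sigma-\Sigma_{\tilde{h},\tau}$. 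The dissipative term $\varepsilon\int_0^T\mathbb{E}[\|\nabla(\cdot)\|^2]$ carries no supremum in time, so it is handled exactly as in the preceding lemma (via \propref{mainresult1} for the $y$-part and the $\text{III}_4$-type It\^o-isometry bound for the $\Sigma$-part), yielding the $C\varepsilon\sum_n\tau_n(\cdots)$ contribution.

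For the stochastic-integral part I would follow the same termwise splitting as in the previous lemma. Expressing $\Sigma-\Sigma_{\tilde{h},\tau}$ through the nodal values and using the piecewise-linear structure of $y_{h,\tau}$ and $\widetilde{u}_{h,\tau}$, the pieces that measure the deviation of an interpolant from its nodal value collapse, by linearity, to the jump terms $\mathbb{E}[\max_{n}\|\widetilde{u}^{n-1}_h-\widetilde{u}^n_h\|^2_{\mathbb{H}^{-1}}]$ and $\mathbb{E}[\max_{n}\|y^{n-1}_h-y^n_h\|^2_{\mathbb{H}^{-1}}]$; here the supremum over $t\in[t_{n-1},t_n]$ acts only on the deterministic interpolation weight, so the maximum over $n$ can be pulled inside the expectation. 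The algebraic piece that was $\text{III}_{2,4}$ vanishes identically by \lemref{LemmaLubo}, just as before.

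The genuinely new and hardest step is the residual term
\[
\mathbb{E}\Big[\max_{n}\sup_{t\in[t_{n-1},t_n]}\big\|\int_t^{t_n}d\widetilde{W}(s)\big\|^2_{\mathbb{H}^{-1}}\Big],
\]
i.e.\ the modulus of continuity of the $\mathbb{H}^{-1}$-valued stochastic integral $\Sigma$ at scale $\tau$. With the supremum \emph{outside} the expectation (as in the preceding lemma) the It\^o isometry sufficed and gave the crude $C\tau\sum_\ell\|\phi_\ell-m(\phi_\ell)\|^2_{\mathbb{H}^{-1}}/((d+1)^{-1}|(\phi_\ell,1)|)$ bound; with the supremum inside, It\^o isometry no longer applies and a genuine maximal inequality is needed. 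I would establish it by a Kolmogorov--Chentsov / Sobolev-embedding-in-time argument: since $\Sigma$ is an $\mathbb{H}^{-1}$-valued Gaussian martingale, its increments satisfy $\mathbb{E}[\|\Sigma(t)-\Sigma(s)\|^a_{\mathbb{H}^{-1}}]\le C_a|t-s|^{a/2}(\cdots)$ by Gaussian moment equivalence and the It\^o isometry; feeding these into the embedding $W^{q,p}(0,T;\mathbb{H}^{-1})\hookrightarrow C^{0,\lambda}([0,T];\mathbb{H}^{-1})$ with $\lambda=q-\tfrac1p>0$ (which forces $qp>1$) and using that $\Sigma-\Sigma_{\tilde{h},\tau}$ vanishes at the nodes converts the H\"older bound into the factor $\tau^{2\lambda}$, while taking the $a$-th moment in $\Omega$ and then the power $2/a$ (Jensen, $a\ge p>2$) produces the stated $\ell^a$-sum $\big(\sum_\ell\|\phi_\ell-m(\phi_\ell)\|^a_{\mathbb{H}^{-1}}/((d+1)^{-1}|(\phi_\ell,1)|)\big)^{2/a}$. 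The admissibility condition $\tfrac1p+q<\tfrac12-\tfrac1a$ is exactly what balances the $\tfrac12$-H\"older regularity of the noise in time against the loss $\tfrac1p$ from the Sobolev embedding and the loss $\tfrac1a$ from passing to the $a$-th moment. The main obstacle is this maximal estimate: keeping the Sobolev-in-time / Gaussian-moment bookkeeping consistent so that the temporal rate $\lambda$ stays positive while the mode sum remains finite for the merely $\mathbb{H}^1$-regular noise. Collecting the $y$-error from \coref{Corollary1}, the jump terms, the $\varepsilon$-dissipation terms, and this maximal bound then completes the proof.
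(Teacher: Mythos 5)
Your proposal follows essentially the same route as the paper: split $\widetilde{u}-\widetilde{u}_{h,\tau}$ via the linear transformation into the $y$-error (controlled by \coref{Corollary1}) plus nodal-interpolation jump terms plus the time-discretization error of the stochastic integral, with the gradient part handled by the It\^o-isometry/$\eta^n_{\text{NOISE}}$ bookkeeping. The only difference is that you sketch the Kolmogorov--Chentsov/Sobolev-embedding maximal inequality for $\mathbb{E}\bigl[\max_n\sup_{t\in[t_{n-1},t_n]}\Vert\int_t^{t_n}d\widetilde{W}\Vert^2_{\mathbb{H}^{-1}}\bigr]$ explicitly, whereas the paper delegates exactly this step to \cite[Lemma 5.7]{BanasVieth_a_post23}; your parameter conditions on $a,p,q,\lambda$ match the paper's.
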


\begin{proof}
Using the identity $\widetilde{u}(t)=y(t)+\int_0^td\widetilde{W}(s)$ and the triangle inequality, we obtain:
{\small
\begin{align}
\label{ErrorII}
\mathbb{E}\left[\sup_{t\in[0, T]}\Vert\widetilde{u}(t)-\widetilde{u}_{h,\tau}(t)\Vert^2_{\mathbb{H}^{-1}}\right]&=\mathbb{E}\left[\sup_{t\in[0, T]}\left\Vert y(t)+\int_0^td\widetilde{W}(s)-y_{h,\tau}(t)+y_{h,\tau}(t)-\widetilde{u}_{h,\tau}(t)\right\Vert^2_{\mathbb{H}^{-1}}\right]\nonumber\\
&\leq 2 \mathbb{E}\left[\sup_{t\in[0,T]}\Vert y(t)-y_{h,\tau}(t)\Vert^2_{\mathbb{H}^{-1}}\right]\\
&\;\;+2\mathbb{E}\left[\sup_{t\in[0,T]}\left\Vert y_{h,\tau}(t)+\int_0^td\widetilde{W}(s)-\widetilde{u}_{h,\tau}(t)\right\Vert^2_{\mathbb{H}^{-1}}\right]=: 2\text{IV}_1+2\text{IV}_2.\nonumber
\end{align}
}
An estimate of $\text{IV}_1$ can be found in \coref{Corollary1}.  By the triangle inequality, we obtain:
\begin{align}
\label{ErrorII2}
\text{IV}_2&\leq \mathbb{E}\left[\max_{n=1,\cdots,N}\sup_{t\in[t_{n-1}, t_n]}\Vert \widetilde{u}_{h,\tau}(t)-\widetilde{u}^n_h\Vert^2_{\mathbb{H}^{-1}}\right]+\mathbb{E}\left[\max_{n=1,\cdots,N}\sup_{t\in[t_{n-1}, t_n]}\Vert y_{h,\tau}(t)-y^n_h\Vert^2_{\mathbb{H}^{-1}}\right]\nonumber\\
&\qquad+\mathbb{E}\left[\max_{n=1,\cdots,N}\left\Vert\sum_{j=1}^n\int_{t_{j-1}}^{t_j}d\widetilde{W}(s)-\int_0^td\widetilde{W}(s) \right\Vert^2_{\mathbb{H}^{-1}}\right]\\
&\qquad+\mathbb{E}\left[\max_{n=1,\cdots,N}\left\Vert \widetilde{u}^n_h-y^n_h-\sum_{j=1}^n\int_{t_{j-1}}^{t_j}d\widetilde{W}(s)\right\Vert^2_{\mathbb{H}^{-1}}\right]\nonumber\\
&=:\text{IV}_{2,1}+\text{IV}_{2,2}+\text{IV}_{2,3}+\text{IV}_{2,4}. \nonumber
\end{align}
From \lemref{LemmaLubo}, we have $\text{IV}_{2,4}=0$. Using the definitions of  $\widetilde{u}_{h,\tau}$ and $y_{h, \tau}$, we  obtain:
\begin{align}
\label{ErrorII212}
\text{IV}_{2,1}\leq \mathbb{E}\left[\max_{n=1,\cdots,N}\Vert \widetilde{u}^{n-1}_h-\widetilde{u}^n_h\Vert^2_{\mathbb{H}^{-1}}\right]\quad \text{and}\quad \text{IV}_{2,2}\leq \mathbb{E}\left[\max_{n=1,\cdots,N}\Vert y^{n-1}_h-y^n_h\Vert^2_{\mathbb{H}^{-1}}\right].
\end{align}
The term $\text{IV}_{2,3}$ can be estimated along the same lines as the term $\text{I}_{1,6}$ in the proof of \cite[Lemma 5.7]{BanasVieth_a_post23}:
\begin{align}
\label{ErrorII23}
\text{IV}_{2,3}\leq C\tau^{p\lambda}\left(\sum_{\ell=1}^L\frac{\Vert \phi_{\ell}-m(\phi_{\ell})\Vert^a_{\mathbb{H}^{-1}}}{(d+1)^{-1}\vert (\phi_{\ell}, 1)\vert}\right)^{\frac{p}{a}} 
\end{align}
for $\lambda=k-\frac{1}{p} > 0$  where  $k>\frac{1}{p}$ and $\frac{1}{p}+k<\frac{1}{2}-\frac{1}{a}$ for some $a, p\in(2, \infty)$, $a\geq p$.

Recalling that $\text{IV}_{2,4}=0$, substituting  \eqref{ErrorII23} and \eqref{ErrorII212} into \eqref{ErrorII2},
yields an estimate of $\text{IV}_2$. The term $\text{IV}_1$ is estimated in \coref{Corollary1}.
By combining these estimates we bound \eqref{ErrorII} as:
\begin{align}
\label{ErrorD}
\mathbb{E}\left[\sup_{t\in[0, T]}\Vert \widetilde{u}_{h,\tau}(t)-\widetilde{u}(t)\Vert^2_{\mathbb{H}^{-1}}\right]
&\leq C\varepsilon\sum_{n=1}^N\mathbb{E}[\eta^n_{\text{NOISE}}]+C\tau^{2\lambda}\left(\sum_{\ell=1}^L\frac{\Vert\phi_{\ell}-m(\phi_{\ell})\Vert^a_{\mathbb{H}^{-1}}}{(d+1)^{-1}\vert (\phi_{\ell}, 1)\vert}\right)^{\frac{2}{a}}\nonumber\\
&\quad+C\mathbb{E}\left[\max_{n=1,\cdots,N}\Vert \widetilde{u}^{n-1}_h-\widetilde{u}^n_h\Vert^2_{\mathbb{H}^{-1}}+\max_{n=1,\cdots,N}\Vert y^{n-1}_h-y^n_h\Vert^2_{\mathbb{H}^{-1}}\right]\\
&\quad+C\int_0^T\mathbb{E}\left[T\mu_{-1}^2(s)+\frac{T}{\varepsilon}\mu_0^2(s)+\varepsilon^{-1}\mu_1^2(s)\right]ds.\nonumber
\end{align}
Using the triangle inequality and the  inequality $(a+b)^2\leq 2a^2+2b^2$, yields
\begin{align}
\label{EstimateIII}
\varepsilon\int_0^T\mathbb{E}\left[\left\Vert\nabla\left(\widetilde{u}_{h,\tau}(t)-\widetilde{u}(t)\right)\right\Vert^2\right]dt&\leq 2\varepsilon\int_0^T\mathbb{E}\left[\left\Vert\nabla\left(y_{h,\tau}(t)-y(t)\right)\right\Vert^2\right]dt\nonumber\\
&\quad+2\varepsilon\int_0^T\mathbb{E}\left[\left\Vert\nabla\left(y_{h,\tau}(t)+\int_0^td\widetilde{W}(s)-\widetilde{u}_{h,\tau}(t)\right)\right\Vert^2\right]dt\nonumber\\
&=:\text{V}_1+\text{V}_2.
\end{align}
The term $\text{V}_2$ is the same as  $\text{III}_2$ in \eqref{Errorutilde1}. Hence, from \eqref{EstimateII4} we have:
\begin{align}
\label{EstimateIII2}
\text{V}_2
&\leq C\varepsilon\sum_{n=1}^N\tau_n\left(\mathbb{E}\left[\left\Vert\nabla(\widetilde{u}^{n-1}_h-\widetilde{u}^n_h)\right\Vert^2\right]+\mathbb{E}\left[\left\Vert\nabla(y^{n-1}_h-y^n_h)\right\Vert^2\right]\right)\\
&\qquad+C\varepsilon\sum_{n=1}^N\tau_n^2\sum_{\ell=1}^L\frac{\Vert\nabla \phi_{\ell}\Vert^2}{(d+1)^{-1}\vert (\phi_{\ell}, 1)\vert}. \nonumber
\end{align}
Substituting \eqref{EstimateIII2} and the estimate of $\text{V}_1$ (obtained from \coref{Corollary1}) into \eqref{EstimateIII}, we deduce the following estimate:
\begin{align}
\label{EstiIV}
\varepsilon\int_0^T\mathbb{E}\left[\left\Vert\nabla\left(\widetilde{u}_{h,\tau}(t)-\widetilde{u}(t)\right)\right\Vert^2\right]dt &\leq  C\varepsilon\sum_{n=1}^N\tau_n\mathbb{E}\left[\left\Vert\nabla(\widetilde{u}^{n-1}_h-\widetilde{u}^n_h)\right\Vert^2\right]+C\varepsilon\sum_{n=1}^N\eta^n_{\text{NOISE}}\nonumber\\
&\qquad+ C\varepsilon\sum_{n=1}^N\tau_n\mathbb{E}\left[\left\Vert\nabla(y^{n-1}_h-y^n_h)\right\Vert^2\right]\\
&\qquad +C\int_0^T\mathbb{E}\left[T\mu_{-1}^2(t)+\frac{T}{\varepsilon}\mu_0^2(t)+\varepsilon^{-1}\mu_1^2(t)\right]dt.\nonumber
\end{align} 
Collecting the estimates \eqref{EstiIV} and \eqref{ErrorD} concludes the proof.
\end{proof}

\section{Error estimate for the  random PDE}
\label{ErrorRandonPDE}
In this section we derive an a posteriori error estimate for the random PDE \eqref{model3}.
The analysis below follows roughly along the lines of \cite[Section 6]{BanasVieth_a_post23}, with several crucial modifications.
In particular, \lemref{LemmaNormL3} is necessary to compensate the lack of $\tilde{h}$-independent $\mathbb{H}^1$-energy bound as well as to avoid the restriction \cite[eq. (37)]{BanasVieth_a_post23}
in spatial dimension $d=3$.

We consider weak formulation of \eqref{model3} as
\begin{align}
\label{Weak1}
\langle \partial_t\widehat{u}(t), \varphi\rangle +(\nabla\widehat{w}(t), \nabla\varphi)=0 &\quad \forall\varphi\in\mathbb{H}^1,\\
\varepsilon(\nabla\widehat{u}(t), \nabla\psi)+\frac{1}{\varepsilon}(f(u(t)), \psi)-(\widehat{w}(t), \psi)=0&\quad \forall\psi\in\mathbb{H}^1.\nonumber
\end{align}
Let us recall that from the definition of the  time interpolant $\widehat{u}_{h,\tau}$ it holds that:
\begin{align*}
\partial_t\widehat{u}_{h,\tau}(t)=\frac{\widehat{u}^n_h-\widehat{u}^{n-1}_h}{\tau_n}\quad \text{for}\; t\in (t_{n-1}, t_n). 
\end{align*}
It follows from \eqref{scheme3} that the time interpolants $\widehat{u}_{h,\tau}$ and $\widehat{w}_{h,\tau}$ satisfy the following:
\begin{align}
\label{Weak1a}
\langle\partial_t\widehat{u}_{h,\tau}(t), \varphi\rangle + (\nabla\widehat{w}_{h,\tau}(t), \nabla\varphi)=\langle\widehat{\mathcal{R}}(t),\varphi\rangle &\quad \forall\varphi\in\mathbb{H}^1,\\
\varepsilon(\nabla\widehat{u}_{h,\tau}(t),\nabla\psi)+\frac{1}{\varepsilon}(f(u_{h,\tau}(t)),\psi)-(\widehat{w}_{h,\tau}(t), \psi)=\langle\widehat{\mathcal{S}}(t),\psi)&\quad \forall\psi\in\mathbb{H}^1,\nonumber
\end{align}
where the residuals $\widehat{\mathcal{R}}(t)$ and $\widehat{\mathcal{S}}(t)$ are defined for $t\in(0, T]$ as follows:
\begin{align*}
\langle\widehat{\mathcal{R}}(t),\varphi\rangle&=(\partial_t\widehat{u}_{h,\tau}(t), \varphi)+(\nabla\widehat{w}_{h,\tau}(t), \nabla\varphi)&\forall\varphi\in\mathbb{H}^1,\\
\langle\widehat{\mathcal{S}}(t),\psi\rangle&=-(\widehat{w}_{h,\tau}(t), \psi)+\varepsilon(\nabla\widehat{u}_{h,\tau}(t),\nabla\psi)+\frac{1}{\varepsilon}(f(u_{h,\tau}(t)),\psi)&\forall\psi\in\mathbb{H}^1.
\end{align*}
We define the space indicator errors $\eta^n_{\text{SPACE},i}$, for $i=4,5,6$,  as follows:
\begin{align*}
\eta^n_{\text{SPACE},4}&:=\left(\sum_{K\in\mathcal{T}^n_h}h_K^2\tau_n^{-2}\Vert\widehat{u}^n_h-\widehat{u}^{n-1}_h\Vert^2_{L^2(K)}\right)^{\frac{1}{2}}+\left(\sum_{e\in\mathcal{E}^n_h}h_e\Vert[\nabla\widehat{w}^n_h.\vec{n}_e]_e\Vert^2_{L^2(e)}\right)^{\frac{1}{2}},\\
\eta^n_{\text{SPACE},5}&:=\left(\sum_{K\in\mathcal{T}^n_h}h_K^2\Vert\widehat{w}^n_h+\varepsilon^{-1}f(u^n_h)\Vert^2_{L^2(T)}\right)^{\frac{1}{2}},\\
\eta^n_{\text{SPACE},6}&:=\left(\sum_{e\in\mathcal{E}^n_h}h_e\Vert[\nabla\widehat{u}^n_h.\vec{n}_e]_e\Vert^2_{L^2(e)}\right)^{\frac{1}{2}}.
\end{align*}
We define the time indicator errors $\eta^n_{\text{TIME},i}$, for $i=4,5$, as follows:
\begin{align*}
\eta^n_{\text{TIME},4}&:=\Vert\nabla(\widehat{w}^n_h-\widehat{w}^{n-1}_h)\Vert,\quad \eta^n_{\text{TIME},6}:=\varepsilon\Vert\nabla(\widehat{u}^n_h-\widehat{u}^{n-1}_h)\Vert,\nonumber\\
\eta^n_{\text{TIME},5}&:=\Vert\widehat{w}^n_h-\widehat{w}^{n-1}_h\Vert+\varepsilon^{-1}\Vert f(u^n_h)-f(u^{n-1}_h)\Vert.
\end{align*}
To simplify the notation we define
\begin{align*}
\widehat{\mu}_{-1}(t)& :=C^*\eta^n_{\text{SPACE},4}+\eta^n_{\text{TIME},4},
\\
\widehat{\mu}_0(t) & :=\eta^n_{\text{TIME},5},
\\
\widehat{\mu}_1(t) & :=\eta^n_{\text{TIME},6}+\eta^n_{\text{SPACE},5}+C^*\eta^n_{\text{SPACE},6}.
\end{align*}
\begin{lemma}
\label{Residual}
For all $\varphi\in \mathbb{H}^1$, the following estimates hold for the residuals $\widehat{\mathcal{R}}$ and $\widehat{\mathcal{S}}$:
\begin{align*}
\langle\widehat{\mathcal{R}}(t), \varphi\rangle&\leq \widehat{\mu}_{-1}(t)\Vert\nabla\varphi\Vert\quad \text{and}\quad
\langle\widehat{\mathcal{S}}(t), \varphi\rangle\leq \widehat{\mu}_0(t)\Vert\varphi\Vert+\widehat{\mu}_1(t)\Vert\nabla\varphi\Vert.
\end{align*}
\end{lemma}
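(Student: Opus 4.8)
The plan is to mirror the residual-based argument proving \lemref{Residuelestimate}, with the nonlinearity $\varepsilon^{-1}f$ as the only genuinely new ingredient. The key tool is \emph{Galerkin orthogonality}: on each subinterval $(t_{n-1},t_n)$ the derivative $\partial_t\widehat{u}_{h,\tau}(t)=\tau_n^{-1}(\widehat{u}^n_h-\widehat{u}^{n-1}_h)$ is constant and the two lines of \eqref{scheme3} hold for every $\varphi_h,\psi_h\in\mathbb{V}^n_h$. I would add the corresponding discrete identities (tested against $\varphi_h,\psi_h$) to the definitions of $\widehat{\mathcal{R}}(t)$ and $\widehat{\mathcal{S}}(t)$, and then add and subtract the endpoint quantities $\widehat{w}^n_h$, $\widehat{u}^n_h$ and $f(u^n_h)$ so as to split each residual into (i) a time-reconstruction part measuring the difference between the interpolants and their values at $t_n$, and (ii) a spatial part tested against $\varphi-\varphi_h$. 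Throughout I would choose $\varphi_h=C^n_h\varphi$ (resp. $\psi_h=C^n_h\psi$) and use the local estimates \eqref{ErrorPnh}, \eqref{ErrorCnh}, exactly as in \cite[Proposition 6.3]{Bartels_book2015}.

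For $\widehat{\mathcal{R}}$ this gives
\[
\langle\widehat{\mathcal{R}}(t),\varphi\rangle=(\partial_t\widehat{u}_{h,\tau}(t),\varphi-\varphi_h)+(\nabla\widehat{w}^n_h,\nabla[\varphi-\varphi_h])+(\nabla[\widehat{w}_{h,\tau}(t)-\widehat{w}^n_h],\nabla\varphi).
\]
The first term is bounded elementwise by the volume contribution of $\eta^n_{\text{SPACE},4}$; the second, after elementwise integration by parts (note $\Delta\widehat{w}^n_h=0$ on each simplex since $\widehat{w}^n_h$ is affine), reduces to the flux jumps $[\nabla\widehat{w}^n_h\cdot\vec{n}_e]$ and yields the edge contribution of $C^*\eta^n_{\text{SPACE},4}$; the third is purely temporal, with $\widehat{w}_{h,\tau}(t)-\widehat{w}^n_h=(1-\tfrac{t-t_{n-1}}{\tau_n})(\widehat{w}^{n-1}_h-\widehat{w}^n_h)$, hence bounded by $\eta^n_{\text{TIME},4}\|\nabla\varphi\|$. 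Summing reproduces $\widehat{\mu}_{-1}(t)\|\nabla\varphi\|$.

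For $\widehat{\mathcal{S}}$ the same manipulation produces a $\widehat{w}$-reconstruction term $(\widehat{w}^n_h-\widehat{w}_{h,\tau}(t),\psi)$, an $f$-reconstruction term $\varepsilon^{-1}(f(u_{h,\tau}(t))-f(u^n_h),\psi)$, a gradient-reconstruction term $\varepsilon(\nabla[\widehat{u}_{h,\tau}(t)-\widehat{u}^n_h],\nabla\psi)$, the interior residual combining $\widehat{w}^n_h$ and $\varepsilon^{-1}f(u^n_h)$ tested against $\psi-\psi_h$, and the flux term $\varepsilon(\nabla\widehat{u}^n_h,\nabla[\psi-\psi_h])$. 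The gradient term is bounded by $\eta^n_{\text{TIME},6}\|\nabla\psi\|$; the interior residual by $\eta^n_{\text{SPACE},5}\|\nabla\psi\|$ and the flux term by $C^*\eta^n_{\text{SPACE},6}\|\nabla\psi\|$ via \eqref{ErrorPnh}, \eqref{ErrorCnh} and $\Delta\widehat{u}^n_h=0$; the $\widehat{w}$-reconstruction term by $\|\widehat{w}^n_h-\widehat{w}^{n-1}_h\|\,\|\psi\|$. Collecting these gives $\widehat{\mu}_0(t)\|\psi\|+\widehat{\mu}_1(t)\|\nabla\psi\|$.

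The \textbf{main obstacle}, and the only step without a counterpart in \lemref{Residuelestimate}, is the $f$-reconstruction term, which has to be absorbed into the contribution $\varepsilon^{-1}\|f(u^n_h)-f(u^{n-1}_h)\|$ of $\widehat{\mu}_0=\eta^n_{\text{TIME},5}$. This is immediate once the time-discrete nonlinearity is read as the piecewise-linear-in-time reconstruction of the nodal values $f(u^n_h)$, for then $f(u_{h,\tau}(t))-f(u^n_h)=(1-\tfrac{t-t_{n-1}}{\tau_n})(f(u^{n-1}_h)-f(u^n_h))$ and the bound follows with constant one. I would make this reconstruction explicit and check the telescoping identity carefully, since the naive pointwise comparison of $f$ evaluated at the interpolant $u_{h,\tau}(t)$ with its endpoint value $f(u^n_h)$ does \emph{not} yield such an estimate for the cubic $f$ (the deviation of a cubic from its right endpoint can exceed the endpoint-to-endpoint increment). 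All remaining terms are routine and identical in structure to the linear case.
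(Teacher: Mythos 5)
Your proof follows the same route as the paper's: use the discrete scheme \eqref{scheme3} to rewrite each residual with test functions $\varphi-\varphi_h$, add and subtract the endpoint values, choose $\varphi_h=C^n_h\varphi$, and invoke element-wise integration by parts together with \eqref{ErrorPnh} and \eqref{ErrorCnh}; the decomposition you write for $\widehat{\mathcal{R}}$ and $\widehat{\mathcal{S}}$ is exactly the one in the paper. The one place where you go beyond the paper is the treatment of the term $\varepsilon^{-1}(f(u_{h,\tau}(t))-f(u^n_h),\varphi)$: the paper's proof does not comment on it at all and simply asserts the bound by $\eta^n_{\text{TIME},5}$, whereas you correctly observe that for the cubic $f$ the literal reading $f$-of-the-interpolant does not satisfy $\Vert f(u_{h,\tau}(t))-f(u^n_h)\Vert\leq\Vert f(u^n_h)-f(u^{n-1}_h)\Vert$ (your endpoint counterexample is valid), and that the stated estimate holds with constant one only if $f(u_{h,\tau}(t))$ is understood as the piecewise-linear-in-time reconstruction of the nodal values $f(u^n_h)$. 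That reinterpretation is consistent with how the residual is defined and used downstream, so your version of the argument is sound and in fact patches a point the paper leaves implicit.
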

\begin{proof}
For  $\varphi\in\mathbb{H}^1$, $\varphi_h\in\mathbb{V}^n_h$, and  $t\in (t_{n-1}, t_n]$, the  residuals can be expressed as follows:
\begin{align*}
\langle\widehat{\mathcal{R}}(t), \varphi\rangle=&\left(\frac{\widehat{u}^n_h-\widehat{u}^{n-1}_h}{\tau_n}, \varphi-\varphi_h\right)+(\nabla\widehat{w}^n_h,\nabla[\varphi-\varphi_h])+(\nabla[\widehat{w}_{h,\tau}(t)-\widehat{w}^n_h], \nabla\varphi), \\
\langle\widehat{\mathcal{S}}(t), \varphi\rangle=&(\widehat{w}^n_h-\widehat{w}_{h,\tau}(t), \varphi)+(\widehat{w}^n_h, \varphi_h-\varphi)+\varepsilon(\nabla[\widehat{u}_{h,\tau}(t)-\widehat{u}^n_h], \nabla\varphi)+\varepsilon(\nabla\widehat{u}^n_h, \nabla[\varphi-\varphi_h])\nonumber\\
&+\frac{1}{\varepsilon}\left(f(u_{h,\tau}(t))-f(u^n_h), \varphi\right)+\frac{1}{\varepsilon}\left(f(u^n_h), \varphi-\varphi_h\right).
\end{align*}
Taking $\varphi_h=C^n_h\varphi\in\mathbb{V}^n_h$ and applying element-wise integration by parts as in the proof of  \cite[Proposition 6.3]{Bartels_book2015}, along with using \eqref{ErrorPnh} and \eqref{ErrorCnh}, yields the desired result.
\end{proof}

For $\delta>0$, we consider the following subspace of $\Omega$:
\begin{align}
\label{SetOmegadelta}
\Omega_{\delta,\tilde{\varepsilon}}:=\left\{\omega\in\Omega:\; \sup_{t\in[0, T]}\Vert u(t)\Vert^2_{\mathbb{H}^{-1}}+\frac{1}{\varepsilon}\int_0^T\Vert u(s)\Vert^4_{\mathbb{L}^4}ds\leq C\tilde{\varepsilon}^{-\delta}\right\}.
\end{align}
 Using Markov's inequality and  \lemref{LemmaRegularity} one can verify that $\mathbb{P}[\Omega_{\delta, \tilde{\varepsilon}}]>0$ for sufficiently small $\tilde{\varepsilon}$, and $\mathbb{P}[\Omega_{\delta, \tilde{\varepsilon}}]\rightarrow 1$ as $\tilde{\varepsilon}\rightarrow 0$. 
 
 For $\gamma>0$, we consider the following subspace of $\Omega$:
\begin{align}
\label{SetOmegagamma}
\Omega_{\gamma, \tilde{\varepsilon}}:=\left\{\omega\in\Omega:\; \sup_{t\in[0, T]}\Vert \tilde{u}(t)\Vert^2_{\mathbb{L}^4}\leq C\tilde{\varepsilon}^{-\gamma}\right\}. 
\end{align}
Using Markov's inequality and  \lemref{Normutilde} one can verify that $\mathbb{P}[\Omega_{\gamma, \tilde{\varepsilon}}]>0$ for sufficiently small $\tilde{\varepsilon}$, and $\mathbb{P}[\Omega_{\gamma, \tilde{\varepsilon}}]\rightarrow 1$ as $\tilde{\varepsilon}\rightarrow 0$.

Next, we  introduce the discrete principal eigenvalue (cf. \cite{abc94,fp04,BartelsMueller2011,BanasVieth_a_post23})
\begin{align}
\label{Eigenvalue1}
\Lambda_{CH}(t):=\inf_{\underset{\int_{\mathcal{D}}vdx=0}{v\in \mathbb{H}^1\setminus\{0\}}}\frac{\varepsilon\Vert\nabla v\Vert^2+\varepsilon^{-1}\left(f'(u_{h,\tau}(t))v,v)\right)}{\Vert\nabla(-\Delta)^{-1}v\Vert^2}.
\end{align}
The discrete principal eigenvalue $\Lambda_{CH}(t)$ involves the numerical approximation $u_{h,\tau}$ of the stochastic Cahn-Hilliard equation and it is therefore computable for every  $\omega\in \Omega$. 

For an arbitrary $\tilde{\varepsilon}>0$, we define the following subspace of $\Omega$:
\begin{align}
\label{SetOmegatilde}
\Omega_{\tilde{\varepsilon}}:=\left\{\omega\in \Omega:\; \sup_{t\in[0, T]}\Vert \tilde{e}(t)\Vert^2_{\mathbb{H}^{-1}}+\varepsilon\int_0^T\Vert \nabla\tilde{e}(s)\Vert^2ds\leq \tilde{\varepsilon}\right\},
\end{align}
where $\tilde{e}(t):=\tilde{u}(t)-\tilde{u}_{h,\tau}(t)$.

For an appropriate choice of $\tilde{\varepsilon}$,  the $\Omega_{\tilde{\varepsilon}}$ has high probability. In fact, the size of $\Omega_{\tilde{\varepsilon}}$ can be controlled by the accuracy of the numerical approximation of the linear SPDE, see Corollary~\ref{RateLinearSPDE} below.  Taking  $\tilde{\varepsilon}=C(h^{\alpha}+\tau^{\gamma})$  for sufficiently small $0<\alpha<2$ and $0< \gamma<1$, and using  Markov's inequality together with Corollary~\ref{RateLinearSPDE} implies that $\mathbb{P} [\Omega_{\tilde{\varepsilon}}]\rightarrow 1$ as $\tilde{\varepsilon}\rightarrow 0$. In addition, $\mathbb{P}[\Omega_{\tilde{\varepsilon}}]>0$   for sufficiently small $\tau=\tau(\tilde{\varepsilon})$ and $h=h(\tilde{\varepsilon})$.


The lemma below is used to deal with the cubic nonlinearity in the proof of the error estimate for the approximation of the RPDE \eqref{model3} in \thmref{mainresult2} below.
\begin{lemma}
\label{LemmaNormL3}
The following estimate holds $\mathbb{P}$-a.s. on  $\Omega_{\tilde{\varepsilon}}\cap\Omega_{\gamma, \tilde{\varepsilon}}$
\begin{align*}
6\varepsilon^{-1}C_{h,\infty}\int_0^t\Vert e(s)\Vert^3_{\mathbb{L}^3}ds\leq& C\left[C_{h,\infty}^4\varepsilon^{-6}+\varepsilon^3+C_{h,\infty}^4\varepsilon^{-3}+C_{h,\infty}^6\varepsilon^{-8}\right]\tilde{\varepsilon}+C_{h,\infty}^4\varepsilon^{-8}\tilde{\varepsilon}^{1-\gamma}\nonumber\\
&+ \frac{7}{4\varepsilon}\int_0^t\Vert e(s)\Vert^4_{\mathbb{L}^4}ds+\frac{3\varepsilon^4}{4}\int_0^t\Vert\nabla\widehat{e}(s)\Vert^2ds+C\int_0^t\Vert\widehat{e}(s)\Vert^2_{\mathbb{H}^{-1}}ds\nonumber\\
&+\varepsilon^{-1}CC_{h,\infty}^2\int_0^t\Vert \widehat{e}(s)\Vert^{\frac{2}{3}}_{\mathbb{H}^{-1}}\Vert\nabla\widehat{e}(s)\Vert^2ds,
\end{align*}
where $C_{h,\infty}:= \displaystyle \sup_{t\in(0,T)}\|u_{h,\tau}(t)\|_{\mathbb{L}^\infty}$, $e(t):=u(t)-u_{h,\tau}(t)$ and $\hat{e}(t):=\hat{u}(t)-\hat{u}_{h,\tau}(t)$.
\end{lemma}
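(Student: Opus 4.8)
The plan is to estimate the cubic term by splitting $e=\tilde e+\hat e$ and treating the two pieces by entirely different mechanisms: for the \emph{random} part $\hat e$ a sharp interpolation that keeps the gradient at exactly the second power, and for the \emph{stochastic} part $\tilde e$ the pathwise smallness available on $\Omega_{\tilde\varepsilon}$ combined with the $\mathbb{L}^4$-control on $\Omega_{\gamma,\tilde\varepsilon}$. First I would write $\|e\|_{\mathbb{L}^3}^3\le C(\|\hat e\|_{\mathbb{L}^3}^3+\|\tilde e\|_{\mathbb{L}^3}^3)$, distributing the mixed contributions (after expanding $|e|^3$) into the buckets described below by Young's inequality, so that it suffices to bound $\varepsilon^{-1}C_{h,\infty}\int_0^t\|\hat e\|_{\mathbb{L}^3}^3$ and $\varepsilon^{-1}C_{h,\infty}\int_0^t\|\tilde e\|_{\mathbb{L}^3}^3$ separately.

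For the $\hat e$-contribution the key tool is the interpolation inequality of \lemref{Fundamentallemma}. Using that $\hat e$ has zero mean and chaining a Ladyzhenskaya-type bound with $\|\hat e\|_{\mathbb{L}^2}^2\le\|\hat e\|_{\mathbb{H}^{-1}}\|\nabla\hat e\|$, it yields for $d\le 3$ the scale-consistent estimate $\|\hat e\|_{\mathbb{L}^3}^3\le C\,\|\hat e\|_{\mathbb{H}^{-1}}^{1/2}\|\hat e\|_{\mathbb{L}^2}^{1/2}\|\nabla\hat e\|^2$ (note $\tfrac12+\tfrac12+2=3$), in which \emph{the gradient appears only quadratically}. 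I would then apply a weighted Young inequality to the scalar factor $\|\hat e\|_{\mathbb{H}^{-1}}^{1/2}\|\hat e\|_{\mathbb{L}^2}^{1/2}$ with conjugate exponents $\tfrac43$ and $4$, producing on one side the retained nonlinear term $\varepsilon^{-1}CC_{h,\infty}^2\|\hat e\|_{\mathbb{H}^{-1}}^{2/3}\|\nabla\hat e\|^2$ (the Young weight is tuned as a power of $C_{h,\infty}$, which is why this term carries $C_{h,\infty}^2$ although the prefactor supplied only one power), and on the other side a term $\propto\|\hat e\|_{\mathbb{L}^2}^2\|\nabla\hat e\|^2$. The latter, together with the complementary elementary bound $\|\hat e\|_{\mathbb{L}^3}^3\le\|\hat e\|_{\mathbb{L}^2}\|\hat e\|_{\mathbb{L}^4}^2$, I would distribute by further weighted Young steps (and $\|\hat e\|_{\mathbb{L}^2}^2\le\|\hat e\|_{\mathbb{H}^{-1}}\|\nabla\hat e\|$) into a small multiple of $\tfrac{3\varepsilon^4}{4}\int\|\nabla\hat e\|^2$, into $\tfrac{7}{4\varepsilon}\int\|e\|_{\mathbb{L}^4}^4$ (passing $\hat e\to e$ up to $\tilde e$-remainders), and into the Gronwall-type term $C\int\|\hat e\|_{\mathbb{H}^{-1}}^2$; the negative powers of $\varepsilon$ inside $C[\,C_{h,\infty}^4\varepsilon^{-6}+\varepsilon^3+C_{h,\infty}^4\varepsilon^{-3}+C_{h,\infty}^6\varepsilon^{-8}\,]$ are precisely the prices paid by these splits.

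For the $\tilde e$-contribution I would interpolate analogously but now exploit that on $\Omega_{\tilde\varepsilon}$ one has $\sup_t\|\tilde e\|_{\mathbb{H}^{-1}}^2\le\tilde\varepsilon$ and $\varepsilon\int_0^T\|\nabla\tilde e\|^2\le\tilde\varepsilon$, and that on $\Omega_{\gamma,\tilde\varepsilon}$ one has $\sup_t\|\tilde u\|_{\mathbb{L}^4}^2\le C\tilde\varepsilon^{-\gamma}$ (with the corresponding discrete $\mathbb{L}^4$-bound for $\tilde u_{h,\tau}$). Bounding $\|\tilde e\|_{\mathbb{L}^3}^3\le\|\tilde e\|_{\mathbb{L}^2}\|\tilde e\|_{\mathbb{L}^4}^2$ and playing the $\mathbb{H}^{-1}$-smallness of $\|\tilde e\|_{\mathbb{L}^2}$ against the $\mathbb{L}^4$-bound yields the constant terms proportional to $\tilde\varepsilon$; the genuinely mixed contribution, in which a factor $\|\tilde e\|_{\mathbb{H}^{-1}}^2\le\tilde\varepsilon$ multiplies a factor $\|\tilde u\|_{\mathbb{L}^4}^2\le C\tilde\varepsilon^{-\gamma}$, is exactly what generates the $C_{h,\infty}^4\varepsilon^{-8}\tilde\varepsilon^{1-\gamma}$ term. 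Collecting the $\hat e$-, $\tilde e$- and mixed contributions then gives the claim.

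The step I expect to be the main obstacle is the treatment of $\hat e$ in $d=3$. The scale-invariant Gagliardo--Nirenberg estimate of $\|\hat e\|_{\mathbb{L}^3}^3$ naturally forces $\|\nabla\hat e\|^{9/4}$, a power strictly above $2$ that cannot be absorbed into the dissipation without an $L^\infty_t\mathbb{H}^1$ a priori bound, which is unavailable here (this is the lack of a $\tilde h$-independent $\mathbb{H}^1$-energy bound, and it is what forced the restriction \cite[eq.~(37)]{BanasVieth_a_post23}). The role of \lemref{Fundamentallemma} is to reorganize the interpolation so that $\|\nabla\hat e\|$ enters only quadratically, at the cost of the low negative-norm power $\|\hat e\|_{\mathbb{H}^{-1}}^{2/3}$; since this factor is small on $\Omega_{\tilde\varepsilon}$, the retained term is exactly the one that can later be absorbed in the Gronwall/energy argument of \thmref{mainresult2}. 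Tuning the Young weights so that the $\varepsilon$- and $C_{h,\infty}$-powers land on the stated coefficients is the remaining delicate, though essentially routine, bookkeeping.
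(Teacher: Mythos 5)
Your plan diverges from the paper's proof at the very first step, and the divergence creates a genuine gap. The paper does \emph{not} split $\Vert e\Vert_{\mathbb{L}^3}^3$ into $\Vert\hat e\Vert_{\mathbb{L}^3}^3+\Vert\tilde e\Vert_{\mathbb{L}^3}^3$; it applies \lemref{Fundamentallemma} with $r=8/3$ to the \emph{full} error $e$, which yields
$6\varepsilon^{-1}C_{h,\infty}\Vert e\Vert_{\mathbb{L}^3}^3\le \varepsilon^{-1}\Vert e\Vert_{\mathbb{L}^4}^4+\varepsilon^{-1}CC_{h,\infty}^{4/3}\Vert e\Vert_{\mathbb{H}^{-1}}^{2/3}\Vert\nabla e\Vert^{2/3}\Vert e\Vert_{\mathbb{L}^4}^{4/3}$.
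You have misread what \lemref{Fundamentallemma} provides: it is not the bound $\Vert\hat e\Vert_{\mathbb{L}^3}^3\le C\Vert\hat e\Vert_{\mathbb{H}^{-1}}^{1/2}\Vert\hat e\Vert_{\mathbb{L}^2}^{1/2}\Vert\nabla\hat e\Vert^2$; rather, the dominant part of the cubic term is absorbed outright into $\varepsilon^{-1}\Vert e\Vert_{\mathbb{L}^4}^4$ (with coefficient exactly $1$, safely below the available $2\varepsilon^{-1}$ in \thmref{mainresult2}), and the remainder carries the gradient only to the power $2/3$ and the $\mathbb{L}^4$-norm to the power $4/3$. Only \emph{after} this reduction is $e=\tilde e+\hat e$ split inside the $\mathbb{H}^{-1}$- and gradient factors; the retained Gronwall term $\varepsilon^{-1}CC_{h,\infty}^2\Vert\hat e\Vert_{\mathbb{H}^{-1}}^{2/3}\Vert\nabla\hat e\Vert^2$ then arises by combining $\Vert\nabla\hat e\Vert^{2/3}$ with $\Vert\hat e\Vert_{\mathbb{L}^4}^{4/3}\le C\Vert\nabla\hat e\Vert^{4/3}$.

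The concrete failure in your route is the $\hat e$-contribution. Starting from $\varepsilon^{-1}C_{h,\infty}\Vert\hat e\Vert_{\mathbb{H}^{-1}}^{1/2}\Vert\hat e\Vert_{\mathbb{L}^2}^{1/2}\Vert\nabla\hat e\Vert^2$ and applying Young with exponents $\tfrac43$ and $4$ to the scalar factor, the second Young bucket is (a multiple of) $\varepsilon^{-1}\Vert\hat e\Vert_{\mathbb{L}^2}^{2}\Vert\nabla\hat e\Vert^2$. This term cannot be distributed into the allowed buckets: converting $\Vert\hat e\Vert_{\mathbb{L}^2}^2\le\Vert\hat e\Vert_{\mathbb{H}^{-1}}\Vert\nabla\hat e\Vert$ produces $\Vert\hat e\Vert_{\mathbb{H}^{-1}}\Vert\nabla\hat e\Vert^3$, a supercritical gradient power that can neither be absorbed into the dissipation (there is no a priori $\mathbb{H}^1$-bound on $\hat e$ — that is precisely the obstruction this lemma is designed to circumvent) nor be cast as the $y_3$-term of \lemref{Gronwall}, which requires the structure $\sup y_1^{\beta}(y_1+y_2)$. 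Your fallback $\Vert\hat e\Vert_{\mathbb{L}^3}^3\le\Vert\hat e\Vert_{\mathbb{L}^2}\Vert\hat e\Vert_{\mathbb{L}^4}^2$ does close at the level of producing \emph{some} inequality, but the resulting coefficient in front of $\int_0^t\Vert\hat e(s)\Vert_{\mathbb{H}^{-1}}^2\,ds$ comes out of order $C_{h,\infty}^4\varepsilon^{-10}$ rather than an absolute constant $C$; since this coefficient feeds directly into the exponent $(20+4(1-\varepsilon^3)\Lambda_{CH})^{+}$ of the Gronwall factor in \thmref{mainresult2}, the resulting estimate would not be robust in $\varepsilon$ and would not be the inequality stated in the lemma. (Your description of the $\tilde e$-contribution and of the origin of the $C_{h,\infty}^4\varepsilon^{-8}\tilde\varepsilon^{1-\gamma}$ term is essentially consistent with the paper, up to the fact that the paper pairs $\int_0^T\Vert\nabla\tilde e\Vert^2\le\varepsilon^{-1}\tilde\varepsilon$ with $\sup_t\Vert\tilde u\Vert_{\mathbb{L}^4}^2\le\tilde\varepsilon^{-\gamma}$ rather than $\sup_t\Vert\tilde e\Vert_{\mathbb{H}^{-1}}^2$.)
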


\begin{proof}
Using \lemref{Fundamentallemma} with $r=\frac{8}{3}$, we obtain:
\begin{align}
\label{long8a}
6\varepsilon^{-1}C_{h,\infty}\Vert e(s)\Vert^3_{\mathbb{L}^3}&\leq \varepsilon^{-1}\Vert e(s)\Vert^4_{\mathbb{L}^4}+\varepsilon^{-1}C C_{h,\infty}^{\frac{4}{3}}\Vert e(s)\Vert^{\frac{2}{3}}_{\mathbb{H}^{-1}}\Vert \nabla e(s)\Vert^{\frac{2}{3}}\Vert  e(s)\Vert^{\frac{4}{3}}_{\mathbb{L}^4}\nonumber\\
&=:\varepsilon^{-1}\Vert e(s)\Vert^4_{\mathbb{L}^4}+\text{VI}.
\end{align} 
Recalling that $e=\widetilde{e}+\widehat{e}$, 
and using the triangle and  Cauchy-Schwarz inequalities, yields
\begin{align}
\label{long9}
\text{VI}&\leq \varepsilon^{-1}CC_{h,\infty}^{\frac{4}{3}}\left(\Vert \widetilde{e}(s)\Vert^{\frac{2}{3}}_{\mathbb{H}^{-1}}+\Vert\widehat{e}(s)\Vert^{\frac{2}{3}}_{\mathbb{H}^{-1}}\right)\Vert\nabla e(s)\Vert^{\frac{2}{3}}\Vert e(s)\Vert^{\frac{4}{3}}_{\mathbb{L}^4}\nonumber\\
&\leq \varepsilon^{-1}CC_{h,\infty}^{\frac{4}{3}}\Vert \widetilde{e}(s)\Vert^{\frac{2}{3}}_{\mathbb{H}^{-1}}\Vert\nabla e(s)\Vert^{\frac{2}{3}}\Vert e(s)\Vert^{\frac{4}{3}}_{\mathbb{L}^4}+\varepsilon^{-1}CC_{h,\infty}^{\frac{4}{3}}\Vert \widehat{e}(s)\Vert^{\frac{2}{3}}_{\mathbb{H}^{-1}}\Vert \nabla e(s)\Vert^{\frac{2}{3}}\Vert e(s)\Vert^{\frac{4}{3}}_{\mathbb{L}^4}\\
&=:\text{VI}_1+\text{VI}_2.\nonumber
\end{align}
Using the definition of $\Omega_{\tilde{\varepsilon}}$, the triangle inequality, and Young's inequality, it follows that
\begin{align}
\label{long10}
\text{VI}_1&=\varepsilon^{-1}CC_{h,\infty}^{\frac{4}{3}}\Vert \widetilde{e}(s)\Vert^{\frac{2}{3}}_{\mathbb{H}^{-1}}\Vert\nabla e(s)\Vert^{\frac{2}{3}}\Vert e(s)\Vert^{\frac{4}{3}}_{\mathbb{L}^4}\nonumber\\
&\leq \varepsilon^{-1}CC_{h,\infty}^{\frac{4}{3}}\tilde{\varepsilon}^{\frac{1}{3}}\Vert\nabla e(s)\Vert^{\frac{2}{3}}\Vert e(s)\Vert^{\frac{4}{3}}_{\mathbb{L}^4}\leq  \varepsilon^{-1}CC_{h,\infty}^{\frac{4}{3}}\tilde{\varepsilon}^{\frac{1}{3}}\left(\Vert \nabla \widetilde{e}(s)\Vert^{\frac{2}{3}}+\Vert \nabla\widehat{e}(s)\Vert^{\frac{2}{3}}\right)\Vert e(s)\Vert^{\frac{4}{3}}_{\mathbb{L}^4}\nonumber\\
&\leq  \varepsilon^{-1}CC_{h,\infty}^{\frac{4}{3}}\tilde{\varepsilon}^{\frac{1}{3}}\Vert \nabla \widetilde{e}(s)\Vert^{\frac{2}{3}}\Vert e(s)\Vert^{\frac{4}{3}}_{\mathbb{L}^4}+  \varepsilon^{-1}CC_{h,\infty}^{\frac{4}{3}}\tilde{\varepsilon}^{\frac{1}{3}}\Vert \nabla\widehat{e}(s)\Vert^{\frac{2}{3}}\Vert e(s)\Vert^{\frac{4}{3}}_{\mathbb{L}^4}\\
&\leq CC_{h,\infty}^{2}\varepsilon^{-1}\tilde{\varepsilon}^{\frac{1}{2}}\Vert \nabla\widetilde{e}(s)\Vert+\frac{\varepsilon^{-1}}{4}\Vert e(s)\Vert^4_{\mathbb{L}^4}+CC_{h,\infty}^2\varepsilon^{-1}\tilde{\varepsilon}^{\frac{1}{2}}\Vert \nabla \widehat{e}(s)\Vert+\frac{\varepsilon^{-1}}{4}\Vert e(s)\Vert^4_{\mathbb{L}^4}\nonumber\\
&\leq CC_{h,\infty}^4\varepsilon^{-6}\tilde{\varepsilon}+\varepsilon^4\Vert\nabla\widetilde{e}(s)\Vert^2+\frac{\varepsilon^4}{4}\Vert \nabla\widehat{e}(s)\Vert^2+\frac{\varepsilon^{-1}}{2}\Vert e(s)\Vert^4_{\mathbb{L}^4}. \nonumber
\end{align}
Using the triangle inequality and Young's inequality, we conclude that
\begin{align*}
\text{VI}_2&=\varepsilon^{-1}CC_{h,\infty}^{\frac{4}{3}}\Vert \widehat{e}(s)\Vert^{\frac{2}{3}}_{\mathbb{H}^{-1}}\Vert \nabla e(s)\Vert^{\frac{2}{3}}\Vert e(s)\Vert^{\frac{4}{3}}_{\mathbb{L}^4}\nonumber\\
&\leq \varepsilon^{-1}CC_{h,\infty}^{\frac{4}{3}}\Vert \widehat{e}(s)\Vert^{\frac{2}{3}}_{\mathbb{H}^{-1}}\left(\Vert \nabla \widetilde{e}(s)\Vert^{\frac{2}{3}}+\Vert \nabla\widehat{e}(s)\Vert^{\frac{2}{3}}\right)\Vert e(s)\Vert^{\frac{4}{3}}_{\mathbb{L}^4}\nonumber\\
&\leq \varepsilon^{-1}CC_{h,\infty}^{\frac{4}{3}}\Vert \widehat{e}(s)\Vert^{\frac{2}{3}}_{\mathbb{H}^{-1}}\Vert \nabla \widetilde{e}(s)\Vert^{\frac{2}{3}}\Vert e(s)\Vert^{\frac{4}{3}}_{\mathbb{L}^4}+\varepsilon^{-1}CC_{h,\infty}^{\frac{4}{3}}\Vert \widehat{e}(s)\Vert^{\frac{2}{3}}_{\mathbb{H}^{-1}}\Vert \nabla \widehat{e}(s)\Vert^{\frac{2}{3}}\Vert e(s)\Vert^{\frac{4}{3}}_{\mathbb{L}^4}\\
&\leq CC_{h,\infty}^{2}\varepsilon^{-1}\Vert \widehat{e}(s)\Vert_{\mathbb{H}^{-1}}\Vert \nabla\widetilde{e}(s)\Vert+\frac{1}{4\varepsilon}\Vert e(s)\Vert^4_{\mathbb{L}^4}\nonumber\\
&\quad+\varepsilon^{-1}CC_{h,\infty}^{\frac{4}{3}}\Vert \widehat{e}(s)\Vert^{\frac{2}{3}}_{\mathbb{H}^{-1}}\Vert \nabla \widehat{e}(s)\Vert^{\frac{2}{3}}\left(\Vert \widetilde{e}(s)\Vert^{\frac{4}{3}}_{\mathbb{L}^4}+\Vert \widehat{e}(s)\Vert^{\frac{4}{3}}_{\mathbb{L}^4}\right).
\end{align*}
Using the  inequality $(a+b)^2\leq 2a^2+2b^2$, we derive from the preceding estimate that
\begin{align*}
\text{VI}_2\leq& C\Vert \widehat{e}(s)\Vert^2_{\mathbb{H}^{-1}}+CC_{h,\infty}^4\varepsilon^{-2}\Vert \nabla\widetilde{e}(s)\Vert^2+\frac{1}{4\varepsilon}\Vert e(s)\Vert^4_{\mathbb{L}^4}\nonumber\\
&+\varepsilon^{-1}CC_{h,\infty}^{\frac{4}{3}}\Vert \widehat{e}(s)\Vert^{\frac{2}{3}}_{\mathbb{H}^{-1}}\Vert \nabla \widehat{e}(s)\Vert^{\frac{2}{3}}\Vert \widetilde{e}(s)\Vert^{\frac{4}{3}}_{\mathbb{L}^4}+\varepsilon^{-1}CC_{h,\infty}^{\frac{4}{3}}\Vert \widehat{e}(s)\Vert^{\frac{2}{3}}_{\mathbb{H}^{-1}}\Vert \nabla \widehat{e}(s)\Vert^{\frac{2}{3}}\Vert \widehat{e}(s)\Vert^{\frac{4}{3}}_{\mathbb{L}^4}.
\end{align*}
Using Young's inequality, the Sobolev embedding $\mathbb{H}^1\hookrightarrow\mathbb{L}^4$, and Poincar\'{e}'s inequality, it follows from the estimate above that
\begin{align}
\label{long11}
\text{VI}_2&\leq C\Vert \widehat{e}(s)\Vert^2_{\mathbb{H}^{-1}}+CC_{h,\infty}^4\varepsilon^{-2}\Vert \nabla\widetilde{e}(s)\Vert^2+\frac{1}{4\varepsilon}\Vert e(s)\Vert^4_{\mathbb{L}^4}+C\varepsilon^2\Vert \widehat{e}(s)\Vert_{\mathbb{H}^{-1}}\Vert\nabla\widehat{e}(s)\Vert\nonumber\\
&\qquad+C\varepsilon^{-7}C_{h,\infty}^4\Vert \widetilde{e}(s)\Vert^4_{\mathbb{L}^4}+\varepsilon^{-1}CC_{h,\infty}^{2}\Vert\widehat{e}(s)\Vert^{\frac{2}{3}}_{\mathbb{H}^{-1}}\Vert \nabla \widehat{e}(s)\Vert^2\nonumber\\
&\leq C\Vert \widehat{e}(s)\Vert^2_{\mathbb{H}^{-1}}+CC_{h,\infty}^4\varepsilon^{-2}\Vert \nabla\widetilde{e}(s)\Vert^2+\frac{1}{4\varepsilon}\Vert e(s)\Vert^4_{\mathbb{L}^4}+C\Vert \widehat{e}(s)\Vert^2_{\mathbb{H}^{-1}}+\frac{\varepsilon^4}{4}\Vert\nabla\widehat{e}(s)\Vert^2\\
&\qquad+C\varepsilon^{-7}C_{h,\infty}^4\Vert \widetilde{e}(s)\Vert^4_{\mathbb{L}^4}+\varepsilon^{-1}CC_{h,\infty}^{2}\Vert\widehat{e}(s)\Vert^{\frac{2}{3}}_{\mathbb{H}^{-1}}\Vert \nabla \widehat{e}(s)\Vert^2.\nonumber
\end{align}
Substituting \eqref{long11} and \eqref{long10} into \eqref{long9}, we obtain:
\begin{align}
\label{long12}
\text{VI}&\leq CC_{h,\infty}^4\varepsilon^{-6}\tilde{\varepsilon}+\left(\varepsilon^4+CC_{h,\infty}^4\varepsilon^{-2}\right)\Vert\nabla\widetilde{e}(s)\Vert^2+CC_{h,\infty}^4\varepsilon^{-7}\Vert \widetilde{e}(s)\Vert^4_{\mathbb{L}^4}+\frac{3\varepsilon^4}{4}\Vert\widehat{e}(s)\Vert^2\\
&\qquad+\frac{3}{4\varepsilon}\Vert e(s)\Vert^4_{\mathbb{L}^4}+C\Vert \widehat{e}(s)\Vert^2_{\mathbb{H}^{-1}}+\varepsilon^{-1}CC_{h,\infty}^2\Vert\widehat{e}(s)\Vert^{\frac{2}{3}}_{\mathbb{H}^{-1}}\Vert \nabla\widehat{e}(s)\Vert^2.\nonumber
\end{align}
Substituting \eqref{long12} into \eqref{long8a}, integrating over $(0, t)$ and using the definition of $\Omega_{\tilde{\varepsilon}}$  (and interpolating the $\mathbb{L}^2$-norm $\frac{3\varepsilon^4}{4}\Vert\widehat{e}(s)\Vert^2$) we deduce
\begin{align}
\label{long13}
6\varepsilon^{-1}C_{h,\infty}\int_0^t\Vert e(s)\Vert^3_{\mathbb{L}^3}ds&\leq CC_{h,\infty}^4\varepsilon^{-6}\tilde{\varepsilon}+C(\varepsilon^4+C_{h,\infty}^4\varepsilon^{-2})\varepsilon^{-1}\tilde{\varepsilon}+CC_{h,\infty}^4\varepsilon^{-7}\int_0^t\Vert \widetilde{e}(s)\Vert^4_{\mathbb{L}^4}ds\nonumber\\
&\quad+ \frac{{5}}{4\varepsilon}\int_0^t\Vert e(s)\Vert^4_{\mathbb{L}^4}ds+\frac{3\varepsilon^4}{4}\int_0^t\Vert\nabla\widehat{e}(s)\Vert^2ds+ {\frac{3\varepsilon^4}{4}}\int_0^t\Vert\widehat{e}(s)\Vert^2_{\mathbb{H}^{-1}}ds\\
&\quad+\varepsilon^{-1}CC_{h,\infty}^2\int_0^t\Vert \widehat{e}(s)\Vert^{\frac{2}{3}}_{\mathbb{H}^{-1}}\Vert\nabla\widehat{e}(s)\Vert^2ds.\nonumber
\end{align}
Using  the embeddings $\mathbb{H}^1\hookrightarrow\mathbb{L}^4$, $\mathbb{L}^{\infty}\hookrightarrow\mathbb{L}^4$, the definitions of $\Omega_{{\gamma, \tilde{\varepsilon}}}$ and $\Omega_{\tilde{\varepsilon}}$ we conclude
\begin{align}
\label{long13a}
\int_0^t\Vert \widetilde{e}(s)\Vert^4_{\mathbb{L}^4}ds&\leq \sup_{s\in[0, t]}\Vert \widetilde{e}(s)\Vert^2_{\mathbb{L}^4}\int_0^t\Vert \widetilde{e}(s)\Vert^2_{\mathbb{L}^4}ds\nonumber\\
&\leq \sup_{s\in[0, T]}\left(\Vert \widetilde{u}(s)\Vert^2_{\mathbb{L}^4}+\Vert \widetilde{u}_{h,\tau}(s)\Vert^2_{\mathbb{L}^{\infty}}\right)\int_0^T\Vert\nabla\widetilde{e}(s)\Vert^2ds\\
&\leq \varepsilon^{-1}\left(\tilde{\varepsilon}^{-\gamma}+C_{h,\infty}^2\right)\tilde{\varepsilon}, \nonumber
\end{align}
$\mathbb{P}$-a.s. on $\Omega_{\gamma, \tilde{\varepsilon}}\cap\Omega_{\tilde{\varepsilon}}$.
Substituting \eqref{long13a} into \eqref{long13} completes the proof. 
\end{proof}


The following theorem provides an estimate for the error $\widehat{e}(t):=\widehat{u}_{h,\tau}(t)-\widehat{u}(t)$ on the subspace $\Omega_{\delta, \tilde{\varepsilon}}\cap\Omega_{\gamma, \tilde{\varepsilon}}\cap\Omega_{\tilde{\varepsilon}}$.
\begin{theorem}
\label{mainresult2}
Assume that  $\Lambda_{CH}\in L^1(0, T)$. Let $\beta=2/3$,
$\alpha(t)=(20+4(1-\varepsilon^3)\Lambda_{CH}(t))^{+}$,  $B=CC_{h,\infty}^2\varepsilon^{-5}$, $E=\exp\left(\int_0^T\alpha(s)ds\right)$ and let
\begin{align*}
A=&C\left\{\left[C_{h,\infty}^4\varepsilon^{-6}+\varepsilon^3+C_{h,\infty}^4\varepsilon^{-3}+C_{h,\infty}^6\varepsilon^{-8}\right]\tilde{\varepsilon}+C_{h,\infty}^4\varepsilon^{-8}\tilde{\varepsilon}^{1-\gamma}+\left(\tilde{\varepsilon}^{-\frac{\gamma}{4}}+C_{h,\infty}^{\frac{1}{2}}\right)\varepsilon^{-2}\tilde{\varepsilon}^{\frac{1}{4}-\frac{\delta}{4}}\right.\nonumber\\
&+\varepsilon(\varepsilon+1)\tilde{\varepsilon}+\left(C_{h,\infty}\varepsilon^{-\frac{5}{4}}\tilde{\varepsilon}^{\frac{\delta}{4}}+\varepsilon^{-\frac{1}{2}}\tilde{\varepsilon}^{\frac{1}{2}}\right)\tilde{\varepsilon}^{\frac{1}{2}-\frac{\delta}{4}}\left(2\varepsilon(1-\varepsilon^3)+ 8\varepsilon^{-2}(1-\varepsilon^3)^2\right)\varepsilon^{-1}\tilde{\varepsilon}\nonumber\\
&+\left.+\int_0^t\left( \widehat{\mu}_{-1}(s)^2+\varepsilon^{-2}\widehat{\mu}_0(s)^2+2\varepsilon^{-4}\widehat{\mu}_1(s)^2\right)ds+(1-\varepsilon^3)\int_0^T\Lambda_{CH}(s)ds\right\}.
\end{align*}
If $8AE\leq \left(8B(1+T)E\right)^{-1/\beta}$, then  it holds $\mathbb{P}$-a.s. on the subspace $\Omega_{\delta, \tilde{\varepsilon}}\cap\Omega_{\gamma, \tilde{\varepsilon}}\cap\Omega_{\tilde{\varepsilon}}$  that:
\begin{align*}
&\sup_{t\in[0, T]}\Vert\widehat{e}(t)\Vert^2_{\mathbb{H}^{-1}}+\frac{\varepsilon^4}{4}\int_0^T\Vert\nabla\widehat{e}(s)\Vert^2ds+\frac{1}{4\varepsilon}\int_0^T\Vert e(s)\Vert^4_{\mathbb{L}^4}ds\nonumber\\
&\quad\leq C\left\{\left[C_{h,\infty}^4\varepsilon^{-6}+\varepsilon^3+C_{h,\infty}^4\varepsilon^{-3}+C_{h,\infty}^6\varepsilon^{-8}\right]\tilde{\varepsilon}+C_{h,\infty}^4\varepsilon^{-8}\tilde{\varepsilon}^{1-\gamma}+\left(\tilde{\varepsilon}^{-\frac{\gamma}{4}}+C_{h,\infty}^{\frac{1}{2}}\right)\varepsilon^{-2}\tilde{\varepsilon}^{\frac{1}{4}-\frac{\delta}{4}}\right.\nonumber\\
&\quad\quad+\varepsilon(\varepsilon+1)\tilde{\varepsilon}+\left(C_{h,\infty}\varepsilon^{-\frac{5}{4}}\tilde{\varepsilon}^{\frac{\delta}{4}}+\varepsilon^{-\frac{1}{2}}\tilde{\varepsilon}^{\frac{1}{2}}\right)\tilde{\varepsilon}^{\frac{1}{2}-\frac{\delta}{4}}+\left(2\varepsilon(1-\varepsilon^3)+ 8\varepsilon^{-2}(1-\varepsilon^3)^2\right)\varepsilon^{-1}\tilde{\varepsilon}\nonumber\\
&\quad\quad\left.+\int_0^T\left( \widehat{\mu}_{-1}(s)^2+\varepsilon^{-2}\widehat{\mu}_0(s)^2+2\varepsilon^{-4}\widehat{\mu}_1(s)^2\right)ds+C(1-\varepsilon^3)\int_0^T\Lambda_{CH}(s)ds\right\}\nonumber\\
&\qquad\qquad\times\exp\left(\int_0^T(20+4(1-\varepsilon^3)\Lambda_{CH}(s))^{+}ds\right),
\end{align*}
  where $e(t)=u_{h,\tau}(t)-u(t)=\widehat{e}(t)+\widetilde{e}(t)$ and $a^{+}:=\max\{a,0\}$.
\end{theorem}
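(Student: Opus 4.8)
The plan is to derive a pathwise differential inequality for $\Vert\widehat e(t)\Vert^2_{\mathbb{H}^{-1}}$ and to close it on $\Omega_{\delta,\tilde{\varepsilon}}\cap\Omega_{\gamma,\tilde{\varepsilon}}\cap\Omega_{\tilde{\varepsilon}}$ by a nonlinear Gronwall (continuation) argument. First I would subtract the weak formulation \eqref{Weak1} of the RPDE from the perturbed system \eqref{Weak1a} satisfied by the time interpolants, obtaining error equations for $(\widehat e,\widehat w_{h,\tau}-\widehat w)$. Testing the mass-balance equation with $\varphi=(-\Delta)^{-1}\widehat e(t)$ yields $\tfrac12\tfrac{d}{dt}\Vert\widehat e\Vert^2_{\mathbb{H}^{-1}}$ and the coupling $(\widehat w_{h,\tau}-\widehat w,\widehat e)$, while testing the chemical-potential equation with $\psi=\widehat e(t)$ produces $\varepsilon\Vert\nabla\widehat e\Vert^2$, the nonlinear term $\varepsilon^{-1}(f(u_{h,\tau})-f(u),\widehat e)$, and the same coupling with opposite sign. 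Adding the two identities cancels the coupling and gives the master energy identity
\begin{equation*}
\tfrac12\tfrac{d}{dt}\Vert\widehat e\Vert^2_{\mathbb{H}^{-1}}+\varepsilon\Vert\nabla\widehat e\Vert^2+\tfrac1\varepsilon\bigl(f(u_{h,\tau})-f(u),\widehat e\bigr)=\langle\widehat{\mathcal R},(-\Delta)^{-1}\widehat e\rangle+\langle\widehat{\mathcal S},\widehat e\rangle,
\end{equation*}
whose right-hand side is controlled by \lemref{Residual} (the purpose of $\widehat\mu_{-1},\widehat\mu_0,\widehat\mu_1$) and Young's inequality.

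The core of the argument is the nonlinearity. Writing $e=\widehat e+\widetilde e$ and using the exact cubic identity $f(u_{h,\tau})-f(u)=f'(u_{h,\tau})e-3u_{h,\tau}e^2+e^3$, I split $\varepsilon^{-1}(f(u_{h,\tau})-f(u),\widehat e)$ into a linear, a quadratic and a cubic part. For the linear part I replace $\widehat e$ by $e-\widetilde e$ to make $\varepsilon^{-1}(f'(u_{h,\tau})e,e)$ appear, and I split the dissipation as $\varepsilon\Vert\nabla\widehat e\Vert^2=\varepsilon^4\Vert\nabla\widehat e\Vert^2+(1-\varepsilon^3)\varepsilon\Vert\nabla\widehat e\Vert^2$. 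Using $\nabla e=\nabla\widehat e+\nabla\widetilde e$, the combination $(1-\varepsilon^3)\bigl(\varepsilon\Vert\nabla e\Vert^2+\varepsilon^{-1}(f'(u_{h,\tau})e,e)\bigr)$ is bounded below by the principal eigenvalue \eqref{Eigenvalue1} as $(1-\varepsilon^3)\Lambda_{CH}(t)\Vert e\Vert^2_{\mathbb{H}^{-1}}$, the leftover $\varepsilon^2(f'(u_{h,\tau})e,e)$ being absorbed via the boundedness of $f'(u_{h,\tau})$ and interpolation. Since $\Lambda_{CH}$ may be negative, moving this bound to the right and splitting $\Vert e\Vert^2_{\mathbb{H}^{-1}}\le 2\Vert\widehat e\Vert^2_{\mathbb{H}^{-1}}+2\Vert\widetilde e\Vert^2_{\mathbb{H}^{-1}}$ yields precisely the Gronwall weight $\alpha(t)=(20+4(1-\varepsilon^3)\Lambda_{CH}(t))^{+}$ acting on $\Vert\widehat e\Vert^2_{\mathbb{H}^{-1}}$ and leaves the coefficient $\varepsilon^4/4$ of the dissipation on the left. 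All mismatch terms in $\widetilde e$ are absorbed by Young's inequality and bounded on $\Omega_{\tilde{\varepsilon}}$ through $\sup_t\Vert\widetilde e\Vert^2_{\mathbb{H}^{-1}}+\varepsilon\int_0^T\Vert\nabla\widetilde e\Vert^2\le\tilde{\varepsilon}$, while the lower-order pieces are controlled on $\Omega_{\delta,\tilde{\varepsilon}}$ and $\Omega_{\gamma,\tilde{\varepsilon}}$; this bookkeeping produces the $\tilde{\varepsilon}$-weighted data collected in $A$.

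The quadratic and cubic contributions carry $C_{h,\infty}=\sup_t\Vert u_{h,\tau}\Vert_{\mathbb{L}^\infty}$; using $|u_{h,\tau}|\le C_{h,\infty}$ their bad part is dominated by $6\varepsilon^{-1}C_{h,\infty}\int_0^t\Vert e\Vert^3_{\mathbb{L}^3}\,ds$, at which point \lemref{LemmaNormL3} applies verbatim. Together with the sign-definite part $\varepsilon^{-1}(e^3,e)=\varepsilon^{-1}\Vert e\Vert^4_{\mathbb{L}^4}$ of the cubic and the retained $\varepsilon^4\Vert\nabla\widehat e\Vert^2$ of the dissipation, the $\varepsilon^{-1}\Vert e\Vert^4_{\mathbb{L}^4}$ and $\tfrac{3\varepsilon^4}{4}\Vert\nabla\widehat e\Vert^2$ returned by \lemref{LemmaNormL3} are reabsorbed on the left, leaving the final coefficients $\tfrac1{4\varepsilon}$ and $\tfrac{\varepsilon^4}{4}$; the lemma also returns $\tilde{\varepsilon}$-data, a term $C\Vert\widehat e\Vert^2_{\mathbb{H}^{-1}}$ feeding the linear Gronwall factor, and the critical superlinear term $B\int_0^t\Vert\widehat e\Vert^{2/3}_{\mathbb{H}^{-1}}\bigl(\varepsilon^4\Vert\nabla\widehat e\Vert^2\bigr)ds$ with $B=CC_{h,\infty}^2\varepsilon^{-5}$. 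Integrating over $(0,t)$ and taking the supremum, with $Z(t):=\sup_{s\le t}\Vert\widehat e(s)\Vert^2_{\mathbb{H}^{-1}}+\tfrac{\varepsilon^4}{4}\int_0^t\Vert\nabla\widehat e\Vert^2+\tfrac1{4\varepsilon}\int_0^t\Vert e\Vert^4_{\mathbb{L}^4}$, leaves a schematic inequality
\begin{equation*}
Z(t)\le A+\int_0^t\alpha(s)\Vert\widehat e(s)\Vert^2_{\mathbb{H}^{-1}}\,ds+B\int_0^t\Vert\widehat e(s)\Vert^{2/3}_{\mathbb{H}^{-1}}\bigl(\varepsilon^4\Vert\nabla\widehat e(s)\Vert^2\bigr)ds.
\end{equation*}

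I expect the main obstacle to be closing this inequality. The $\alpha$-term is handled by the linear Gronwall lemma, producing the prefactor $E=\exp(\int_0^T\alpha)$ (finite thanks to the positive part and the assumption $\Lambda_{CH}\in L^1$); the last term, however, is superlinear in $Z$ and cannot be treated by Gronwall directly. I would therefore run a continuation argument in $t$: since $t\mapsto Z(t)$ is continuous and $Z(0)$ reduces to the (small) initial projection error absorbed into $A$, the hypothesis $8AE\le(8B(1+T)E)^{-1/\beta}$ with $\beta=2/3$ is exactly what keeps $\Vert\widehat e(t)\Vert^{2/3}_{\mathbb{H}^{-1}}$ small enough that $B\Vert\widehat e\Vert^{2/3}_{\mathbb{H}^{-1}}\varepsilon^4\Vert\nabla\widehat e\Vert^2$ can be reabsorbed into $\tfrac{\varepsilon^4}{4}\Vert\nabla\widehat e\Vert^2$ on all of $[0,T]$. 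Once this reabsorption is justified for every $t$, only the linear part remains and Gronwall delivers the stated estimate with the factor $E$. This continuation step is the reason the bound holds only under the smallness condition and only pathwise on $\Omega_{\delta,\tilde{\varepsilon}}\cap\Omega_{\gamma,\tilde{\varepsilon}}\cap\Omega_{\tilde{\varepsilon}}$.
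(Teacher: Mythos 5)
Your proposal is correct and follows essentially the same route as the paper: the same energy identity from testing with $(-\Delta)^{-1}\widehat{e}$ and $\widehat{e}$, the same $\varepsilon^3/(1-\varepsilon^3)$ splitting that combines the spectral estimate \eqref{Eigenvalue1} with the retained $\varepsilon^4$-dissipation, the same use of \lemref{LemmaNormL3} for the cubic term, and the same treatment of the $\widetilde{e}$-terms on $\Omega_{\delta,\tilde{\varepsilon}}\cap\Omega_{\gamma,\tilde{\varepsilon}}\cap\Omega_{\tilde{\varepsilon}}$. The only cosmetic difference is that where you run the continuation argument by hand to absorb the superlinear term $B\Vert\widehat{e}\Vert^{2/3}_{\mathbb{H}^{-1}}\Vert\nabla\widehat{e}\Vert^2$, the paper invokes the generalized Gronwall lemma (\lemref{Gronwall}), whose hypothesis is exactly your smallness condition $8AE\leq(8B(1+T)E)^{-1/\beta}$.
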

\begin{proof}
Setting $\widehat{e}_w(t):=\widehat{w}_{h,\tau}(t)-\widehat{w}(t)$, subtracting \eqref{Weak1} from \eqref{Weak1a}, and taking $\varphi=(-\Delta)^{-1}\widehat{e}(t)$ and $\psi=\widehat{e}(t)$ in the resulting equations, we derive:
\begin{align*}
&(\partial_t\widehat{e}(t), (-\Delta)^{-1}\widehat{e}(t))+(\nabla\widehat{e}(t), \nabla(-\Delta)^{-1}\widehat{e}(t))=\langle\widehat{\mathcal{R}}(t), (-\Delta)^{-1}\widehat{e}(t)\rangle\\
&-(\widehat{e}_w(t), \widehat{e}(t))+\varepsilon(\nabla\widehat{e}(t), \nabla\widehat{e}(t))=-\varepsilon^{-1}\left(f(u_{h,\tau}(t))-f(u(t)), \widehat{e}(t)\right)+\langle\widehat{\mathcal{S}}(t), \widehat{e}(t)\rangle.
\end{align*}
Summing the preceding two equations, we obtain:
\begin{align}
\label{ErrorRandom1}
&\frac{1}{2}\frac{d}{dt}\Vert\widehat{e}(t)\Vert^2_{\mathbb{H}^{-1}}+\varepsilon\Vert\nabla\widehat{e}(t)\Vert^2+\varepsilon^{-1}\left(f(u_{h,\tau}(t))-f(u(t)), \widehat{e}(t)\right)\nonumber\\
&\qquad=\langle\widehat{\mathcal{R}}(t), (-\Delta)^{-1}\widehat{e}(t)\rangle+\langle\widehat{\mathcal{S}}(t), \widehat{e}(t)\rangle.
\end{align}
Using the fact that $e(t)=\widehat{e}(t)+\widetilde{e}(t)$, we  split the term involving $f$ in \eqref{ErrorRandom1} as follows:
\begin{align}
\label{decompof1}
&f(u_{h,\tau}(t))-f(u(t)), \widehat{e}(t))\nonumber\\
&\qquad=(f(u_{h,\tau}(t))-f(u(t)), e(t))-(f(u_{h,\tau}(t))-f(u(t)), \widetilde{e}(t)). 
\end{align}
We use the identity
\begin{align}
\label{identityf1}
f(a)-f(b)&=(a-b)f'(a)+(a-b)^3-3(a-b)^2a\nonumber\\
&=3(a-b)a^2-(a-b)+(a-b)^3-3(a-b)^2a\quad a,b\in\mathbb{R},
\end{align}
and note that $e(t)=u_{h,\tau}(t)-u(t)$ to obtain
\begin{align}
\label{decompof1a}
\left(f(u_{h,\tau}(t))-f(u(t)), e(t)\right)&=3(u^2_{h,\tau}(t), e(t)^2)-\Vert e(t)\Vert^2+\Vert e(t)\Vert^4_{\mathbb{L}^4}-3(u_{h,\tau}(t), e(t)^3)\nonumber\\
&\geq\Vert e(t)\Vert^4_{\mathbb{L}^4}-\Vert e(t)\Vert^2-3(u_{h,\tau}(t), e(t)^3).
\end{align}
Substituting \eqref{decompof1a} into \eqref{decompof1} and noting $e(t)=\widetilde{e}(t)+\widehat{e}(t)$ yields
\begin{align}
\label{decompof1b}
&\left(f(u_{h,\tau}(t))-f(u(t)), \widehat{e}(t)\right)\nonumber\\
&\qquad\geq \Vert e(t)\Vert^4_{\mathbb{L}^4}-\Vert e(t)\Vert^2-3(u_{h,\tau}(t), e(t)^3)-\left(f(u_{h,\tau}(t))-f(u(t)), \widetilde{e}(t)\right).
\end{align}
Substituting \eqref{decompof1b} into \eqref{ErrorRandom1} yields
\begin{align*}
&\frac{1}{2}\frac{d}{dt}\Vert \widehat{e}(t)\Vert^2_{\mathbb{H}^{-1}}+\varepsilon\Vert\nabla\widehat{e}(t)\Vert^2+\frac{1}{\varepsilon}\Vert e(t)\Vert^4_{\mathbb{L}^4}\nonumber\\
&\qquad\leq \varepsilon^{-1}\Vert e(t)\Vert^2+3\varepsilon^{-1}(u_{h,\tau}(t), e(t)^3)+\varepsilon^{-1}\left(f(u_{h,\tau}(t))-f(u(t)), \widetilde{e}(t)\right)\\
&\qquad\qquad+\langle\widehat{\mathcal{R}}(t), (-\Delta)^{-1}\widehat{e}(t)\rangle+\langle\widehat{\mathcal{S}}(t), \widehat{e}(t)\rangle.\nonumber
\end{align*}
Multiplying the preceding estimate by $\varepsilon^3$ yields:
\begin{align}
\label{ErrorRandom2}
&\frac{\varepsilon^3}{2}\frac{d}{dt}\Vert \widehat{e}(t)\Vert^2_{\mathbb{H}^{-1}}+\varepsilon^4\Vert\nabla\widehat{e}(t)\Vert^2+\varepsilon^2\Vert e(t)\Vert^4_{\mathbb{L}^4}\nonumber\\
&\qquad\leq \varepsilon^{2}\Vert e(t)\Vert^2+3\varepsilon^{2}(u_{h,\tau}(t), e(t)^3)+\varepsilon^{2}\left(f(u_{h,\tau}(t))-f(u(t)), \widetilde{e}(t)\right)\\
&\qquad\qquad+\varepsilon^3\langle\widehat{\mathcal{R}}(t), (-\Delta)^{-1}\widehat{e}(t)\rangle+\varepsilon^3\langle\widehat{\mathcal{S}}(t), \widehat{e}(t)\rangle.\nonumber
\end{align}
Using \eqref{identityf1} we estimate
\begin{align*}
&\left(f(u_{h,\tau}(t))-f(u(t)), \widehat{e}(t)\right)\nonumber\\
&=\left(f(u_{h,\tau}(t))-f(u(t)), e(t)\right)-\left(f(u_{h,\tau}(t))-f(u(t)), \widetilde{e}(t)\right)\nonumber\\
&\geq \left(f'(u_{h,\tau}(t))e(t), e(t)\right)+\Vert e(t)\Vert^4_{\mathbb{L}^4}-3(u_{h,\tau}(t), e(t)^3)-\left(f(u_{h,\tau}(t))-f(u(t)), \widetilde{e}(t)\right).
\end{align*}
We substitute the preceding estimate into \eqref{ErrorRandom1} and get
\begin{align}
\label{ErrorRandom3}
&\frac{1}{2}\frac{d}{dt}\Vert\widehat{e}(t)\Vert^2_{\mathbb{H}^{-1}}+\varepsilon\Vert\nabla\widehat{e}(t)\Vert^2+\frac{1}{\varepsilon}\Vert e(t)\Vert^4_{\mathbb{L}^4}\nonumber\\
&\leq  -\varepsilon^{-1}\left(f'(u_{h,\tau}(t))e(t), e(t)\right)+3\varepsilon^{-1}(u_{h,\tau}(t), e(t)^3)+\varepsilon^{-1}\left(f(u_{h,\tau}(t))-f(u(t)), \widetilde{e}(t)\right)\\
&\qquad+\langle\mathcal{\widehat{R}}(t), (-\Delta)^{-1}\widehat{e}(t)\rangle+\langle \mathcal{S}(t), \widehat{e}(t)\rangle. \nonumber
\end{align}
Using the spectral estimate \eqref{Eigenvalue1} and the triangle inequality, we estimate
\begin{align*}
-\varepsilon^{-1}\left(f'(u_{h,\tau}(t))e(t), e(t)\right)&\leq \Lambda_{CH}(t)\Vert e(t)\Vert^2_{\mathbb{H}^{-1}}+\varepsilon\Vert \nabla e(t)\Vert^2\nonumber\\
&\leq  \Lambda_{CH}(t)\Vert e(t)\Vert^2_{\mathbb{H}^{-1}}+ \varepsilon\left(\Vert\nabla\widetilde{e}(t)\Vert+\Vert\nabla\widehat{e}(t)\Vert\right)^2\\
&\leq\Lambda_{CH}(t)\Vert e(t)\Vert^2_{\mathbb{H}^{-1}}+\varepsilon\Vert \nabla\widetilde{e}(t)\Vert^2+\varepsilon\Vert\nabla\widehat{e}(t)\Vert^2+2\varepsilon\Vert\nabla\widehat{e}(t)\Vert\Vert\nabla\widetilde{e}(t)\Vert.\nonumber
\end{align*}
Substituting the preceding estimate into \eqref{ErrorRandom3} yields
\begin{align}
\label{ErrorRandom4}
&\frac{1}{2}\frac{d}{dt}\Vert\widehat{e}(t)\Vert^2_{\mathbb{H}^{-1}}+\varepsilon\Vert\nabla\widehat{e}(t)\Vert^2+\frac{1}{\varepsilon}\Vert e(t)\Vert^4_{\mathbb{L}^4}\nonumber\\
&\;\;\;\leq \Lambda_{CH}(t)\Vert e(t)\Vert^2_{\mathbb{H}^{-1}}+3\varepsilon^{-1}(u_{h,\tau}(t), e(t)^3)+\varepsilon\Vert \nabla\widetilde{e}(t)\Vert^2+\varepsilon\Vert\nabla\widehat{e}(t)\Vert^2\\
&\quad\;\;\;+2\varepsilon\Vert\nabla\widehat{e}(t)\Vert\Vert\nabla\widetilde{e}(t)\Vert+\varepsilon^{-1}\left(f(u_{h,\tau}(t))-f(u(t)), \widetilde{e}(t)\right)\nonumber\\
&\quad\;\;\;+\langle\mathcal{\widehat{R}}(t), (-\Delta)^{-1}\widehat{e}(t)\rangle+\langle \mathcal{S}(t), \widehat{e}(t)\rangle. \nonumber
\end{align}
Multiplying \eqref{ErrorRandom4} by $1-\varepsilon^3$  and adding the resulting estimate to  \eqref{ErrorRandom2} yields:
\begin{align}
\label{ErrorRandom5}
&\frac{1}{2}\frac{d}{dt}\Vert\widehat{e}(t)\Vert^2_{\mathbb{H}^{-1}}+\varepsilon^{4}\Vert\nabla\widehat{e}(t)\Vert^2+\frac{1}{\varepsilon}\Vert e(t)\Vert^4_{\mathbb{L}^4}\nonumber\\
&\qquad\leq \varepsilon^2\Vert e(t)\Vert^2+(1-\varepsilon^3)\Lambda_{CH}(t)\Vert e(t)\Vert^2_{\mathbb{H}^{-1}}+\varepsilon^{-1}\left(f(u_{h,\tau}(t))-f(u(t)), \widetilde{e}(t)\right)\\
&\qquad\qquad+3\varepsilon^{-1}(u_{h,\tau}(t), e(t)^3)+\varepsilon(1-\varepsilon^3)\Vert\nabla\widetilde{e}(t)\Vert^2+2\varepsilon(1-\varepsilon^3)\Vert\nabla\widehat{e}(t)\Vert\Vert\nabla\widetilde{e}(t)\Vert\nonumber\\
&\qquad\qquad+\langle\mathcal{\widehat{R}}(t), (-\Delta)^{-1}\widehat{e}(t)\rangle+\langle \mathcal{S}(t), \widehat{e}(t)\rangle,\nonumber
\end{align}
where we also used the fact that $0<\varepsilon<1$. 

Using \lemref{Residual} and Young's inequality, we obtain:
\begin{align}
\label{inter1a}
&2\langle\mathcal{\widehat{R}}(t), (-\Delta)^{-1}\widehat{e}(t)\rangle+2\langle \mathcal{S}(t), \widehat{e}(t)\rangle\nonumber\\
&\qquad\leq \widehat{\mu}_{-1}(t)^2+\varepsilon^{-2}\widehat{\mu}_0(t)^2+2\varepsilon^{-4}\widehat{\mu}_1(t)^2+\Vert \widehat{e}(t)\Vert^2_{\mathbb{H}^{-1}}+\varepsilon^2\Vert\widehat{e}(t)\Vert^2+\frac{\varepsilon^4}{2}\Vert\nabla\widehat{e}(t)\Vert^2.
\end{align}
Using the interpolation inequality $\Vert\cdot\Vert^2\leq\Vert\cdot\Vert_{\mathbb{H}^{-1}}\Vert\nabla \cdot\Vert$ and Young's inequality, we derive: 
\begin{align}
\label{inter1b}
4\varepsilon^2\Vert \widehat{e}(t)\Vert^2\leq 4\varepsilon^2\Vert\widehat{e}(t)\Vert_{\mathbb{H}^{-1}}\Vert\nabla\widehat{e}(t)\Vert\leq \frac{\varepsilon^4}{2}\Vert\nabla\widehat{e}(t)\Vert^2+18\Vert\widehat{e}(t)\Vert^2_{\mathbb{H}^{-1}}. 
\end{align}
Using \eqref{inter1b} in \eqref{inter1a}, we obtain:
\begin{align*}
&2\langle\mathcal{\widehat{R}}(t), (-\Delta)^{-1}\widehat{e}(t)\rangle+2\langle \mathcal{S}(t), \widehat{e}(t)\rangle+2\varepsilon^2\Vert\widehat{e}(t)\Vert^2\nonumber\\
&\leq  \widehat{\mu}_{-1}(t)^2+\varepsilon^{-2}\widehat{\mu}_0(t)^2+2\varepsilon^{-4}\widehat{\mu}_1(t)^2+18\Vert \widehat{e}(t)\Vert^2_{\mathbb{H}^{-1}}+\frac{\varepsilon^4}{2}\Vert\nabla\widehat{e}(t)\Vert^2. 
\end{align*}
Using Young's inequality, yields:
\begin{align}
\label{inter1c}
4\varepsilon(1-\varepsilon^3)\Vert \nabla\widehat{e}(t)\Vert\Vert\nabla\widetilde{e}(t)\Vert\leq \frac{\varepsilon^4}{2}\Vert\nabla\widehat{e}(t)\Vert^2+8\varepsilon^{-2}(1-\varepsilon^3)^2\Vert\nabla\widetilde{e}(t)\Vert^2. 
\end{align} 
We substitute \eqref{inter1c} and \eqref{inter1a} into \eqref{ErrorRandom5} and integrate  over $(0, t)$ to get
\begin{align}
\label{ErrorRandom6}
&\Vert\widehat{e}(t)\Vert^2_{\mathbb{H}^{-1}}+\varepsilon^4\int_0^t\Vert\nabla\widehat{e}(s)\Vert^2ds+2\varepsilon^{-1}\int_0^t\Vert e(s)\Vert^4_{\mathbb{L}^4}ds\nonumber\\
&\leq\int_0^t(19+4(1-\varepsilon^3)\Lambda_{CH}(s))\Vert\widehat{e}(s)\Vert^2_{\mathbb{H}^{-1}}ds+{6}\varepsilon^{-1}\int_0^t\vert (u_{h,\tau}(s), e(s)^3)\vert ds\\
&\quad+\int_0^t\left( \widehat{\mu}_{-1}(s)^2+\varepsilon^{-2}\widehat{\mu}_0(s)^2+2\varepsilon^{-4}\widehat{\mu}_1(s)^2\right)ds+4(1-\varepsilon^3)\int_0^t\Lambda_{CH}(s)\Vert\widetilde{e}(s)\Vert^2_{\mathbb{H}^{-1}}ds\nonumber\\
&\quad+4\varepsilon^2\int_0^t\Vert\widetilde{e}(s)\Vert^2ds+\left(2\varepsilon(1-\varepsilon^3)+8\varepsilon^{-2}(1-\varepsilon^3)^2\right)\int_0^t\Vert\nabla\widetilde{e}(s)\Vert^2ds+\text{VII}, \nonumber
\end{align}
where $\text{VII}$ is defined as:
\begin{align*}
\text{VII}:=2\varepsilon^{-1}\int_0^t\left\vert \Big(f(u_{h,\tau}(s))-f(u(s)), \widetilde{e}(s)\Big)\right\vert ds.
\end{align*}
Next, we estimate $\text{VII}$. Applying the triangle inequality gives:
\begin{align}
\label{long1}
\text{VII}\leq 2\varepsilon^{-1}\int_0^t\left\vert \big(f(u_{h,\tau}(s), \widetilde{e}(s)\big)\right\vert ds+2\varepsilon^{-1}\int_0^t\left\vert \big(f(u(s)), \widetilde{e}(s)\big)\right\vert ds
=:\text{VII}_1+\text{VII}_2.
\end{align}
Using the Cauchy-Schwarz inequality, the interpolation inequality $\Vert \cdot \Vert^2 \leq \Vert \cdot \Vert_{\mathbb{H}^{-1}} \Vert \nabla \cdot \Vert$, Hölder's inequality, and the definition of $\Omega_{\tilde{\varepsilon}}$, it follows $\mathbb{P}$-a.s. on $\Omega_{\tilde{\varepsilon}}$ that
\begin{align}
\label{long1a}
\text{VII}_1&=2\varepsilon^{-1}\int_0^t\left\vert \big(f(u_{h,\tau}(s)), \widetilde{e}(s)\big)\right\vert ds\leq \varepsilon^{-1}C\int_0^t\Vert u_{h,\tau}(s)\Vert_{\mathbb{L}^{\infty}}\Vert \widetilde{e}(s)\Vert ds\nonumber\\
&\leq\varepsilon^{-1} C\sup_{s\in[0, T]}\Vert u_{h,\tau}(s)\Vert_{\mathbb{L}^{\infty}}\int_0^t\Vert \widetilde{e}(s)\Vert_{\mathbb{H}^{-1}}^{\frac{1}{2}}\Vert \nabla \widetilde{e}(s)\Vert^{\frac{1}{2}}ds\nonumber\\
&\leq \varepsilon^{-1}C\sup_{t\in [0, T]}\Vert u_{h,\tau}(t)\Vert_{\mathbb{L}^{\infty}}\sup_{t\in[0, T]}\Vert \widetilde{e}(t)\Vert_{\mathbb{H}^{-1}}^{\frac{1}{2}}\int_0^T\Vert\nabla\widetilde{e}(s)\Vert^{\frac{1}{2}}ds\\
&\leq \varepsilon^{-1}C T^{\frac{3}{4}}\sup_{t\in [0, T]}\Vert u_{h,\tau}(t)\Vert_{\mathbb{L}^{\infty}}\sup_{t\in[0, T]}\Vert \widetilde{e}(t)\Vert_{\mathbb{H}^{-1}}^{\frac{1}{2}}\left(\int_0^T\Vert\nabla\widetilde{e}(s)\Vert^{2}ds\right)^{\frac{1}{4}}\leq CC_{h,\infty}\varepsilon^{-\frac{5}{4}}\tilde{\varepsilon}^{\frac{1}{2}}.\nonumber
\end{align}
Recalling that $f(a) = a^3 - a$ and applying the triangle inequality, we estimate
\begin{align}
\label{long1b}
\text{VII}_2\leq 2\varepsilon^{-1}\int_0^t\left\vert \big(u(s)^3, \widetilde{e}(s)\big)\right\vert ds+2\varepsilon^{-1}\int_0^t\left\vert \big(u(s), \widetilde{e}(s)\big)\right\vert ds=:\text{VII}_{21}+\text{VII}_{22}.
\end{align}
Using the Cauchy-Schwarz and Hölder inequalities, the embedding $\mathbb{L}^q \hookrightarrow \mathbb{L}^p$ ($1 \leq p \leq q$), and the interpolation inequality $\Vert \cdot \Vert^2 \leq \Vert \cdot \Vert_{\mathbb{H}^{-1}} \Vert \nabla \cdot \Vert$, it holds $\mathbb{P}$-a.s. on $\Omega_{\delta, \tilde{\varepsilon}} \cap \Omega_{\tilde{\varepsilon}}$ that:
\begin{align}
\label{long1c}
\text{VII}_{22}&\leq 2\varepsilon^{-1}\int_0^t\Vert u(s)\Vert\Vert \widetilde{e}(s)\Vert ds\leq 2\varepsilon^{-1}\left(\int_0^t\Vert u(s)\Vert^2ds\right)^{\frac{1}{2}}\left(\int_0^t\Vert\widetilde{e}(s)\Vert^2ds\right)^{\frac{1}{2}}\nonumber\\
&\leq C\varepsilon^{-1}\left(\int_0^t\Vert u(s)\Vert^2_{\mathbb{L}^4} ds\right)^{\frac{1}{2}}\left(\int_0^t\Vert\widetilde{e}(s)\Vert_{\mathbb{H}^{-1}}\Vert\nabla\widetilde{e}(s)\Vert ds\right)^{\frac{1}{2}}\nonumber\\
&\leq C\varepsilon^{-1}\left(\int_0^t\Vert u(s)\Vert^4_{\mathbb{L}^4} ds\right)^{\frac{1}{4}}\sup_{t\in[0, T]}\Vert\widetilde{e}(s)\Vert_{\mathbb{H}^{-1}}^{\frac{1}{2}}\left(\int_0^T\Vert\nabla\widetilde{e}(s)\Vert ds\right)^{\frac{1}{2}}\\
&\leq C\varepsilon^{-1}\left(\int_0^t\Vert u(s)\Vert^4_{\mathbb{L}^4} ds\right)^{\frac{1}{4}}\sup_{t\in[0, T]}\Vert\widetilde{e}(s)\Vert_{\mathbb{H}^{-1}}^{\frac{1}{2}}\left(\int_0^T\Vert\nabla\widetilde{e}(s)\Vert^2 ds\right)^{\frac{1}{4}}\leq C\varepsilon^{-\frac{1}{2}}\tilde{\varepsilon}^{1-\frac{\delta}{4}}.\nonumber
\end{align}
Using Hölder's and Young's inequalities, we estimate
\begin{align}
\label{long1d}
\text{VII}_{21}&  =2\varepsilon^{-1}\int_0^t\vert (u(s)^3, \widetilde{e}(s))\vert ds \leq 2\varepsilon^{-1}\int_0^t\Vert u(s)\Vert^3_{\mathbb{L}^4}\Vert \widetilde{e}(s)\Vert_{\mathbb{L}^4}ds\nonumber\\
&\leq2\varepsilon^{-1}\sup_{t\in[0, T]}\Vert \widetilde{e}(t)\Vert^{\frac{1}{2}}_{\mathbb{L}^4}\int_0^T\Vert u(s)\Vert^3_{\mathbb{L}^4}\Vert \widetilde{e}(s)\Vert_{\mathbb{L}^4}^{\frac{1}{2}}ds\\
&\leq 2\varepsilon^{-1}\sup_{t\in[0, T]}\Vert \widetilde{e}(t)\Vert^{\frac{1}{2}}_{\mathbb{L}^4}\left(\int_0^t\Vert u(s)\Vert^4_{\mathbb{L}^4}ds\right)^{\frac{3}{4}}\left(\int_0^t\Vert\widetilde{e}(s)\Vert^2_{\mathbb{L}^4}ds\right)^{\frac{1}{4}}.\nonumber
\end{align}
Using the Sobolev embeddings $\mathbb{H}^1 \hookrightarrow \mathbb{L}^4$ and $\mathbb{L}^\infty \hookrightarrow \mathbb{L}^4$, Poincaré's inequality, and the definitions of $\Omega_{\tilde{\varepsilon}}$, $\Omega_{\delta, \tilde{\varepsilon}}$, and $\Omega_{\gamma, \tilde{\varepsilon}}$, it follows from \eqref{long1d} that:
\begin{align}
\label{long3}
\text{VII}_{21}&\leq C\varepsilon^{-1}\sup_{t\in[0, T]}\left[\Vert \widetilde{u}(t)\Vert^{\frac{1}{2}}_{\mathbb{L}^4}+\Vert {\red \widetilde{u}}_{h,\tau}(t)\Vert_{\mathbb{L}^4}^{\frac{1}{2}}\right]\left(\int_0^t\Vert u(s)\Vert^4_{\mathbb{L}^4}ds\right)^{\frac{3}{4}}\left(\int_0^t\Vert\nabla\widetilde{e}(s)\Vert^2ds\right)^{\frac{1}{4}}\nonumber\\
&\leq C\left(\tilde{\varepsilon}^{-\frac{\gamma}{4}}+C_{h,\infty}^{\frac{1}{2}}\right)\varepsilon^{-2}\tilde{\varepsilon}^{\frac{1}{4}-\frac{\delta}{4}}.
\end{align}
Substituting \eqref{long3} and \eqref{long1c} into \eqref{long1b} gives:
\begin{align}
\label{long3aa}
\text{VII}_2\leq C\left(\tilde{\varepsilon}^{-\frac{\gamma}{4}}+C_{h,\infty}^{\frac{1}{2}}\right)\varepsilon^{-2}\tilde{\varepsilon}^{\frac{1}{4}-\frac{\delta}{4}}+C\varepsilon^{-\frac{1}{2}}\tilde{\varepsilon}^{1-\frac{\delta}{4}}.
\end{align} 
Substituting \eqref{long1a} and \eqref{long3aa}  into \eqref{long1} yields
\begin{align}
\label{long3a}
\text{VII}\leq C\left(C_{h,\infty}\varepsilon^{-\frac{5}{4}}\tilde{\varepsilon}^{\frac{\delta}{4}}+\varepsilon^{-\frac{1}{2}}\tilde{\varepsilon}^{\frac{1}{2}}\right)\tilde{\varepsilon}^{\frac{1}{2}-\frac{\delta}{4}}+C\left(\tilde{\varepsilon}^{-\frac{\gamma}{4}}+C_{h,\infty}^{\frac{1}{2}}\right)\varepsilon^{-2}\tilde{\varepsilon}^{\frac{1}{4}-\frac{\delta}{4}}.
\end{align}
Substituting \eqref{long3a} into \eqref{ErrorRandom6} yields:
\begin{align}
\label{ErrorRandom7}
&\Vert\widehat{e}(t)\Vert^2_{\mathbb{H}^{-1}}+3\varepsilon^4\int_0^t\Vert\nabla\widehat{e}(s)\Vert^2ds+2\varepsilon^{-1}\int_0^t\Vert e(s)\Vert^4_{\mathbb{L}^4}ds\nonumber\\
&\quad\leq\int_0^t(19+4(1-\varepsilon^3)\Lambda_{CH}(s))\Vert\widehat{e}(s)\Vert^2_{\mathbb{H}^{-1}}ds+6\varepsilon^{-1}\int_0^t\vert (u_{h,\tau}(s), e(s))^3)\vert ds\\
&\quad\quad+C\left(C_{h,\infty}\varepsilon^{-\frac{5}{4}}\tilde{\varepsilon}^{\frac{\delta}{4}}+\varepsilon^{-\frac{1}{2}}\tilde{\varepsilon}^{\frac{1}{2}}\right)\tilde{\varepsilon}^{\frac{1}{2}-\frac{\delta}{4}}+C\left(\tilde{\varepsilon}^{-\frac{\gamma}{4}}+C_{h,\infty}^{\frac{1}{2}}\right)\varepsilon^{-2}\tilde{\varepsilon}^{\frac{1}{4}-\frac{\delta}{4}}\nonumber\\
&\quad\quad+\int_0^t\left( \widehat{\mu}_{-1}(s)^2+\varepsilon^{-2}\widehat{\mu}_0(s)^2+2\varepsilon^{-4}\widehat{\mu}_1(s)^2\right)ds+C(1-\varepsilon^3)\tilde{\varepsilon}\int_0^t\Lambda_{CH}(s)ds\nonumber\\
&\quad\quad+4\varepsilon^2\int_0^t\Vert\widetilde{e}(s)\Vert^2ds+\left(2\varepsilon(1-\varepsilon^3)+8\varepsilon^{-2}(1-\varepsilon^3)^2\right)\int_0^t\Vert\nabla\widetilde{e}(s)\Vert^2ds.\nonumber
\end{align}
Next, we note that
\begin{align}
\label{long5}
6\varepsilon^{-1}\int_0^t\vert(u_{h,\tau}(s), e(s)^3)\vert ds\leq 6\varepsilon^{-1}\sup_{t\in[0, T]}\Vert u_{h,\tau}(t)\Vert_{\mathbb{L}^{\infty}}\int_0^t\Vert e(s)\Vert^3_{\mathbb{L}^3}ds.
\end{align}
Using the interpolation inequality $\Vert \cdot \Vert^2 \leq \Vert \cdot \Vert_{\mathbb{H}^{-1}} \Vert \nabla \cdot \Vert$, it holds $\mathbb{P}$-a.s. on $\Omega_{\tilde{\varepsilon}}$ that:
\begin{align}
\label{long6}
2\varepsilon^2\int_0^t\Vert \widetilde{e}(s)\Vert^2ds\leq \varepsilon^{2}\sup_{s\in[0, T]}\Vert\widetilde{e}(s)\Vert^2_{\mathbb{H}^{-1}}+\varepsilon^2\int_0^t\Vert \nabla\widetilde{e}(s)\Vert^2ds\leq C\varepsilon^2\tilde{\varepsilon}+C\varepsilon\tilde{\varepsilon}. 
\end{align}
Substituting \eqref{long6} and \eqref{long5} into \eqref{ErrorRandom7}, it follows that $\mathbb{P}$-a.s. on $\Omega_{\delta, \tilde{\varepsilon}} \cap \Omega_{\gamma, \tilde{\varepsilon}} \cap \Omega_{\tilde{\varepsilon}}$, we have:
\begin{align}
\label{long7}
&\Vert\widehat{e}(t)\Vert^2_{\mathbb{H}^{-1}}+\varepsilon^4\int_0^t\Vert\nabla\widehat{e}(s)\Vert^2ds+2\varepsilon^{-1}\int_0^t\Vert e(s)\Vert^4_{\mathbb{L}^4}ds\nonumber\\
&\quad\leq\int_0^t(19+4(1-\varepsilon^3)\Lambda_{CH}(s))\Vert\widehat{e}(s)\Vert^2_{\mathbb{H}^{-1}}ds+6\varepsilon^{-1}C_{h,\infty}\int_0^t\Vert e(s)\Vert^3_{\mathbb{L}^3}ds\\
&\quad\quad+C\left(C_{h,\infty}\varepsilon^{-\frac{5}{4}}\tilde{\varepsilon}^{\frac{\delta}{4}}+\varepsilon^{-\frac{1}{2}}\tilde{\varepsilon}^{\frac{1}{2}}\right)\tilde{\varepsilon}^{\frac{1}{2}-\frac{\delta}{4}}+C\left(\tilde{\varepsilon}^{-\frac{\gamma}{4}}+C_{h,\infty}^{\frac{1}{2}}\right)\varepsilon^{-2}\tilde{\varepsilon}^{\frac{1}{4}-\frac{\delta}{4}}\nonumber\\
&\quad\quad+C(1-\varepsilon^3)\tilde{\varepsilon}\int_0^t\Lambda_{CH}(s)ds+\int_0^t\left( \widehat{\mu}_{-1}(s)^2+\varepsilon^{-2}\widehat{\mu}_0(s)^2+2\varepsilon^{-4}\widehat{\mu}_1(s)^2\right)ds\nonumber\\
&\quad\quad+C\varepsilon(\varepsilon+1)\tilde{\varepsilon}+\left(2\varepsilon(1-\varepsilon^3)+8\varepsilon^{-2}(1-\varepsilon^3)^2\right)\varepsilon^{-1}\tilde{\varepsilon}.\nonumber
\end{align}
We  use \lemref{LemmaNormL3} to estimate the $\mathbb{L}^3$-term on the right-hand side of \eqref{long7} and obtain that
 \begin{align*}
&\Vert\widehat{e}(t)\Vert^2_{\mathbb{H}^{-1}}+\frac{\varepsilon^4}{4}\int_0^t\Vert\nabla\widehat{e}(s)\Vert^2ds+\frac{1}{4\varepsilon}\int_0^t\Vert e(s)\Vert^4_{\mathbb{L}^4}ds\nonumber\\
&\quad\leq C\left[C_{h,\infty}^4\varepsilon^{-6}+\varepsilon^3+C_{h,\infty}^4\varepsilon^{-3}+C_{h,\infty}^6\varepsilon^{-8}\right]\tilde{\varepsilon}+CC_{h,\infty}^4\varepsilon^{-8}\tilde{\varepsilon}^{1-\gamma}\\
&\quad\quad+C\left(C_{h,\infty}\varepsilon^{-\frac{5}{4}}\tilde{\varepsilon}^{\frac{\delta}{4}}+\varepsilon^{-\frac{1}{2}}\tilde{\varepsilon}^{\frac{1}{2}}\right)\tilde{\varepsilon}^{\frac{1}{2}-\frac{\delta}{4}}+C\left(\tilde{\varepsilon}^{-\frac{\gamma}{4}}+C_{h,\infty}^{\frac{1}{2}}\right)\varepsilon^{-2}\tilde{\varepsilon}^{\frac{1}{4}-\frac{\delta}{4}}+C\varepsilon(\varepsilon+1)\tilde{\varepsilon}\nonumber\\
&\quad\quad+\left(2\varepsilon(1-\varepsilon^3)+8\varepsilon^{-2}(1-\varepsilon^3)^2\right)\varepsilon^{-1}\tilde{\varepsilon}+\int_0^t\left( \widehat{\mu}_{-1}(s)^2+\varepsilon^{-2}\widehat{\mu}_0(s)^2+2\varepsilon^{-4}\widehat{\mu}_1(s)^2\right)ds\nonumber\\
&\quad\quad+C(1-\varepsilon^3)\int_0^T\Lambda_{CH}(s)ds+C\int_0^t(20+4(1-\varepsilon^3)\Lambda_{CH}(s))\Vert\widehat{e}(s)\Vert^2_{\mathbb{H}^{-1}}ds\nonumber\\
&\quad\quad+CC_{h,\infty}^2\varepsilon^{-1}\int_0^t\Vert \widehat{e}(s)\Vert^{\frac{2}{3}}_{\mathbb{H}^{-1}}\Vert\nabla\widehat{e}(s)\Vert^2ds,\nonumber
\end{align*}
$\mathbb{P}$-a.s. on $\Omega_{\delta, \tilde{\varepsilon}} \cap \Omega_{\gamma, \tilde{\varepsilon}} \cap \Omega_{\tilde{\varepsilon}}$.

Applying the generalized Gronwall lemma (\lemref{Gronwall}) to the above estimate, with 
\begin{align*}
y_1(t)=\Vert \widehat{e}(t)\Vert^2_{\mathbb{H}^{-1}},\; y_2(t)=\varepsilon^4\Vert\nabla\widehat{e}(t)\Vert^2+\varepsilon^{-1}\Vert e(t)\Vert^4_{\mathbb{L}^4},\; y_3(t)=CC_{h,\infty}^2\varepsilon^{-1}\Vert \widehat{e}(t)\Vert^{\frac{2}{2}}_{\mathbb{H}^{-1}}\Vert\nabla\widehat{e}(t)\Vert^2
\end{align*}
and $\beta=2/3$ completes the proof.
\end{proof}

 \section{Error estimate for the approximation of the stochastic Cahn-Hilliard equation}
\label{ErrorRegularizedCHE}
In this section, we combine the estimates for the linear SPDE in Section~\ref{ErrorlinearSPDE} and the estimates for the nonlinear RPDE in Section~\ref
{ErrorRandonPDE} to derive an a posteriori error estimate for the fully discrete approximation scheme \eqref{scheme1}.

We denote the subspace of functions from $\mathbb{V}^n_h$ with zero mean as
\begin{align*}
\mathring{\mathbb{V}}^n_h:=\{v_h\in \mathbb{V}^n_h: (v_h, 1)=0\}.
\end{align*}
We introduce the discrete inverse Laplace operator $\Delta_h^{-1}: \mathring{\mathbb{V}}_h^n\to \mathring{\mathbb{V}}_h^n$ as follows: given $u_h\in \mathring{\mathbb{V}}^n_h$, define $\Delta_h^{-1}u_h\in \mathring{\mathbb{V}}_h^n$ such that
\begin{align*}
(\nabla(-\Delta_h^{-1}u_h), \nabla v_h)=(u_h, v_h)\quad \forall v_h\in \mathbb{V}_h^n.
\end{align*}

For $u_h, v_h\in \mathring{\mathbb{V}}_h^n$, we define the discrete $\mathbb{H}^{-1}$ inner product as follows:
\begin{align*}
(u_h, v_h)_{-1,h}:=(\nabla(-\Delta_h)^{-1}u_h, \nabla(-\Delta_h)^{-1}v_h), 
\end{align*}
and the corresponding discrete $\mathbb{H}^{-1}$-norm for $v_h\in \mathring{\mathbb{V}}_h^n$ is given by:
\begin{align*}
\Vert v_h\Vert_{-1,h}:= \|\nabla(-\Delta_h)^{-1}v_h\|.
\end{align*}
There exists a constant $C\geq 0$ such that  $\Vert v_h\Vert_{\mathbb{H}^{-1}}\leq C\Vert v_h\Vert_{-1,h}$ for all $v_h\in\mathring{\mathbb{V}}_h^n$. In fact,
noting the definition of the projection operator $P_h^n$ and its stability $\Vert \nabla P_h^n\psi\Vert\leq C\Vert\nabla\psi\Vert$
and the definition of $-\Delta_h^{-1}$, we deduce that
\begin{align}\label{normeq}
\Vert v_h\Vert_{\mathbb{H}^{-1}}
& =\sup_{\psi\in \mathbb{H}^1}\frac{(v_h, \psi)}{\|\nabla \psi\|_{\mathbb{H}^1}}
=\sup_{\psi\in \mathbb{H}^1}\frac{(v_h, P_h^n\psi)}{\|\nabla \psi\|_{\mathbb{H}^1}}
\leq C\sup_{\psi\in \mathbb{H}^1}\frac{(v_h, P_h^n\psi)}{\|\nabla P_h^n\psi\|_{\mathbb{H}^1}}
\\
\nonumber
& = C\sup_{\psi\in \mathbb{H}^1}\frac{(\nabla (-\Delta_h^{-1})v_h, \nabla P_h^n\psi)}{\|\nabla P_h^n\psi\|_{\mathbb{H}^1}}
\leq C\Vert v_h\Vert_{-1,h}.
\end{align}

To simplify the notation, we formulate the a posteriori estimate from \lemref{LemmaErrortilde} as follows:
\begin{align*}
\mathbb{E}\left[\sup_{t\in[0,T]}\Vert\widetilde{e}(t)\Vert^2_{\mathbb{H}^{-1}}+\varepsilon\int_0^T\Vert\nabla\widetilde{e}(s)\Vert^2ds\right]\leq \mathcal{\widetilde{R}},
\end{align*}
and the estimate from \thmref{mainresult2} as follows:
\begin{align*}
\sup_{t\in[0,T]}\Vert\widehat{e}(t)\Vert^2_{\mathbb{H}^{-1}}+\frac{\varepsilon^4}{4}\int_0^T\Vert\nabla\widehat{e}(s)\Vert^2ds+\frac{1}{4\varepsilon}\int_0^T\Vert e(s)\Vert^4_{\mathbb{L}^4}ds\leq \mathcal{\widehat{R}},
\end{align*}
$\mathbb{P}$-a.s. on $\Omega_{\delta, \tilde{\varepsilon}}\cap\Omega_{\gamma, \tilde{\varepsilon}}\cap\Omega_{\tilde{\varepsilon}}$ (recall the definitions in \eqref{SetOmegadelta}, \eqref{SetOmegagamma}, and \eqref{SetOmegatilde}).

The next lemma is used in Theorem~\ref{mainresult3} to control the  error on the complement of the probability subspace $\Omega_{\delta, \tilde{\varepsilon}}\cap\Omega_{\gamma, \tilde{\varepsilon}}\cap\Omega_{\tilde{\varepsilon}}$.
\begin{lemma}
\label{LemmaErrorhat2}
The following estimate holds for the approximation error $\widehat{e}(t)=\widehat{u}(t)-\widehat{u}_{h,\tau}$
\begin{align*}
\mathbb{E}\left[\left(\sup_{t\in[0, T]}\Vert\widehat{e}(t)\Vert^2_{\mathbb{H}^{-1}}+\varepsilon\int_0^T\Vert\nabla\widehat{e}(s)\Vert^2ds+\frac{1}{\varepsilon}\int_0^T\Vert\widehat{e}(s)\Vert^4_{\mathbb{L}^4}ds\right)^2\right]\leq \widehat{C}_{0,h}^2+(\mathbb{E}[\widehat{\mathcal{R}}_{\mu}])^2,
\end{align*}
where  $\widehat{C}_{0,h}$ and $\widehat{\mathcal{R}}_{\mu}$ are defined respectively in \eqref{ConstantC0} and \eqref{Residualmu} below.
\end{lemma}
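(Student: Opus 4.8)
The plan is to establish a \emph{pathwise} energy estimate for $\widehat e$ that, in contrast to the robust bound of \thmref{mainresult2}, avoids the spectral estimate \eqref{Eigenvalue1}. This is essential because \lemref{LemmaErrorhat2} must hold on all of $\Omega$: it is used in \thmref{mainresult3} to control, via Cauchy--Schwarz, the contribution of the complement of $\Omega_{\delta,\tilde{\varepsilon}}\cap\Omega_{\gamma,\tilde{\varepsilon}}\cap\Omega_{\tilde{\varepsilon}}$, whereas the eigenvalue argument produces the \emph{random} Gronwall factor $\exp(\int_0^T(20+4(1-\varepsilon^3)\Lambda_{CH})^{+}ds)$, whose exponential moments are not available on the whole sample space. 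I would therefore test the error equation with $(-\Delta)^{-1}\widehat e$ and $\widehat e$ exactly as up to \eqref{ErrorRandom1}, and treat the nonlinearity through the monotonicity bound \eqref{decompof1b}, which retains the full dissipation $\tfrac1\varepsilon\|e\|_{\mathbb{L}^4}^4$ and only discards the sign-definite term $3(u_{h,\tau}^2,e^2)\ge0$. This gives the differential inequality
\begin{align*}
\tfrac12\tfrac{d}{dt}\|\widehat e(t)\|_{\mathbb{H}^{-1}}^2+\varepsilon\|\nabla\widehat e(t)\|^2+\tfrac1\varepsilon\|e(t)\|_{\mathbb{L}^4}^4\le \varepsilon^{-1}\|e\|^2+3\varepsilon^{-1}(u_{h,\tau},e^3)+\varepsilon^{-1}(f(u_{h,\tau})-f(u),\widetilde e)+\langle\widehat{\mathcal R},(-\Delta)^{-1}\widehat e\rangle+\langle\widehat{\mathcal S},\widehat e\rangle,
\end{align*}
in which no factor of $\Lambda_{CH}$ appears, so the ensuing Gronwall constant is deterministic (of the form $Ce^{C\varepsilon^{-3}T}$).

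Next I would bound the three source terms by elementary inequalities, since $\varepsilon$-robustness is not needed here and \lemref{LemmaNormL3} can be bypassed. For the cubic cross term I use the embedding $\mathbb{L}^4\hookrightarrow\mathbb{L}^3$ on the bounded domain, giving $|(u_{h,\tau},e^3)|\le C_{h,\infty}\|e\|_{\mathbb{L}^3}^3\le CC_{h,\infty}\|e\|_{\mathbb{L}^4}^3$, and then Young's inequality absorbs a multiple of $\tfrac1\varepsilon\|e\|_{\mathbb{L}^4}^4$ into the left-hand side, leaving $CC_{h,\infty}^4\varepsilon^{-1}$. The term $\varepsilon^{-1}\|e\|^2$ is split with $e=\widehat e+\widetilde e$ and the interpolation $\|\cdot\|^2\le\|\cdot\|_{\mathbb{H}^{-1}}\|\nabla\cdot\|$, absorbing $\|\nabla\widehat e\|^2$ and retaining a Gronwall term $C\varepsilon^{-3}\|\widehat e\|_{\mathbb{H}^{-1}}^2$ together with $\varepsilon^{-1}\|\widetilde e\|^2$. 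The mixed term $\varepsilon^{-1}(f(u_{h,\tau})-f(u),\widetilde e)$ is treated as $\mathrm{VII}$ in \thmref{mainresult2}, but now bounded directly by the norms $\|u_{h,\tau}\|_{\mathbb{L}^\infty}$, $\|u\|_{\mathbb{L}^4}$ and $\|\widetilde e\|_{\mathbb{L}^4}$, without recourse to $\Omega_{\tilde{\varepsilon}}$. The residuals are estimated by \lemref{Residual} and Young's inequality, producing $\int_0^T(\widehat\mu_{-1}^2+\varepsilon^{-2}\widehat\mu_0^2+2\varepsilon^{-4}\widehat\mu_1^2)ds$. A standard linear Gronwall argument then yields the pathwise bound
\begin{align*}
X:=\sup_{t\in[0,T]}\|\widehat e(t)\|_{\mathbb{H}^{-1}}^2+\varepsilon\int_0^T\|\nabla\widehat e\|^2ds+\tfrac1\varepsilon\int_0^T\|\widehat e\|_{\mathbb{L}^4}^4ds\le Ce^{C\varepsilon^{-3}T}\big(\widehat{\mathcal R}_\mu+\mathcal G\big),
\end{align*}
where $\widehat{\mathcal R}_\mu$ is the residual estimator and $\mathcal G$ gathers the solution-dependent quantities $C_{h,\infty}^4$, $\int_0^T\|u\|_{\mathbb{L}^4}^4$, $\int_0^T\|\widetilde e\|_{\mathbb{L}^4}^4$, $\int_0^T\|\nabla\widetilde e\|^2$ and $\sup_t\|\widetilde e\|_{\mathbb{H}^{-1}}^2$ (the $\widehat e$-version of the quartic term on the left is recovered from $\|\widehat e\|_{\mathbb{L}^4}^4\le8\|e\|_{\mathbb{L}^4}^4+8\|\widetilde e\|_{\mathbb{L}^4}^4$).

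It then remains to square $X$ and take expectations. Using Minkowski's inequality in $L^2(\Omega)$ I would separate the residual-free, solution-dependent part of $\mathcal G$ from the residual part. Each term in $\mathcal G$ has finite second moment: the moments of $\|u\|_{\mathbb{L}^4}$ and $\|\widetilde u\|_{\mathbb{L}^4}$ come from \lemref{LemmaRegularity} and \lemref{Normutilde}, those of $\|\widetilde e\|$ from higher-moment versions of \coref{Corollary1} and \lemref{LemmaErrortilde}, and those of $C_{h,\infty}=\sup_t\|u_{h,\tau}\|_{\mathbb{L}^\infty}$ from the discrete stability of \eqref{scheme1} combined with an inverse inequality, which is admissible because $h$, $\tilde h$ and $\tau$ are fixed. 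These contributions assemble into the deterministic constant $\widehat C_{0,h}$, while the residual part contributes $(\mathbb{E}[\widehat{\mathcal R}_\mu])^2$, yielding $\mathbb{E}[X^2]\le\widehat C_{0,h}^2+(\mathbb{E}[\widehat{\mathcal R}_\mu])^2$.

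The main obstacle is the cubic term together with the need for finite second moments: one must verify that replacing the spectral estimate by monotonicity still closes the estimate with a deterministic Gronwall constant, and that the resulting quantities --- especially $C_{h,\infty}^4$ and the quartic norm $\int_0^T\|\widetilde e\|_{\mathbb{L}^4}^4$ --- possess the higher-order moments needed after squaring (for instance eighth moments of $C_{h,\infty}$). Establishing these moment bounds, and confirming that $\widehat C_{0,h}$ --- though strongly dependent on $\varepsilon$ and the mesh --- is finite, is the technical heart of the argument; it is precisely the crudeness of this $\varepsilon$- and mesh-dependent bound that is acceptable, since in \thmref{mainresult3} it is multiplied by the small factor $\mathbb{P}[(\Omega_{\delta,\tilde{\varepsilon}}\cap\Omega_{\gamma,\tilde{\varepsilon}}\cap\Omega_{\tilde{\varepsilon}})^c]^{1/2}$.
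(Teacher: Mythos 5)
Your route is genuinely different from the paper's. You subtract the continuous and discrete weak forms, exploit the monotonicity of $f$ to keep the quartic dissipation, and close a pathwise Gronwall estimate for $\widehat{e}$ with a deterministic factor $e^{C\varepsilon^{-3}T}$. The paper never forms an error equation here: it bounds $\widehat{u}$ and $\widehat{u}_{h,\tau}$ \emph{separately} — testing \eqref{Hat3} with $(-\Delta)^{-1}\widehat{u}$ as in \cite[Theorem 3.1]{BanasDaniel23} to get \eqref{Hat4}, and testing \eqref{final1}--\eqref{final2} with $(-\Delta_h)^{-1}\widehat{u}_{h,\tau}$ and $\widehat{u}_{h,\tau}$, dumping the nonlinearity crudely into $C\varepsilon^{-3}\Vert f(u_{h,\tau})\Vert^2$ — and then concludes by the triangle inequality. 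What the paper's approach buys is twofold: (i) no Gronwall factor at all, since each of the two energy identities is coercive on its own, which is why $\widehat{C}_{0,h}$ in \eqref{ConstantC0} contains only algebraic powers of $\varepsilon^{-1}$ and $\tilde{h}^{-1}$ and no exponential; and (ii) no cross terms coupling $\widehat{e}$ to $\widetilde{e}$ and $u$, so the only moment inputs are \lemref{Normutilde} (arbitrary $p$-moments of $\Vert\widetilde{u}\Vert_{\mathbb{L}^p}$, used after squaring \eqref{Hat4}) and the computable quantities $\mathbb{E}[\sup_t\Vert u_{h,\tau}\Vert^{8}_{\mathbb{L}^\infty}]$, $\mathbb{E}[\sup_t\Vert u_{h,\tau}\Vert^{12}_{\mathbb{L}^\infty}]$, which are simply left inside the estimator rather than proved finite. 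What your approach buys is a structurally sharper error bound (you retain the full $\frac{1}{\varepsilon}\Vert e\Vert^4_{\mathbb{L}^4}$ dissipation), but at a real cost.

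Two caveats keep your argument from proving the lemma \emph{as stated}. First, the exponential $e^{C\varepsilon^{-3}T}$ is not absorbable into the constant $C$ of \eqref{ConstantC0}, which is meant to be $\varepsilon$-independent; you prove a weaker version of the inequality (still adequate for \thmref{mainresult3}, where $\widehat{C}_{0,h}$ is only multiplied by a small probability, but not the stated one). Second, the higher moments you defer are not all supplied by the results you cite: \lemref{LemmaRegularity} gives only the \emph{first} moment of $\int_0^T\Vert u\Vert^4_{\mathbb{L}^4}\,ds$, whereas squaring your $\mathcal{G}$ requires its second moment (this is exactly what the paper obtains by squaring \eqref{Hat4} before taking expectations), and the eighth moment of $\Vert\widetilde{e}\Vert_{\mathbb{L}^4}$ is not available from \coref{Corollary1} or \lemref{LemmaErrortilde}; you would have to reduce it to \lemref{Normutilde} plus the discrete $\mathbb{L}^\infty$-bound via the triangle inequality. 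These gaps are fillable, but they are the substance of the proof rather than routine verifications.
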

\begin{proof}
We recall that $\widehat{e}(t)=\widehat{u}_{h,\tau}(t)-\widehat{u}(t)$, where from \eqref{model3}, it follows that $\widehat{u}(t)$   solves:
\begin{align}
\label{Hat3}
\frac{d \widehat{u}}{dt}(t)=-\varepsilon\Delta^2\widehat{u}(t)+\frac{1}{\varepsilon}\Delta f(\widehat{u}(t)+\widetilde{u}(t))\quad t\in(0, T],\; \widehat{u}(0)=u_0. 
\end{align}
Testing \eqref{Hat3} with $(-\Delta)^{-1}\widehat{u}(t)$ and following the same approach as in \cite[Theorem 3.1]{BanasDaniel23}, we obtain:
\begin{align}
\label{Hat4}
\sup_{t\in[0, T]}\Vert \widehat{u}(t)\Vert^2_{\mathbb{H}^{-1}}+\varepsilon\int_0^T\Vert\nabla\widehat{u}(s)\Vert^2ds+\frac{1}{4\varepsilon}\int_0^T\Vert u(s)\Vert^4_{\mathbb{L}^4}ds\leq \frac{C}{\varepsilon}+\frac{C}{\varepsilon}\int_0^T\Vert \widetilde{u}(s)\Vert^4_{\mathbb{L}^4}ds.
\end{align}
Squaring \eqref{Hat4}, using the embedding $\mathbb{L}^q\hookrightarrow\mathbb{L}^p$ ($1\leq p\leq q$), taking the expectation in the resulting inequality, and applying \lemref{Normutilde}, we derive:
{\small
\begin{align}
\label{Trala1}
\mathbb{E}\left[\left(\sup_{t\in[0, T]}\Vert \widehat{u}(t)\Vert^2_{\mathbb{H}^{-1}}+\varepsilon\int_0^T\Vert\nabla\widehat{u}(s)\Vert^2ds+\frac{1}{4\varepsilon}\int_0^T\Vert u(s)\Vert^4_{\mathbb{L}^4}ds\right)^2\right]\leq C\varepsilon^{-2}+C\tilde{h}^{-12d}\varepsilon^{-4}. 
\end{align}
}
It remains to estimate the term involving $\widehat{u}_{h,\tau}$. First, let us recall that $\widehat{u}_{h,\tau}$ satifisfies:
\begin{subequations}
\begin{align}
\label{final1}
(\partial_t\widehat{u}_{h,\tau}(t), \varphi_h)+(\nabla\widehat{w}_{h,\tau}(t), \nabla\varphi_h)=\langle \widehat{\mathcal{R}}(t), \varphi_h\rangle & \qquad \varphi_h\in\mathbb{V}^n_h,\\
\label{final2}
\varepsilon(\nabla\widehat{u}_{h,\tau}(t), \nabla\psi_h)=(\widehat{w}_{h,\tau}(t), \psi_h)-\varepsilon^{-1}\left(f(u_{h,\tau}(t)), \psi_h\right)+\langle \widehat{\mathcal{S}}(t), \psi_h\rangle & \qquad \psi_h\in\mathbb{V}^n_h,
\end{align}
\end{subequations}
 for all $t\in [t_{n-1}, t_{n}]$.
 
  Taking $\varphi=(-\Delta_h)^{-1}\widehat{u}_{h,\tau}(t)$ in \eqref{final1} and $\psi=\widehat{u}_{h,\tau}(t)$ in \eqref{final2} yields:
 \begin{align}
 \label{final2a}
& \frac{1}{2}\frac{d}{dt}\Vert \widehat{u}_{h,\tau}(t)\Vert^2_{-1,h}+\varepsilon\Vert\nabla\widehat{u}_{h,\tau}(t)\Vert^2+\frac{1}{\varepsilon}\left(f(u_{h,\tau}(t)), \widehat{u}_{h,\tau}(t)\right)\nonumber\\
&\quad=\langle \widehat{\mathcal{R}}(t), (-\Delta_h)^{-1}\widehat{u}_{h,\tau}(t)\rangle+\langle \widehat{\mathcal{S}}(t), \widehat{u}_{h,\tau}(t)\rangle. 
 \end{align}
 Using Cauchy-Schwarz, Poincar\'{e}, and Young's inequalities, it follows from \eqref{final2a} that:
 \begin{align*}
 \frac{1}{2}\frac{d}{dt}\Vert \widehat{u}_{h,\tau}(t)\Vert^2_{-1,h}+\frac{3\varepsilon}{4}\Vert\nabla\widehat{u}_{h,\tau}(t)\Vert^2&\leq \frac{C}{\varepsilon^3}\Vert f(u_{h,\tau}(t))\Vert^2\nonumber\\
 &\quad+\langle \widehat{\mathcal{R}}(t), (-\Delta_h)^{-1}\widehat{u}_{h,\tau}(t)\rangle+\langle \widehat{\mathcal{S}}(t), \widehat{u}_{h,\tau}(t)\rangle. 
 \end{align*}
 Using \lemref{Residual} and noting $\Vert \nabla(-\Delta_h^{-1}u_h)\Vert\leq \Vert u_h\Vert$  we obtain
 \begin{align*}
& \frac{1}{2}\frac{d}{dt}\Vert \widehat{u}_{h,\tau}(t)\Vert^2_{-1,h}+\frac{3\varepsilon}{4}\Vert\nabla\widehat{u}_{h,\tau}(t)\Vert^2\nonumber\\
 &\leq \frac{C}{\varepsilon^3}\Vert f(u_{h,\tau}(t))\Vert^2+\widehat{\mu}_{-1}(t)\Vert \nabla(-\Delta)^{-1}\widehat{u}_{h,\tau}(t)\Vert+\widehat{\mu}_0(t)\Vert \widehat{u}_{h,\tau}(t)\Vert +\widehat{\mu}_1(t)\Vert\nabla\widehat{u}_{h,\tau}(t)\Vert\\
 &\leq \frac{C}{\varepsilon^3}\Vert f(u_{h,\tau}(t))\Vert^2+C\widehat{\mu}_{-1}(t)\Vert \widehat{u}_{h,\tau}(t)\Vert+\widehat{\mu}_0(t)\Vert \widehat{u}_{h,\tau}(t)\Vert +\widehat{\mu}_1(t)\Vert\nabla\widehat{u}_{h,\tau}(t)\Vert. \nonumber
 \end{align*}
 Using Poincar\'{e}'s and Young's inequalities, it follows from the preceding estimate that
 \begin{align}
 \label{final2b}
\frac{1}{2}\frac{d}{dt}\Vert \widehat{u}_{h,\tau}(t)\Vert^2_{-1,h}+\frac{\varepsilon}{2}\Vert\nabla\widehat{u}_{h,\tau}(t)\Vert^2\leq \frac{C}{\varepsilon^3}\Vert f(u_{h,\tau}(t))\Vert^2+\frac{C}{\varepsilon}\left(\widehat{\mu}_{-1}^2(t)+\widehat{\mu}^2_0(t) +\widehat{\mu}_1^2(t)\right). 
 \end{align}
 
 Integrating \eqref{final2b} over $(0, t)$, taking the supremum over $[0, T]$, squaring both sides of the resulting inequality, using the embedding $\mathbb{L}^{\infty}\hookrightarrow\mathbb{L}^{2}$, and applying \eqref{normeq}, we obtain:
 \begin{align}
 \label{Trala2}
&\mathbb{E}\left[\left(\sup_{t\in[0, T]}\Vert \widehat{u}_{h, \tau}(t)\Vert^2_{\mathbb{H}^{-1}}+\frac{\varepsilon}{2}\int_0^T\Vert\nabla\widehat{u}_{h,\tau}(s)\Vert^2ds\right)^2\right]\nonumber\\
&\leq C\varepsilon^{-6}\mathbb{E}\left[\sup_{t\in[0, T]}\Vert u_{h,\tau}(t)\Vert^{12}_{\mathbb{L}^{\infty}}\right]+C\varepsilon^{-2}\left(\int_0^T\mathbb{E}\left[\widehat{\mu}_{-1}^2(t)+\widehat{\mu}^2_0(t) +\widehat{\mu}_1^2(t)\right]dt\right)^2. 
\end{align}
Using the embedding $\mathbb{L}^{\infty}\hookrightarrow\mathbb{L}^{4}$, we get
\begin{align}
\label{Trala3}
\mathbb{E}\left[\left(\frac{1}{\varepsilon}\int_0^T\Vert u_{h,\tau}\Vert^4_{\mathbb{L}^4}(s)ds\right)^2\right]\leq C\varepsilon^{-2}\mathbb{E}\left[\sup_{t\in[0, T]} \Vert u_{h,\tau}(t)\Vert^8_{\mathbb{L}^{\infty}}\right].
\end{align}
Combining \eqref{Trala3}, \eqref{Trala2}, and \eqref{Trala1} concludes the proof.
\end{proof}


The theorem below provides an estimate for the approximation error of the numerical scheme \eqref{scheme1} and is the main result of this paper.
\begin{theorem}
\label{mainresult3}
Let $u$ be the weak solution to \eqref{pb1}, and let $u_{h,\tau}$ be given by \eqref{Interpolant1}.
If the assumptions of \thmref{mainresult2} are satisfied, then it holds that:
\begin{align*}
&\mathbb{E}\left[\sup_{t\in[0, T]}\Vert u_{h,\tau}(t)-u(t)\Vert^2_{\mathbb{H}^{-1}}\right]+\varepsilon\int_0^T\mathbb{E}\left[\Vert\nabla\left(u_{h,\tau}(s)-u(s)\right)\Vert^2\right]ds
\nonumber\\
&\qquad\leq C\left\{\widetilde{\mathcal{R}}+\mathbb{E}\left[1\!\!1_{\Omega_{\delta, \tilde{\varepsilon}}\cap\Omega_{\gamma, \tilde{\varepsilon}}\cap\Omega_{\tilde{\varepsilon}}}\widehat{\mathcal{R}}\right]+\left(\tilde{\varepsilon}^{\delta}\varepsilon^{-3}+\tilde{h}^{-6d}\varepsilon^2\widetilde{\varepsilon}^{\gamma}+\tilde{\varepsilon}^{-1}\widetilde{\mathcal{R}}\right)^{1/2}\left(\widehat{C}_{0,h}+ \mathbb{E}[\widehat{\mathcal{R}}_{\mu}]\right)\right\},
\end{align*}
where the constant $\widehat{C}_{0,h}$ is defined as:
\begin{align}
\label{ConstantC0}
\widehat{C}_{0,h}=C\left(\varepsilon^{-2}\mathbb{E}\left[\sup_{t\in[0, T]} \Vert u_{h,\tau}(t)\Vert^8_{\mathbb{L}^{\infty}}\right]+\varepsilon^{-6}\mathbb{E}\left[\sup_{t\in[0, T]}\Vert u_{h,\tau}(t)\Vert^{12}_{\mathbb{L}^{\infty}}\right]+\varepsilon^{-2}+\tilde{h}^{-12d}\varepsilon^{-4}\right)^{\frac{1}{2}},
\end{align}
and the residual $\widehat{\mathcal{R}}_{\mu}$ is given by
\begin{align}
\label{Residualmu}
\widehat{\mathcal{R}}_{\mu}=C\varepsilon^{-2}\int_0^T\left[\widehat{\mu}_{-1}^2(t)+\widehat{\mu}^2_0(t) +\widehat{\mu}_1^2(t)\right]dt.
\end{align}

\end{theorem}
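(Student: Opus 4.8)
The plan is to exploit the linear splitting $u=\widetilde u+\widehat u$, $u_{h,\tau}=\widetilde u_{h,\tau}+\widehat u_{h,\tau}$, so that the total error $e=u_{h,\tau}-u$ decomposes as $e=\widetilde e+\widehat e$ with $\widetilde e=\widetilde u_{h,\tau}-\widetilde u$ and $\widehat e=\widehat u_{h,\tau}-\widehat u$. Using $(a+b)^2\le 2a^2+2b^2$ one obtains pathwise
\begin{align*}
\sup_{t\in[0,T]}\Vert e(t)\Vert^2_{\mathbb{H}^{-1}}+\varepsilon\int_0^T\Vert\nabla e(s)\Vert^2ds
&\le 2\Big(\sup_{t\in[0,T]}\Vert\widetilde e(t)\Vert^2_{\mathbb{H}^{-1}}+\varepsilon\int_0^T\Vert\nabla\widetilde e(s)\Vert^2ds\Big)\\
&\quad+2\Big(\sup_{t\in[0,T]}\Vert\widehat e(t)\Vert^2_{\mathbb{H}^{-1}}+\varepsilon\int_0^T\Vert\nabla\widehat e(s)\Vert^2ds\Big).
\end{align*}
Taking expectations, the linear contribution is bounded directly by $2\widetilde{\mathcal R}$ via \lemref{LemmaErrortilde}, and this bound holds on all of $\Omega$. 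It therefore remains to control the nonlinear contribution in expectation.

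For the nonlinear part I would set $G:=\Omega_{\delta,\tilde{\varepsilon}}\cap\Omega_{\gamma,\tilde{\varepsilon}}\cap\Omega_{\tilde{\varepsilon}}$ and split $\mathbb{E}[\,\cdot\,]=\mathbb{E}[1\!\!1_{G}\,\cdot\,]+\mathbb{E}[1\!\!1_{G^c}\,\cdot\,]$. On $G$ the pathwise estimate of \thmref{mainresult2} applies and bounds the $\widehat e$-energy (and the $\mathbb{L}^4$-term) by the random residual $\widehat{\mathcal R}$, which yields the term $\mathbb{E}[1\!\!1_{G}\widehat{\mathcal R}]$. On the complement $G^c$, where \thmref{mainresult2} is unavailable, I would apply the Cauchy--Schwarz inequality on $\Omega$,
\begin{align*}
\mathbb{E}\!\left[1\!\!1_{G^c}\Big(\sup_{t\in[0,T]}\Vert\widehat e\Vert^2_{\mathbb{H}^{-1}}+\varepsilon\!\int_0^T\!\Vert\nabla\widehat e\Vert^2ds+\tfrac{1}{\varepsilon}\!\int_0^T\!\Vert\widehat e\Vert^4_{\mathbb{L}^4}ds\Big)\right]
\le \big(\mathbb{P}[G^c]\big)^{1/2}\Big(\mathbb{E}\big[\widehat E^{\,2}\big]\Big)^{1/2},
\end{align*}
where $\widehat E$ denotes the parenthesized energy, and invoke \lemref{LemmaErrorhat2} for the second factor, giving $(\widehat C_{0,h}^2+(\mathbb{E}[\widehat{\mathcal R}_\mu])^2)^{1/2}\le \widehat C_{0,h}+\mathbb{E}[\widehat{\mathcal R}_\mu]$.

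The remaining and most delicate step is to estimate the probability of the bad set. By a union bound $\mathbb{P}[G^c]\le \mathbb{P}[\Omega_{\delta,\tilde{\varepsilon}}^c]+\mathbb{P}[\Omega_{\gamma,\tilde{\varepsilon}}^c]+\mathbb{P}[\Omega_{\tilde{\varepsilon}}^c]$, and each summand follows from Markov's inequality applied to the functional defining the corresponding set. Concretely, Markov together with the moment bound of \lemref{LemmaRegularity} gives $\mathbb{P}[\Omega_{\delta,\tilde{\varepsilon}}^c]\le C\tilde{\varepsilon}^{\delta}\varepsilon^{-3}$; Markov with \lemref{Normutilde} gives $\mathbb{P}[\Omega_{\gamma,\tilde{\varepsilon}}^c]\le C\tilde h^{-6d}\varepsilon^2\tilde{\varepsilon}^{\gamma}$; and Markov with the $L^2(\Omega)$-bound $\mathbb{E}[\sup_t\Vert\widetilde e\Vert^2_{\mathbb{H}^{-1}}+\varepsilon\int_0^T\Vert\nabla\widetilde e\Vert^2ds]\le\widetilde{\mathcal R}$ of \lemref{LemmaErrortilde} gives $\mathbb{P}[\Omega_{\tilde{\varepsilon}}^c]\le \tilde{\varepsilon}^{-1}\widetilde{\mathcal R}$. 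Summing produces exactly the factor $(\tilde{\varepsilon}^{\delta}\varepsilon^{-3}+\tilde h^{-6d}\varepsilon^2\tilde{\varepsilon}^{\gamma}+\tilde{\varepsilon}^{-1}\widetilde{\mathcal R})^{1/2}$ multiplying $\widehat C_{0,h}+\mathbb{E}[\widehat{\mathcal R}_\mu]$; collecting the linear, on-$G$, and off-$G$ contributions and absorbing constants then yields the asserted estimate. The main obstacle is precisely this complement estimate: since the sharp pathwise bound of \thmref{mainresult2} degenerates off $G$, one must trade a \emph{small} probability (from the three Markov estimates) against an \emph{unconditional} second-moment bound (\lemref{LemmaErrorhat2}) coming from deterministic energy estimates valid on all of $\Omega$, and the care lies in lining up the powers of $\tilde{\varepsilon}$, $\varepsilon$ and $\tilde h$ so that the resulting product remains genuinely small.
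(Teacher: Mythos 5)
Your proposal is correct and follows essentially the same route as the paper: split $e=\widetilde e+\widehat e$, bound the linear part by $\widetilde{\mathcal R}$ via \lemref{LemmaErrortilde}, treat the nonlinear part on $\Omega_{\delta,\tilde\varepsilon}\cap\Omega_{\gamma,\tilde\varepsilon}\cap\Omega_{\tilde\varepsilon}$ with \thmref{mainresult2}, and on the complement use Cauchy--Schwarz together with the union/Markov bounds (from \lemref{LemmaRegularity}, \lemref{Normutilde}, \lemref{LemmaErrortilde}) and the unconditional second-moment bound of \lemref{LemmaErrorhat2}. No gaps.
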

\begin{proof}
Noting that $e=\widehat{e}+\widetilde{e}$ and using the triangle inequality, we obtain:
\begin{align}
\label{FullError0}
&\mathbb{E}\left[\sup_{t\in[0, T]}\Vert e(t)\Vert^2_{\mathbb{H}^{-1}}\right]+\varepsilon\int_0^T\mathbb{E}[\Vert\nabla e(s)\Vert^2]ds\nonumber\\
 &\qquad\leq \left\{\mathbb{E}\left[\sup_{t\in[0, T]}\Vert \widetilde{e}(t)\Vert^2_{\mathbb{H}^{-1}}\right]+\varepsilon\int_0^T\mathbb{E}[\Vert\nabla \widetilde{e}(s)\Vert^2]ds\right\}\\
 &\qquad\qquad+\left\{\mathbb{E}\left[\sup_{t\in[0, T]}\Vert \widehat{e}(t)\Vert^2_{\mathbb{H}^{-1}}\right]+\varepsilon\int_0^T\mathbb{E}[\Vert\nabla \widehat{e}(s)\Vert^2]ds\right\}=:\text{VIII}_1+\text{VIII}_2.\nonumber
\end{align}
The term $\text{VIII}_1$ is estimated in \lemref{LemmaErrortilde}. To estimate $\text{VIII}_2$, we  split it as follows:
\begin{align}
\label{FullError1}
\text{VIII}_2&=\mathbb{E}\left[\sup_{t\in[0, T]}\Vert \widehat{e}(t)\Vert^2_{\mathbb{H}^{-1}}\right]+\varepsilon\int_0^T\mathbb{E}\left[\Vert\nabla \widehat{e}(s)\Vert^2\right]ds\nonumber\\
&\leq \left\{\mathbb{E}\left[\sup_{t\in[0, T]}1\!\!1_{\Omega_{\delta,\tilde{\varepsilon}}\cap\Omega_{\gamma, \tilde{\varepsilon}}\cap\Omega_{\tilde{\varepsilon}}}\Vert \widehat{e}(t)\Vert^2_{\mathbb{H}^{-1}}\right]+\varepsilon\int_0^T\mathbb{E}\left[1\!\!1_{\Omega_{\delta, \tilde{\varepsilon}}\cap\Omega_{\gamma, \tilde{\varepsilon}}\cap\Omega_{\tilde{\varepsilon}}}\Vert\nabla \widehat{e}(s)\Vert^2\right]ds\right\}\\
&\qquad+\left\{\mathbb{E}\left[\sup_{t\in[0, T]}1\!\!1_{\left(\Omega_{\delta, \tilde{\varepsilon}}\cap\Omega_{\gamma, \tilde{\varepsilon}}\cap\Omega_{\tilde{\varepsilon}}\right)^c}\Vert \widehat{e}(t)\Vert^2_{\mathbb{H}^{-1}}\right]+\varepsilon\int_0^T\mathbb{E}\left[1\!\!1_{\left(\Omega_{\delta, \tilde{\varepsilon}}\cap\Omega_{\gamma, \tilde{\varepsilon}}\cap\Omega_{\tilde{\varepsilon}}\right)^c}\Vert\nabla \widehat{e}(s)\Vert^2\right]ds\right\}\nonumber\\
&=:\text{VIII}_{21}+\text{VIII}_{22}. \nonumber
\end{align}
The term $\text{VIII}_{21}$ is estimated using \propref{mainresult2}.
 To estimate $\text{VIII}_{22}$ we note that
\begin{align*}
\sup_{t\in[0, T]}\left(1\!\!1_{\left(\Omega_{\delta, \tilde{\varepsilon}}\cap\Omega_{\gamma, \tilde{\varepsilon}}\cap\Omega_{\tilde{\varepsilon}}\right)^c}\Vert \widehat{e}(t)\Vert^2_{\mathbb{H}^{-1}}\right)\leq 1\!\!1_{\left(\Omega_{\delta, \tilde{\varepsilon}}\cap\Omega_{\gamma, \tilde{\varepsilon}}\cap\Omega_{\tilde{\varepsilon}}\right)^c}\sup_{t\in[0, T]}\Vert \widehat{e}(t)\Vert^2_{\mathbb{H}^{-1}},
\end{align*}
 and use Cauchy-Schwarz's inequality to get
\begin{align}
\label{FullError1a}
\text{VIII}_{22}&\leq \left(\mathbb{P}[\Omega_{\delta,\tilde{\varepsilon}}^c\cup\Omega_{\gamma, \tilde{\varepsilon}}^c\cup\Omega_{\tilde{\varepsilon}}^c]\right)^{\frac{1}{2}}\left(\mathbb{E}\left[\left(\sup_{t\in[0, T]}\Vert\widehat{e}(t)\Vert^2_{\mathbb{H}^{-1}}+\varepsilon\int_0^T\Vert\nabla\widehat{e}(s)\Vert^2ds\right)^{\color{red}2}\right]\right)^{\frac{1}{2}}.
\end{align}
Using Markov's inequality and \cite[Proposition 3.1]{BanasDaniel23}, we derive:
\begin{align*}
\mathbb{P}[\Omega_{\delta, \tilde{\varepsilon}}^c]\leq \tilde{\varepsilon}^{\delta}\mathbb{E}\left[\sup_{t\in[0, T]}\Vert u(t)\Vert^2_{\mathbb{H}^{-1}}+\varepsilon^{-1}\int_0^T\Vert u(s)\Vert^4_{\mathbb{L}^4}ds\right]\leq C\tilde{\varepsilon}^{\delta}\varepsilon^{-3}.
\end{align*}
Using Markov's inequality and \lemref{Normutilde} with $p=4$, we obtain:
\begin{align*}
\mathbb{P}[\Omega_{\gamma, \tilde{\varepsilon}}^c]\leq \tilde{\varepsilon}^{\gamma}\mathbb{E}\left[\sup_{t\in[0, T]}\Vert \widetilde{u}(t)\Vert^4_{\mathbb{L}^4}\right]\leq C\tilde{h}^{-6d}\varepsilon^{2}\tilde{\varepsilon}^{\gamma}.
\end{align*}
Using Markov's inequality and \lemref{LemmaErrortilde}, we derive the following estimate:
\begin{align*}
\mathbb{P}[\Omega_{\tilde{\varepsilon}}^c]\leq \tilde{\varepsilon}^{-1}\mathbb{E}\left[\sup_{t\in[0, T]}\Vert \widetilde{e}(t)\Vert^2_{\mathbb{H}^{-1}}+\varepsilon^{-1}\int_0^T\Vert \widetilde{e}(s)\Vert^4_{\mathbb{L}^4}ds\right]\leq C\tilde{\varepsilon}^{-1}\mathbb{E}[\widetilde{\mathcal{R}}]. 
\end{align*}
Using the preceding estimates, we obtain
\begin{align*}
\mathbb{P}[\Omega_{\delta, \tilde{\varepsilon}}\cup\Omega_{\gamma, \tilde{\varepsilon}}\cup\Omega_{\tilde{\varepsilon}}]\leq \mathbb{P}[\Omega_{\delta, \tilde{\varepsilon}}^c]+\mathbb{P}[\Omega_{\gamma, \tilde{\varepsilon}}^c]+\mathbb{P}[\Omega_{\tilde{\varepsilon}}^c]\leq 
C\left(\tilde{\varepsilon}^{\delta}\varepsilon^{-3}+\tilde{h}^{-6d}\varepsilon^2\tilde{\varepsilon}^{\gamma}+\tilde{\varepsilon}^{-1}\mathbb{E}[\tilde{\mathcal{R}}]\right).
\end{align*}
Hence substitute the above estimate into \eqref{FullError1a} and use \lemref{LemmaErrorhat2} to conclude
\begin{align}
\label{FullError1e}
\text{VIII}_{22}\leq C\left(\tilde{\varepsilon}^{\delta}\varepsilon^{-3}+\tilde{h}^{-6d}\varepsilon^2\tilde{\varepsilon}^{\gamma}+\tilde{\varepsilon}^{-1}\mathbb{E}[\tilde{\mathcal{R}}]\right)^{{\color{red} 1/2}}
\left(\widehat{C}_{0,h}+\revdr{\mathbb{E}[\widehat{\mathcal{R}}_{\mu}]}\right).
\end{align}
Substituting \eqref{FullError1e} into \eqref{FullError1}, and applying \propref{mainresult2}, we get:
\begin{align*}
\text{VIII}_2\leq \mathbb{E}\left[1\!\!1_{\Omega_{\delta, \tilde{\varepsilon}}\cap\Omega_{\gamma, \tilde{\varepsilon}}\cap\Omega_{\tilde{\varepsilon}}}\widehat{\mathcal{R}}\right]+C\left(\tilde{\varepsilon}^{\delta}\varepsilon^{-3}+\tilde{h}^{-6d}\varepsilon^2\tilde{\varepsilon}^{\gamma}+\tilde{\varepsilon}^{-1}\mathbb{E}[\tilde{\mathcal{R}}]\right)^{1/2}\left(\widehat{C}_{0,h}+\mathbb{E}[\widehat{\mathcal{R}}_{\mu}]\right).
\end{align*}
Substituting the estimate above into \eqref{FullError0} and applying \propref{mainresult1} completes the proof. 
\end{proof}

\section{Numerical experiments}\label{sec_num}

We consider the regularized problem \eqref{pb1} on the spatial domain $\mathcal{D}=(-1,1)^2$ with initial condition
$$
 u_0^\eps(x) = -\tanh\left(\frac{\max\{-(|x|-r_1), |x|-r_2\}}{\sqrt{2}\varepsilon}\right)\,,
$$
with $r_1 = 0.2$, $r_2 = 0.55$ and the interfacial width parameter $\eps = \frac{1}{32}$.
We consider the noise approximation \eqref{Noiseapprox2} for $\tilde{h} = \frac{1}{16}$ and $\tilde{h} = \frac{1}{32}$;
the noise term is scaled by an additional factor $\sigma = 0.4$, i.e., we use $\sigma \Delta_n\widetilde{W}$ in (\ref{scheme1}).
The simulation is performed for $T= 0.012$ and we employ a uniform time step $\tau_n = \tau = 10^{-6}$ in \eqref{scheme1}.

We employ a simple time-explicit algorithm for (pathwise) adaptive mesh refinement: we choose $h_{min}=\frac{1}{128}$ and
given the triangulation $\mathcal{T}_h^{n-1}$ we compute (the realization of) the solution $u_h^n\in \mathbb{V}_h^n(\mathcal{T}_h^{n-1})$.
The triangulation $\mathcal{T}_h^{n}$ for the next time level is then constructed using the computed value of $u_h^n$ as follows.
We set $\eta_{max} := \|\Delta_h u_h^n\|_{\mathbb{L}^{\infty}}$ and refine the mesh until $h_K \leq h_{min}$ for all triangles where $\Delta_h u_h^n|_{K} \geq 0.25 \eta_{max}$.
We coarsen all triangles $K$ where $\Delta_h u_h^n|_{K} \leq 0.1 \eta_{max}$ under the constraint that the coarsening does not violate the condition $h_K \leq \tilde{h}$
(to ensure the compatibility condition $\mathbb{V}_{\tilde{h}} \subset \mathbb{V}_h^n$). This approach results in meshes with mesh size $h_K \approx h_{min}$ along the interface of each realization
of the numerical solution (and $h_K\approx \tilde{h}$ away from the interface), see Figure~\ref{fig_mesh}

The snapshots of the computed solution at different times for $\tilde{h} = \frac{1}{32}$ are displayed in Figure~\ref{fig_uh}
and the corresponding adaptive finite element mesh is displayed in Figure~\ref{fig_mesh} (note that to simplify the implementation the noise at $t=0$ is approximated at a slightly coarser mesh away from the interface). The evolution
for $\tilde{h} = \frac{1}{16}$ exhibits no qualitatively significant differences on the graphical level.
\begin{figure}[htp!]
\includegraphics[width=0.32\textwidth]{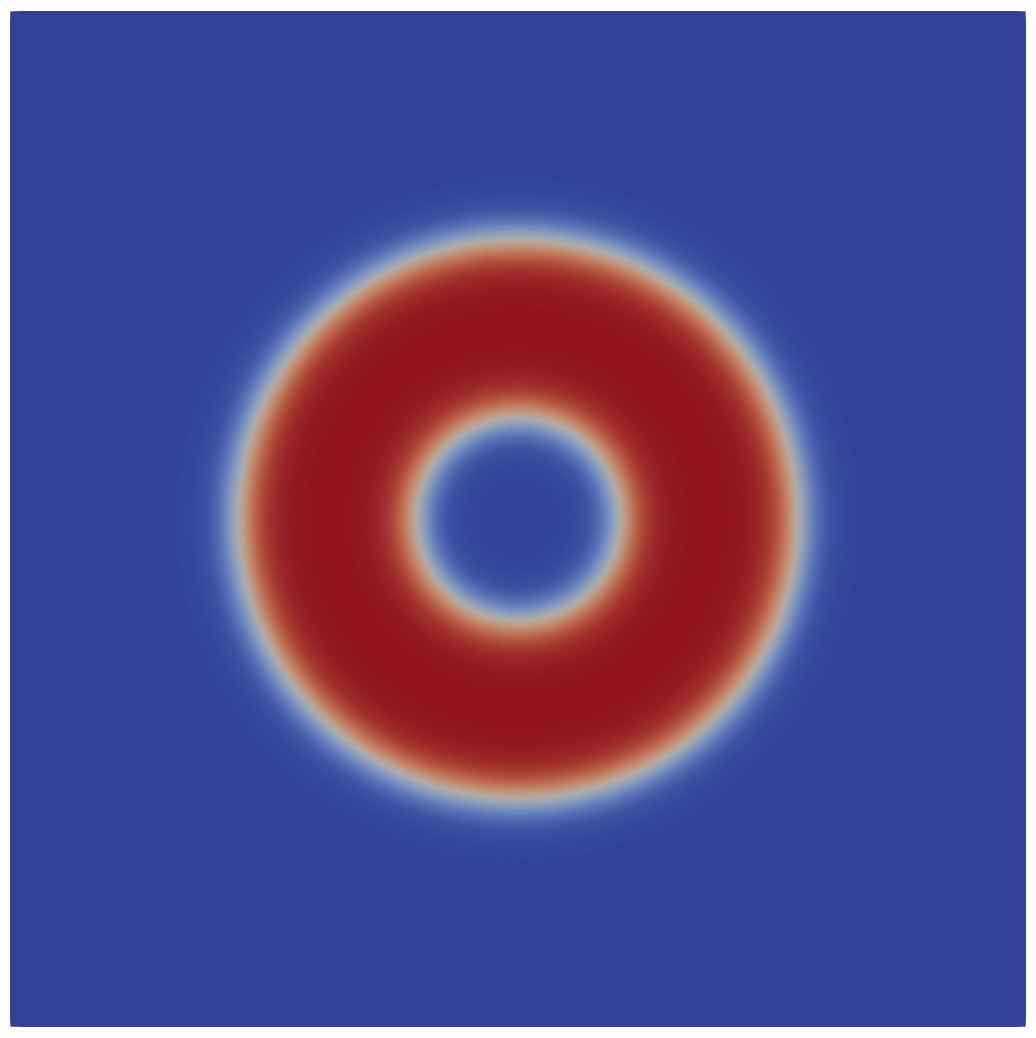}
\includegraphics[width=0.32\textwidth]{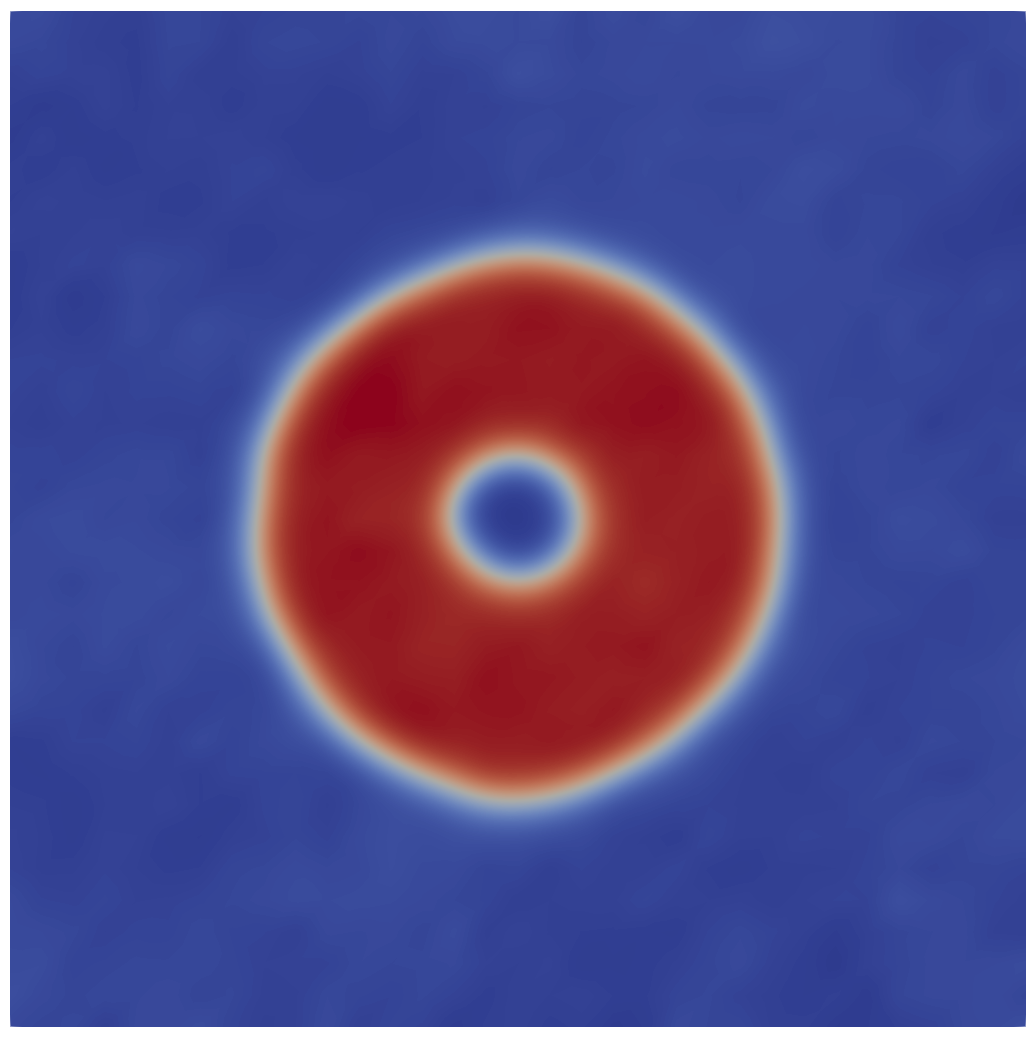}
\includegraphics[width=0.32\textwidth]{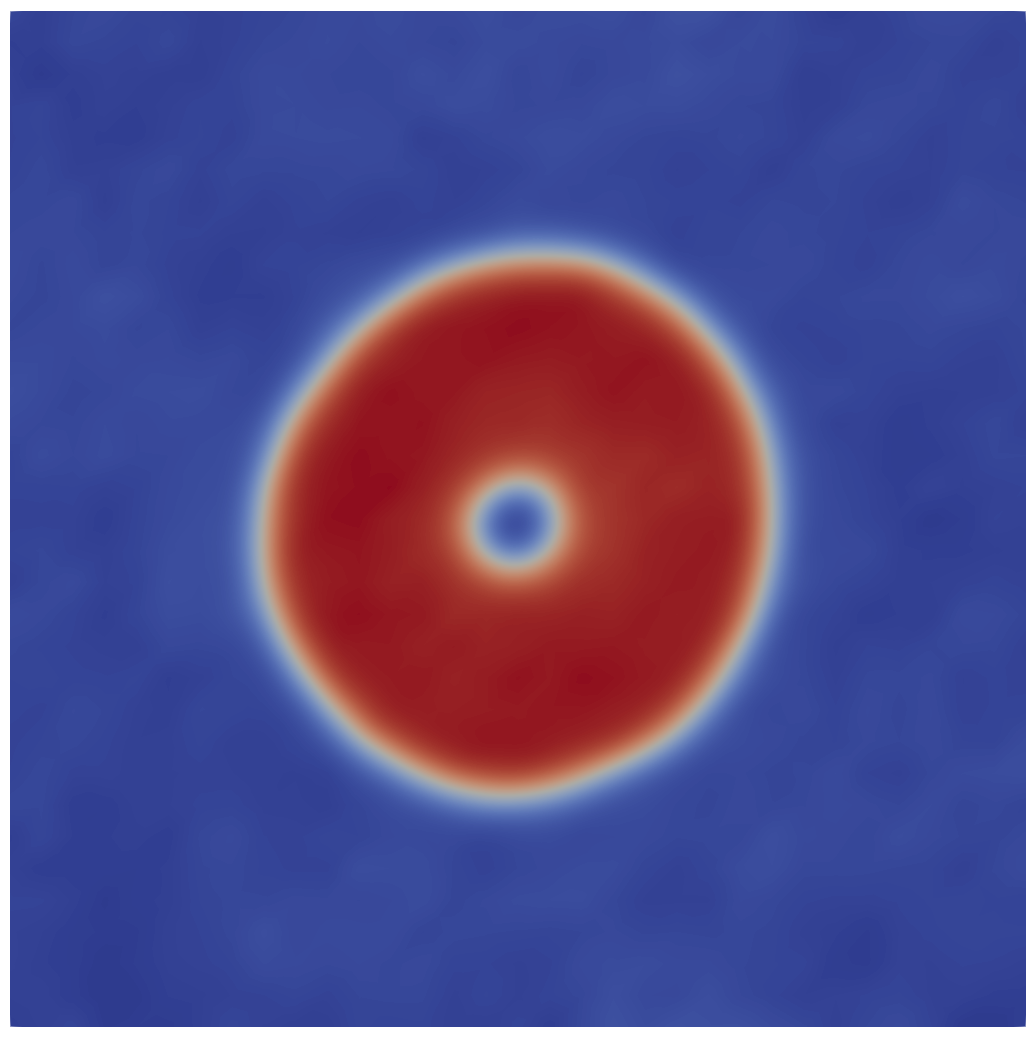}
\includegraphics[width=0.32\textwidth]{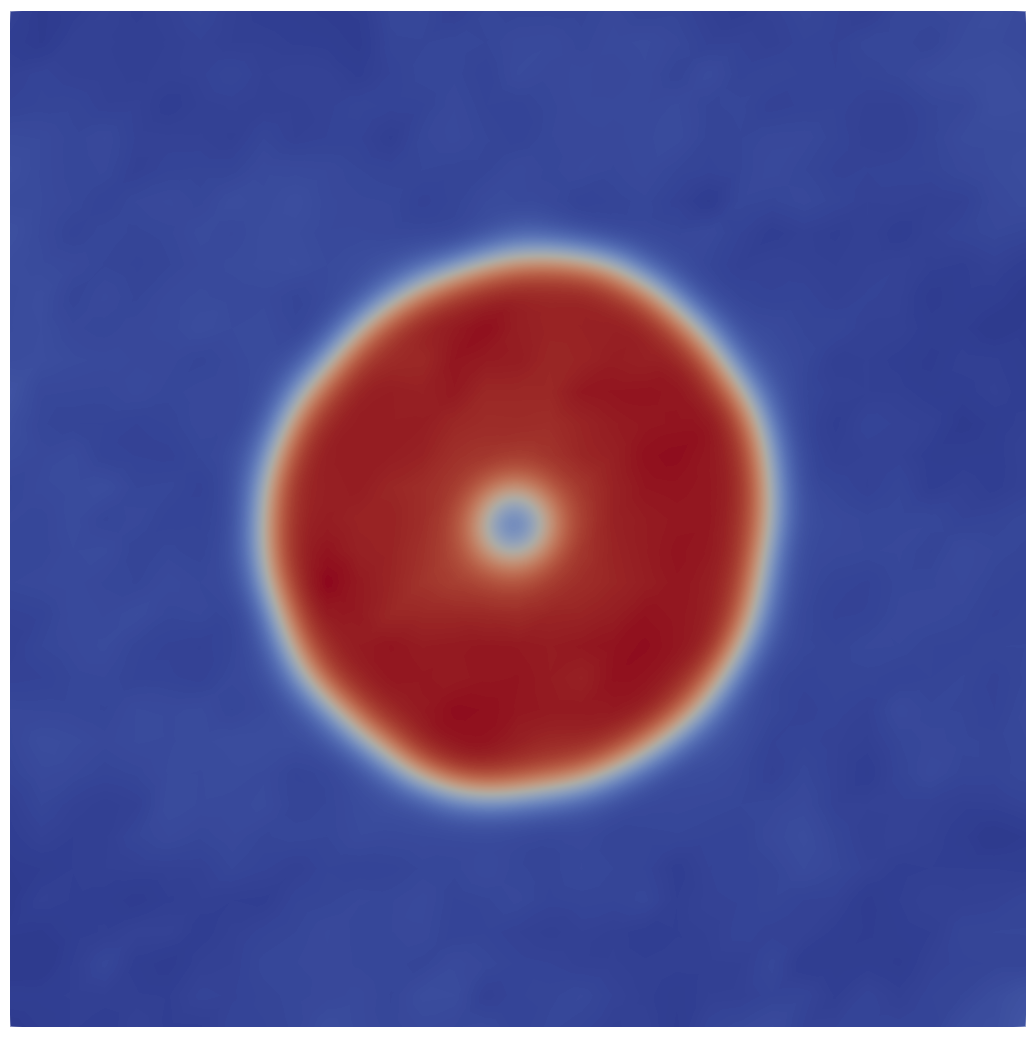}
\includegraphics[width=0.32\textwidth]{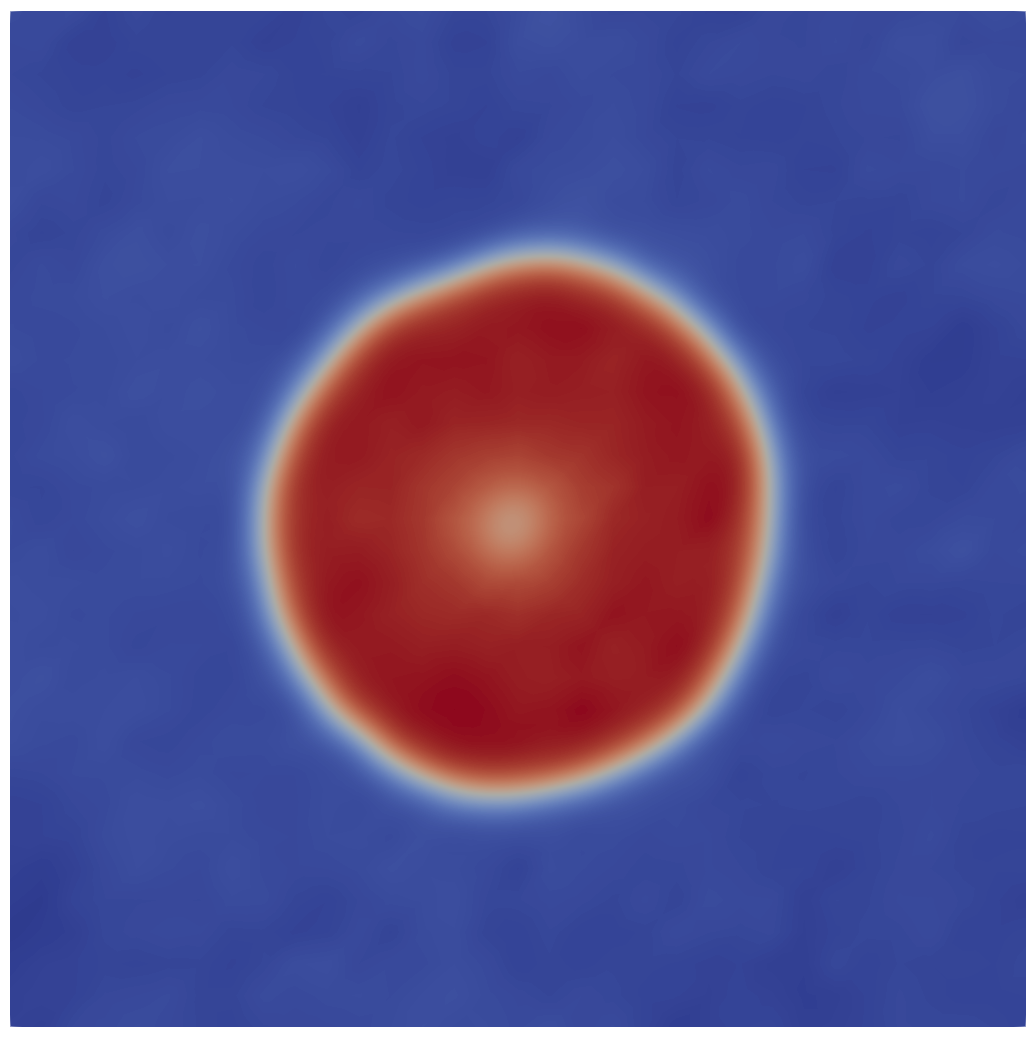}
\includegraphics[width=0.32\textwidth]{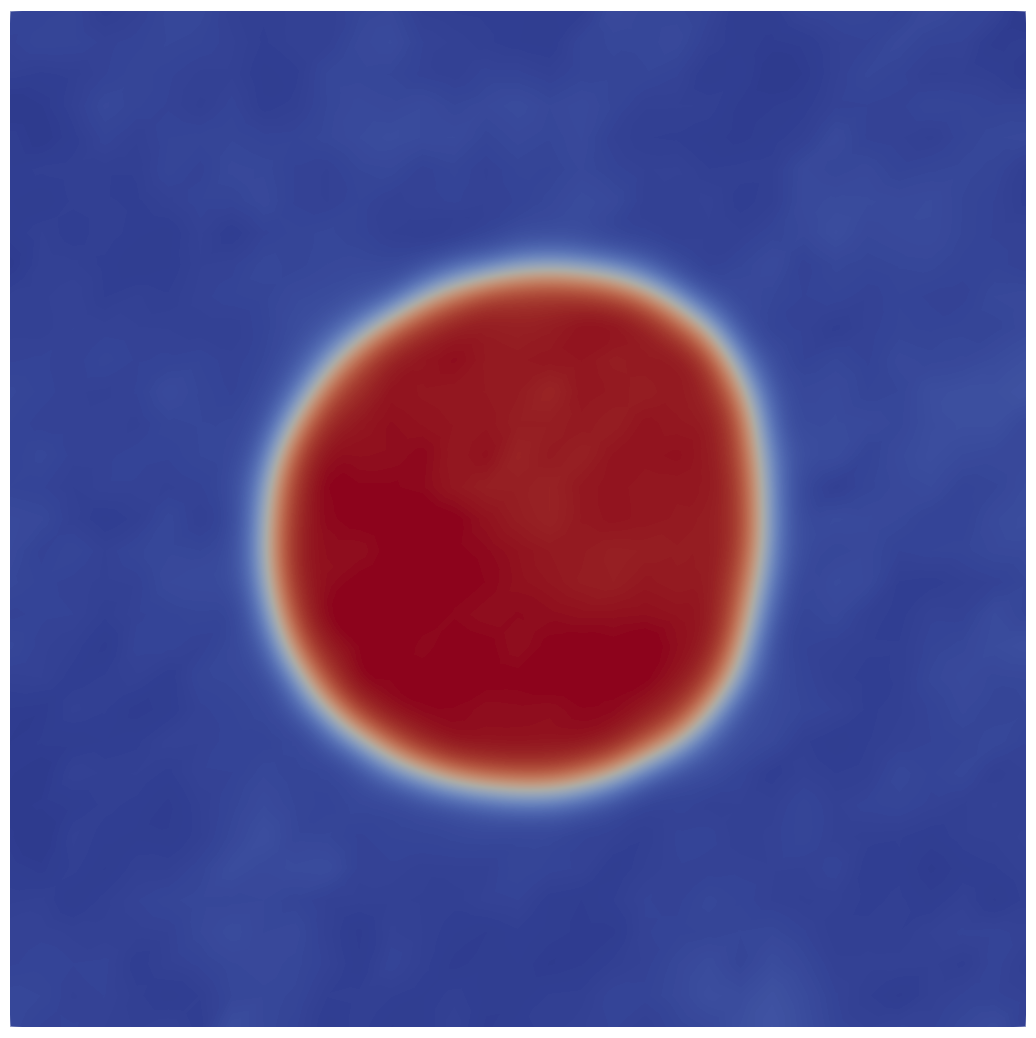}
\caption{Numerical solution at time $t=0,0.0065,0.009,0.0095,0.0097,0.012$.}
\label{fig_uh}
\end{figure}
\begin{figure}[htp!]
\includegraphics[width=0.32\textwidth]{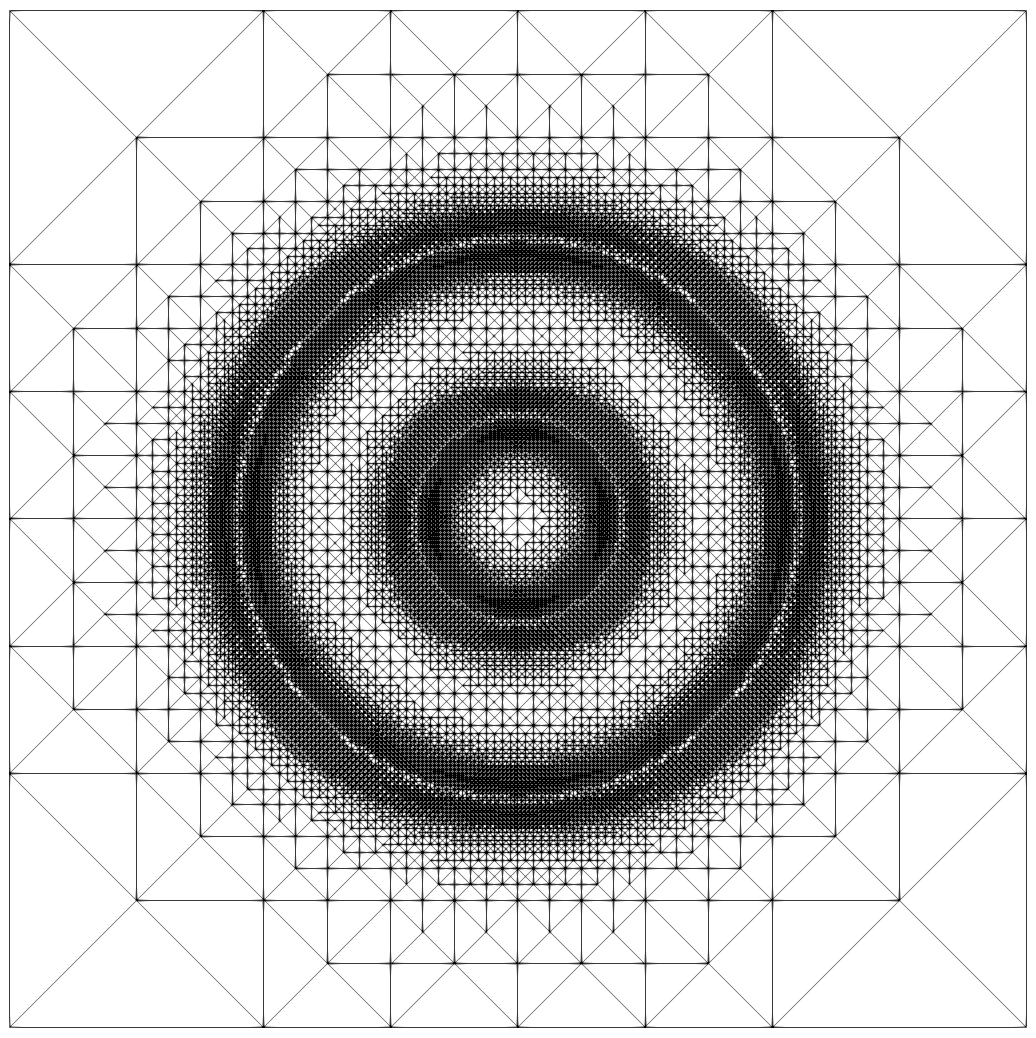}
\includegraphics[width=0.32\textwidth]{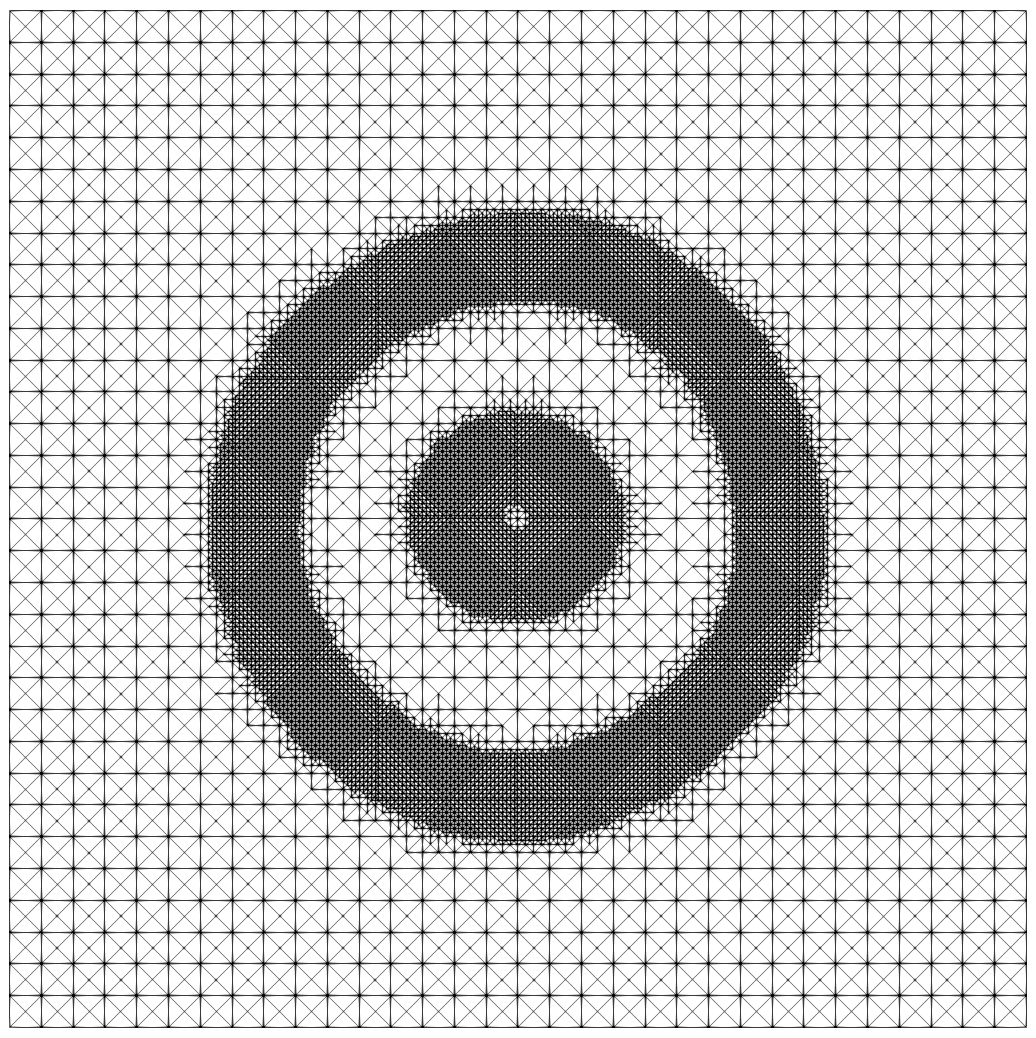}
\includegraphics[width=0.32\textwidth]{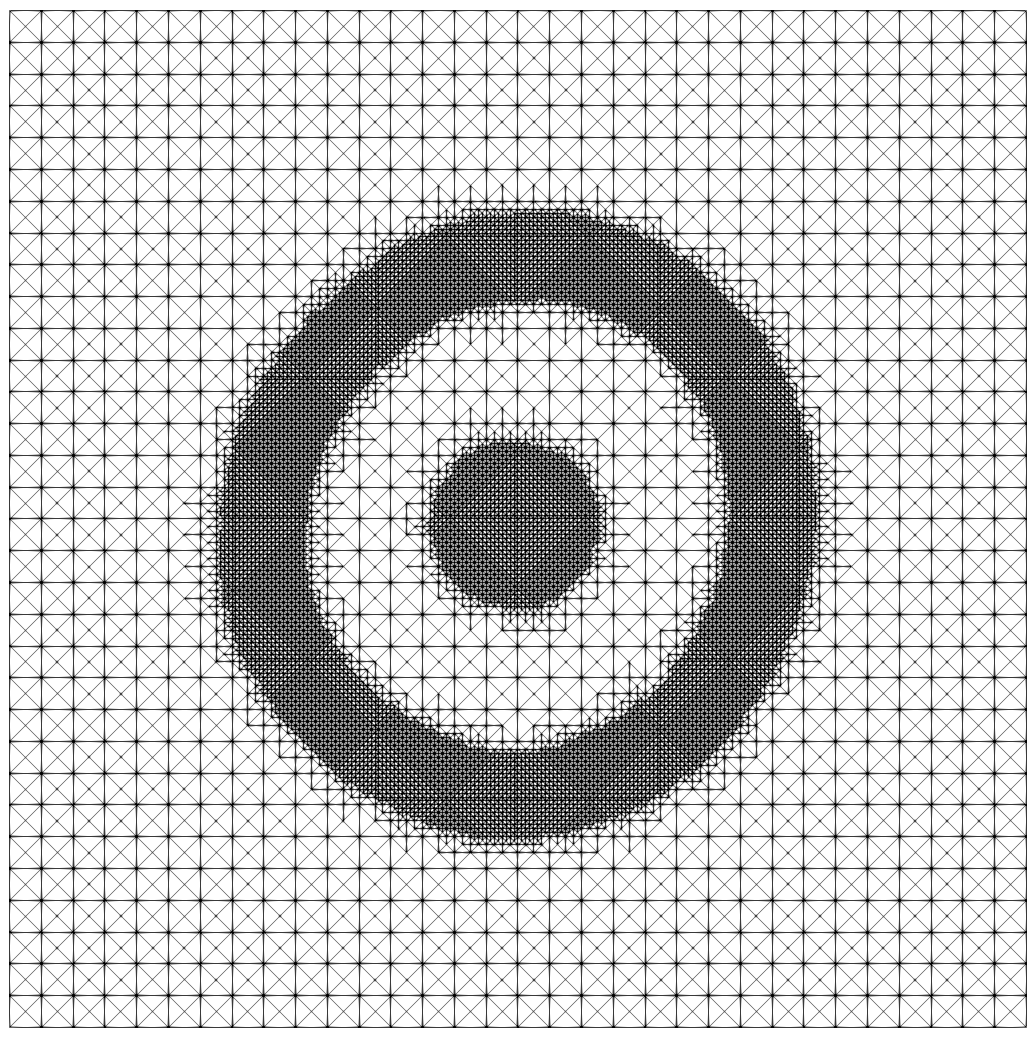}
\includegraphics[width=0.32\textwidth]{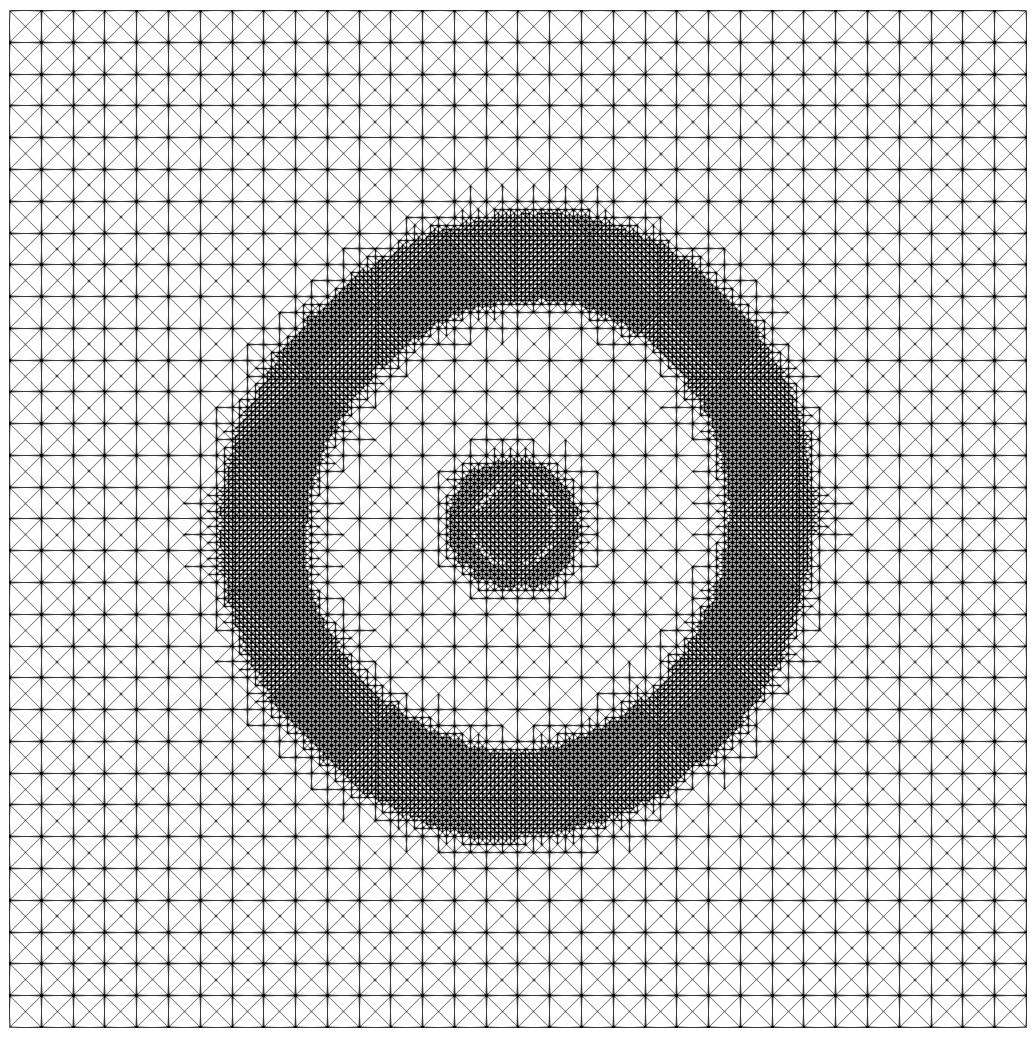}
\includegraphics[width=0.32\textwidth]{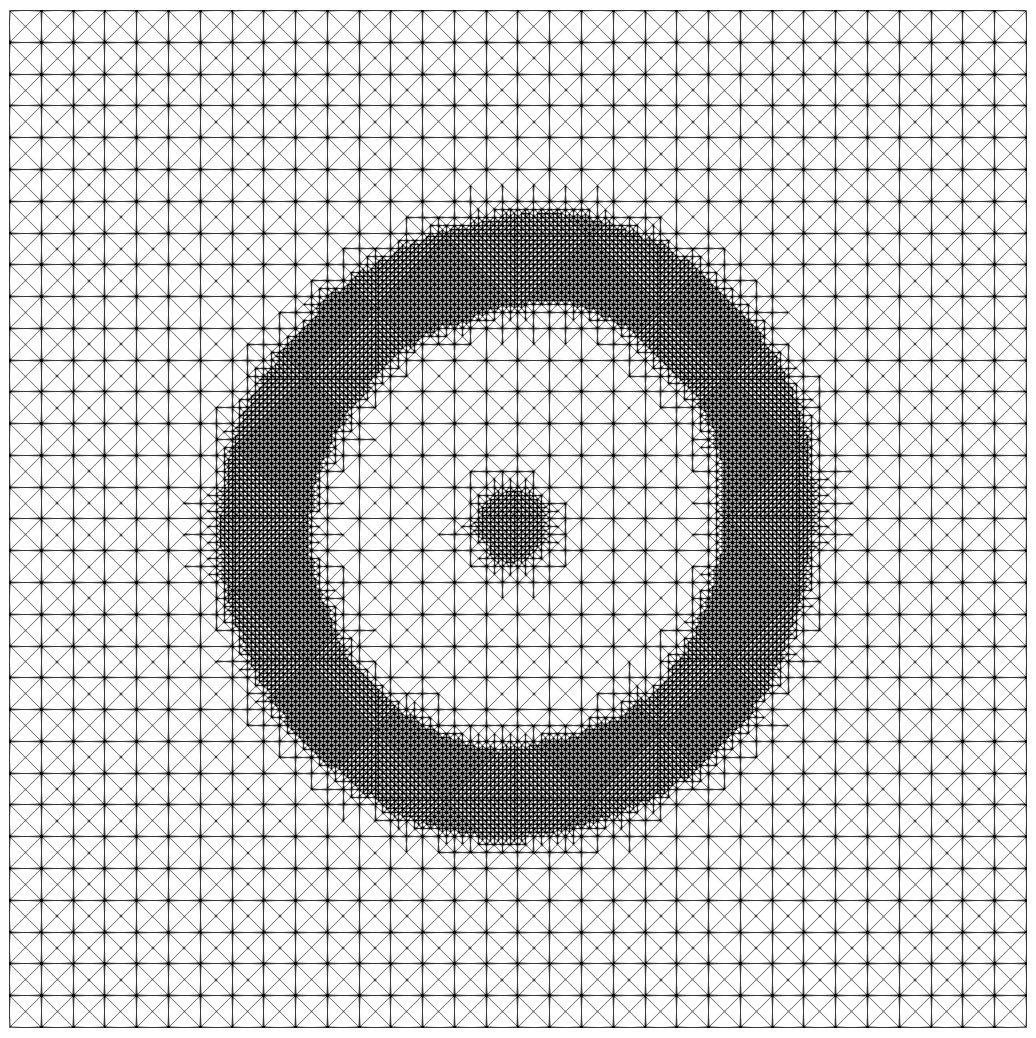}
\includegraphics[width=0.32\textwidth]{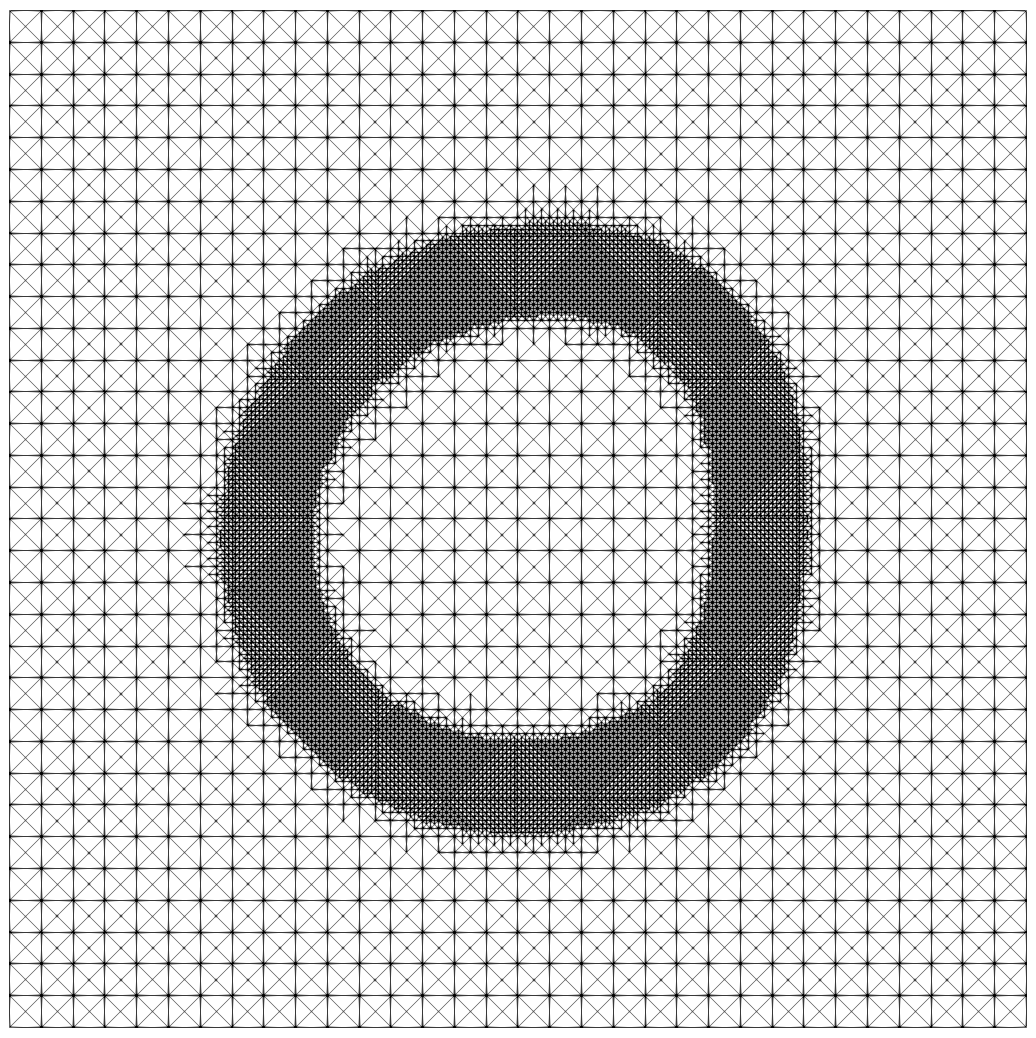}
\caption{Finite element mesh at time $t=0,0.0065,0.009,0.0095,0.0097,0.012$.}
\label{fig_mesh}
\end{figure}

The numerical solution computed for the considered initial condition evolves analogously to the deterministic setting and the stochastic setting with smooth noise, cf. \cite{BanasVieth_a_post23}:
both circles shrink until the inner circle disappear and the solution converges to a steady state which is represented by one circular interface.
The disappearance of the inner circle represents a topological change of the interface which is reflected by the peak
of the discrete principal eigenvalue (\ref{Eigenvalue1}), see Figure~\ref{fig_lambda} where we display the evolution of the principal eigenvalue for different realizations of the noise
with $\tilde{h} = \frac{1}{16}$ and $\tilde{h} = \frac{1}{32}$, respectively.
Apart from slightly larger oscillations for the finer discretisation of the noise, the evolution for both choices of $\tilde{h}$ is qualitatively similar.
\begin{figure}[htp!]
\includegraphics[width=0.45\textwidth]{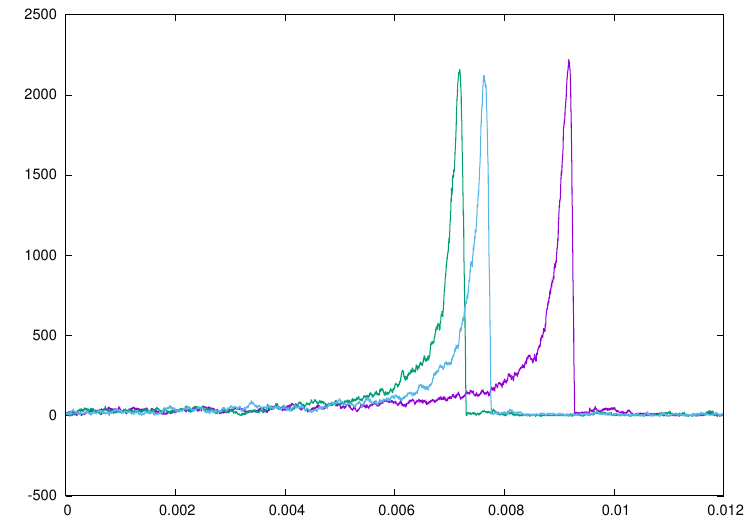}
\includegraphics[width=0.45\textwidth]{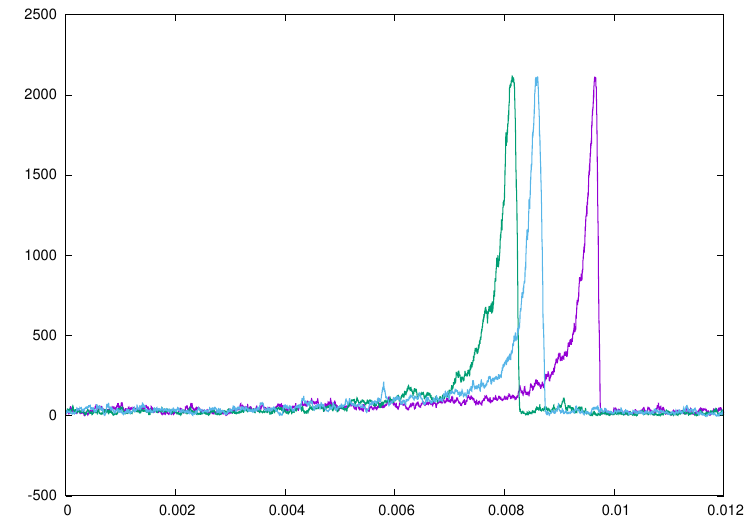}
\caption{Evolution of the discrete principal eigenvalue for different realizations of the noise for $\tilde{h}=1/16$ (left) and for $\tilde{h}=1/32$ (right)}
\label{fig_lambda}
\end{figure}

In Figure~\ref{fig_hist} we display the histogram of the peak-times of the discrete principal eigenvalue for $\tilde{h} = \frac{1}{16}, \frac{1}{32}$ (computed from  $2000$ and $4000$ realisations of the noise, respectively)
along with a (scaled) graph of the evolution of the discrete principal eigenvalue of the deterministic problem. Similarly as in the case of smooth noise \cite{BanasVieth_a_post23},
we observe that the probability of the peak-time in the stochastic case is higher close to the peak-time of the eigenvalue of the deterministic problem.
\begin{figure}[htp!]
\includegraphics[width=0.6\textwidth]{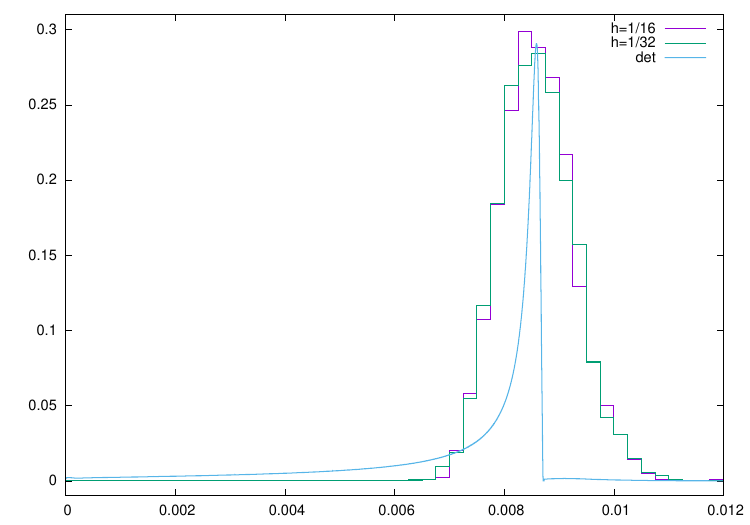}
\caption{Histogram of the peak-times of the principal eigenvalue for $\tilde{h}=1/16$, $\tilde{h}=1/32$ and the evolution of the (scaled) principal eigenvalue of the deterministic problem.}
\label{fig_hist}
\end{figure}

Finally, in Figure~\ref{fig_exp} we display the evolution of the expected value of the discrete energy $\mathcal{E}(u_h^n) = \eps \|\nabla u_h^n\|^2 + \frac{1}{\eps}\|F(u_h^n)\|_{\mathbb{L}^1}$ and of the expected value of discrete principal eigenvalue
as well as the evolution of the corresponding respective value for the deterministic problem. Analogously to the smooth noise case, cf. \cite{BanasVieth_a_post23}, the displayed results indicate that, on average, the topological change of the interface
occurs earlier than in the deterministic setting. Moreover, we observe only minor dependence of the discrete energy on the noise discretisation parameter $\tilde{h}$.
\begin{figure}[htp!]
\includegraphics[width=0.45\textwidth]{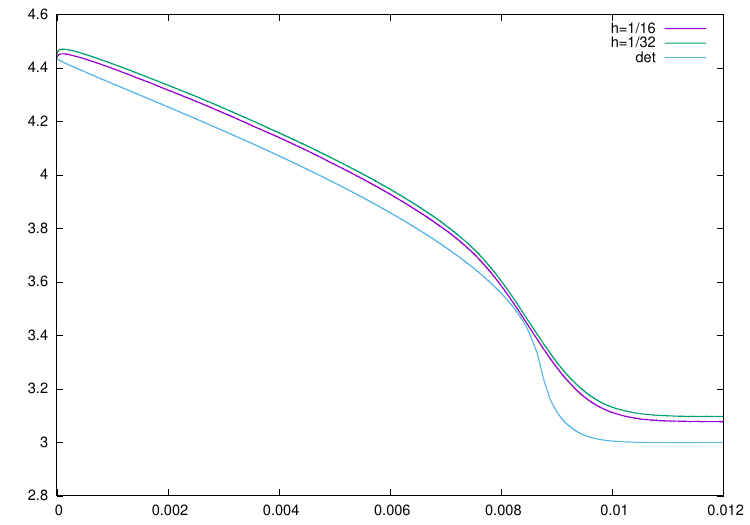}
\includegraphics[width=0.45\textwidth]{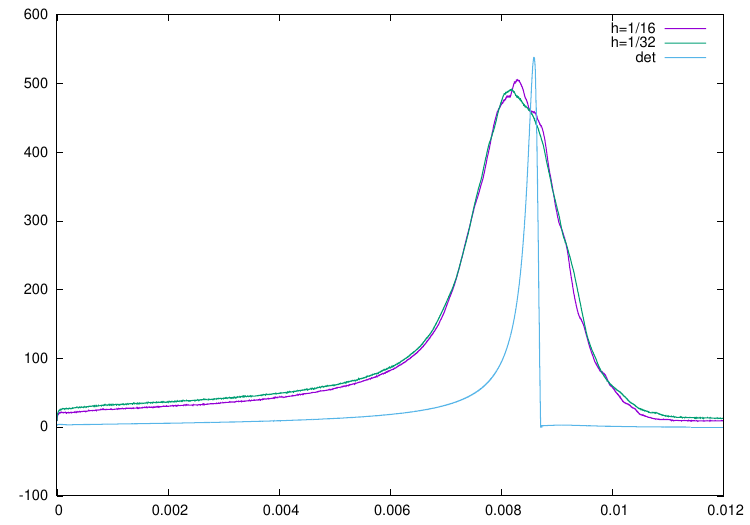}
\caption{Evolution of the expected value of the discrete energy (left) and of the principal eigenvalue (right) for $\tilde{h}=1/16$, $\tilde{h}=1/32$ and for the deterministic problem.}
\label{fig_exp}
\end{figure}

\appendix
\section{Regularity estimates of the solution to the stochastic Cahn-Hilliard equation and some useful inequalities}
\label{SectionRegularity}
In this section we prove an interpolation inequality, and regularity estimates for the solution to the stochastic Cahn-Hilliard equation.
\begin{lemma}
\label{Fundamentallemma}
Let $2<r<3$ and $C>0$. Then, there exists a positive constant $C_{\mathcal{D}}$, independent of $\varepsilon$ and $C$ such that for every $v\in \mathbb{H}^1\cap\mathbb{L}^2_0$ {and $\alpha, \beta>0$}, the following holds:
\begin{align*}
C\Vert v\Vert^3_{\mathbb{L}^3}\leq \Vert v\Vert^4_{\mathbb{L}^4}+C_{\mathcal{D}}\frac{C^{4-r}}{4-r}\varepsilon^{3-r} \Vert v\Vert^{\frac{4-r}{2}}_{\mathbb{H}^{-1}}\Vert  \nabla v\Vert^{\frac{4-r}{2}}_{\mathbb{L}^2}\Vert v\Vert_{\mathbb{L}^4}^{2r-4}.
\end{align*}
\end{lemma}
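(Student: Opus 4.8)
The plan is to reduce the claim to two elementary interpolation inequalities followed by a single application of a scaled weighted Young inequality.

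First I would control the $\mathbb{L}^3$-norm by interpolating between $\mathbb{L}^2$ and $\mathbb{L}^4$: since $\tfrac13=\tfrac{1/3}{2}+\tfrac{2/3}{4}$, the log-convexity of the Lebesgue norms (Hölder) gives $\Vert v\Vert_{\mathbb{L}^3}\le \Vert v\Vert_{\mathbb{L}^2}^{1/3}\Vert v\Vert_{\mathbb{L}^4}^{2/3}$, hence $\Vert v\Vert^3_{\mathbb{L}^3}\le \Vert v\Vert_{\mathbb{L}^2}\Vert v\Vert^2_{\mathbb{L}^4}$. Next, exploiting that $v\in\mathbb{L}^2_0$ has zero mean, I would use the negative-norm interpolation $\Vert v\Vert^2_{\mathbb{L}^2}=(\nabla v,\nabla(-\Delta)^{-1}v)\le \Vert\nabla v\Vert\,\Vert\nabla(-\Delta)^{-1}v\Vert\le C_{\mathcal{D}}\Vert v\Vert_{\mathbb{H}^{-1}}\Vert\nabla v\Vert$, where the last bound is the equivalence of $\Vert\nabla(-\Delta)^{-1}\cdot\Vert$ with the $\mathbb{H}^{-1}$-norm on $\mathring{\mathbb{H}}^{-1}$ recorded in \secref{sec_notation}. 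Combining the two yields the $\varepsilon$- and $C$-free interpolation inequality
\[
\Vert v\Vert^3_{\mathbb{L}^3}\le C_{\mathcal{D}}^{1/2}\,\Vert v\Vert^{1/2}_{\mathbb{H}^{-1}}\Vert\nabla v\Vert^{1/2}\Vert v\Vert^2_{\mathbb{L}^4}.
\]

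The key observation is that the right-hand side is precisely a weighted geometric mean of the two terms appearing in the claim. Setting $\theta:=\tfrac{3-r}{4-r}$, so that $1-\theta=\tfrac{1}{4-r}$ and $\tfrac{\theta}{1-\theta}=3-r$ — all lying in $(0,1)$, respectively positive, exactly because $2<r<3$ — a direct check of the exponents of $\Vert v\Vert_{\mathbb{L}^4}$, $\Vert v\Vert_{\mathbb{H}^{-1}}$ and $\Vert\nabla v\Vert$ shows
\[
\Vert v\Vert^{1/2}_{\mathbb{H}^{-1}}\Vert\nabla v\Vert^{1/2}\Vert v\Vert^2_{\mathbb{L}^4}=\big(\Vert v\Vert^4_{\mathbb{L}^4}\big)^{\theta}\Big(\Vert v\Vert^{\frac{4-r}{2}}_{\mathbb{H}^{-1}}\Vert\nabla v\Vert^{\frac{4-r}{2}}\Vert v\Vert^{2r-4}_{\mathbb{L}^4}\Big)^{1-\theta},
\]
the $\mathbb{L}^4$-exponent check being $4\theta+(2r-4)(1-\theta)=2$ and the $\mathbb{H}^{-1}$- and $\nabla$-exponents being $\tfrac{4-r}{2}(1-\theta)=\tfrac12$.

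Finally I would apply the weighted Young inequality $x^\theta y^{1-\theta}\le \theta\,\delta\,x+(1-\theta)\,\delta^{-\theta/(1-\theta)}y$ with $x=\Vert v\Vert^4_{\mathbb{L}^4}$ and $y=\Vert v\Vert^{\frac{4-r}{2}}_{\mathbb{H}^{-1}}\Vert\nabla v\Vert^{\frac{4-r}{2}}\Vert v\Vert^{2r-4}_{\mathbb{L}^4}$, after multiplying through by $C$. The scaling parameter $\delta$ (of the form $\delta\sim (C\,C_{\mathcal{D}}^{1/2}\varepsilon)^{-1}$) is chosen so that the coefficient of $\Vert v\Vert^4_{\mathbb{L}^4}$ is normalised; since $\tfrac{\theta}{1-\theta}=3-r$, the complementary term then carries exactly the weight $1-\theta=\tfrac{1}{4-r}$, the power $C\cdot C^{3-r}=C^{4-r}$ arising from $\delta^{-(3-r)}$, and the power $\varepsilon^{3-r}$ arising from the $\varepsilon$-dependence of $\delta$, leaving a residual factor $C_{\mathcal{D}}^{(4-r)/2}\big(\tfrac{3-r}{4-r}\big)^{3-r}$ that is bounded uniformly for $r\in(2,3)$ and depends only on $\mathcal{D}$; this factor is absorbed into $C_{\mathcal{D}}$. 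The only delicate point is this last step: tracking $\delta$ to reproduce the precise algebraic prefactor $\tfrac{C^{4-r}}{4-r}\varepsilon^{3-r}$ and confirming that the leftover constant is genuinely independent of both $\varepsilon$ and $C$, which is exactly where the identities $1-\theta=\tfrac{1}{4-r}$ and $\tfrac{\theta}{1-\theta}=3-r$, and hence the restriction $2<r<3$, are indispensable; the remaining manipulations are routine.
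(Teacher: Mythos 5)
Your argument up to the final Young step is correct and uses the same ingredients as the paper, just in a different order: the paper first applies the pointwise Young inequality with conjugate exponents $\tfrac{4-r}{3-r}$ and $4-r$ to split $C|v|^3\le |v|^4+\tfrac{C^{4-r}}{4-r}|v|^r$, integrates, and only then interpolates $\Vert v\Vert_{\mathbb{L}^r}^r\le\Vert v\Vert_{\mathbb{L}^2}^{4-r}\Vert v\Vert_{\mathbb{L}^4}^{2r-4}$ and $\Vert v\Vert_{\mathbb{L}^2}\le\Vert v\Vert_{\mathbb{H}^{-1}}^{1/2}\Vert\nabla v\Vert^{1/2}$, whereas you interpolate first and apply Young at the level of norms at the end. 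Your exponent bookkeeping with $\theta=\tfrac{3-r}{4-r}$ is right, and this route cleanly yields
\begin{align*}
C\Vert v\Vert^3_{\mathbb{L}^3}\leq \Vert v\Vert^4_{\mathbb{L}^4}+C_{\mathcal{D}}\,\frac{C^{4-r}}{4-r}\,\Vert v\Vert^{\frac{4-r}{2}}_{\mathbb{H}^{-1}}\Vert\nabla v\Vert^{\frac{4-r}{2}}\Vert v\Vert^{2r-4}_{\mathbb{L}^4},
\end{align*}
i.e.\ the lemma \emph{without} the factor $\varepsilon^{3-r}$.

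The gap is exactly the point you flag as delicate: the factor $\varepsilon^{3-r}$ cannot be produced by the $\varepsilon$-dependence of $\delta$. Once you normalize the coefficient of $\Vert v\Vert^4_{\mathbb{L}^4}$ to $1$, $\delta$ is pinned to $(C\,C_{\mathcal{D}}^{1/2}\theta)^{-1}$ and carries no $\varepsilon$; if instead you take $\delta\sim(C\,C_{\mathcal{D}}^{1/2}\theta\,\varepsilon)^{-1}$ so that $\delta^{-(3-r)}$ contributes $\varepsilon^{3-r}$ to the second term, the first term becomes $\varepsilon^{-1}\Vert v\Vert^4_{\mathbb{L}^4}$. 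This trade-off is unavoidable, not an artifact of your method: testing the claimed inequality on $\lambda v_0$ for a fixed $v_0$ and optimizing over $\lambda>0$ shows that a bound with unit coefficient on $\Vert v\Vert^4_{\mathbb{L}^4}$, a prefactor $\varepsilon^{3-r}$ on the second term, and $C_{\mathcal{D}}$ independent of $\varepsilon$ cannot hold uniformly as $\varepsilon\to 0$. You are in good company: the paper's own proof has the same defect (the displayed factorization of $C|v|^3$ there is not an identity, and the intended Young step actually places a factor $\varepsilon^{-1}$, up to constants, in front of $|v|^4$), and the $\varepsilon$-free version you prove is the one effectively used in \lemref{LemmaNormL3}. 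Still, as written your last step asserts something false; you should either drop $\varepsilon^{3-r}$ from the target inequality or accept the compensating $\varepsilon^{-1}$ on the $\mathbb{L}^4$ term.
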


\begin{proof}
For $C>0$, $2<r<3$ the Young's inequality $ab\leq \frac{q-1}{q}a^{\frac{q}{q-1}}+\frac{b^q}{q}$ with $q=4-r$ yields
\begin{align*}
C\vert v\vert^3 = C\varepsilon^{\frac{3-r}{4-r}}(\vert v\vert^4)^{\frac{3-r}{4-r}}\varepsilon^{\frac{3-r}{4-r}}\vert v\vert^{\frac{r}{4-r}}\leq \vert v\vert^4+\frac{C^{4-r}}{4-r}\varepsilon^{3-r}\vert v\vert^r.
\end{align*}
Integrating the above estimate  over $\mathcal{D}$, we obtain
\begin{align}
\label{cubic2}
C\Vert v\Vert^3_{\mathbb{L}^3}\leq \Vert v\Vert^4_{\mathbb{L}^4}+\frac{C^{4-r}}{4-r}\varepsilon^{3-r}\Vert v\Vert^r_{\mathbb{L}^r}.
\end{align}
Let us recall the following interpolation inequality (see, for example, \cite[Proposition 6.10]{Folland})
\begin{align*}
\Vert u\Vert_{\mathbb{L}^{q'}}\leq \Vert u\Vert^{\lambda}_{\mathbb{L}^{p'}}\Vert u\Vert^{1-\lambda}_{\mathbb{L}^{r'}},\quad u\in \mathbb{L}^{r'}
\end{align*}
for $p'<q'<r'$ and $\lambda=\frac{p'}{q'}\frac{r'-q'}{r'-p'}$.
Using the preceding interpolation inequality  with $p'=2$, $q'=r$ and $r'=4$ (hence $\lambda=\frac{4-r}{r}$), we obtain
\begin{align*}
\Vert v\Vert^r_{\mathbb{L}^r}\leq \Vert v\Vert^{4-r}_{\mathbb{L}^2}\Vert v\Vert^{2r-4}_{\mathbb{L}^4}.
\end{align*}
By combining the above estimate with the interpolation inequality $\Vert v\Vert_{\mathbb{L}^2} \leq \Vert v\Vert^{\frac{1}{2}}_{\mathbb{H}^{-1}}\Vert \nabla v\Vert^{\frac{1}{2}}_{\mathbb{L}^2}$, we deduce that
\begin{align*}
\Vert v\Vert^r_{\mathbb{L}^r}\leq \Vert v\Vert^{\frac{4-r}{2}}_{\mathbb{H}^{-1}}\Vert  \nabla v\Vert^{\frac{4-r}{2}}_{\mathbb{L}^2}\Vert v\Vert_{\mathbb{L}^4}^{2r-4}.
\end{align*}
Substituting the preceding estimate into \eqref{cubic2} concludes the proof.
\end{proof}

The following generalized version of the Gronwall lemma was shown in  \cite[Lemma 2.1]{BartelsRuediger11}.
\begin{lemma}{[Generalized Gronwall's lemma]}
\label{Gronwall}
Let $T>0$ be fixed. Suppose that  $y_1\in C([0, T])$ is non-negative, $y_2, y_3\in L^1(0, T)$, $\alpha\in L^{\infty}(0, T)$, and there is $A\geq 0$ such that
\begin{align*}
y_1(t)+\int_0^ty_2(s)ds\leq A+\int_0^t\alpha(s)y_1(s)ds+\int_0^ty_3(s)ds,
\end{align*}
for all $t\in[0, T]$. Assume that for $B\geq 0$, $\beta>0$, and every $t\in[0, T]$, we have
\begin{align*}
\int_0^ty_3(s)ds\leq B\sup_{s\in[0, t]}y_1^{\beta}(s)\int_0^t\left(y_1(s)+y_2(s)\right)ds.
\end{align*}
Set $E=\exp\left(\int_0^t\alpha(s)ds\right)$ and assume that $8AE\leq \left(8B(1+T)E\right)^{-1/\beta}$. Then, it holds that
\begin{align*}
\sup_{t\in[0, T]}y_1(t)+\int_0^Ty_2(s)ds\leq 8A\exp\left(\int_0^T\alpha(s)ds\right). 
\end{align*}
\end{lemma}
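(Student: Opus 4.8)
The plan is to reduce the abstract inequality to a scalar nonlinear Gronwall inequality for the ``energy'' $m(t):=\sup_{s\in[0,t]}y_1(s)+\int_0^t y_2(s)\,ds$ and then to close it by a continuation (bootstrap) argument, the smallness condition $8AE\le(8B(1+T)E)^{-1/\beta}$ being precisely what allows the superlinear term to be absorbed.

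First I would split $m=p+q$ with $p(t):=\sup_{s\in[0,t]}y_1(s)$ and $q(t):=\int_0^t y_2(s)\,ds$. Using $y_1\ge0$ and the nonnegativity of the integrands on the right-hand side (so that $s\mapsto\int_0^s\alpha y_1+\int_0^s y_3$ is nondecreasing), the first hypothesis yields, after dropping the nonnegative terms that are not needed,
\[
p(t)\le A+\int_0^t\alpha(s)p(s)\,ds+\int_0^t y_3(s)\,ds,\qquad q(t)\le A+\int_0^t\alpha(s)p(s)\,ds+\int_0^t y_3(s)\,ds .
\]
Then I would feed in the second hypothesis together with the crude bounds $\sup_{s\le t}y_1^\beta=p(t)^\beta\le m(t)^\beta$ and $\int_0^t(y_1+y_2)\le Tp(t)+q(t)\le(1+T)m(t)$ to bound the single superlinear contribution $\int_0^t y_3\le B(1+T)\,m(t)^{1+\beta}$ appearing on the right of both inequalities.

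The decisive step is the continuation argument. Since $y_1$ is continuous, $m$ is continuous and $m(0)=y_1(0)\le A\le 8AE$ (here $E=\exp(\int_0^T\alpha)\ge1$, as $\alpha\ge0$). On the closed set $J:=\{t\in[0,T]:m\le 8AE \text{ on } [0,t]\}$ I would exploit that the hypothesis is equivalent to $B(1+T)(8AE)^\beta\le\frac{1}{8E}$, so that whenever $m\le 8AE$ the superlinear contribution satisfies $B(1+T)p^\beta m\le\frac{1}{8E}\cdot 8AE=A$. This turns the two inequalities into the linear ones $p(t)\le 2A+\int_0^t\alpha p$ and $q(t)\le 2A+\int_0^t\alpha p$. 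Classical Gronwall applied to $p$ gives $p(t)\le 2A\exp(\int_0^t\alpha)=2AE(t)$, and substituting this into the bound for $q$ and using $\int_0^t\alpha E=E(t)-1$ yields $q(t)\le 2AE(t)$. Hence $m\le 4AE<8AE$ on $J$, and continuity forces $J=[0,T]$; evaluating at $t=T$ then gives the claim, in fact with the sharper constant $4A$ in place of $8A$.

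The main obstacle is exactly the superlinear term $m^{1+\beta}$, for which the ordinary (linear) Gronwall lemma does not apply: one must keep $m$ below the threshold $8AE$ long enough for the smallness condition to absorb this term into the constant $A$, which is what the bootstrap/continuity argument accomplishes. A secondary bookkeeping point is to estimate $p=\sup y_1$ first and only afterwards $q=\int y_2$, so that the exponential factor enters to the first power $E$ (with coefficient $\alpha$) rather than as $E^2$.
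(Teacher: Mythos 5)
Your proof is correct: the paper itself does not prove this lemma but cites \cite{BartelsRuediger11}, and the argument there is exactly the continuation/bootstrap you describe --- bound $p(t)=\sup_{s\le t}y_1(s)$ and $q(t)=\int_0^t y_2$ separately, use the smallness condition $8B(1+T)E(8AE)^{\beta}\le 1$ to absorb the superlinear contribution $\int_0^t y_3\le B(1+T)m(t)^{1+\beta}$ into the constant $A$ on the set where $m\le 8AE$, apply the linear Gronwall inequality, and close by continuity of $m$. The only points worth flagging are that you implicitly use the nonnegativity of $\alpha$, $y_2$ (and, for the monotonicity step, of $\int_0^s y_3$), which is not written in the statement but is assumed in the cited source and holds in the paper's application, and that the degenerate case $A=0$ needs a separate (trivial) remark since then $4AE<8AE$ fails; your sharper constant $4A$ in place of $8A$ is of course consistent with the claimed bound.
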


In the next lemma we provide a regularity estimate of the solution to the linear SPDE \eqref{model2}.
\begin{lemma}
\label{Normutilde}
Let $\widetilde{u}$ be the solution to \eqref{model2}. 
For any $p\geq 2$, the following estimate holds:
\begin{align*}
\mathbb{E}\left[\sup_{t\in[0, T]}\Vert \widetilde{u}(t)\Vert^p_{\mathbb{L}^p}\right]\leq C\tilde{h}^{-\frac{3pd}{2}}\varepsilon^{-\frac{p}{2}}.
\end{align*}
\end{lemma}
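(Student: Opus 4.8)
The plan is to exploit that the solution of the linear problem \eqref{model2} is a \emph{centred Gaussian} process (additive, solution-independent noise with zero initial data), so that all spatial $\mathbb{L}^p$-moments are controlled by its covariance, and to obtain a bound uniform in $p$ by passing through the embedding $\mathbb{H}^2\hookrightarrow\mathbb{L}^\infty\hookrightarrow\mathbb{L}^p$ (valid since $d\le 3$). Writing $A:=\varepsilon\Delta^2$ on the zero-mean subspace $\mathbb{L}^2_0$ and $S(t):=e^{-tA}$ for the associated analytic, self-adjoint, contraction semigroup, the unique solution of \eqref{model2} is the stochastic convolution $\widetilde u(t)=\int_0^t S(t-s)\,d\widetilde W(s)$, where by \eqref{Noiseapprox2} one has $d\widetilde W(t)=\sum_{\ell=1}^L g_\ell\psi_\ell\,d\beta_\ell(t)$ with $\psi_\ell:=\phi_\ell-m(\phi_\ell)$ and $g_\ell:=\big((d+1)^{-1}|(\phi_\ell,1)|\big)^{-1/2}$. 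Since $\widetilde u(t)$ and the auxiliary process below are Gaussian with values in $\mathbb{H}^2$, the Kahane--Khintchine (Gaussian hypercontractivity) inequality gives $\mathbb{E}\|\widetilde u(t)\|_{\mathbb{H}^2}^p\le C_p\big(\mathbb{E}\|\widetilde u(t)\|_{\mathbb{H}^2}^2\big)^{p/2}$, which reduces the whole estimate to second moments.

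To bring the time-supremum inside the expectation I would use the factorization method. Fix $\alpha\in(1/p,1/2)$ (with $4\alpha<3/2$) and set $Y(s):=\int_0^s(s-\sigma)^{-\alpha}S(s-\sigma)\,d\widetilde W(\sigma)$, so that $\widetilde u(t)=c_\alpha\int_0^t(t-s)^{\alpha-1}S(t-s)Y(s)\,ds$. Using contractivity of $S$ on $\mathbb{H}^2$ and H\"older's inequality in $s$ (the kernel $(t-s)^{\alpha-1}$ lies in $L^{p'}$ precisely when $\alpha>1/p$), I obtain $\sup_{t\le T}\|\widetilde u(t)\|_{\mathbb{H}^2}^p\le C\int_0^T\|Y(s)\|_{\mathbb{H}^2}^p\,ds$. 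Taking expectations, invoking Gaussianity of $Y(s)$, and then the embeddings above yields
\[
\mathbb{E}\Big[\sup_{t\in[0,T]}\|\widetilde u(t)\|_{\mathbb{L}^p}^p\Big]\le C\,\mathbb{E}\Big[\sup_{t\in[0,T]}\|\widetilde u(t)\|_{\mathbb{H}^2}^p\Big]\le C\,T\,\Big(\sup_{s\in[0,T]}\mathbb{E}\|Y(s)\|_{\mathbb{H}^2}^2\Big)^{p/2},
\]
so it remains to bound the single second moment $\sup_s\mathbb{E}\|Y(s)\|_{\mathbb{H}^2}^2$.

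For that moment I would expand in the Neumann eigenbasis $\{e_j\}$, $-\Delta e_j=\lambda_j e_j$ with $\lambda_j>0$ on $\mathbb{L}^2_0$, so $S(r)e_j=e^{-\varepsilon r\lambda_j^2}e_j$. By the It\^o isometry and the scalar integrals $\int_0^\infty r^{-2\alpha}\lambda_j^{2}e^{-2\varepsilon r\lambda_j^2}\,dr=C\varepsilon^{2\alpha-1}\lambda_j^{4\alpha-2}$, the smoothing of the fourth-order semigroup produces the $\varepsilon$-power and reveals a \emph{fractional} Sobolev norm of the noise:
\[
\mathbb{E}\|Y(s)\|_{\mathbb{H}^2}^2=\sum_{\ell=1}^L g_\ell^2\int_0^s (s-\sigma)^{-2\alpha}\|S(s-\sigma)\psi_\ell\|_{\mathbb{H}^2}^2\,d\sigma\le C\varepsilon^{2\alpha-1}\sum_{\ell=1}^L g_\ell^2\|\psi_\ell\|_{\mathbb{H}^{4\alpha}}^2.
\]
Since each $\psi_\ell\in\mathbb{V}_{\tilde h}$ has zero mean, finite element inverse inequalities give $\|\psi_\ell\|_{\mathbb{H}^{4\alpha}}\le C\tilde h^{-4\alpha}\|\psi_\ell\|$, and \lemref{Lemmabasis} supplies $g_\ell^2\le C\tilde h^{-d}$, $\|\psi_\ell\|\le C\tilde h^{d/2}$ and $L\le C\tilde h^{-d}$. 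Substituting these bounds, summing over $\ell$, raising to the power $p/2$ and using the (fixed, hence harmless) freedom in $\alpha$ assembles the $\tilde h$- and $\varepsilon$-contributions into the stated bound $\tilde h^{-3pd/2}\varepsilon^{-p/2}$; the estimate is deliberately non-sharp in $\tilde h$, which is acceptable since $\tilde h$ is fixed throughout.

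The main obstacle is the low spatial regularity of the driving noise: the $\psi_\ell$ belong to $\mathbb{H}^1$ but not to $\mathbb{H}^2$, so a direct It\^o/energy estimate for $\|\widetilde u\|_{\mathbb{H}^2}^2$ is unavailable because its quadratic variation $\sum_\ell g_\ell^2\|\psi_\ell\|_{\mathbb{H}^2}^2$ is infinite. This is precisely what forces the mild-solution/factorization route, in which the smoothing of $e^{-\varepsilon t\Delta^2}$ must compensate the singular factorization kernel. The delicate balance is in the choice of $\alpha$: it must be large enough ($\alpha>1/p$) for the time-supremum step, yet small enough ($4\alpha<3/2$) that $\int_0^\infty r^{-2\alpha}\lambda^2e^{-2\varepsilon r\lambda^2}\,dr$ converges and that the fractional inverse inequality on $\mathbb{V}_{\tilde h}$ applies. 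Carefully tracking the $\varepsilon$-dependence through the semigroup smoothing, and reconciling it with these integrability and inverse-inequality constraints, is the principal technical point; the case of small $p$ (where $\mathbb{H}^2\hookrightarrow\mathbb{L}^\infty$ can be relaxed to $\mathbb{H}^s\hookrightarrow\mathbb{L}^p$) is handled analogously.
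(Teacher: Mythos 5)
Your argument is correct in its essentials but follows a genuinely different route from the paper's. The paper exploits the finite dimensionality of the noise head-on: it applies the crude bound $\Vert\sum_{\ell}a_\ell\Vert^p\le L^{p-1}\sum_\ell\Vert a_\ell\Vert^p$, treats each scalar stochastic convolution $\int_0^t e^{-\varepsilon\Delta^2(t-s)}(\phi_\ell-m(\phi_\ell))\,d\beta_\ell(s)$ pointwise in $x$ via Burkh\"older--Davis--Gundy, and extracts $\varepsilon^{-p/2}$ from the eigen-expansion using $\sum_j\lambda_j^{-2}<\infty$ (valid for $d\le 3$); the $\tilde h^{-3pd/2}$ then comes entirely from $L^{p-1}\cdot L\cdot\vert(\phi_\ell,1)\vert^{-p/2}$ together with $\Vert\phi_\ell\Vert_{\mathbb{L}^\infty}\le C$. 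You instead keep the noise intact, reduce to second moments by Gaussianity (Kahane--Khintchine), obtain the time-supremum by the factorization method, and pay for the low noise regularity with the smoothing of $e^{-\varepsilon t\Delta^2}$, which surfaces as the fractional norm $\Vert\psi_\ell\Vert_{\mathbb{H}^{4\alpha}}$ handled by inverse estimates. Your route is more systematic and, notably, supplies a legitimate maximal inequality for the stochastic convolution (the paper's direct application of BDG to $t\mapsto\int_0^te^{-\varepsilon\Delta^2(t-s)}\,d\beta_\ell$, whose integrand depends on $t$, is not literally a martingale inequality); the paper's route is more elementary and avoids semigroup machinery. Both yield the same non-sharp powers.

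Two bookkeeping points need attention before your sketch closes. First, the admissible window for $\alpha$ is $1/p<\alpha<3/8$ (you need $2\alpha<1$ for $Y$ to have finite variance and $4\alpha<3/2$ for $\psi_\ell\in\mathbb{H}^{4\alpha}$), which is \emph{empty} for $2\le p\le 8/3$; relaxing $\mathbb{H}^2$ to $\mathbb{H}^s$ does not rescue $p=2$ since $\alpha>1/p$ and $2\alpha<1$ are then incompatible. The clean fix is to prove the bound for a large exponent $q$ and descend to $p<q$ by Jensen, which preserves the stated powers since $\left(\tilde h^{-3qd/2}\varepsilon^{-q/2}\right)^{p/q}=\tilde h^{-3pd/2}\varepsilon^{-p/2}$; say this explicitly rather than ``handled analogously.'' Second, your $\tilde h$-exponent is $(d+8\alpha)p/2$, which is $\le 3pd/2$ only when $\alpha\le d/4$; for $d=1$ and moderate $p$ this conflicts with $\alpha>1/p$, so again you should run the argument at large $q$ (where $\alpha$ can be taken small) and interpolate down. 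With these adjustments the proof is complete.
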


\begin{proof}
Using the semi-group approach (cf. \cite{Debussche1, DaPratoZabczyk}), we express the solution to \eqref{model2} as:
\begin{align*}
\widetilde{u}(t)=\int_0^te^{-\varepsilon\Delta^2(t-s)}d\widetilde{W}(s)=\sum_{\ell=1}^L\frac{1}{\sqrt{(d+1)^{-1}\vert (\phi_{\ell}, 1)\vert}}\int_0^te^{-\varepsilon\Delta^2(t-s)}(\phi_{\ell}-m(\phi_{\ell}))d\beta_{\ell}(s).
\end{align*}
By applying the triangle inequality, we obtain: 
\begin{align}
\label{Refbruit1}
&\mathbb{E}\left[\sup_{t\in[0, T]}\Vert \widetilde{u}(t)\Vert^p_{\mathbb{L}^p}\right]\nonumber\\
&\quad\leq\sum_{\ell=1}^L\frac{ L^{p-1}}{(d+1)^{-\frac{p}{2}}\vert (\phi_{\ell}, 1)\vert^{\frac{p}{2}}}\mathbb{E}\left[\sup_{t\in[0, T]}\left\Vert\int_0^te^{-\varepsilon\Delta^2(t-s)}(\phi_{\ell}-m(\phi_{\ell}))d\beta_{\ell}(s)\right\Vert^p_{\mathbb{L}^p}\right].
\end{align}
Using the Burkh\"{o}lder-Davis-Gundy inequality (see, e.g., \cite[Theorem 4.36]{DaPratoZabczyk}), we obtain:
\begin{align*}
&\mathbb{E}\left[\sup_{t\in[0, T]}\left\Vert\int_0^te^{-\varepsilon\Delta^2(t-s)}(\phi_{\ell}-m(\phi_{\ell}))d\beta_{\ell}(s)\right\Vert^p_{\mathbb{L}^p}\right]\nonumber\\
&\qquad\leq C\mathbb{E}\left[\int_{\mathcal{D}}\sup_{t\in[0, T]}\left\vert\int_0^te^{-\varepsilon\Delta^2(t-s)}(\phi_{\ell}(x)-m(\phi_{\ell}))d\beta_{\ell}(s)\right\vert^pdx\right]\nonumber\\
&\qquad\leq C\mathbb{E}\left[\int_{\mathcal{D}}\sup_{t\in[0, T]}\left\vert\sum_{j\in\mathbb{N}^d}\int_0^te^{-\varepsilon\lambda_j^2(t-s)}(\phi_{\ell}(x)-m(\phi_{\ell}))e_j(x)d\beta_{\ell}(s)\right\vert^pdx\right]\\
&\qquad\leq C\int_{\mathcal{D}}\sum_{j\in\mathbb{N}^d}\mathbb{E}\left[\sup_{t\in[0, T]}\left\vert\int_0^te^{-\varepsilon\lambda_j^2(t-s)}(\phi_{\ell}(x)-m(\phi_{\ell}))e_j(x)d\beta_{\ell}(s)\right\vert^pdx\right]\nonumber\\
&\qquad\leq C\int_{\mathcal{D}}\left(\sum_{j\in\mathbb{N}^d}\int_0^Te^{-2\lambda_j^2(t-s)\varepsilon}\vert (\phi_{\ell}(x)-m(\phi_{\ell}))e_j(x)\vert^2ds\right)^{\frac{p}{2}}dx.\nonumber
\end{align*}
Using the fact that $\sum_{j\in\mathbb{N}^d}\frac{1}{\lambda_j^2}<\infty$, it follows from the estimate above that:
\begin{align*}
&\mathbb{E}\left[\sup_{t\in[0, T]}\left\Vert\int_0^te^{-\varepsilon\Delta^2(t-s)}(\phi_{\ell}-m(\phi_{\ell}))d\beta_{\ell}(s)\right\Vert^p_{\mathbb{L}^p}\right]\nonumber\\
&\quad\leq C\Vert \phi_{\ell}\Vert^p_{\mathbb{L}^{\infty}}\int_{\mathcal{D}}\left(\sum_{j\in\mathbb{N}^d}\int_0^Te^{-2\lambda_j^2(t-s)\varepsilon}ds\right)^{\frac{p}{2}}dx\leq C\int_{\mathcal{D}}\left(\sum_{j\in\mathbb{N}^d}\frac{1}{\lambda_j^2\varepsilon}\right)^{\frac{p}{2}}dx\leq C\varepsilon^{-\frac{p}{2}}. 
\end{align*}
Substituting the preceding estimate into \eqref{Refbruit1} and  using \lemref{Lemmabasis} completes the proof. 
\end{proof}

In the next Lemma we provide some regularity estimates of the solution to the stochastic Cahn-Hilliard equation \eqref{pb1}.
\begin{lemma}
\label{LemmaRegularity} 
Let $u$ be the solution to the stochastic Cahn-Hilliard equation \eqref{pb1}. Then there exists a constant $C\geq 0$, such that
\begin{align*}
\mathbb{E}\left[\sup_{t\in[0, T]}\Vert u(t)\Vert^2_{\mathbb{H}^{-1}}+\frac{1}{\varepsilon}\int_0^T\Vert u(s)\Vert^4_{\mathbb{L}^4}ds\right]\leq C\left(\Vert u_0\Vert^2_{\mathbb{H}^{-1}}+\varepsilon^{-1}+\tilde{h}^{-6d}\varepsilon^{-3}\right).
\end{align*}
\end{lemma}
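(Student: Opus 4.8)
The plan is to exploit the decomposition $u=\widetilde{u}+\widehat{u}$ from \eqref{model2}--\eqref{model3}, which reduces the stochastic estimate to a \emph{pathwise} (deterministic) estimate for the random PDE solved by $\widehat{u}$, combined with the explicit Gaussian moment bound for $\widetilde{u}$ established in \lemref{Normutilde}. First I would record that $\widehat{u}$ solves \eqref{Hat3}, i.e. $\tfrac{d}{dt}\widehat{u}=-\varepsilon\Delta^2\widehat{u}+\varepsilon^{-1}\Delta f(\widehat{u}+\widetilde{u})$ with $\widehat{u}(0)=u_0$, and that $\widehat{u}(t)$ and $\widetilde{u}(t)$ have zero spatial mean, so that $(-\Delta)^{-1}\widehat{u}$ is well defined and the $\mathbb{H}^{-1}$ gradient-flow structure is available.

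The core step is testing \eqref{Hat3} with $(-\Delta)^{-1}\widehat{u}(t)$. Using the identities $(\Delta^2\widehat{u},(-\Delta)^{-1}\widehat{u})=\|\nabla\widehat{u}\|^2$ and $(\Delta f(u),(-\Delta)^{-1}\widehat{u})=-(f(u),\widehat{u})$, and then writing $\widehat{u}=u-\widetilde{u}$ with $f(u)=u^3-u$, one arrives at the pathwise identity
\[
\tfrac12\tfrac{d}{dt}\|\widehat{u}\|^2_{\mathbb{H}^{-1}}+\varepsilon\|\nabla\widehat{u}\|^2+\tfrac{1}{\varepsilon}\|u\|^4_{\mathbb{L}^4}=\tfrac{1}{\varepsilon}\|u\|^2+\tfrac{1}{\varepsilon}(f(u),\widetilde{u}).
\]
I would estimate the right-hand side pathwise: the term $\varepsilon^{-1}\|u\|^2$ is absorbed into $\varepsilon^{-1}\|u\|^4_{\mathbb{L}^4}$ via H\"older and Young's inequalities at the cost of a constant $C\varepsilon^{-1}$, while the cross term is split as $\varepsilon^{-1}(u^3,\widetilde{u})-\varepsilon^{-1}(u,\widetilde{u})$ and controlled by Young's inequality (exponents $4/3$ and $4$) so that every occurrence of $\|u\|_{\mathbb{L}^4}$ is absorbed into the coercive quartic term, leaving a remainder of the form $C\varepsilon^{-1}(1+\|\widetilde{u}\|^4_{\mathbb{L}^4})$. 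Integrating in time and taking the supremum reproduces, as in \cite[Theorem 3.1]{BanasDaniel23}, the pathwise bound
\[
\sup_{t\in[0,T]}\|\widehat{u}(t)\|^2_{\mathbb{H}^{-1}}+\tfrac{1}{4\varepsilon}\int_0^T\|u(s)\|^4_{\mathbb{L}^4}\,ds\le \|u_0\|^2_{\mathbb{H}^{-1}}+C\varepsilon^{-1}+C\varepsilon^{-1}\int_0^T\|\widetilde{u}(s)\|^4_{\mathbb{L}^4}\,ds.
\]

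Taking expectations and invoking \lemref{Normutilde} with $p=4$, which yields $\mathbb{E}\big[\int_0^T\|\widetilde{u}\|^4_{\mathbb{L}^4}\,ds\big]\le CT\tilde{h}^{-6d}\varepsilon^{-2}$, produces the $\tilde{h}^{-6d}\varepsilon^{-3}$ contribution on the right-hand side. To pass from $\widehat{u}$ to $u$ on the left-hand side I would use $\|u\|^2_{\mathbb{H}^{-1}}\le 2\|\widehat{u}\|^2_{\mathbb{H}^{-1}}+2\|\widetilde{u}\|^2_{\mathbb{H}^{-1}}$ together with $\|\widetilde{u}\|_{\mathbb{H}^{-1}}\le C\|\widetilde{u}\|_{\mathbb{L}^2}$ and \lemref{Normutilde} with $p=2$, whose contribution $C\tilde{h}^{-3d}\varepsilon^{-1}$ is dominated by the preceding terms; collecting the bounds then gives the claim. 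The main obstacle is the pathwise treatment of the nonlinear cross term $\varepsilon^{-1}(f(u),\widetilde{u})$: one must distribute the available quartic coercivity $\varepsilon^{-1}\|u\|^4_{\mathbb{L}^4}$ so carefully that all powers of $\|u\|_{\mathbb{L}^4}$ are absorbed while the rough noise enters only through the controllable quantity $\|\widetilde{u}\|^4_{\mathbb{L}^4}$. It is precisely this quantity---large because the regularized noise is only $\mathbb{H}^1$-regular, so that $\widetilde{u}$ is merely $\mathbb{L}^p$-integrable with the scaling of \lemref{Normutilde}---that dictates the $\tilde{h}^{-6d}\varepsilon^{-3}$ term and thus the final bound.
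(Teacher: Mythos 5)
Your proposal is correct and follows essentially the same route as the paper's proof: decompose $u=\widetilde{u}+\widehat{u}$, test the random PDE for $\widehat{u}$ with $(-\Delta)^{-1}\widehat{u}$, absorb the cross term $\varepsilon^{-1}(f(u),\widetilde{u})$ into the quartic coercivity via H\"older/Young leaving $C\varepsilon^{-1}(1+\Vert\widetilde{u}\Vert^4_{\mathbb{L}^4})$, and then apply \lemref{Normutilde} with $p=4$ (and $p=2$ for the triangle-inequality step back to $u$). The only cosmetic difference is that you use the exact identity $(f(u),u)=\Vert u\Vert^4_{\mathbb{L}^4}-\Vert u\Vert^2$ where the paper invokes the lower bound $(f(v),v)\geq\tfrac12\Vert v\Vert^4_{\mathbb{L}^4}-C$, which is immaterial.
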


\begin{proof}
 Let us recall that $u(t)=\widehat{u}(t)+\widetilde{u}(t)$, where $\widehat{u}(t)$ and $\widetilde{u}(t)$ solve \eqref{model3} and \eqref{model2} respectively. Equivalently, $\widehat{u}(t)$ satisfies the following random PDE
\begin{align*}
\frac{d\widehat{u}(t)}{dt}=-\varepsilon\Delta^2\widehat{u}(t)+\frac{1}{\varepsilon}\Delta f(u(t)),\quad
\widehat{u}(0)&=u_0,\quad t\in (0, T].
\end{align*}
Testing the above equation with $(-\Delta)^{-1}\widehat{u}(t)$ yields
\begin{align*}
\frac{1}{2}\frac{d}{dt}\Vert \widehat{u}(t)\Vert^2_{\mathbb{H}^{-1}}+\varepsilon\Vert\nabla \widehat{u}(t)\Vert^2+\frac{1}{\varepsilon}\left(f(u(t)), \widehat{u}(t)\right)=0.
\end{align*}
Using the  fact that $(f(v), v)\geq \frac{1}{2}\Vert v\Vert^4_{\mathbb{L}^4}-C$, $v\in \mathbb{L}^4$, it follows that
\begin{align}
\label{DaPratoZabczyk1}
\frac{1}{2}\frac{d}{dt}\Vert \widehat{u}(t)\Vert^2_{\mathbb{H}^{-1}}+\varepsilon\Vert\nabla \widehat{u}(t)\Vert^2+\frac{1}{2\varepsilon}\Vert u(t)\Vert^4_{\mathbb{L}^4}\leq \frac{C}{\varepsilon}+\frac{1}{\varepsilon}\left\vert \left(f(u(t)), \widetilde{u}(t)\right)\right\vert.
\end{align}
Noting that $\vert f(x)\vert\leq 2\vert x\vert^3+C_1$, using H\"{o}lder and Young's inequalities and the embbeding $\mathbb{L}^4\hookrightarrow \mathbb{L}^1$, we deduce that
\begin{align*}
\left\vert \left(f(u(t)), \widetilde{u}(t)\right)\right\vert
&\leq 2\int_{\mathcal{D}}\vert u(t)\vert^3\vert \widetilde{u}(t)\vert dx+C_1\int_{\mathcal{D}}\vert \widetilde{u}(t)\vert dx\nonumber\\
&\leq 2\left(\int_{\mathcal{D}}\vert u(t)\vert^4 dx\right)^{\frac{3}{4}}\left(\int_{\mathcal{D}}\vert \widetilde{u}(t)\vert^4 dx\right)^{\frac{1}{4}}+C_1\int_{\mathcal{D}}\vert \widetilde{u}(t)\vert dx\\
&\leq \frac{1}{4}\int_{\mathcal{D}}\vert u(t)\vert^4dx+C\int_{\mathcal{D}}\vert \widetilde{u}(t)\vert^4dx+C_1\int_{\mathcal{D}}\vert \widetilde{u}(t)\vert dx\nonumber\\
&\leq \frac{1}{4}\Vert u(t)\Vert^4_{\mathbb{L}^4}+C\Vert \widetilde{u}(t)\Vert^4_{\mathbb{L}^4}+C.\nonumber
\end{align*}
Substituting the preceding estimate into \eqref{DaPratoZabczyk1} and absorbing  $\frac{1}{4\varepsilon}\Vert u(t)\Vert^4_{\mathbb{L}^4}$ into the left hand side, yields
\begin{align*}
\frac{1}{2}\frac{d}{dt}\Vert \widehat{u}(t)\Vert^2_{\mathbb{H}^{-1}}+\varepsilon\Vert\nabla \widehat{u}(t)\Vert^2+\frac{1}{4\varepsilon}\Vert u(t)\Vert^4_{\mathbb{L}^4}\leq \frac{C}{\varepsilon}+\frac{C}{\varepsilon}\Vert \widetilde{u}(t)\Vert^4_{\mathbb{L}^4}.
\end{align*}
Integrating  over $[0, t]$ and  taking the supremum over $[0, T]$ yields
\begin{align*}
&\sup_{t\in[0, T]}\Vert\widehat{u}(t)\Vert^2_{\mathbb{H}^{-1}}+\varepsilon\int_0^T\Vert\nabla\widehat{u}(s)\Vert^2ds+\frac{1}{4\varepsilon}\int_0^T\Vert u(t)\Vert^4_{\mathbb{L}^4}ds\nonumber\\
&\quad\leq  \Vert \widehat{u}(0)\Vert^2_{\mathbb{H}^{-1}}  +\frac{C}{\varepsilon}+\frac{C}{\varepsilon}\int_0^T\Vert \widetilde{u}(s)\Vert^4_{\mathbb{L}^4}ds.
\end{align*}  
Taking the expectation on both sides  and  using \lemref{Normutilde} yields
\begin{align}
\label{depart1a}
&\mathbb{E}\left[\sup_{t\in[0, T]}\Vert \widehat{u}(t)\Vert^2_{\mathbb{H}^{-1}}\right]+\varepsilon\int_0^T\mathbb{E}\left[\Vert\nabla \widehat{u}(s)\Vert^2\right]ds+\frac{1}{4\varepsilon}\int_0^T\mathbb{E}\left[\Vert u(s)\Vert^4_{\mathbb{L}^4}\right]ds\nonumber\\
&\leq \Vert u_0\Vert^2_{\mathbb{H}^{-1}}+\frac{CT}{\varepsilon}+\frac{C}{\varepsilon}\int_0^T\mathbb{E}\left[\Vert \widetilde{u}^{\varepsilon}(s)\Vert^4_{\mathbb{L}^4}\right]ds\nonumber\\
&\leq C\left(\Vert u_0\Vert^2_{\mathbb{H}^{-1}}+\varepsilon^{-1}+\tilde{h}^{-6d}\varepsilon^{-3}\right). 
\end{align}
Using triangle inequality, the inequality $\Vert \cdot\Vert_{\mathbb{H}^{-1}}\leq C\Vert\cdot\Vert$,  \lemref{Normutilde} and \eqref{depart1a} yields
\begin{align}
\label{depart1b}
\mathbb{E}\left[\sup_{t\in[0, T]}\Vert u(t)\Vert^2_{\mathbb{H}^{-1}}\right]&\leq \mathbb{E}\left[\sup_{t\in[0, T]}\Vert \widehat{u}(t)\Vert^2_{\mathbb{H}^{-1}}\right]+\mathbb{E}\left[\sup_{t\in[0, T]}\Vert \widetilde{u}(t)\Vert^2_{\mathbb{H}^{-1}}\right]\nonumber\\
&\leq \mathbb{E}\left[\sup_{t\in[0, T]}\Vert \widehat{u}(t)\Vert^2_{\mathbb{H}^{-1}}\right]+C\mathbb{E}\left[\sup_{t\in[0, T]}\Vert \widetilde{u}(t)\Vert^2\right]\\
&\leq C\left(\Vert u_0\Vert^2_{\mathbb{H}^{-1}}+\varepsilon^{-1}+\tilde{h}^{-6d}\varepsilon^{-3}\right).\nonumber
\end{align}
Combining \eqref{depart1b} and \eqref{depart1a} ends the proof. 
\end{proof}

\section{Rate of convergence of the backward Euler method for linear stochastic Cahn-Hilliard equation with rough noise}
\label{Rateimplicit}
In this section, we examine the convergence rate of fully discrete scheme \eqref{scheme2} for the linear SPDE \eqref{model2}.
We consider a quasi-uniform triangulation $\mathcal{T}_h$ of $\mathcal{D}$, and $\mathbb{V}_h$ the associated finite element space of piecewise linear functions such that $\mathbb{V}_{\tilde{h}}\subset\mathbb{V}_h$.
 For simplicity we assume throughout this section that the finite element space $\mathbb{V}_h^n$ in \eqref{scheme2} is the same on all time levels, i.e. that $\mathbb{V}_h^n = \mathbb{V}_h$ for $n=0,\dots, N$.

The (seimi-discrete) finite element approximation of \eqref{model2} is given by:  find $\widetilde{u}_h(t), \widetilde{w}_h(t)\in \mathbb{V}_h$, for  $ t\in(0,T]$, such that:
\begin{align}
\label{FEMlinearSPDE}
\begin{array}{ll}
(\widetilde{u}_h(t), \varphi_h)+(\nabla\widetilde{u}_h(t), \nabla\varphi_h)=0&\quad  \forall \varphi\in \mathbb{V}_h,\\
(\widetilde{w}_h(t),\psi_h)=\varepsilon(\nabla\widetilde{u}_h(t), \nabla \psi_h)&\quad \forall \psi_h\in \mathbb{V}_h.
\end{array}
\end{align}
Analogously to \eqref{lineartransform} we introduce the linear transformation:
\begin{align*}
y_h(t)=\widetilde{u}_h(t)-\int_0^td\widetilde{W}(s)=\widetilde{u}_h(t)-\Sigma(t).
\end{align*}
and similarly to \eqref{Eqy} we conclude that $(y_h, \widetilde{w}_h)$ satisfies the random PDE
\begin{align}
\label{Eqydiscrete1}
\begin{array}{ll}
\langle \partial_ty_h(t), \varphi_h\rangle+(\nabla\widetilde{w}_h(t), \nabla\varphi_h)=0&\quad \forall\varphi_h\in \mathbb{V}_h,\\
(\widetilde{w}_h(t), \psi_h)=\varepsilon(\nabla\widetilde{u}_h(t), \psi)& \quad \forall \psi_h\in \mathbb{V}_h.
\end{array} 
\end{align}

\begin{lemma}
\label{ErrorNoiseLemma2}
Let $\Sigma(t)$ be the stochastic convolution given by \eqref{Sigma1}, and  let $\Sigma_{\tilde{h}, \tau}(t)$ be the continuous piecewise linear time-interpolant of $\{\Sigma_{\tilde{h}}^n\}_{n=0}^N$ given by \eqref{Sigma2}. 
Then, it  holds that:
\begin{align*}
\mathbb{E}\left[\Vert \Sigma(t)-\Sigma_{\tilde{h}, \tau}(t)\Vert^2_{\mathbb{H}^{-1}}\right]\leq C\tau_n\sum_{\ell=1}^L\frac{\Vert\phi_{\ell}-m(\phi_{\ell})\Vert^2_{\mathbb{H}^{-1}}}{(d+1)^{-1}\vert(\phi_{\ell},1)\vert}\quad \forall t\in(t_{n-1},t_n]. 
\end{align*}
\end{lemma}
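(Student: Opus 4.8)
The plan is to follow the same strategy as in the proof of \lemref{ErrorNoiseLemma}, replacing the seminorm $\Vert\nabla\cdot\Vert$ by the $\mathbb{H}^{-1}$-norm and working pointwise in $t$ rather than integrating over $[0,T]$. First I would fix $t\in(t_{n-1},t_n]$ and compute the difference $\Sigma(t)-\Sigma_{\tilde{h},\tau}(t)$ explicitly from the definitions \eqref{Sigma1} and \eqref{Sigma2}. Writing $\int_0^td\widetilde{W}(s)=\sum_{i=1}^{n-1}\Delta_i\widetilde{W}+\int_{t_{n-1}}^td\widetilde{W}(s)$ and noting $\Delta_n\widetilde{W}=\int_{t_{n-1}}^{t_n}d\widetilde{W}(s)$, the partial sums $\sum_{i=1}^{n-1}\Delta_i\widetilde{W}$ cancel, leaving
\begin{align*}
\Sigma(t)-\Sigma_{\tilde{h},\tau}(t)=\int_{t_{n-1}}^td\widetilde{W}(s)-\frac{t-t_{n-1}}{\tau_n}\int_{t_{n-1}}^{t_n}d\widetilde{W}(s).
\end{align*}

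Next I would take the $\mathbb{H}^{-1}$-norm, pass to the expectation, and split using $(a+b)^2\leq 2a^2+2b^2$ into two contributions, one from the increment over $[t_{n-1},t]$ and one from the full increment over $[t_{n-1},t_n]$ scaled by $(t-t_{n-1})/\tau_n$. To each term I would apply the It\^{o} isometry in the $\mathbb{H}^{-1}$-norm, together with the facts $\mathbb{E}[(\Delta_n\beta_\ell)^2]=t-t_{n-1}$ (for the partial increment) and $\mathbb{E}[(\Delta_n\beta_\ell)(\Delta_n\beta_k)]=0$ for $k\neq\ell$, exactly as in \lemref{ErrorNoiseLemma}. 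Since the Brownian motions $\beta_\ell$ are independent, the cross terms vanish and each expectation reduces to $(t-t_{n-1})\sum_{\ell=1}^L\frac{\Vert\phi_\ell-m(\phi_\ell)\Vert^2_{\mathbb{H}^{-1}}}{(d+1)^{-1}\vert(\phi_\ell,1)\vert}$.

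Finally I would use the elementary bounds $t-t_{n-1}\leq\tau_n$ and $\left(\frac{t-t_{n-1}}{\tau_n}\right)^2\leq 1$ valid for $t\in(t_{n-1},t_n]$ to absorb the time factors, and collect the two contributions into a single constant, which yields the claimed estimate. I do not expect a genuine obstacle here, as the argument is the pointwise, $\mathbb{H}^{-1}$-valued counterpart of a computation already performed. The only point requiring real care is the application of the It\^{o} isometry in the $\mathbb{H}^{-1}$-norm for the finite-dimensional noise $\widetilde{W}$: one must verify that the $\mathbb{H}^{-1}$ inner products of the (non-orthogonal) functions $\phi_\ell-m(\phi_\ell)$ contribute only through the diagonal terms $\Vert\phi_\ell-m(\phi_\ell)\Vert^2_{\mathbb{H}^{-1}}$, which is precisely what the independence of the $\beta_\ell$ guarantees.
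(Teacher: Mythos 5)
Your proposal is correct and follows essentially the same route as the paper's proof: the same explicit cancellation reducing the difference to $\int_{t_{n-1}}^{t}d\widetilde{W}(s)-\frac{t-t_{n-1}}{\tau_n}\Delta_n\widetilde{W}$, the same splitting into two terms, and the same use of the independence of the $\beta_\ell$ (so that only diagonal terms survive in the $\mathbb{H}^{-1}$-norm) followed by the elementary bounds on the time factors. No gaps.
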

\begin{proof}
From the definitions of $\Sigma(t)$ and $\Sigma_{\tilde{h}, \tau}(t)$, it follows that:
\begin{align*}
\mathbb{E}\left[\Vert \Sigma(t)-\Sigma_{\tilde{h}, \tau}(t)\Vert^2_{\mathbb{H}^{-1}}\right]&=\mathbb{E}\left[\left\Vert\int_0^td\widetilde{W}(s)-\frac{t-t_{n-1}}{\tau_n}\Delta_n\widetilde{W}-\sum_{i=1}^{n-1}\Delta_i\widetilde{W}\right\Vert^2_{\mathbb{H}^{-1}}\right]\nonumber\\
&=\mathbb{E}\left[\left\Vert\int_{t_{n-1}}^td\widetilde{W}(s)-\frac{t-t_{n-1}}{\tau_n}\int_{t_{n-1}}^{t_n}d\widetilde{W}(s)\right\Vert^2_{\mathbb{H}^{-1}}\right].
\end{align*}
Using the triangle inequality, the fact that  $\mathbb{E}[(\Delta_n\beta_{\ell})^2]=\tau_n$, $\mathbb{E}[(\Delta_n\beta_{\ell})(\Delta_n\beta_k)]=0$ for $k\neq \ell$, and the preceding equality, it follows that:
\begin{align*}
\mathbb{E}\left[\Vert \Sigma(t)-\Sigma_{\tilde{h}, \tau}(t)\Vert^2_{\mathbb{H}^{-1}}\right]&\leq  C\mathbb{E}\left[\left\Vert\int_{t_{n-1}}^td\widetilde{W}(s)\right\Vert^2_{\mathbb{H}^{-1}}\right]+C\mathbb{E}\left[\left\Vert\frac{t-t_{n-1}}{\tau_n}\Delta_n\widetilde{W}\right\Vert^2_{\mathbb{H}^{-1}}\right]\nonumber\\
&\leq  C\int_{t_{n-1}}^{t}\sum_{\ell=1}^L\frac{\Vert\phi_{\ell}-m(\phi_{\ell})\Vert^2_{\mathbb{H}^{-1}}}{(d+1)^{-1}\vert (\phi_{\ell},1)\vert}ds+C\tau_n\sum_{\ell=1}^L\frac{\Vert\phi_{\ell}-m(\phi_{\ell})\Vert^2_{\mathbb{H}^{-1}}}{(d+1)^{-1}\vert (\phi_{\ell}, 1)\vert}\nonumber\\
&\leq C\tau_n\sum_{\ell=1}^L\frac{\Vert\phi_{\ell}-m(\phi_{\ell})\Vert^2_{\mathbb{H}^{-1}}}{(d+1)^{-1}\vert (\phi_{\ell}, 1)\vert}.
\end{align*}
\end{proof}

\begin{lemma}
\label{LemmaDiscdisrete}
Let $(\widetilde{u}^n_h,\widetilde{w}^n_h)$ be the numerical solution satisfying \eqref{scheme2}. Then, there  exists a positive constant $C$ such that
\begin{align*}
\varepsilon\sum_{n=1}^N\mathbb{E}[\Vert\nabla[\widetilde{u}^n_h-\widetilde{u}^{n-1}_h]\Vert^2]+\sum_{n=1}^N\tau_n\mathbb{E}[\Vert\nabla \widetilde{w}^n_h\Vert^2]\leq C\sum_{\ell=1}^L\frac{\Vert\nabla\phi_{\ell}\Vert^2}{(d+1)^{-1}\vert (\phi_{\ell},1)\vert}.
\end{align*}
\end{lemma}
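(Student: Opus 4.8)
The plan is to derive a pathwise discrete energy identity from scheme \eqref{scheme2} and then to control the resulting noise contribution in expectation. First I would test the first equation of \eqref{scheme2} with $\varphi_h=\tau_n\widetilde{w}^n_h$ and the second with $\psi_h=\widetilde{u}^n_h-\widetilde{u}^{n-1}_h$; substituting the second relation into the first eliminates the mixed term $(\widetilde{w}^n_h,\widetilde{u}^n_h-\widetilde{u}^{n-1}_h)$. Using the identity $(\nabla\widetilde{u}^n_h,\nabla(\widetilde{u}^n_h-\widetilde{u}^{n-1}_h))=\tfrac12(\Vert\nabla\widetilde{u}^n_h\Vert^2-\Vert\nabla\widetilde{u}^{n-1}_h\Vert^2+\Vert\nabla(\widetilde{u}^n_h-\widetilde{u}^{n-1}_h)\Vert^2)$ then yields the $\omega$-wise identity
\begin{align*}
\frac{\varepsilon}{2}\left(\Vert\nabla\widetilde{u}^n_h\Vert^2-\Vert\nabla\widetilde{u}^{n-1}_h\Vert^2\right)+\frac{\varepsilon}{2}\Vert\nabla(\widetilde{u}^n_h-\widetilde{u}^{n-1}_h)\Vert^2+\tau_n\Vert\nabla\widetilde{w}^n_h\Vert^2=(\Delta_n\widetilde{W},\widetilde{w}^n_h).
\end{align*}
Taking expectations and summing over $n=1,\dots,N$, the energy differences telescope and, since $\widetilde{u}^0_h=0$, the surviving boundary term $\tfrac{\varepsilon}{2}\mathbb{E}[\Vert\nabla\widetilde{u}^N_h\Vert^2]$ is nonnegative and can be discarded.

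The hard part will be the noise term $\sum_n\mathbb{E}[(\Delta_n\widetilde{W},\widetilde{w}^n_h)]$, which is \emph{not} mean-zero because $\widetilde{w}^n_h$ depends on the increment $\Delta_n\widetilde{W}$. To treat it I would split $\widetilde{w}^n_h=\widetilde{w}^{n-1}_h+(\widetilde{w}^n_h-\widetilde{w}^{n-1}_h)$. The contribution of $\widetilde{w}^{n-1}_h$ vanishes in expectation, since $\widetilde{w}^{n-1}_h$ is $\mathcal{F}_{t_{n-1}}$-measurable while $\Delta_n\widetilde{W}$ has zero mean and is independent of $\mathcal{F}_{t_{n-1}}$, so conditioning gives $\mathbb{E}[(\Delta_n\widetilde{W},\widetilde{w}^{n-1}_h)]=0$. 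For the remaining term I would use that in this appendix the space $\mathbb{V}^n_h=\mathbb{V}_h$ is fixed in $n$, so the difference of the second equation of \eqref{scheme2} at levels $n$ and $n-1$ holds on $\mathbb{V}_h$; testing it with $\psi_h=\Delta_n\widetilde{W}\in\mathbb{V}_{\tilde h}\subset\mathbb{V}_h$ converts the $\mathbb{L}^2$-pairing into a gradient pairing,
\begin{align*}
(\widetilde{w}^n_h-\widetilde{w}^{n-1}_h,\Delta_n\widetilde{W})=\varepsilon\left(\nabla(\widetilde{u}^n_h-\widetilde{u}^{n-1}_h),\nabla\Delta_n\widetilde{W}\right).
\end{align*}
This is the crucial step: it is precisely what moves a derivative onto the noise and thereby produces the $\Vert\nabla\phi_\ell\Vert^2$ on the right-hand side, and it is exactly where the assumption $\mathbb{V}^n_h=\mathbb{V}_h$ enters.

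Finally, by Cauchy--Schwarz and Young's inequality $\varepsilon ab\le\tfrac{\varepsilon}{4}a^2+\varepsilon b^2$ I would bound the increment contribution by $\tfrac{\varepsilon}{4}\mathbb{E}[\Vert\nabla(\widetilde{u}^n_h-\widetilde{u}^{n-1}_h)\Vert^2]+\varepsilon\mathbb{E}[\Vert\nabla\Delta_n\widetilde{W}\Vert^2]$ and absorb the first summand into the $\tfrac{\varepsilon}{2}\Vert\nabla(\widetilde{u}^n_h-\widetilde{u}^{n-1}_h)\Vert^2$ term on the left. Since $\nabla\Delta_n\widetilde{W}=\sum_{\ell}((d+1)^{-1}\vert(\phi_\ell,1)\vert)^{-1/2}\,\nabla\phi_\ell\,\Delta_n\beta_\ell$ (the constant $m(\phi_\ell)$ drops under the gradient), the independence of the increments together with $\mathbb{E}[(\Delta_n\beta_\ell)^2]=\tau_n$ gives $\mathbb{E}[\Vert\nabla\Delta_n\widetilde{W}\Vert^2]=\tau_n\sum_{\ell}\tfrac{\Vert\nabla\phi_\ell\Vert^2}{(d+1)^{-1}\vert(\phi_\ell,1)\vert}$. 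Summing over $n$ and using $\sum_n\tau_n=T$ (which, with $\varepsilon$, is absorbed into the constant $C$) then yields the claimed estimate, both nonnegative left-hand terms being simultaneously controlled.
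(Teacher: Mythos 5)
Your proposal is correct and follows essentially the same route as the paper: the same energy identity from testing with $\widetilde{w}^n_h$ and $\widetilde{u}^n_h-\widetilde{u}^{n-1}_h$, and the same key step of converting the noise pairing into $\varepsilon\,\mathbb{E}[(\nabla\Delta_n\widetilde{W},\nabla(\widetilde{u}^n_h-\widetilde{u}^{n-1}_h))]$ via the second equation of \eqref{scheme2} and the independence of $\Delta_n\widetilde{W}$ from level $n-1$. The only cosmetic difference is that you subtract $\widetilde{w}^{n-1}_h$ before invoking the constitutive relation, whereas the paper tests with $\Delta_n\widetilde{W}$ first and subtracts $\nabla\widetilde{u}^{n-1}_h$ afterwards; both orderings use the same two ingredients and give the same bound.
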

\begin{proof}
Taking $\varphi_h=\widetilde{w}^n_h$ and $\psi_h=\widetilde{u}^n_h-\widetilde{u}^{n-1}_h$ in \eqref{scheme2} we obtain
\begin{align*}
\frac{1}{\tau_n}(\widetilde{u}^n_h-\widetilde{u}^{n-1}_h, \widetilde{w}^n_h)+(\nabla\widetilde{w}^n_h, \nabla\widetilde{w}^n_h)=\frac{1}{\tau_n}(\Delta_n\widetilde{W}, \widetilde{w}^n_h)\\
(\widetilde{w}^n_h, \widetilde{u}^n_h-\widetilde{u}^{n-1}_h)=\varepsilon(\nabla\widetilde{u}^n_h, \nabla[\widetilde{u}^n_h-\widetilde{u}^{n-1}_h]).
\end{align*}
Combining the two preceding identities yields
\begin{align*}
\varepsilon(\nabla\widetilde{u}^n_h, \nabla[\widetilde{u}^n_h-\widetilde{u}^{n-1}_h])+\tau_n\Vert\nabla\widetilde{w}^n_h\Vert^2=(\Delta_n\widetilde{W},\widetilde{w}^n_h).
\end{align*}
Using the identity $2a(a-b)=a^2-b^2+(a-b)^2$ for $a,b\in\mathbb{R}$, we obtain
\begin{align}
\label{Discrepancy1}
&\frac{\varepsilon}{2}\left(\mathbb{E}[\Vert \nabla\widetilde{u}^n_h\Vert^2]-\mathbb{E}[\Vert \nabla\widetilde{u}^{n-1}_h\Vert^2]+\mathbb{E}[\Vert \nabla[\widetilde{u}^n_h-\widetilde{u}^{n-1}_h]\Vert^2]\right)+\tau_n\mathbb{E}[\Vert\nabla\widetilde{w}^n_h\Vert^2]\nonumber\\
&\quad=\mathbb{E}[(\Delta_n\widetilde{W},\widetilde{w}^n_h)].
\end{align}
Taking $\psi_h=\Delta_n\widetilde{W}$ in the second equation of \eqref{scheme2}, using the fact that $\nabla\Delta_n\widetilde{W}$ and $\nabla\widetilde{u}^{n-1}_h$ are independent, the fact that $\mathbb{E}[\nabla\Delta_n\widetilde{W}]=0$, and Young's inequality we obtain
\begin{align*}
\mathbb{E}[(\Delta_n\widetilde{W},\widetilde{w}^n_h)]&=\varepsilon\mathbb{E}[(\nabla\Delta_n\widetilde{W}, \nabla\widetilde{u}^n_h)]=\varepsilon\mathbb{E}[(\nabla\Delta_n\widetilde{W}, \nabla[\widetilde{u}^n_h-\widetilde{u}^{n-1}_h)]\nonumber\\
&\leq \frac{\varepsilon}{4}\Vert\nabla[\widetilde{u}^n_h-\widetilde{u}^{n-1}_h]\Vert^2+C\varepsilon\Vert \nabla\Delta_n\widetilde{W}\Vert^2.
\end{align*}
Substituting the preceding estimate in \eqref{Discrepancy1} and 
summing the resulting  inequality over $n\in\{1,\cdots,N\}$, we get
\begin{align}
\label{Discrepancy2}
&\frac{\varepsilon}{2}\mathbb{E}[\Vert \nabla \widetilde{u}^N_h\Vert^2]+\frac{\varepsilon}{4}\sum_{n=1}^N\mathbb{E}[\Vert\nabla[\widetilde{u}^n_h-\widetilde{u}^{n-1}_h]\Vert^2]+\sum_{n=1}^N\tau_n\mathbb{E}[\Vert\nabla \widetilde{w}^n_h\Vert^2]\nonumber\\
&\quad\leq \frac{\varepsilon}{2}\mathbb{E}[\Vert\nabla\widetilde{u}^0_h\Vert^2]+C\varepsilon\sum_{n=1}^N\mathbb{E}[\Vert\nabla\Delta_n\widetilde{W}\Vert^2].
\end{align}
From the definition of $\Delta_n\widetilde{W}$ in \eqref{Noiseapprox2}, using  the fact that $\mathbb{E}[\Delta_n\beta_j\Delta_n\beta_k]=\tau_n\delta_{j,k}$, yields
\begin{align*}
\mathbb{E}[\Vert\nabla\Delta_n\widetilde{W}\Vert^2]\leq C\mathbb{E}[\Vert\nabla\Delta_n\widetilde{W}\Vert^2]\leq C\tau_n\sum_{\ell=1}^L\frac{\Vert\nabla\phi_{\ell}\Vert^2}{(d+1)^{-1}\vert (\phi_{\ell},1)\vert}.
\end{align*}
Substituting the preceding estimate in \eqref{Discrepancy2} and using the fact that $\widetilde{u}^0_h=0$, we obtain
\begin{align*}
\frac{\varepsilon}{4}\sum_{n=1}^N\mathbb{E}[\Vert\nabla[\widetilde{u}^n_h-\widetilde{u}^{n-1}_h]\Vert^2]+\sum_{n=1}^N\tau_n\mathbb{E}[\Vert\nabla \widetilde{w}^n_h\Vert^2]\leq C\sum_{\ell=1}^L\frac{\Vert\nabla\phi_{\ell}\Vert^2}{(d+1)^{-1}\vert (\phi_{\ell},1)\vert}.
\end{align*}
\end{proof} 

We define the piecewise constant time interpolant $\bar{\widetilde{u}}_{h,\tau}$ of the numerical solution $\{\widetilde{u}_h^n\}_{n=0}^N$ in \eqref{scheme2} as:
\begin{align*}
\bar{\widetilde{u}}_{h,\tau}(t):=u^n_h\quad \text{if}\; t\in(t_{n-1}, t_n],\quad n=1,\cdots,N, \quad \text{where}\quad \tau=\max\limits_{1\leq n\leq N}\tau_n.
\end{align*}
Analogously, we define the piecewise constant time interpolant $\bar{\widetilde{w}}_{h,\tau}$ of the numerical solution $\{\widetilde{w}_h^n\}_{n=0}^N$ in \eqref{scheme2}.

\begin{lemma}
\label{LemmaErrorConstantInter}
Let $\bar{\widetilde{u}}_{h,\tau}(t)$ and $\widetilde{u}_{h,\tau}(t)$ be respectively the piecewise constant and the piecewise linear interpolants in time of the numerical solution $\{\widetilde{u}^n_h\}_{n=0}^N$. Then, the following estimate holds
\begin{align*}
\int_0^T\mathbb{E}[\Vert\nabla[\bar{\widetilde{u}}_{h,\tau}(t)-\widetilde{u}_{h,\tau}(t)]\Vert^2]dt\leq C\tau\varepsilon^{-1}\sum_{\ell=1}^L\frac{\Vert\nabla\phi_{\ell}\Vert^2}{(d+1)^{-1}\vert (\phi_{\ell},1)\vert},
\end{align*}
where $C$ is a positive constant independent of $\tau$.
\end{lemma}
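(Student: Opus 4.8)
The plan is to exploit the explicit piecewise structure of the two interpolants and reduce everything to the discrete quantity already controlled by \lemref{LemmaDiscdisrete}. First I would compute the difference of the two interpolants on a generic subinterval. Recalling the definitions, for $t\in[t_{n-1},t_n]$ the piecewise linear interpolant reads $\widetilde{u}_{h,\tau}(t)=\frac{t-t_{n-1}}{\tau_n}\widetilde{u}^n_h+\frac{t_n-t}{\tau_n}\widetilde{u}^{n-1}_h$, whereas $\bar{\widetilde{u}}_{h,\tau}(t)=\widetilde{u}^n_h$. Subtracting gives the clean identity
\begin{align*}
\bar{\widetilde{u}}_{h,\tau}(t)-\widetilde{u}_{h,\tau}(t)=\frac{t_n-t}{\tau_n}\left(\widetilde{u}^n_h-\widetilde{u}^{n-1}_h\right),\qquad t\in[t_{n-1},t_n].
\end{align*}
Taking the gradient and noting that the scalar weight $\frac{t_n-t}{\tau_n}$ is deterministic is the decisive simplification: the whole problem collapses onto the increments $\widetilde{u}^n_h-\widetilde{u}^{n-1}_h$, which are precisely the objects estimated in the energy-type bound of \lemref{LemmaDiscdisrete}.

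Next I would square the seminorm, take expectations, and integrate over each subinterval, summing over $n$, so that the deterministic time weight factors out of the expectation:
\begin{align*}
\int_0^T\mathbb{E}\left[\Vert\nabla[\bar{\widetilde{u}}_{h,\tau}(t)-\widetilde{u}_{h,\tau}(t)]\Vert^2\right]dt=\sum_{n=1}^N\mathbb{E}\left[\Vert\nabla(\widetilde{u}^n_h-\widetilde{u}^{n-1}_h)\Vert^2\right]\int_{t_{n-1}}^{t_n}\frac{(t_n-t)^2}{\tau_n^2}\,dt.
\end{align*}
The elementary time integral evaluates (via $s=t_n-t$) to $\int_{t_{n-1}}^{t_n}\frac{(t_n-t)^2}{\tau_n^2}\,dt=\frac{\tau_n}{3}$, so the right-hand side becomes $\frac{1}{3}\sum_{n=1}^N\tau_n\,\mathbb{E}[\Vert\nabla(\widetilde{u}^n_h-\widetilde{u}^{n-1}_h)\Vert^2]$.

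Finally I would bound $\tau_n\leq\tau$ uniformly and extract $\tau$ from the sum, leaving $\frac{\tau}{3}\sum_{n=1}^N\mathbb{E}[\Vert\nabla(\widetilde{u}^n_h-\widetilde{u}^{n-1}_h)\Vert^2]$. At this point I invoke \lemref{LemmaDiscdisrete}, which furnishes exactly $\varepsilon\sum_{n=1}^N\mathbb{E}[\Vert\nabla(\widetilde{u}^n_h-\widetilde{u}^{n-1}_h)\Vert^2]\leq C\sum_{\ell=1}^L\frac{\Vert\nabla\phi_\ell\Vert^2}{(d+1)^{-1}\vert(\phi_\ell,1)\vert}$; dividing this by $\varepsilon$ and combining with the previous display yields the claimed bound with the factor $\tau\varepsilon^{-1}$. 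I do not expect a genuine obstacle here: the argument is a direct computation whose only delicate points are the bookkeeping of the interpolant weights (using the factor $\frac{t_n-t}{\tau_n}$, which vanishes at $t_n$ and equals $1$ at $t_{n-1}$, rather than $\frac{t-t_{n-1}}{\tau_n}$) and the observation that the deterministic weight can be pulled outside the expectation before \lemref{LemmaDiscdisrete} is applied.
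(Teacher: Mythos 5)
Your argument is correct and is essentially identical to the paper's own proof: both reduce the integrand on each subinterval to the weighted increment $\frac{t_n-t}{\tau_n}(\widetilde{u}^n_h-\widetilde{u}^{n-1}_h)$, evaluate the elementary time integral to get a factor $\tau_n/3$, and then invoke \lemref{LemmaDiscdisrete} to produce the $\varepsilon^{-1}$ factor. No gaps.
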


\begin{proof}
Easy computations lead to
\begin{align*}
\int_0^T\mathbb{E}[\Vert\nabla[\bar{\widetilde{u}}_{h,\tau}(t)-\widetilde{u}_{h,\tau}(t)]\Vert^2]dt&=\sum_{n=1}^N\int_{t_{n-1}}^{t_n}\mathbb{E}[\Vert\nabla[\bar{\widetilde{u}}_{h,\tau}(t)-\widetilde{u}_{h,\tau}(t)]\Vert^2]dt\nonumber\\
&=\sum_{n=1}^N\frac{1}{\tau_n^2}\mathbb{E}[\Vert\nabla[\widetilde{u}^n_h-\widetilde{u}^{n-1}_h]\Vert^2]\int_{t_{n-1}}^{t_n}(t-t_n)^2dt\nonumber\\
&\leq C\sum_{n=1}^N\tau_n\mathbb{E}[\Vert\nabla[\widetilde{u}^n_h-\widetilde{u}^{n-1}_h]\Vert^2].
\end{align*}
Using \lemref{LemmaDiscdisrete}, it follows from the preceding estimate that
\begin{align*}
\int_0^T\mathbb{E}[\Vert\nabla[\bar{\widetilde{u}}_{h,\tau}(t)-\widetilde{u}_{h,\tau}(t)]\Vert^2]dt&\leq C\tau\sum_{n=1}^N\mathbb{E}[\Vert\nabla[\widetilde{u}^n_h-\widetilde{u}^{n-1}_h]\Vert^2]\nonumber\\
&\leq C\tau\varepsilon^{-1}\sum_{\ell=1}^L\frac{\Vert\nabla\phi_{\ell}\Vert^2}{(d+1)^{-1}\vert (\phi_{\ell},1)\vert}.
\end{align*}
\end{proof}

In the next lemma, we provide  an error estimate for $\widetilde{u}_h(t)-\widetilde{u}_{h,\tau}(t)$.
\begin{lemma}
\label{Errorrtatetime}
Let $\widetilde{u}_h$ be the solution to \eqref{FEMlinearSPDE}, and let $\widetilde{u}_{h, \tau}$ be the continuous piecewise linear time-interpolant of $\{\widetilde{u}_h^n\}_{n=0}^N$, satisfying \eqref{scheme2}. Then, the following error estimate holds:
\begin{align*}
&\sup_{t\in[0, T]}\mathbb{E}[\Vert \widetilde{u}_h(t)-\widetilde{u}_{h,\tau}(t)\Vert^2_{\mathbb{H}^{-1}}]+\varepsilon\int_0^T\mathbb{E}[\Vert \nabla(\widetilde{u}_h(s)-\widetilde{u}_{h,\tau}(s))\Vert^2]ds\nonumber\\
&\quad\leq  C\tau\left(\sum_{\ell=1}^L\frac{\Vert\phi_{\ell}-m(\phi_{\ell})\Vert^2_{\mathbb{H}^{-1}}+\Vert \nabla\phi_{\ell}\Vert^2}{(d+1)^{-1}\vert(\phi_{\ell},1)\vert}\right).
\end{align*}
\end{lemma}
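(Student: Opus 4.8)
The plan is to transfer the energy argument of \propref{mainresult1} to the present fully finite-dimensional setting, where both $\widetilde{u}_h$ and $\widetilde{u}_{h,\tau}$ belong to the \emph{same} space $\mathbb{V}_h$, and then to account for the mismatch between the piecewise constant and piecewise linear time interpolants. Write $e_u:=\widetilde{u}_h-\widetilde{u}_{h,\tau}$, $e_y:=y_h-y_{h,\tau}$ and $e_w:=\widetilde{w}_h-\bar{\widetilde{w}}_{h,\tau}$, with $\bar{\widetilde{w}}_{h,\tau}$ the piecewise constant interpolant of $\{\widetilde{w}^n_h\}$. Since $y_h=\widetilde{u}_h-\Sigma$ and, by \lemref{LemmaLubo}, $\widetilde{u}_{h,\tau}=y_{h,\tau}+\Sigma_{\tilde{h},\tau}$, one has the pathwise identity $e_y=e_u-(\Sigma-\Sigma_{\tilde{h},\tau})$; in particular $e_y(0)=0$, and because $\Sigma\in\mathbb{V}_{\tilde{h}}\subset\mathbb{V}_h$ all three error quantities lie in $\mathbb{V}_h$ and have zero mean, so the discrete inverse Laplacian $(-\Delta_h)^{-1}$ applies to them. (The test $\varphi_h=(-\Delta_h)^{-1}e_y$ is legitimate since $y_h$ solves a finite-dimensional ODE with the continuous-in-time forcing $\Sigma$, hence $\partial_t y_h$ exists pathwise.)

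The first step is to subtract the fully discrete scheme \eqref{schemey} (written for the time interpolants, so that $\widetilde{w}^n_h$ is replaced by $\bar{\widetilde{w}}_{h,\tau}$) from the semidiscrete RPDE \eqref{Eqydiscrete1}, and to test the $\partial_t$-equation with $\varphi_h=(-\Delta_h)^{-1}e_y\in\mathbb{V}_h$. Using the self-adjointness and the defining property of $(-\Delta_h)^{-1}$, this yields $\frac{1}{2}\frac{d}{dt}\|e_y\|^2_{-1,h}+(e_w,e_y)=0$. Splitting $(e_w,e_y)=(e_w,e_u)-(e_w,\Sigma-\Sigma_{\tilde{h},\tau})$ and using the second equations of \eqref{scheme2}/\eqref{FEMlinearSPDE}, which couple $e_w$ only to the piecewise constant reconstruction $\bar{e}_u:=\widetilde{u}_h-\bar{\widetilde{u}}_{h,\tau}$ through $(e_w,\psi_h)=\varepsilon(\nabla\bar{e}_u,\nabla\psi_h)$ for all $\psi_h\in\mathbb{V}_h$, I would take $\psi_h=e_u$ and $\psi_h=\Sigma-\Sigma_{\tilde{h},\tau}\in\mathbb{V}_{\tilde{h}}\subset\mathbb{V}_h$. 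Writing $\bar{e}_u=e_u+D_1$ with $D_1:=\widetilde{u}_{h,\tau}-\bar{\widetilde{u}}_{h,\tau}$ and $D_2:=\Sigma-\Sigma_{\tilde{h},\tau}$ then produces the energy identity $\frac{1}{2}\frac{d}{dt}\|e_y\|^2_{-1,h}+\varepsilon\|\nabla e_u\|^2=-\varepsilon(\nabla D_1,\nabla e_u)+\varepsilon(\nabla e_u,\nabla D_2)+\varepsilon(\nabla D_1,\nabla D_2)$.

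The remaining steps are routine. Applying Young's inequality to each right-hand term to absorb $\frac{\varepsilon}{2}\|\nabla e_u\|^2$ on the left, integrating over $(0,t)$ using $e_y(0)=0$, taking expectation and the supremum in $t$, one bounds both $\sup_t\mathbb{E}[\|e_y\|^2_{-1,h}]$ and $\varepsilon\int_0^T\mathbb{E}[\|\nabla e_u\|^2]\,ds$ by $C\varepsilon\int_0^T\mathbb{E}[\|\nabla D_1\|^2+\|\nabla D_2\|^2]\,ds$. Here the interpolant term is controlled by \lemref{LemmaErrorConstantInter}, whose $\varepsilon^{-1}$ cancels the prefactor $\varepsilon$, and the noise term by \lemref{ErrorNoiseLemma} together with $\sum_n\eta^n_{\text{NOISE}}\le C\tau\sum_\ell \|\nabla\phi_\ell\|^2/((d+1)^{-1}|(\phi_\ell,1)|)$; each contributes the $\|\nabla\phi_\ell\|^2$ part of the claimed bound. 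Finally, to pass from the discrete norm to $\|\cdot\|_{\mathbb{H}^{-1}}$ I would use $e_u=e_y+D_2$, the norm equivalence \eqref{normeq} valid on $\mathbb{V}_h$, and \lemref{ErrorNoiseLemma2}, which supplies the $\|\phi_\ell-m(\phi_\ell)\|^2_{\mathbb{H}^{-1}}$ contribution; invoking $0<\varepsilon<1$ to discard the harmless factors of $\varepsilon$ gives the stated estimate.

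I expect the main obstacle to be the bookkeeping around the two time interpolants: the fully discrete second equation naturally couples $e_w$ to the piecewise constant reconstruction $\bar{\widetilde{u}}_{h,\tau}$ rather than to the piecewise linear $\widetilde{u}_{h,\tau}$ appearing in the statement, so one must insert $D_1=\widetilde{u}_{h,\tau}-\bar{\widetilde{u}}_{h,\tau}$ at the right places and check that its contribution carries exactly the power of $\varepsilon$ that \lemref{LemmaErrorConstantInter} can absorb. The only other delicate point is confirming that the clean cancellation $\varepsilon\|\nabla e_u\|^2$ (rather than $\varepsilon\|\nabla e_y\|^2$) emerges, which is what makes the $\|\nabla(\widetilde{u}_h-\widetilde{u}_{h,\tau})\|^2$ term on the left-hand side of the lemma directly estimable.
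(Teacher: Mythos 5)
Your proposal is correct and follows essentially the same route as the paper: subtract the semidiscrete RPDE for $y_h$ from the fully discrete one, test with $(-\Delta_h)^{-1}(y_h-y_{h,\tau})$, and control the resulting cross terms with Lemmas \ref{LemmaErrorConstantInter}, \ref{ErrorNoiseLemma} and \ref{ErrorNoiseLemma2} together with the norm equivalence \eqref{normeq}. The only (harmless) difference is organizational: you extract $\varepsilon\Vert\nabla(\widetilde{u}_h-\widetilde{u}_{h,\tau})\Vert^2$ directly in the energy identity, whereas the paper first obtains $\varepsilon\Vert\nabla(y_h-y_{h,\tau})\Vert^2$ and converts to the $\widetilde{u}$-error at the end via the triangle inequality and the noise-increment bound.
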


\begin{proof}
Using \eqref{schemey} and \eqref{DerivInterpoly}, it follows that $y_{h,\tau}$ satisfies:
\begin{align}
\label{Eqydiscrete2}
\begin{array}{ll}
\langle\partial_t y_{h,\tau}(t), \varphi_h\rangle+(\nabla\bar{\widetilde{w}}_{h,\tau}(t), \nabla \varphi_h)=0&\quad \forall\varphi_h\in\mathbb{V}_h\\
(\bar{\widetilde{w}}_{h,\tau}(t), \psi_h)=\varepsilon(\nabla\bar{\widetilde{u}}_{h,\tau}(t), \nabla\psi_h)&\quad \forall \psi_h\in\mathbb{V}_h.
\end{array} 
\end{align}
Subtracting \eqref{Eqydiscrete2} from \eqref{Eqydiscrete1} yields:
\begin{align}
\label{Eqydiffsicrete1}
\begin{array}{ll}
\langle\partial_t[y_h(t)-y_{h,\tau}(t)], \varphi_h\rangle+(\nabla[\widetilde{w}_h(t)-\bar{\widetilde{w}}_{h,\tau}(t)], \nabla \varphi_h)=0 &\quad \forall \varphi_h\in\mathbb{V}_h,\\
(\widetilde{w}_h(t)-\bar{\widetilde{w}}_{h,\tau}(t), \psi_h)=\varepsilon(\nabla[\widetilde{u}_h(t)-\bar{\widetilde{u}}_{h,\tau}(t)], \nabla\psi_h)& \quad \forall \psi_h\in \mathbb{V}_h. 
\end{array}
\end{align}
Taking $\varphi_h=(-\Delta_h)^{-1}(y_h(t)-y_{h,\tau}(t))$
in the first equation of \eqref{Eqydiffsicrete1}, we obtain: 
\begin{align}
\label{Eqydiff2}
\frac{1}{2}\frac{d}{dt}\Vert y_h(t)-y_{h,\tau}(t)\Vert^2_{\mathbb{H}^{-1}}+(\widetilde{w}_h(t)-\bar{\widetilde{w}}_{h,\tau}(t), y_h(t)-y_{h,\tau})=0.
\end{align}
Integrating \eqref{Eqydiff2} over $(0, t)$, noting that $y_{h,\tau}(0)=y_h(0)=0$, and taking the expectation yields:
\begin{align}
\label{Eqydiff4}
\frac{1}{2}\mathbb{E}[\Vert y_h(t)-y_{h,\tau}(t)\Vert^2_{-1,h}]=-\int_0^t\mathbb{E}[(\widetilde{w}_h(s)-\bar{\widetilde{w}}_{h,\tau}(s), y_h(s)-y_{h,\tau}(s))]ds.
\end{align}
Taking $\psi_h=y_h(t)-y_{h,\tau}(t)$ in \eqref{Eqydiffsicrete1},  recalling that  $y_h(t)=\tilde{u}_h(t)-\Sigma(s)$,
and $y_{h,\tau}(t)=\tilde{u}_{h,\tau}(t)-\Sigma_{\tilde{h}, \tau}(t)$, we obtain:
\begin{align*}
&(\widetilde{w}_h(t)-\bar{\widetilde{w}}_{h,\tau}(t), y_h(t)-y_{h,\tau}(t))\nonumber\\
&=\varepsilon(\nabla[\widetilde{u}_h(t)-\widetilde{u}_{h,\tau}(t)], \nabla[ y_h(t)-y_{h,\tau}(t)]+\varepsilon(\nabla[\widetilde{u}_{h,\tau}(t)-\bar{\widetilde{u}}_{h,\tau}(t)], \nabla[y_h(t)-y_{h,\tau}(t)])
\nonumber\\
&=\varepsilon\Vert \nabla[y_h(t)-y_{h,\tau}(t)]\Vert^2-\varepsilon(\nabla[\Sigma(t)-\Sigma_{\tilde{h}, \tau}(t)], \nabla[y_h(t)-y_{h,\tau}(t)])\nonumber\\
&\quad +\varepsilon(\nabla[\widetilde{u}_{h,\tau}(t)-\bar{\widetilde{u}}_{h,\tau}(t)], \nabla[y_h(t)-y_{h,\tau}(t))]. 
\end{align*}
Substituting the preceding identity into \eqref{Eqydiff4} leads to:
\begin{align}
\label{Eqydiff5}
&\frac{1}{2}\mathbb{E}[\Vert y_h(t)-y_{h,\tau}(t)\Vert^2_{-1,h}]+\varepsilon\int_0^t\mathbb{E}[\Vert \nabla(y_h(s)-y_{h,\tau}(s))\Vert^2]ds\nonumber\\
&\quad=\varepsilon\int_0^t\mathbb{E}[(\nabla[\Sigma(s)-\Sigma_{\tilde{h}, \tau}(s)], \nabla[y_h(s)-y_{h,\tau}(s)])]ds\nonumber\\
&\quad \quad -\varepsilon\int_0^t\mathbb{E}[(\nabla[\widetilde{u}_{h,\tau}(s)-\bar{\widetilde{u}}_{h,\tau}(s)], \nabla[y_h(s)-y_{h,\tau}(s))])]ds.
\end{align}
Using  Cauchy-Schwarz's inequality and Young's inequality, we obtain:
\begin{align*}
&\mathbb{E}\left[\left(\nabla[\Sigma(s)-\Sigma_{\tilde{h}, \tau}(s)], \nabla[y_h(s)-y_{h,\tau}(s)]\right)\right]\nonumber\\
&\quad\leq \frac{1}{4}\mathbb{E}[\Vert \nabla(y_h(s)-y_{h,\tau}(s))\Vert^2]+C\mathbb{E}[\Vert \nabla(\Sigma(s)-\Sigma_{\tilde{h},\tau}(s))\Vert^2].
\end{align*}
Using again  Cauchy-Schwarz's inequality and Young's inequality, we estimate:
\begin{align*}
&\mathbb{E}[(\nabla[\widetilde{u}_{h,\tau}(t)-\bar{\widetilde{u}}_{h,\tau}(t)], \nabla[y_h(t)-y_{h,\tau}(t)])]\nonumber\\
&\leq \frac{1}{4}\mathbb{E}[\Vert \nabla[y_h(t)-y_{h,\tau}(t)]\Vert^2]+C\mathbb{E}[\Vert\nabla[\widetilde{u}_{h,\tau}(t)-\bar{\widetilde{u}}_{h,\tau}(t)]\Vert^2].
\end{align*}
Substituting the two preceding estimates into \eqref{Eqydiff5} and taking the supremum over $[0, T]$ we obtain:
\begin{align*}
&\sup_{t\in[0, T]}\mathbb{E}[\Vert y_h(t)-y_{h,\tau}(t)\Vert^2_{-1,h}]+\varepsilon\int_0^T\mathbb{E}[\Vert \nabla(y_h(s)-y_{h,\tau}(s))\Vert^2]ds\nonumber\\
&\quad\leq C\varepsilon\int_0^T\mathbb{E}[\Vert \nabla(\Sigma(s)-\Sigma_{\tilde{h},\tau}(s))\Vert^2]ds+C\varepsilon\int_0^T\mathbb{E}[\Vert\nabla[\widetilde{u}_{h,\tau}(s)-\bar{\widetilde{u}}_{h,\tau}(s)]\Vert^2]ds. 
\end{align*}
Noting \eqref{normeq}, using Lemmas \ref{ErrorNoiseLemma}, \ref{LemmaErrorConstantInter}, and recalling \eqref{Noise1}, it follows from the preceding estimate that:
\begin{align*}
&\sup_{t\in[0, T]}\mathbb{E}\left[\Vert y_h(t)-y_{h,\tau}(t)\Vert^2_{\mathbb{H}^{-1}}\right]+\varepsilon\int_0^T\mathbb{E}[\Vert \nabla(y_h(s)-y_{h,\tau}(s))\Vert^2]ds\nonumber\\
&\qquad\leq C\tau\sum_{\ell=1}^L\frac{\Vert\nabla\phi_{\ell}\Vert^2}{(d+1)^{-1}\vert(\phi_{\ell},1)\vert}+ C\varepsilon\sum_{n=1}^N\eta^n_{\text{NOISE}}\leq C\tau\sum_{\ell=1}^L\frac{\Vert\nabla\phi_{\ell}\Vert^2}{(d+1)^{-1}\vert(\phi_{\ell},1)\vert}.
\end{align*}
Recalling that  $y_h(t)=\tilde{u}_h(t)-\Sigma(t)$ 
and $y_{h,\tau}(t)=\tilde{u}_{h,\tau}(t)-\Sigma_{\tilde{h}, \tau}(t)$, and applying the  triangle inequality,  \lemref{ErrorNoiseLemma} and the preceding estimate, it follows that:
\begin{align}
\label{Eqydiff6}
\varepsilon\int_0^T\mathbb{E}[\Vert \nabla(\widetilde{u}_h(s)-\widetilde{u}_{h,\tau}(s))\Vert^2]ds\leq  C\tau\sum_{\ell=1}^L\frac{\Vert\nabla\phi_{\ell}\Vert^2}{(d+1)^{-1}\vert(\phi_{\ell},1)\vert}.
\end{align}
Using the triangle inequality,  \lemref{ErrorNoiseLemma2}, and the estimate \eqref{Eqydiff6}, it follows that:
\begin{align*}
\sup_{t\in[0, T]}\mathbb{E}[\Vert \widetilde{u}_h(t)-\widetilde{u}_{h,\tau}(t)\Vert^2_{\mathbb{H}^{-1}}]\leq C\tau\left(\sum_{\ell=1}^L\frac{\Vert\phi_{\ell}-m(\phi_{\ell})\Vert^2_{\mathbb{H}^{-1}}}{(d+1)^{-1}\vert(\phi_{\ell},1)\vert}+\sum_{\ell=1}^L\frac{\Vert\nabla\phi_{\ell}\Vert^2}{(d+1)^{-1}\vert(\phi_l,1)\vert}\right).
\end{align*}
Summing the two preceding estimates completes the proof. 
\end{proof}

Let us recall that \eqref{model2} can be written in the following "formal" abstract form   (see, e.g., the introduction of \cite{Larsson3}):
\begin{align}
\label{modelElliotcontinuous}
d\widetilde{u}(t)=-\varepsilon\Delta^2\widetilde{u}(t)+d\widetilde{W}(t),\quad t\in (0, T],\; \widetilde{u}(0)=0.
\end{align}
The mild solution of \eqref{modelElliotcontinuous} satisfies $\mathbb{P}$-a.s.:
\begin{align}
\label{mildElliotcont}
\widetilde{u}(t)&=\int_0^te^{-\Delta^2\varepsilon(t-s)}d\widetilde{W}(s)\nonumber\\
&=\sum_{\ell=1}^L\frac{1}{\sqrt{(d+1)^{-1}\vert(\phi_{\ell},1)\vert}}\int_0^te^{-\Delta^2\varepsilon(t-s)}(\phi_{\ell}-m(\phi_{\ell}))d\beta_{\ell}(s) \quad \forall t\in (0, T].
\end{align}
Equivalently, the finite element solution $\widetilde{u}_h(t)$   of \eqref{FEMlinearSPDE} satisfies (see, e.g.,  \cite{Larsson3}):
\begin{align}
\label{modelElliotFEM}
d\widetilde{u}_h(t)=-\varepsilon\Delta_h^2\widetilde{u}_hdt+d\widetilde{W}(t),\quad t\in(0, T],\; \widetilde{u}_h(0)=0, 
\end{align}
where the operator $\Delta_h: \mathring{\mathbb{V}}_h\rightarrow\mathring{\mathbb{V}}_h$ (the "discrete Laplacian") is defined by:
\begin{align*}
(-\Delta_h\xi_h, \eta_h)=(\nabla\xi_h,\nabla\eta_h)\quad \forall \xi_h, \eta_h\in\mathbb{V}^n_h.
\end{align*}
The mild solution $\widetilde{u}_h(t)$ can therefore be written as follows
\begin{align}
\label{mildElliotFEM}
\widetilde{u}_h(t)&=\int_{0}^te^{-\Delta^2_h\varepsilon(t-s)}P_hd\widetilde{W}(s)\nonumber\\
&=\sum_{\ell=1}^L\frac{1}{\sqrt{(d+1)^{-1}\vert(\phi_{\ell},1)\vert}}\int_0^te^{-\Delta^2_h\varepsilon(t-s)}P_h(\phi_{\ell}-m(\phi_{\ell}))d\beta_{\ell}(s)\quad \forall t\in (0, T].
\end{align}
We aim to provide an error estimate for $\widetilde{u}(t)-\widetilde{u}_h(t)$. We begin by recalling the following error estimate for the approximation of the semi-group from \cite[Lemma 5.2]{ElliotLarsson}.
\begin{lemma}
\label{ElliotLemma}
Let $r\in\{2,3\}$, and let $\alpha\in[0,r]$ be  such that $0\leq r-\alpha\leq 2$. Then, it holds:
\begin{align*}
\left\Vert \left(e^{-\Delta^2\varepsilon t}-e^{-\Delta_h^2\varepsilon t}P_h\right)v\right\Vert_k\leq Ch^{r-k}(\varepsilon
t)^{-\frac{r-\alpha}{4}}\vert v\vert_{\alpha},\quad t>0,\quad k=0,1,2, 
\end{align*}
where $\vert v\vert_{\alpha}=\Vert \Delta^{\alpha}v\Vert$ and $\Vert v\Vert_k=\Vert v\Vert_{\mathbb{H}^k}$. 
\end{lemma}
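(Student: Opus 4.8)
The estimate is a standard smoothing-type finite element error bound for the analytic semigroup generated by the (scaled) biharmonic operator, and the plan is to follow the resolvent/contour approach of Thom\'ee and Elliott--Larsson. Write $A=\varepsilon\Delta^2$ and $A_h=\varepsilon\Delta_h^2$; since the Neumann Laplacian and its finite element counterpart $\Delta_h$ are self-adjoint and positive on the mean-zero subspaces, both $A$ and $A_h$ are self-adjoint, positive and sectorial, so $e^{-At}$ and $e^{-A_ht}$ are bounded analytic semigroups. The first step is to represent the error operator $F_h(t):=e^{-At}-e^{-A_ht}P_h$ by the Dunford--Taylor integral
\begin{align*}
F_h(t)=\frac{1}{2\pi i}\int_{\Gamma}e^{-\lambda t}\big[(\lambda+A)^{-1}-(\lambda+A_h)^{-1}P_h\big]\,d\lambda,
\end{align*}
where $\Gamma$ is the boundary of a sector $\{|\arg\lambda|\le\theta\}$ separating the origin from the (common, positive) spectrum. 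This reduces the semigroup error to a resolvent error with explicit dependence on the spectral parameter $\lambda$.

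The second, and decisive, step is the resolvent estimate: for the (mixed) finite element approximation of the fourth-order resolvent problem $(\lambda+\varepsilon\Delta^2)u=v$ one would establish, uniformly for $\lambda\in\Gamma$, a bound of the form
\begin{align*}
\big\|[(\lambda+A)^{-1}-(\lambda+A_h)^{-1}P_h]v\big\|_k\le C\,h^{r-k}\,|\lambda|^{\beta}\,|v|_{\alpha},
\end{align*}
where $\beta=\beta(r,\alpha,k)$ is chosen precisely so that the subsequent contour integration reproduces the smoothing factor. Because all operators are self-adjoint, this bound can be obtained via energy estimates for the mixed system (with the auxiliary variable $w=-\varepsilon\Delta u$), using elliptic regularity of the biharmonic resolvent problem together with the approximation properties \eqref{ErrorPnh} of $P_h$ and the definition of $\Delta_h$, while keeping the $\lambda$- and $\varepsilon$-scalings sharp. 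Inserting the resolvent bound into the contour integral, rescaling $\lambda\mapsto\lambda/(\varepsilon t)$, and integrating $e^{-\lambda t}$ along $\Gamma$ then produces exactly the factor $(\varepsilon t)^{-(r-\alpha)/4}$. The three cases $k=0,1,2$ follow by proving the resolvent estimate directly in each norm (or by combining the $k=0$ case with an inverse inequality), and the nonsmooth-data range $\alpha<r$ is reached by interpolating between the maximal-regularity endpoint $\alpha=r$ (no time singularity) and the low-regularity endpoint $\alpha=r-2$ (full singularity $(\varepsilon t)^{-1/2}$), which is exactly why the constraint $0\le r-\alpha\le2$ appears.

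The principal obstacle is the fourth-order, nonconforming structure: since $\mathbb{V}_h$ consists only of $\mathbb{H}^1$-conforming piecewise affine functions, $A_h=\varepsilon\Delta_h^2$ is defined through the mixed formulation rather than as the square of an $\mathbb{H}^2$-conforming discrete operator, so the resolvent error estimate must be carried out for the mixed method with the correct $|\lambda|$-dependence, which is considerably more delicate than the second-order (Thom\'ee) case. Tracking the interfacial-width parameter $\varepsilon$ robustly through the rescaling, and respecting the regularity constraint $0\le r-\alpha\le2$ while interpolating, are the remaining technical points. Since the statement is a known result, I would simply verify that its hypotheses match our setting; the full argument along the above lines is carried out in \cite[Lemma 5.2]{ElliotLarsson}, from which we quote it.
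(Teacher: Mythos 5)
Your proposal ends exactly where the paper does: \lemref{ElliotLemma} is not proved in the paper at all but is quoted directly from \cite[Lemma 5.2]{ElliotLarsson}, which is precisely the citation you give. The preceding sketch of the Dunford--Taylor/resolvent route is a reasonable outline of how such smoothing estimates are established, but the paper itself offers no argument beyond the reference, so your approach is essentially the same.
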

\begin{lemma}
\label{Errorratespace}
Let $\widetilde{u}(t)$ and $\widetilde{u}_h(t)$ be the mild solutions of \eqref{modelElliotcontinuous} and \eqref{modelElliotFEM}, respectively. Then, the following error estimate holds:
\begin{align*}
\mathbb{E}[\Vert \widetilde{u}(t)-\widetilde{u}_h(t)\Vert^2]+\varepsilon\mathbb{E}[\Vert\nabla(\widetilde{u}(t)-\widetilde{u}_h(t))\Vert^2]\leq C h^2\sum_{\ell=1}^L\frac{\Vert\nabla\phi_{\ell}\Vert^2}{(d+1)^{-1}\vert(\phi_{\ell},1)\vert}. 
\end{align*} 
\end{lemma}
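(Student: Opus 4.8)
The plan is to work directly with the mild-solution representations \eqref{mildElliotcont} and \eqref{mildElliotFEM} and to reduce everything to the deterministic semigroup error bound of \lemref{ElliotLemma}. First I would subtract the two representations to write
\[
\widetilde{u}(t)-\widetilde{u}_h(t)=\sum_{\ell=1}^L\frac{1}{\sqrt{(d+1)^{-1}\vert(\phi_{\ell},1)\vert}}\int_0^t\left(e^{-\Delta^2\varepsilon(t-s)}-e^{-\Delta_h^2\varepsilon(t-s)}P_h\right)(\phi_{\ell}-m(\phi_{\ell}))\,d\beta_{\ell}(s).
\]
Since each integrand is deterministic and the Brownian motions $\beta_{\ell}$ are independent, the It\^o isometry (applied in $\mathbb{L}^2$, and, after commuting $\nabla$ into the stochastic integral, again in $\mathbb{L}^2$ for the gradient) turns $\mathbb{E}[\Vert\cdot\Vert^2]$ and $\mathbb{E}[\Vert\nabla\cdot\Vert^2]$ into single sums over $\ell$, with all cross terms vanishing. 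Concretely, for $k=0,1$,
\[
\mathbb{E}\big[\Vert\nabla^k(\widetilde{u}(t)-\widetilde{u}_h(t))\Vert^2\big]=\sum_{\ell=1}^L\frac{1}{(d+1)^{-1}\vert(\phi_{\ell},1)\vert}\int_0^t\Big\Vert\nabla^k\big(e^{-\Delta^2\varepsilon(t-s)}-e^{-\Delta_h^2\varepsilon(t-s)}P_h\big)(\phi_{\ell}-m(\phi_{\ell}))\Big\Vert^2\,ds,
\]
where $\nabla^0$ denotes the identity.

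Next I apply \lemref{ElliotLemma} with $r=2$ and $\alpha=1$, so that $|v|_1=\Vert\nabla v\Vert$ and the admissibility condition $0\le r-\alpha\le 2$ holds, to each factor in the integrand. For the $\mathbb{L}^2$ part ($k=0$) this yields the pointwise bound $Ch^2(\varepsilon(t-s))^{-1/4}\Vert\nabla(\phi_{\ell}-m(\phi_{\ell}))\Vert$, and for the gradient part ($k=1$) the bound $Ch(\varepsilon(t-s))^{-1/4}\Vert\nabla(\phi_{\ell}-m(\phi_{\ell}))\Vert$. Crucially, since $m(\phi_{\ell})$ is constant, $\Vert\nabla(\phi_{\ell}-m(\phi_{\ell}))\Vert=\Vert\nabla\phi_{\ell}\Vert$. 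Squaring produces the time weight $(\varepsilon(t-s))^{-1/2}$, which is integrable on $(0,t)$ with $\int_0^t(\varepsilon(t-s))^{-1/2}\,ds=2\varepsilon^{-1/2}t^{1/2}\le 2\varepsilon^{-1/2}T^{1/2}$. Collecting, the $\mathbb{L}^2$ term is bounded by $Ch^4\varepsilon^{-1/2}T^{1/2}\sum_{\ell}\tfrac{\Vert\nabla\phi_{\ell}\Vert^2}{(d+1)^{-1}\vert(\phi_{\ell},1)\vert}$ and, after multiplication by $\varepsilon$, the gradient term by $Ch^2\varepsilon^{1/2}T^{1/2}\sum_{\ell}\tfrac{\Vert\nabla\phi_{\ell}\Vert^2}{(d+1)^{-1}\vert(\phi_{\ell},1)\vert}$. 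Since $h\le 1$ and $\varepsilon,T$ enter only through the constant, both are dominated by $Ch^2\sum_{\ell}\tfrac{\Vert\nabla\phi_{\ell}\Vert^2}{(d+1)^{-1}\vert(\phi_{\ell},1)\vert}$, which is the asserted estimate.

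The main obstacle is the correct bookkeeping in the application of \lemref{ElliotLemma}: one must choose the smoothing index $\alpha$ so that simultaneously (i) the squared singularity $(\varepsilon(t-s))^{-(r-\alpha)/2}$ is integrable near $s=t$, which forces $\alpha>0$, and (ii) the data seminorm $|v|_{\alpha}$ is exactly the computable quantity $\Vert\nabla\phi_{\ell}\Vert$ on the right-hand side, which fixes $\alpha=1$; together with $r=2$ this produces the order $h^{2-k}$. A minor point to state carefully is the interpretation $|v|_{\alpha}=\Vert(-\Delta)^{\alpha/2}v\Vert$, so that $|v|_1=\Vert\nabla v\Vert$, since the piecewise-affine basis functions are not $\mathbb{H}^2$-regular; no $\mathbb{H}^2$-bound on $\phi_{\ell}$ is required because only the first-order seminorm enters.
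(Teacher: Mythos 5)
Your overall strategy is the same as the paper's: subtract the two mild-solution formulas, use the It\^o isometry (with independence of the $\beta_\ell$ killing the cross terms) to reduce to deterministic integrals of the semigroup error, and invoke \lemref{ElliotLemma} with the data measured in the $|\cdot|_1$-seminorm so that only $\Vert\nabla\phi_\ell\Vert$ appears. Your treatment of the gradient term ($r=2$, $\alpha=1$, $k=1$, giving $h^2\varepsilon^{-1/2}$ before multiplication by $\varepsilon$) is exactly the paper's computation and is fine.

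The gap is in the $\mathbb{L}^2$ term. You take $r=2$, $\alpha=1$, $k=0$, which after squaring and integrating the singularity $(\varepsilon(t-s))^{-1/2}$ leaves you with $C h^4\varepsilon^{-1/2}T^{1/2}$, and you then absorb $\varepsilon^{-1/2}$ "into the constant." That does not yield the stated inequality: throughout this paper the generic constant $C$ is $\varepsilon$-independent (every $\varepsilon$-power is tracked explicitly, and the whole point of the analysis is robustness in $\varepsilon$; the lemma is later fed into Corollary~\ref{RateLinearSPDE}, whose bound $C(h^2+\tau)\tilde{h}^{-2-d}$ contains no $\varepsilon$). Your bound only gives $Ch^2$ under the extra mesh condition $h\lesssim\varepsilon^{1/4}$, or with a constant blowing up like $\varepsilon^{-1/2}$. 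The repair is to choose the smoothing index so that $r=\alpha$ in the $\mathbb{L}^2$ estimate (the paper uses $r=\alpha=1$, $k=0$): then the time weight $(\varepsilon t)^{-(r-\alpha)/4}$ disappears entirely, the pointwise bound is $Ch\,|v|_1$, and squaring plus integration over $(0,T)$ gives $Ch^2 T\Vert\nabla\phi_\ell\Vert^2$ with no $\varepsilon$ at all. With that single change your argument reproduces the paper's proof.
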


\begin{proof}
 Subtracting \eqref{mildElliotFEM} from \eqref{mildElliotcont} yields:
\begin{align*}
&\nabla(\widetilde{u}(t)-\widetilde{u}_h(t))\nonumber\\
&\quad=\sum_{\ell=1}^L\frac{1}{\sqrt{(d+1)^{-1}\vert(\phi_{\ell},1)\vert}}\int_0^t\nabla\left(e^{-\Delta^2\varepsilon(t-s)}-e^{-\Delta^2_h\varepsilon(t-s)}P_h\right)(\phi_{\ell}-m(\phi_{\ell}))d\beta_{\ell}(s).
\end{align*} 
Using the  It\^{o} isometry, the fact that   $\mathbb{E}[(\Delta_n\beta_{\ell})^2]=\tau_n$, and $\mathbb{E}[(\Delta_n\beta_{\ell})(\Delta_n\beta_k)]=0$ for $k\neq \ell$,  we obtain: 
\begin{align*}
&\mathbb{E}[\Vert\nabla(\widetilde{u}(t)-\widetilde{u}_h(t))\Vert^2]\nonumber\\
&\quad\leq \sum_{\ell=1}^L\frac{1}{(d+1)^{-1}\vert(\phi_{\ell},1)\vert}\mathbb{E}\left[\left\Vert\int_0^t\nabla\left(e^{-\Delta^2\varepsilon(t-s)}-e^{-\Delta^2_h\varepsilon(t-s)}P_h\right)(\phi_{\ell}-m(\phi_{\ell}))d\beta_{\ell}(s)\right\Vert^2\right]\nonumber\\
&\quad\leq C\sum_{\ell=1}^L\frac{1}{(d+1)^{-1}\vert(\phi_{\ell},1)\vert}\int_0^t\left\Vert\nabla\left(e^{-\Delta^2\varepsilon(t-s)}-e^{-\Delta^2_h\varepsilon(t-s)}P_h\right)(\phi_{\ell}-m(\phi_{\ell}))\right\Vert^2ds.
\end{align*}
Using the estimate $\Vert \nabla v\Vert\leq \Vert v\Vert_1$,  \lemref{ElliotLemma} with $r=2$, $\alpha=1$ and $k=1$ yields:
\begin{align*}
&\mathbb{E}[\Vert\nabla(\widetilde{u}(t)-\widetilde{u}_h(t))\Vert^2]\nonumber\\
&\quad\leq C\sum_{\ell=1}^L\frac{1}{(d+1)^{-1}\vert(\phi_{\ell},1)\vert}\int_0^t\left\Vert\left(e^{-\Delta^2\varepsilon(t-s)}-e^{-\Delta^2_h\varepsilon(t-s)}P_h\right)(\phi_{\ell}-m(\phi_{\ell}))\right\Vert^2_{\mathbb{H}^1}ds\nonumber\\
&\quad \leq Ch^2\sum_{\ell=1}^L\frac{\Vert \nabla\phi_{\ell}\Vert^2}{(d+1)^{-1}\vert(\phi_{\ell},1)\vert}\int_0^t\varepsilon^{-\frac{1}{2}}(t-s)^{-\frac{1}{2}}ds\nonumber\\
&\quad\leq C\varepsilon^{-\frac{1}{2}} h^2\sum_{\ell=1}^L\frac{\Vert \nabla\phi_{\ell}\Vert^2}{(d+1)^{-1}\vert(\phi_{\ell},1)\vert}. 
\end{align*}
Along the same lines as above, by using \lemref{ElliotLemma} with $r=\alpha=1$ and $k=0$, we obtain:
\begin{align*}
\mathbb{E}[\Vert\widetilde{u}(t)-\widetilde{u}_h(t)\Vert^2]\leq C h^2\sum_{\ell=1}^L\frac{\Vert \nabla\phi_{\ell}\Vert^2}{(d+1)^{-1}\vert(\phi_{\ell},1)\vert}.
\end{align*}
By combining the two preceding estimates, we conclude the proof. 
\end{proof}

Using trianle inequality and Lemmas \ref{Errorratespace} and \ref{Errorrtatetime} we obtain the following error estimate.
\begin{theorem}
\label{ThmRateLinearSPDE}
Let $\widetilde{u}_h$ be the solution of \eqref{model2}, and let $\widetilde{u}_{h, \tau}$ be the continuous piecewise linear time-interpolant of $\{\widetilde{u}_h^n\}_{n=0}^N$, satisfying \eqref{scheme2}. Then, the following error estimate holds:
\begin{align*}
&\sup_{t\in[0, T]}\mathbb{E}[\Vert \widetilde{u}(t)-\widetilde{u}_{h,\tau}(t)\Vert^2_{\mathbb{H}^{-1}}]+\varepsilon\int_0^T\mathbb{E}[\Vert \nabla(\widetilde{u}(s)-\widetilde{u}_{h,\tau}(s))\Vert^2]ds\nonumber\\
&\quad\leq  C(h^2+\tau)\left(\sum_{\ell=1}^L\frac{\Vert\phi_{\ell}-m(\phi_{\ell})\Vert^2_{\mathbb{H}^{-1}}+\Vert \nabla\phi_{\ell}\Vert^2}{(d+1)^{-1}\vert(\phi_{\ell},1)\vert}\right).
\end{align*}
\end{theorem}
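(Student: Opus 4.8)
The plan is to introduce the semidiscrete (spatially discrete, time-continuous) finite element solution $\widetilde{u}_h$ of \eqref{FEMlinearSPDE} as an intermediate object and split the full error by the triangle inequality,
\begin{align*}
\widetilde{u}(t)-\widetilde{u}_{h,\tau}(t)=\big(\widetilde{u}(t)-\widetilde{u}_h(t)\big)+\big(\widetilde{u}_h(t)-\widetilde{u}_{h,\tau}(t)\big),
\end{align*}
so that the first summand carries the purely spatial error and the second the purely temporal error. Applying $(a+b)^2\leq 2a^2+2b^2$ to both the $\mathbb{H}^{-1}$-term and the $\nabla$-term reduces the claim to estimating each summand separately with the two preceding lemmas.

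First I would treat the spatial contribution. For the supremum term I would use the continuous embedding $\Vert\cdot\Vert_{\mathbb{H}^{-1}}\leq C\Vert\cdot\Vert$ to pass to the $\mathbb{L}^2$-norm and then invoke \lemref{Errorratespace}; since the right-hand side of \lemref{Errorratespace} is independent of $t$, taking $\sup_{t\in[0,T]}$ costs nothing and yields
\begin{align*}
\sup_{t\in[0,T]}\mathbb{E}\big[\Vert\widetilde{u}(t)-\widetilde{u}_h(t)\Vert^2_{\mathbb{H}^{-1}}\big]\leq Ch^2\sum_{\ell=1}^L\frac{\Vert\nabla\phi_{\ell}\Vert^2}{(d+1)^{-1}\vert(\phi_{\ell},1)\vert}.
\end{align*}
For the gradient integral I would again apply \lemref{Errorratespace} pointwise in $s$ and integrate over $[0,T]$; the $t$-independence of the bound turns the integration into a factor $T$ that is absorbed into the constant, giving the same right-hand side up to a constant.

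Next I would dispose of the temporal contribution by quoting \lemref{Errorrtatetime} verbatim for both $\sup_{t\in[0,T]}\mathbb{E}[\Vert\widetilde{u}_h-\widetilde{u}_{h,\tau}\Vert^2_{\mathbb{H}^{-1}}]$ and $\varepsilon\int_0^T\mathbb{E}[\Vert\nabla(\widetilde{u}_h-\widetilde{u}_{h,\tau})\Vert^2]\,ds$, which are bounded by $C\tau\sum_{\ell}\frac{\Vert\phi_{\ell}-m(\phi_{\ell})\Vert^2_{\mathbb{H}^{-1}}+\Vert\nabla\phi_{\ell}\Vert^2}{(d+1)^{-1}\vert(\phi_{\ell},1)\vert}$. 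Finally I would combine the two: observing that $\sum_{\ell}\frac{\Vert\nabla\phi_{\ell}\Vert^2}{(d+1)^{-1}\vert(\phi_{\ell},1)\vert}\leq \sum_{\ell}\frac{\Vert\phi_{\ell}-m(\phi_{\ell})\Vert^2_{\mathbb{H}^{-1}}+\Vert\nabla\phi_{\ell}\Vert^2}{(d+1)^{-1}\vert(\phi_{\ell},1)\vert}$, the spatial $h^2$-bound is controlled by the same weighted sum that appears in the temporal bound, and adding the two produces the asserted $C(h^2+\tau)$ prefactor in front of the common sum.

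The assembly itself is routine bookkeeping; the genuine difficulty of \thmref{ThmRateLinearSPDE} is already absorbed into the two lemmas. Specifically, the spatial half rests on the deterministic semigroup error estimate of \lemref{ElliotLemma} combined with the It\^o isometry (carried out in \lemref{Errorratespace}), while the temporal half rests on the transformation to the variable $y_h$ and the comparison of the piecewise-constant and piecewise-linear interpolants in \lemref{Errorrtatetime}. Consequently the only points demanding care here are the norm conversion $\Vert\cdot\Vert_{\mathbb{H}^{-1}}\leq C\Vert\cdot\Vert$ for the spatial term and the observation that the $t$-independent spatial bound renders both the supremum and the time integral harmless.
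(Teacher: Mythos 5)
Your proposal is correct and follows essentially the same route as the paper: the paper's proof of \thmref{ThmRateLinearSPDE} is precisely the triangle-inequality splitting through the semidiscrete solution $\widetilde{u}_h$, with the spatial part handled by \lemref{Errorratespace} and the temporal part by \lemref{Errorrtatetime}. The bookkeeping details you supply (the norm conversion $\Vert\cdot\Vert_{\mathbb{H}^{-1}}\leq C\Vert\cdot\Vert$, the $t$-independence of the spatial bound, and merging the weighted sums) are exactly what the paper leaves implicit.
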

Using \thmref{ThmRateLinearSPDE} along with \lemref{Lemmabasis} implies the following error estimate.
\begin{corollary}
\label{RateLinearSPDE}
The following error estimate holds:
\begin{align*}
\sup_{t\in[0, T]}\mathbb{E}[\Vert \widetilde{u}(t)-\widetilde{u}_{h,\tau}(t)\Vert^2_{\mathbb{H}^{-1}}]+\varepsilon\int_0^T\mathbb{E}[\Vert \nabla(\widetilde{u}(s)-\widetilde{u}_{h,\tau}(s))\Vert^2]ds\leq C(h^2+\tau)\tilde{h}^{-2-d}. 
\end{align*}
\end{corollary}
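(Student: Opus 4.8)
The plan is to deduce the corollary directly from \thmref{ThmRateLinearSPDE} by estimating the $\tilde{h}$-dependent constant appearing on its right-hand side with the help of the basis-function bounds collected in \lemref{Lemmabasis}. Concretely, it suffices to establish
\[
\sum_{\ell=1}^L\frac{\Vert\phi_{\ell}-m(\phi_{\ell})\Vert^2_{\mathbb{H}^{-1}}+\Vert \nabla\phi_{\ell}\Vert^2}{(d+1)^{-1}\vert(\phi_{\ell},1)\vert}\leq C\tilde{h}^{-2-d},
\]
since multiplying this by the factor $C(h^2+\tau)$ from \thmref{ThmRateLinearSPDE} yields exactly the claimed bound. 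First I would control the denominator from below: by \lemref{Lemmabasis}(i) we have $\vert(\phi_{\ell},1)\vert\geq C_1\tilde{h}^d$, so the reciprocal weight satisfies $\big((d+1)^{-1}\vert(\phi_{\ell},1)\vert\big)^{-1}\leq C\tilde{h}^{-d}$ uniformly in $\ell$.

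Next I would bound the two numerator contributions separately. For the gradient term, the inverse estimate $\Vert\nabla\phi_{\ell}\Vert\leq C\tilde{h}^{-1}\Vert\phi_{\ell}\Vert$ together with $\Vert\phi_{\ell}\Vert\leq C\tilde{h}^{d/2}$ (both from \lemref{Lemmabasis}(ii)) gives $\Vert\nabla\phi_{\ell}\Vert^2\leq C\tilde{h}^{-2}\Vert\phi_{\ell}\Vert^2\leq C\tilde{h}^{d-2}$. For the mean-subtracted term I would use the continuous embedding $\Vert\cdot\Vert_{\mathbb{H}^{-1}}\leq C\Vert\cdot\Vert$ and the triangle inequality $\Vert\phi_{\ell}-m(\phi_{\ell})\Vert\leq\Vert\phi_{\ell}\Vert+\vert m(\phi_{\ell})\vert\,\vert\mathcal{D}\vert^{1/2}$; since $\vert\mathcal{D}\vert=1$ and $\vert m(\phi_{\ell})\vert=\vert(\phi_{\ell},1)\vert\leq C\tilde{h}^d\leq C\tilde{h}^{d/2}$ for $\tilde{h}\leq 1$, this produces $\Vert\phi_{\ell}-m(\phi_{\ell})\Vert^2_{\mathbb{H}^{-1}}\leq C\tilde{h}^d$. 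As $d\geq 1$ and $\tilde{h}\leq 1$, the gradient contribution $\tilde{h}^{d-2}$ dominates, so each summand is bounded by $C\tilde{h}^{-d}\cdot\tilde{h}^{d-2}=C\tilde{h}^{-2}$. Summing over $\ell=1,\dots,L$ and invoking the dimension bound $L\leq C\tilde{h}^{-d}$ from \lemref{Lemmabasis}(i) then supplies the announced factor $\tilde{h}^{-2-d}$.

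No genuine obstacle arises in this argument, which is essentially a bookkeeping of scales; the only point demanding slight care is the $\mathbb{H}^{-1}$-estimate of $\phi_{\ell}-m(\phi_{\ell})$, where one must check that subtracting the mean does not degrade the scaling. This is guaranteed because $\vert m(\phi_{\ell})\vert$ is of the higher order $\tilde{h}^d$ and is therefore absorbed into the $\Vert\phi_{\ell}\Vert$-contribution. I would close the proof by substituting the displayed sum bound into \thmref{ThmRateLinearSPDE}.
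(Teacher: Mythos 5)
Your proposal is correct and follows essentially the same route as the paper, which simply asserts that the corollary follows from \thmref{ThmRateLinearSPDE} together with \lemref{Lemmabasis}; your computation ($\Vert\nabla\phi_{\ell}\Vert^2\leq C\tilde{h}^{d-2}$, $\Vert\phi_{\ell}-m(\phi_{\ell})\Vert^2_{\mathbb{H}^{-1}}\leq C\tilde{h}^{d}$, weight $\leq C\tilde{h}^{-d}$, and $L\leq C\tilde{h}^{-d}$) is exactly the bookkeeping the authors leave implicit, and it matches the analogous scaling already recorded for $\eta^n_{\text{NOISE}}$.
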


\section*{Acknowledgement}
Funded by the Deutsche Forschungsgemeinschaft (DFG, German Research Foundation) -- Project-ID 317210226 -- SFB 1283.

\bibliographystyle{plain}
\bibliography{references}

\end{document}